%
%
\documentclass[12pt]{amsart}
\usepackage{amssymb}
\usepackage{amsmath}

\usepackage[displaymath]{lineno}

\setlength{\headheight}{8pt} \setlength{\textheight}{19cm}
\setlength{\textwidth}{14cm} \setlength{\oddsidemargin}{0cm}
\setlength{\evensidemargin}{0cm} \setlength{\topmargin}{0.5cm}

\theoremstyle{plain}
\newtheorem{thm}{Theorem}[section]

\newtheorem{theorem}[thm]{Theorem}
\newtheorem{lemma}[thm]{Lemma}
\newtheorem{corollary}[thm]{Corollary}
\newtheorem{proposition}[thm]{Proposition}

\theoremstyle{definition}

\newtheorem{remark}[thm]{Remark}

\newtheorem{definition}[thm]{Definition}

\newtheorem{example}[thm]{Example}

\newtheorem{defn-thm}[thm]{Definition-Theorem}

\newtheorem*{myrem}{Remark}

\numberwithin{equation}{thm}

\catcode`\@=11
\def\opn#1#2{\def#1{\mathop{\kern0pt\fam0#2}\nolimits}}
\def\underrightarrow{\mathpalette\underrightarrow@}
\def\underrightarrow@#1#2{\vtop{\ialign{$##$\cr
 \hfil#1#2\hfil\cr\noalign{\nointerlineskip}%
 #1{-}\mkern-6mu\cleaders\hbox{$#1\mkern-2mu{-}\mkern-2mu$}\hfill
 \mkern-6mu{\to}\cr}}}

\def\underleftarrow{\mathpalette\underleftarrow@}
\def\underleftarrow@#1#2{\vtop{\ialign{$##$\cr
 \hfil#1#2\hfil\cr\noalign{\nointerlineskip}#1{\leftarrow}\mkern-6mu
 \cleaders\hbox{$#1\mkern-2mu{-}\mkern-2mu$}\hfill
 \mkern-6mu{-}\cr}}}
\let\amp@rs@nd@\relax
\newdimen\ex@
\ex@.2326ex
\newdimen\bigaw@
\newdimen\minaw@
\minaw@16.08739\ex@
\newdimen\minCDaw@
\minCDaw@2.5pc
\newif\ifCD@
\def\minCDarrowwidth#1{\minCDaw@#1}

\def\@CD{\def\A##1A##2A{\llap{$\vcenter{\hbox
 {$\scriptstyle##1$}}$}\Big\uparrow\rlap{$\vcenter{\hbox{%
$\scriptstyle##2$}}$}&&}%
\def\V##1V##2V{\llap{$\vcenter{\hbox
 {$\scriptstyle##1$}}$}\Big\downarrow\rlap{$\vcenter{\hbox{%
$\scriptstyle##2$}}$}&&}%
\def\={&\hskip.5em\mathrel
 {\vbox{\hrule width\minCDaw@\vskip3\ex@\hrule width
 \minCDaw@}}\hskip.5em&}%
\def\verteq{\Big\Vert&&}%
\def\noarr{&&}%
\def\vspace##1{\noalign{\vskip##1\relax}}\relax\let\amp@rs@nd@&\iffalse}\fi
 \CD@true\vcenter\bgroup\relax\let\\=\cr\iffalse}\fi\tabskip\z@skip\baselineskip20\ex@
 \lineskip3\ex@\lineskiplimit3\ex@\halign\bgroup
 &\hfill$\m@th##$\hfill\cr}
\def\@endCD{\cr\egroup\egroup}
\def\>#1>#2>{\amp@rs@nd@\setbox\z@\hbox{$\scriptstyle
 \;{#1}\;\;$}\setbox\@ne\hbox{$\scriptstyle\;{#2}\;\;$}\setbox\tw@
 \hbox{$#2$}\ifCD@
 \global\bigaw@\minCDaw@\else\global\bigaw@\minaw@\fi
 \ifdim\wd\z@>\bigaw@\global\bigaw@\wd\z@\fi
 \ifdim\wd\@ne>\bigaw@\global\bigaw@\wd\@ne\fi
 \ifCD@\hskip.5em\fi
 \ifdim\wd\tw@>\z@
 \mathrel{\mathop{\hbox to\bigaw@{\rightarrowfill}}\limits^{#1}_{#2}}\else
 \mathrel{\mathop{\hbox to\bigaw@{\rightarrowfill}}\limits^{#1}}\fi
 \ifCD@\hskip.5em\fi\amp@rs@nd@}
\def\<#1<#2<{\amp@rs@nd@\setbox\z@\hbox{$\scriptstyle
 \;\;{#1}\;$}\setbox\@ne\hbox{$\scriptstyle\;\;{#2}\;$}\setbox\tw@
 \hbox{$#2$}\ifCD@
 \global\bigaw@\minCDaw@\else\global\bigaw@\minaw@\fi
 \ifdim\wd\z@>\bigaw@\global\bigaw@\wd\z@\fi
 \ifdim\wd\@ne>\bigaw@\global\bigaw@\wd\@ne\fi
 \ifCD@\hskip.5em\fi
 \ifdim\wd\tw@>\z@
 \mathrel{\mathop{\hbox to\bigaw@{\leftarrowfill}}\limits^{#1}_{#2}}\else
 \mathrel{\mathop{\hbox to\bigaw@{\leftarrowfill}}\limits^{#1}}\fi
 \ifCD@\hskip.5em\fi\amp@rs@nd@}
\newenvironment{CDS}{\@CDS}{\@endCDS}
\def\@CDS{\def\A##1A##2A{\llap{$\vcenter{\hbox
 {$\scriptstyle##1$}}$}\Big\uparrow\rlap{$\vcenter{\hbox{%
$\scriptstyle##2$}}$}&}%
\def\V##1V##2V{\llap{$\vcenter{\hbox
 {$\scriptstyle##1$}}$}\Big\downarrow\rlap{$\vcenter{\hbox{%
$\scriptstyle##2$}}$}&}%
\def\={&\hskip.5em\mathrel
 {\vbox{\hrule width\minCDaw@\vskip3\ex@\hrule width
 \minCDaw@}}\hskip.5em&}
\def\verteq{\Big\Vert&}
\def\novarr{&}
\def\noharr{&&}
\def\SE##1E##2E{\slantedarrow(0,18)(4,-3){##1}{##2}&}
\def\SW##1W##2W{\slantedarrow(24,18)(-4,-3){##1}{##2}&}
\def\NE##1E##2E{\slantedarrow(0,0)(4,3){##1}{##2}&}
\def\NW##1W##2W{\slantedarrow(24,0)(-4,3){##1}{##2}&}
\def\slantedarrow(##1)(##2)##3##4{%
\thinlines\unitlength1pt\lower 6.5pt\hbox{\begin{picture}(24,18)%
\put(##1){\vector(##2){24}}%
\put(0,8){$\scriptstyle##3$}%
\put(20,8){$\scriptstyle##4$}%
\end{picture}}}
\def\vspace##1{\noalign{\vskip##1\relax}}\relax\let\amp@rs@nd@&\iffalse}\fi
 \CD@true\vcenter\bgroup\relax\let\\=\cr\iffalse}\fi\tabskip\z@skip\baselineskip20\ex@
 \lineskip3\ex@\lineskiplimit3\ex@\halign\bgroup
 &\hfill$\m@th##$\hfill\cr}
\def\@endCDS{\cr\egroup\egroup}
\newdimen\TriCDarrw@
\newif\ifTriV@

\newenvironment{TriCDA}{\@TriCDA}{\@endTriCD}
\def\@TriCDV{\TriV@true\def\TriCDpos@{6}\@TriCD}
\def\@TriCDA{\TriV@false\def\TriCDpos@{10}\@TriCD}
\def\@TriCD#1#2#3#4#5#6{%
\setbox0\hbox{$\ifTriV@#6\else#1\fi$} \TriCDarrw@=\wd0
\advance\TriCDarrw@ 24pt \advance\TriCDarrw@ -1em
\def\SE##1E##2E{\slantedarrow(0,18)(2,-3){##1}{##2}&}
\def\SW##1W##2W{\slantedarrow(12,18)(-2,-3){##1}{##2}&}
\def\NE##1E##2E{\slantedarrow(0,0)(2,3){##1}{##2}&}
\def\NW##1W##2W{\slantedarrow(12,0)(-2,3){##1}{##2}&}
\def\slantedarrow(##1)(##2)##3##4{\thinlines\unitlength1pt
\lower 6.5pt\hbox{\begin{picture}(12,18)%
\put(##1){\vector(##2){12}}%
\put(-4,\TriCDpos@){$\scriptstyle##3$}%
\put(12,\TriCDpos@){$\scriptstyle##4$}%
\end{picture}}}
\def\={\mathrel {\vbox{\hrule
   width\TriCDarrw@\vskip3\ex@\hrule width
   \TriCDarrw@}}}
\def\>##1>>{\setbox\z@\hbox{$\scriptstyle
 \;{##1}\;\;$}\global\bigaw@\TriCDarrw@
 \ifdim\wd\z@>\bigaw@\global\bigaw@\wd\z@\fi
 \hskip.5em
 \mathrel{\mathop{\hbox to \TriCDarrw@
{\rightarrowfill}}\limits^{##1}}
 \hskip.5em}
\def\<##1<<{\setbox\z@\hbox{$\scriptstyle
 \;{##1}\;\;$}\global\bigaw@\TriCDarrw@
 \ifdim\wd\z@>\bigaw@\global\bigaw@\wd\z@\fi
 \mathrel{\mathop{\hbox to\bigaw@{\leftarrowfill}}\limits^{##1}}
 }
 \CD@true\vcenter\bgroup\relax\let\\=\cr\iffalse}\fi
 \tabskip\z@skip\baselineskip20\ex@
 \lineskip3\ex@\lineskiplimit3\ex@
 \ifTriV@
 \halign\bgroup
 &\hfill$\m@th##$\hfill\cr
#1&\multispan3\hfill$#2$\hfill&#3\\
&#4&#5\\
&&#6\cr\egroup%
\else
 \halign\bgroup
 &\hfill$\m@th##$\hfill\cr
&&#1\\%
&#2&#3\\
#4&\multispan3\hfill$#5$\hfill&#6\cr\egroup \fi}
\def\@endTriCD{\egroup}

\newcounter{Myenumi}
\newenvironment{myenumi}%
{\begin{list}{}{\usecounter{Myenumi}%
\settowidth{\leftmargin}{2.n}\settowidth{\labelwidth}{2.n}%
\setlength{\labelsep}{0pt}}}{\end{list}}
\newcounter{Myenumii}
\newenvironment{myenumii}%
{\begin{list}{}{\usecounter{Myenumii}%
\settowidth{\leftmargin}{a)n}\settowidth{\labelwidth}{a)n}%
\setlength{\labelsep}{0pt}}}{\end{list}}
\newcounter{Myenumiii}
\newenvironment{myenumiii}%
{\begin{list}{}{\usecounter{Myenumiii}%
\settowidth{\leftmargin}{iv.n}\settowidth{\labelwidth}{iv.n}%
\setlength{\labelsep}{0pt}}}{\end{list}}

{\end{list}}

\renewenvironment{itemize}%
{\begin{list}{}{%
\settowidth{\leftmargin}{2.n}\settowidth{\labelwidth}{2.n}%
\setlength{\labelsep}{0pt}}}{\end{list}}

\newsymbol\onto 1310

\newcommand{\sA}{{\mathcal A}}

\newcommand{\sN}{{\mathcal N}}
\newcommand{\sO}{{\mathcal O}}
\newcommand{\sP}{{\mathcal P}}

\newcommand{\sS}{{\mathcal S}}

\newcommand{\sV}{{\mathcal V}}
\newcommand{\sW}{{\mathcal W}}

\newcommand{\sY}{{\mathcal Y}}


\newcommand{\A}{{\mathbb A}}

\newcommand{\C}{{\mathbb C}}

\newcommand{\G}{{\mathbb G}}

\newcommand{\Q}{{\mathbb Q}}
\newcommand{\R}{{\mathbb R}}

\newcommand{\V}{{\mathbb V}}

\newcommand{\Z}{{\mathbb Z}}

\newcommand{\Ad}{{\rm Ad}}
\newcommand{\rank}{{\rm rank}}

\newcommand{\End}{{\rm End}}

\newcommand{\GL}{{\rm GL}}

\newcommand{\Sp}{{\rm Sp}}

\newcommand{\Hom}{{\rm Hom}}
\newcommand{\Ker}{{\rm Ker}}

\newcommand{\Aut}{{\rm Aut}}

\newcommand{\im}{{\rm Im}}

\newcommand{\Gr}{{\rm Gr}}

\newcommand{\Id}{{\rm Id}}

\newcommand{\Spec}{{\rm Spec}}

\setcounter{section}{-1}


\begin{document}

\title[{\tiny The Geometry on Smooth Toroidal Compactifications of Siegel varieties}]{The Geometry on Smooth Toroidal Compactifications of Siegel varieties}
\author{Shing-Tung Yau}
\address{\rm Department of Mathematics\\
Harvard University\\ Cambridge, MA 02138, USA}
\email{yau@math.harvard.edu}

\author{Yi Zhang}
\address{\rm School of Mathematical Sciences \\
Fudan University \\
Shanghai 200433, China}
\email{zhangyi\_math@fudan.edu.cn} 
\maketitle

{\small
\begin{abstract}
We study smooth toroidal
compactifications of Siegel varieties
thoroughly from the viewpoints of mixed Hodge theory and K\"ahler-Einstein metric.
We observe that any cusp of a Siegel space can be identified as a set of certain weight one polarized mixed Hodge structures.
We then study the infinity boundary divisors of toroidal
compactifications, and obtain a global volume form formula of an arbitrary smooth Siegel variety $\sA_{g,\Gamma}(g>1)$ with a smooth toroidal
compactification $\overline{\sA}_{g,\Gamma}$ such that $D_\infty:=\overline{\sA}_{g,\Gamma}\setminus \sA_{g,\Gamma}$ is normal crossing.  We use this volume form formula to show that the unique group-invariant K\"ahler-Einstein metric on $\sA_{g,\Gamma}$ endows some
restraint combinatorial conditions for all smooth toroidal
compactifications of $\sA_{g,\Gamma}.$  Again using the volume form formula, we study the asymptotic behaviour of logarithmical canonical line bundle on any smooth toroidal compactification of $\sA_{g,\Gamma}$ carefully and we obtain
that the logarithmical canonical bundle degenerate sharply even though
it is big and numerically effective.
\end{abstract}
}

\small\tableofcontents
\section{Introduction}

\vspace{0.5cm}

Throughout this paper, the number $g$ is an integer more than two.

Siegel varieties are locally symmetric varieties.
They are important and interesting in algebraic geometry and number theory
because they arise as moduli spaces for Abelian varieties with a polarization and a level structure.

The purpose of this paper is to study smooth toroidal compactifications of Siegel varieties and 	
their applications, we also try to understand the K\"ahler-Einstein metrics on Siegel varieties through the compactifications.
We discuss the geometric aspects of the theory after the works of Ash-Mumford-Rapoport-Tai and Faltings-Chai. Later advances in algebraic geometry 	
have given us many very effective tools for studying these varieties and their toroidal compactifications.

There is a general theory of compactifications of all locally symmetric varieties $D/\Gamma$($D$ a bounded symmetric domain, $\Gamma\subset\Aut(D)$ an arithmetic subgroup). Every variety $D/\Gamma$
has its Stake-Baily-Borel compactification, which is a canonical minimal compactification. But this
compactification has rather bad singularities. In another direction, Ash, Mumford, Rapoport and Tai, in their collaborated book \cite{AMRT}, use the theory of toroidal  embedding to construct a whole class of compactifications with mild singularities, including, when $\Gamma$ is neat, smooth  compactifications. Faltings and Chai use purely algebraic method to construct arithmetic  toroidal compactifications of Siegel varieties.

A  toroidal compactification $\overline{\sA}_{g,\Gamma}$ of a Siegel variety $\sA_{g,\Gamma}:=\mathfrak{H}_g/\Gamma$(here $\mathfrak{H}_g$ is the Siegel space of genus $g$ and $\Gamma\subset\Aut(\mathfrak{H}_g)$ is an arithmetic  subgroup) is totally determined by a combinatorial condition : an admissible family of polyhedral decompositions of certain positive cones. As well known, the natural Bergman metric on $\mathfrak{H}_g$ is K\"ahler-Einstein. The first author believes that the intrinsic K\"ahler-Einstein metric on a quasi-projective manifold $M$ should be helpful for finding a nice compactification $\overline{M}$ of $M,$ and he has thought this problem for a long time. In this paper, we can assert that the K\"ahler-Einstein metric on $\sA_{g,\Gamma}$ endows some restraint combinatorial conditions for all toroidal smooth compactifications of $\sA_{g,\Gamma}$(Theorem \ref{global-volume-form-on Siegel varieties}, Theorem \ref{combinatorial-condition-1} in Section 3). Let us explain this result : Let  $\sigma_{\max}$ be an admissible top-dimensional polyhedral cone with $N$($=\dim_\C\sA_{g,\Gamma})$ edges $\rho_1,\cdots \rho_N.$ Each edge $\rho_i$ of $\sigma_{\max}$ corresponds to an irreducible  components $D_i$ of the boundary divisor $D_\infty:= \overline{\sA}_{g,\Gamma}\setminus\sA_{g,\Gamma}.$
Assume that $D_\infty$ is normal crossing. For every $i=1,\cdots,N,$ let $s_i$ be the global section of the line bundle $[D_i]$ defining $D_i.$ Then, the $(s_1,\cdots,s_N)$  give us a global coordinate system on $\sA_{g,\Gamma}$ and we can choose a suitable Hermitian metric $||\cdot||_i$ on each $[D_i]$ such that the volume form on $\sA_{g,\Gamma}$ is represented by
\begin{equation}\label{global-volume-form-introduction}
\Phi_{g,\Gamma}=\frac{2^{\frac{g(g-1)}{2}}\mathrm{vol}_{\Gamma}(\sigma_{\max})^2d\sV_g}{(\prod_{j=1}^{N} ||s_i||_i^2)F^{g+1}_{\sigma_{\max}}(\log||s_1||_1,\cdots, \log||s_{N}||_{N})},
\end{equation}
where $d\sV_g$ is a continuous volume form on a partial compactification $\mathcal{U}_{\sigma_{\max}}$ of $\sA_{g,n}$
with $\sA_{g,\Gamma}\subset \mathcal{U}_{\sigma_{\max}}\subset \overline{\sA}_{g,\Gamma},$
$F_{\sigma_{\max}}$ is a homogenous rational polynomial of degree $g$ and
$\mathrm{vol}_{\Gamma}(\sigma_{\max})$ is the lattice volume of $\sigma_{\max},$ moreover the coefficients of $F_{\sigma_{\max}}$ are totally determined by $\sigma_{\max}$ with marking order of edges and $\Gamma$.
An interesting observation is that the unique K\"ahler-Einstein metric on $\sA_{g,\Gamma}$ guarantees a real Monge-Amp\'ere equation of elliptic type
\begin{equation}\label{Monge-Ampere-eq-0}
   \det(\frac{\partial^2 H}{\partial x_i\partial x_j})_{i,j}=2^{\frac{g(g-1)}{2}}\mathrm{vol}_{\Gamma}(\sigma_{\max})^2 \exp ((g+1)H)
\end{equation}
for  $H:=-\log F_{\sigma_{\max}}$ on the domain
$\{(x_1,\cdots, x_{\frac{g(g+1)}{2}})\in \R^{\frac{g(g+1)}{2}}\,\,|\,\,x_i \leq -C<0 \forall i\}$
(Theorem \ref{global-volume-form-on Siegel varieties}). This Monge-Amp\'ere equation \ref{Monge-Ampere-eq-0} defines a system of rational polynomials, and the system of all coefficients of $F_{\sigma_{\max}}$ gives a nature solution to that system of rational polynomials. Moreover, this system defines
an affine variety $\mathfrak{Q}_g$ over $\Q,$ which is dependent only on $\mathfrak{H}_g.$ The important thing is that the set of all admissible top-dimensional polyhedral cones has an injection into the set $\mathfrak{Q}_g(\Z)$ of all integral point of $\mathfrak{Q}_g$(Theorem \ref{combinatorial-condition-1}). Furthermore, we give a remark in \ref{Remark-mumford-volume} that the real elliptic Monge-Amp\'ere equation \ref{Monge-Ampere-eq-0} and Theorem \ref{combinatorial-condition-1} are always true for all smooth toroidal compactifications whether $D_\infty$ is normal crossing or not.

As an important application of the formula \ref{global-volume-form-introduction} in Algebraic geometry,
we study the asymptotic behaviour of logarithmical canonical line bundles on smooth toroidal compactifications of $\sA_{g,\Gamma}$(Theorem \ref{non-ample-logarithmical-cotangent-bundle}, Theorem \ref{recurrence-intersection-formula} and Theorem \ref{multiple-intersection-formula} in Section 4). We find all logarithmical cotangent bundles degenerate sharply even though $K_{\overline{\sA}_{g,\Gamma}}+D_\infty$'s are big and numerically effective(cf.\cite{Mum77}).
For convenience, we fix a compactification $\overline{\sA}_{g,\Gamma}$ and write $D_\infty=\bigcup\limits_{j} D_j,$ $ D_{i,\infty}:=\bigcup\limits_{j\neq i} D_j\cap D_i$ and $
D_i^*:=D_i\setminus D_{i,\infty}.$ Mumford also shows  that the form $\frac{\sqrt{-1}}{2\pi}\partial\overline{\partial}\log\Phi_{g,\Gamma}$ on $\sA_{g,\Gamma}$ is a current on $\overline{\sA}_{g,\Gamma}$ representing $c_1([K_{\overline{\sA}_{g,\Gamma}}+D_\infty])$ in cohomology class. Using the formula \ref{global-volume-form-introduction}, we get that the restriction of $\partial\overline{\partial}\log\Phi_{g,\Gamma}$ to each $D_i^*$ in sense of limit(denote by $\mathrm{Res}_{D_i}(\partial\overline{\partial}\log\Phi_{g,\Gamma})$) is a closed smooth form on $D_i^*.$ Moreover, the key point is that the form $\mathrm{Res}_{D_i}(\partial\overline{\partial}\log\Phi_{g,\Gamma})$ on $D_i^*$ has Poincar\'e growth on $D_{i,\infty}$ by Mumford's goodness property.
Therefore, we can regard $\frac{\sqrt{-1}}{2\pi}\mathrm{Res}_{D_i}(\partial\overline{\partial}\log\Phi_{g,\Gamma})$ as a positive closed current on $D_i.$
Let $||\cdot||_i$ be an arbitrary Hermitian metric on the line bundle $[D_i]$ for each $D_i,$
we get
\noindent\begin{eqnarray*}
   &&(K_{\overline{\sA}_{g,\Gamma}}+D_\infty)^{\dim_\C\sA_{g,\Gamma}-d}\cdot D_{i_1}\cdots D_{i_d} \\
   &=&(\frac{\sqrt{-1}}{2\pi})^{\dim_\C\sA_{g,\Gamma}-d}\int_{D_{i_l}}\mathrm{Res}_{D_{i_l}}((\partial\overline{\partial}\log\Phi_{g,\Gamma})^{\dim_\C\sA_{g,\Gamma}-d}) \wedge(\bigwedge_{1\leq j\leq d,j\neq l}c_1([D_{i_j}],||\cdot||_{i_j}))
\end{eqnarray*}
for any $d$($1\leq d\leq N-1$) irreducible components $D_{i_1},\cdots, D_{i_d}$ of the boundary divisor $D_\infty$ and any integer $l\in [1,d]$(Theorem \ref{recurrence-intersection-formula}). Furthermore, we observe that irreducible components of $D_\infty$ are all from lower genus Siegel varieties and the type of $\mathrm{Res}_{D_i}(\partial\overline{\partial}\log\Phi_{g,\Gamma})$ is similar with the type of $\partial\overline{\partial}\log\Phi_{g,\Gamma}.$
Due to this structure of self-similarity,
we use the method of recursion to deduce an integral formula : For any $d$($1\leq d\leq N-1$) different irreducible components $D_1,\cdots, D_d$ of $D_\infty$ satisfying  that $\bigcap\limits_{l=1}^d D_l\neq \emptyset,$ there is
\begin{eqnarray*}
   &&(K_{\overline{\sA}_{g,\Gamma}}+D_\infty)^{\dim_\C\sA_{g,\Gamma}-d}\cdot D_1\cdots D_{d}\\
   &=& \int_{\bigcap\limits_{k=1}^d D_k}
   \mathrm{Res}_{\bigcap\limits_{i=1}^d D_i}(\mathrm{Res}_{\bigcap\limits_{i=1}^{d-1} D_i}\cdots(\mathrm{Res}_{D_1}((\frac{\sqrt{-1}}{2\pi}\partial\overline{\partial}\log\Phi_{g,\Gamma})^{\dim_\C\sA_{g,\Gamma}-d})\cdots )).
\end{eqnarray*}
A direct consequence is that if $d\geq g-1$ then
the intersection number $$(K_{\overline{\sA}_{g,\Gamma}}+D_\infty)^{\dim_\C\sA_{g,\Gamma}-d}\cdot D_1\cdots D_{d}=0$$ for any $d$ different irreducible components $D_1,\cdots,D_d$ of $D_\infty.$ Therefore, the divisor $K_{\overline{\sA}_{g,\Gamma}}+D_\infty$ on $\overline{\sA}_{g,\Gamma}$ is never ample(Theorem \ref{multiple-intersection-formula}).

In general, the boundary divisors of  smooth  toroidal compactifications may have self-intersections(cf.\cite{AMRT} and \cite{FC}).
However, in most geometric applications, we would like to have a nice toroidal compactification $\overline{\sA}_{g,\Gamma}$ of $\sA_{g,\Gamma}$ such that the added infinity boundary $D_\infty =\overline{\sA}_{g,\Gamma}\setminus\sA_{g,\Gamma}$ is a normal crossing divisor, for example, in Mumford's work of Hirzebruch's
proportionality theorem in the non-compact case(cf.\cite{Mum77}).  In Section 2, we study the boundaries of  smooth  toroidal compactifications explicitly and  we actually obtain a sufficient and necessary combinatorial condition
for toroidal compactifications with normal crossing boundary divisor(Theorem \ref{Infity-divisor-on-toroidal-compactification} and
Theorem \ref{geometrically-fine-1-1-to non-self-intersections} in Section 2).

On the other hand, the degenerate limits of Abelian varieties have been studied by Mumford,Oda-Seshadri,Nakamura and Namikawa. Deligne's Theorem(cf.\cite{Del71}) shows that the $n$th cohomology group of an arbitrary complex variety $X$ carries a canonical mixed Hodge structure, and that the structure is reduced to an ordinary Hodge structure of pure weight $n$ if $X$ is a complete nonsingular variety.
Thus, toroidal compactifications of Siegel varieties can be related back to degenerations of Abelian varieties or to degenerations of weight one Hodge structures. Roughly, there is a correspondence
between the category of degenerations of Abelian varieties and the category of limits of weight one Hodge structures.
The Hodge-theoretic interpretation of the boundary $\overline{\sA}_{g,\Gamma}\setminus\sA_{g,\Gamma}$
of toroidal compactification is given by Carlson, Cattani and Kaplan in \cite{CCK79}.
Thus, we believe that any rational boundary component(cusp) of a Siegel variety must parameterize some class of mixed Hodge structures.
That is the motivation for our studying cusps of Siegel varieties. Recently, Kato and Usui generalize the work of Carlson-Cattani-Kaplan, and use the idea of logarithmic geometry to give toroidal compactifications of period domains from view of mixed Hodge theory(cf.\cite{KU}).
We explore this topic, and obtain that the Hodge-theoretic interpretation of the boundary of Siegel varieties  coincides with the classic description given by Satake-Baily-Borel in \cite{Sat} and \cite{BB66}. Actually, any cusp of Siegel space $\mathfrak{H}_g$ can be identified with a set of certain weight one polarized mixed Hodge structures(Theorem \ref{PMHS-rational boundary component} in Section 1).

The results of this paper,  the methods and the techniques in this paper, are essential to all locally symmetric varieties. Thus, the results of this paper can be generalized to general locally symmetric varieties by our methods and techniques in this paper.\\

\noindent{\bf Acknowledgements.} We thank Professor Kang Zuo and Professor Ching-Li Chai for useful suggestions, and the second author particularly thanks Doctor Xuan-Ming Ye.
We are grateful to Taida Institute for Mathematical Sciences and Mathematics Department National Taiwan University,
the final version of the paper was finished  during the period of our visiting TIMS.
The second author is also grateful to Mathematics Department Harvard University for hospitality during 2009-2010.

The second author is
supported partially by the NSFC Grant(\#11271070) and LNMS of Fudan University,
he was also supported in part by the NSFC Grant(\#10731030) of Key Project(Algebraic Geometry) during the period 2008-2011.

\vspace{1cm}

\noindent{\bf Notation.}\\

For any real Lie group $\sP,$ $\sP^{+}$ is the identity component of $\sP$ for the real topology.
For any linear space $L_k$ over a field $k,$ a finite field extension $k\subset K$ allows we to
define a $K$-linear space $L_K:= L_k \otimes K.$

Throughout this paper, we fix a real vector space $V_\R$  of dimensional $2g$ and fix a standard symplectic form $\psi=\left(
  \begin{array}{cc}
    0 & -I_g \\
    I_g & 0
  \end{array}
\right)$ on $V_\R.$
    For any non-degenerate skew-symmetric bilinear form $\widetilde{\psi}$
on $V_\R,$ it is known that there is an element $T\in \GL(V_\R)$ such that
$^tT\widetilde{\psi}T=\psi.$ We now fix a  symplectic basis
$\{e_{i}\}_{1\leq i\leq 2g}$ of the symplectic space
$(V_\R,\psi)$ such that
$ \psi(e_i,e_{g+i})=-1 $ for $1\leq i\leq g$ and $\psi(e_i,e_{j}) =0$ for $|j-i|\neq g.$
\begin{itemize}
    \item Denote by $V_\Z:= \oplus_{1\leq i\leq 2g}\Z e_i,$ then
$V_\R=V_\Z\otimes_\Z \R$ and  $V_\Z$ is a standard lattice in
$V_\R.$ In this paper, we fix the lattice $V_\Z$ and fix the
rational space $V_\Q:=V_\Z\otimes_\Z\Q.$

   \item Let $V^{(k)}_\Q$ be
the rational subspace of $V_\Q$ spanned by $\Q$-vectors
$\{e_{k+1},\cdots, e_{g}\}$ for $0\leq k\leq g-1,$ and
$V^{(g)}_\Q:=\{0\}.$ Let $V^{(k)}:=V^{(k)}_\Q\otimes\R$ for $0\leq k\leq g.$

   \item Define
$\Sp(g, \frak R):=\{h\in \GL(V_{\frak R})\,|\,
\psi(hu,hv)=\psi(u,v)\, \forall u,v \in V_{\frak R}\}
$ where $V_{\mathfrak{R}}:=V_\Z\otimes_\Z \mathfrak{R}$ for any $\Z$-algebra $\mathfrak{R}.$
Let
$\Gamma_g(n):=\{\gamma\in
\Sp(g,\Z)\,\,|\,\, \gamma\equiv I_{2g} \mod n\}$ for any integer $ n\geq
2$ and $\Gamma_g=\Gamma_g(1):=\Sp(g,\Z).$
Thus each congruent group $\Gamma_g(n)$ is a  normal subgroup of
$\Sp(g,\Z)$ with finite index.
\end{itemize}

For any free $\Z$-module $W_\Z,$ we use $W$ to represent a linear space over $\Z$( i.e, $W(\mathfrak{R}):=W_\Z\otimes_{\Z} \mathfrak{R}$
   for any $\Z$-algebra $\mathfrak{R}$), and we define $\GL(W)$ to be the algebraic group over $\Q$ representing the functor
($ \Q-\mbox{algebras} \>>> \mbox{Groups}, \,\,\,\,\mathfrak{R}\longmapsto \GL(W_\mathfrak{R})$)
(cf.\cite{Mil}). We always write $\GL(n)$ for $\GL(W)$ if $\rank W_\Z=n.$
For the fixed free $\Z$-module $V_\Z,$ we also define $\Sp(V,\psi)$ to be the
algebraic group over $\Q$ representing the functor
$$ \Q\mbox{-algebras} \>>> \mbox{Groups}, \,\,\,\,\mathfrak{R}\longmapsto \Sp(V,\psi)(\mathfrak{R}):=\Sp(g, \frak R).$$
We know that $\Sp(V,\psi)$ is an algebraic subgroup of $\GL(V).$
\begin{itemize}
     \item Two subgroups $S_1$ and $S_2$ of $\Sp(g,\Q)$ are commensurable if $S_1\cap S_2$ has finite index in both $S_1$ and $S_2.$
   A subgroup $\Gamma\subset \Sp(g,\Q)$ is arithmetic if
$\rho(\Gamma)$ is commensurable with $\rho(\Sp(g,\Q))\cap \GL(n,\Z)$ for some embedding
$\rho: \Sp(V,\psi)\> \hookrightarrow>> \GL(n).$
By a result of Borel, a subgroup $\Gamma\subset \Sp(g,\Q)$ is arithmetic if and only if
that $\rho^{'}(\Gamma)$ is commensurable with $\rho^{'}(\Sp(g,\Q))\cap \GL(n^{'},\Z)$ for every embedding
$\rho^{'}: \Sp(V,\psi)\> \hookrightarrow>> \GL(n^{'})$(cf.Chap. VI. \cite{Mil}).
Thus a subgroup $\Gamma\subset \Sp(g,\Z)$ is arithmetic if and only if
$[\Sp(g,\Z):\Gamma]<\infty.$

     \item Let $k^{'}$ be a subfield of $\C$ and $\widetilde{V}_{k^{'}}$ a $k^{'}$-vector space. Let $\GL(\widetilde{V})$ be an algebraic group
      defined over $k^{'}$ as above.
     An automorphism $\alpha$ of a $k^{'}$-vector space  is defined to be neat
(or torsion free) if its eigenvalues in $\C$ generate a torsion free subgroup of $\C.$ An element $h\in \Sp(g,\Q)$  is said to be neat(or torsion free) if $\rho(h)$ is neat for one faithful representation $\rho: \Sp(V,\psi)\> \hookrightarrow >>\GL(\widetilde{V}).$  A subgroup $\Gamma\subset\Sp(g,\R)$ is said to be neat if all elements of $\Gamma$ are torsion free.
We have that if $h\in \Sp(g,\Q)$ is neat then $\rho^{'}(h)$ is neat for
every representation $\rho^{'}$ of $\Sp(V,\psi)$ defined over $k^{'}$(cf.\cite{Mil}).
For example,  the $\Gamma_g(n)$  is a neat arithmetic subgroup of $\Sp(g,\Q)$ if $n\geq 3.$
    \item For any arithmetic subgroup $\Gamma\subset\Sp(g,\Q)$($g\geq 2$), we  can find a neat subgroup $\Gamma^{'}\subset \Gamma$ of finite index. In fact, the neat subgroup $\Gamma^{'}$ can be given by congruence conditions(cf.\cite{Mil}).
\end{itemize}

The Siegel space $\frak H_g$ of degree $g$
is a set of all symmetric matrices over $\C$ of degree $g$ whose imaginary parts are positive defined.
The simple Lie group $\Sp(g,\R)$ acts transitively on $\mathfrak{H}_g$ as
$\left(
         \begin{array}{cc}
           A & B \\
           C & D \\
         \end{array}
       \right)\bullet\tau:=\frac{A\tau+B}{C\tau+D}.$  Let $o:=\sqrt{-1}I_g$ be a fixed  point on $\mathfrak{H}_g.$
\begin{itemize}
  \item The stabilizer of $o$
is isomorphic to the unitary group $\mathrm{U}(g).$
We identify $o$ with $\pi(e)$ where
$\pi: \Sp(g,\R)\to \mathfrak{H}_g$ is the standard projection and $e\in \Sp(g,\R)$ is the identity.
The element $s_o:=\left(
  \begin{array}{cc}
    0 & I_g \\
    -I_g & 0
  \end{array}
\right)$ in $\Sp(g,\R)$ acts as an involution of $\mathfrak{H}_g$ leaving $o$ as the only isolated fixed point.
Therefore the Siegel space $\mathfrak{H}_g$ is an non-compact Hermitian symmetric space.

    \item From now on, let $G_\R=\Sp(g,\R)$ be the real
Lie group with Lie algebra
$\mathfrak{g}=\mathfrak{g}_\R:=\mathrm{Lie}(G_\R),$ and regard $K_\R:=\mathrm{U}(g)$ as a real
Lie group with Lie algebra
$\mathfrak{f}=\mathfrak{f}_\R=\mathrm{Lie}(K_\R).$
With respect to the standard symplectic basis,
we have :
\begin{eqnarray*}
  \mathfrak{g}_{\R}
   &=& \{ \left(
                      \begin{array}{cc}
                        A & B \\
                        C & D \\
                      \end{array}
                    \right)\in M(2g, \R)
  \,\,|\,\, D=-^tA,\, ^tB=B,\,\, ^tC=C \},\\
\mathfrak{f}_{\R}&=& \{ \left(
                      \begin{array}{cc}
                        A & B \\
                       -B & A \\
                      \end{array}
                    \right)\in M(2g, \R)
  \,\,|\,\, ^tA=-A,\,\, ^tB=B
  \}\cong u(g)
\end{eqnarray*}
\end{itemize}

   A \textbf{Siegel variety} is defined to be
$\sA_{g,\Gamma}:=\Gamma\backslash\mathfrak{H}_g,$
where $\Gamma$ is an arithmetic subgroup of $\Sp(g,\Q).$ Any Siegel variety is a normal quasi-project variety.
\begin{itemize}
  \item Any  neat arithmetic subgroup $\Gamma$ of $\Sp(g,\Q)$ acts freely on the Siegel Space $\mathfrak{H}_g,$ so that the induced
$\sA_{g,\Gamma}$ is a regular quasi-projective complex variety of
dimension $g(g+1)/2.$
 A \textbf{Siegel variety of degree $g$ with level $n$} is defined to be
$\sA_{g,n}:=\Gamma_g(n)\backslash\mathfrak{H}_g.$
Thus, the Siegel varieties $\sA_{g,n}\,\,$$n\geq 3$  are
quasi-projective complex manifolds.\\
\end{itemize}



\section{Cusps of Siegel varieties from the viewpoint of mixed Hodge theory}

\vspace{0.5cm}

\subsection{Typical homogenous rational polarized VHS on Siegel space}
Define $$\mathrm{U}^1:=\mathrm{U}(1)=\{ |z|=1 \,|\,z\in \C\}.$$
Let $\tau\in \mathfrak{H}_g$ be an arbitrary point.  Let
$T_\tau(\mathfrak{H}_g)$ be the real tangent space at $\tau$ and
$J_\tau$ the complex structure on $T_\tau(\mathfrak{H}_g)$
induced by the global complex structure $J$ of
$\mathfrak{H}_g.$

The Siegel space $\mathfrak{H}_g$ has a natural $\Sp(g,\R)$-invariant K\"ahler metric(Bergman metric). Regard $\mathfrak{H}_g$ as a Riemannian
symmetric space, we have a basic fact:
\begin{lemma}[Cf.\cite{Hel}\&\cite{Del73}]\label{basic lemma}
For any $z=a+\sqrt{-1}b\in \mathrm{U}^1,$ there is a unique
isometric automorphism $u_\tau(z):\mathfrak{H}_g \to \mathfrak{H}_g$ such
that
\begin{itemize}
    \item $u_\tau(z)$ maps $\tau$ to $\tau,$
    \item $du_\tau(z): T_\tau(\mathfrak{H}_g)\to T_\tau(\mathfrak{H}_g)$
    is given by $v \mapsto  z\cdot v:= av+bJ_\tau(v).$
\end{itemize}
\end{lemma}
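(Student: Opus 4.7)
The plan is to exploit the fact that $\mathfrak{H}_g$ is a Hermitian symmetric space of noncompact type: the isometry group $G_\R = \mathrm{Sp}(g,\R)$ acts transitively, the stabilizer of $o$ is the maximal compact subgroup $K_\R = \mathrm{U}(g)$, and the complex structure $J$ on $\mathfrak{H}_g$ arises from a distinguished circle in the center of $K_\R$. I will first handle $\tau = o$ and then use homogeneity to transport the construction to an arbitrary $\tau$.

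For uniqueness, the argument is the standard rigidity principle for Riemannian isometries: if $u$ and $u'$ are two isometric automorphisms of the connected Riemannian manifold $(\mathfrak{H}_g, B)$ fixing $\tau$ with the same differential at $\tau$, then $u^{-1} \circ u'$ is an isometry fixing $\tau$ with identity differential, hence equals $\mathrm{id}_{\mathfrak{H}_g}$ (as it preserves geodesics through $\tau$ and $\mathfrak{H}_g$ is geodesically complete). So any candidate $u_\tau(z)$, if it exists, is unique.

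For existence at $\tau = o$, I would note that the center $Z(K_\R)$ of $\mathrm{U}(g)$ is the circle $\{e^{i\theta} I_g : \theta \in \R\}$, and I identify it with $\mathrm{U}^1$ by $e^{i\theta} \leftrightarrow z = \cos\theta + \sqrt{-1}\sin\theta$. Under the Cartan decomposition $\mathfrak{g}_\R = \mathfrak{f}_\R \oplus \mathfrak{p}$, the tangent space $T_o(\mathfrak{H}_g)$ is identified with $\mathfrak{p}$, and I would compute in the given matrix model: a direct calculation with the explicit description of $\mathfrak{g}_\R$ and $\mathfrak{f}_\R$ in the Notation section shows that $\mathrm{Ad}(e^{i\theta} I_g)$ acts on $\mathfrak{p}$ as $\cos(2\theta) \cdot \mathrm{id} + \sin(2\theta) \cdot J_o$ (after choosing the correct normalization of the isomorphism $\mathrm{U}^1 \to Z(K_\R)$, which may require using $e^{i\theta/2} I_g$), which is exactly multiplication by $z$ in the complex structure $J_o$. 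Define $u_o(z) \in G_\R$ to be this element; it is automatically an isometry since it lies in $K_\R \subset G_\R$.

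For general $\tau$, pick any $g \in G_\R$ with $g \cdot o = \tau$ and set $u_\tau(z) := g \cdot u_o(z) \cdot g^{-1}$; this is an isometry fixing $\tau$, and the chain rule together with $dg : T_o \to T_\tau$ being complex-linear (since $G_\R$ acts holomorphically, so $dg$ intertwines $J_o$ and $J_\tau$) guarantees that $du_\tau(z)$ is the prescribed map $v \mapsto av + b J_\tau(v)$. Independence from the choice of $g$ follows from the uniqueness proved above. The main obstacle, really the only nontrivial point, is the explicit matrix verification that the adjoint action of $Z(K_\R)$ on $\mathfrak{p}$ realizes multiplication by $z$ with respect to $J_o$ in the correct normalization; once that is confirmed, the rest of the argument is formal.
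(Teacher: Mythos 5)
Your proof is correct. The paper itself gives no argument for this lemma --- it is quoted as a standard fact with a pointer to Helgason and Deligne --- and your proposal is essentially the standard proof contained in those sources: uniqueness by rigidity of isometries (an isometry of the complete connected manifold $\mathfrak{H}_g$ fixing a point with identity differential is the identity), existence at the base point $o$ from the circle at the center of $\mathrm{U}(g)$, and transport to general $\tau$ by homogeneity together with the fact that $\Sp(g,\R)$ acts holomorphically, so $dg$ intertwines $J_o$ and $J_\tau$. The one computation you defer is genuine but comes out exactly as you predict: if $k_\theta$ denotes the image of $e^{\sqrt{-1}\theta}I_g$ in $\Sp(g,\R)$, then $\Ad(k_\theta)$ acts on $\mathfrak{p}\cong\{(A,B)\,:\,A,B\in\mathrm{Sym}_g(\R)\}$ by $(A,B)\mapsto(\cos 2\theta\,A+\sin 2\theta\,B,\ -\sin 2\theta\,A+\cos 2\theta\,B)$, i.e.\ as $\cos 2\theta\cdot\mathrm{id}+\sin 2\theta\cdot J_o$ up to the sign convention for $J_o$, so one must indeed use $e^{\sqrt{-1}\theta/2}I_g$; the ambiguity in the choice of square root is harmless because the two lifts differ by $-I_{2g}$, which acts trivially on $\mathfrak{H}_g$, and in any case the uniqueness you prove makes $u_\tau(z)$ independent of all choices. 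This is consistent with how the paper then regards $u_\tau$ as valued in $\Aut(\mathfrak{H}_g)=\Sp(g,\R)/\{\pm I_{2g}\}$ and only lifts $u_\tau^2$ to a homomorphism into $\Sp(g,\R)$.
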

The uniqueness of $u_\tau(z)$ ensures that $u_\tau(1)=\Id$ and
$$u_\tau(z_1z_2)=u_\tau(z_1)\circ u_\tau(z_2)=u_\tau(z_2)\circ u_\tau(z_1),\,\, \forall z_1, z_2\in\mathrm{U}^1.$$
Therefore, we obtain a group  homomorphism
\begin{equation}\label{basic lemma-appendix}
    u_\tau:\mathrm{U}^1 \>>>
\Aut(\mathfrak{H}_g)=\Sp(g,\R)/\{\pm I_{2g}\},\,\, z\mapsto u_\tau(z).
\end{equation}
Furthermore,  we can lift $u_\tau^2$ to be a group homomorphism
$ h_\tau: \mathrm{U}^1\>>> \Sp(g,\R).$
Since $\Sp(g,\R)$ is a simply-connected topological space. The
uniqueness of $u_\tau(z)$ guarantees that
\begin{equation}\label{Hodge-structure-1}
h_{M(\tau)}= M h_\tau M^{-1},  \forall \tau\in
\mathfrak{H}_g, \forall M\in \Sp(g,\R).
\end{equation}
The $u^2_o(\sqrt{-1})$ is the involution of $\mathfrak{H}_g$
fixing the point $o=\sqrt{-1}I_g,$ so $h_o(\sqrt{-1})$ is one of
 $\{\pm\left(
  \begin{array}{cc}
    0 & I_g \\
    -I_g & 0
  \end{array}
\right)\}.$  Since that $h_o: \mathrm{U}^1\to \Sp(g,\R)$ is a
group homomorphism and that $h_o(\mathrm{U}^1)$ is a commutative
group in $U(g),$ we must have $h_o(\sqrt{-1})=\left(
  \begin{array}{cc}
    0 & I_g \\
    -I_g & 0
  \end{array}
\right).$

Let $\G_m:=\GL(1)$ be an algebraic torus. A priori
$\G_m=\G_{m/\Q}$ is defined over $\Q,$ and thus
$\G_m(k)=k^{\times}$ for any field $k$ containing $\Q.$ Following
1.4.4 in \cite{Del73}, we define
 $\mathrm{GSp}(V,\psi)$ to be the quotient
 $\Sp(V,\psi)\times \G_m$ by the central subgroup $\{e,(\epsilon, -1) \},$
it is an algebraic group over $\Q.$
Let $\iota$ be the composed homomorphism
 $$\iota: \Sp(V,\psi)\>\hookrightarrow>>\Sp(V,\psi)\times \G_m\>>>
 \mathrm{GSp}(V,\psi),$$
and $t:\mathrm{GSp}(V,\psi)\to\G_m$ the homomorphism given by
 $$\mathrm{GSp}(V,\psi)(\R)\to \G_m(\R),\,\,\,   [(g,\lambda)]\mapsto \lambda^2.$$
We then have a split exact sequence
 \begin{equation}\label{Sp-to-GSp}
1\>>> \Sp(V,\psi)\>\iota>>\mathrm{GSp}(V,\psi)\>t>>\G_m\>>>1.
 \end{equation}

According to Deligne's Hodge theory(cf.\cite{Del71},\cite{Del73},\cite{Del79}), each  $h_\tau$
actually corresponds to a rational Hodge structure on $V_\R$ of pure weight one given by a Hodge
filtration
$$F_{\tau}^\bullet=\big( F_{\tau}^2\subset F_{\tau}^1\subset
F_{\tau}^0 \big)= \big(0\subset F_{\tau}^1\subset
V_\C \big) $$ on $V_\C$ such that
$$F_{\tau}^1=\mbox{ the subspace of $V_\C$ spanned by the column vectors of  $\left(
                                                                           \begin{array}{c}
                                                                             \tau \\
                                                                             I_g
                                                                           \end{array}
                                                                         \right).$ }$$
Moreover, we observe that $h_\tau$ is polarized automatically by the standard
symplectic form $\psi,$ i.e., $F_{\tau}^\bullet$ satisfies the
following two Riemann-Hodge bilinear relations:
\begin{myenumii}
  \item  $ \psi(F_{\tau}^p,F_{\tau}^q) = 0 \mbox{ for } p+q>1.$

  \item  The Hermitian form
$V_\C\times V_\C\>>> \C\,\,\,\,(x,y)\mapsto \psi(C_\tau(x), \overline{y})$
is positive definite.
\end{myenumii}
\begin{proposition}[Satake-Deligne \cite{Sat}\&\cite{Del73}]\label{Borel-embedding}
Define
$$
\mathfrak{S}_g=\mathfrak{S}(V_\R,\psi):=\{F^1\in \mathrm{Grass}(g,
V_\C)\,\,\, | \,\, \psi(F^1,F^1)=0, \sqrt{-1}\psi(F^1,
\overline{F^1})>0\}.
$$
The map
$h: \mathfrak{H}_g\>\cong>> \mathfrak{S}_g\,\,\,\,
   \tau  \longmapsto F^1_\tau $
identifies the Siegel space $\mathfrak{H}_g$ with the period domain
$\mathfrak{S}_g.$ Moreover, the map $h$ is biholomorphic.
\end{proposition}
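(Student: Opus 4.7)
The plan is to establish the three assertions—the map $h$ is well-defined into $\mathfrak{S}_g$, it is bijective, and it is biholomorphic—by explicit matrix computation together with a single structural observation that forces the ``bottom block'' of any basis of $F^1\in\mathfrak{S}_g$ to be invertible.

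First I would verify well-definedness. Given $\tau\in\mathfrak{H}_g$, the columns of $\bigl(\begin{smallmatrix}\tau\\ I_g\end{smallmatrix}\bigr)$ are $\C$-linearly independent because the lower block is $I_g$, so $F^1_\tau\in\mathrm{Grass}(g,V_\C)$. Using the block form of $\psi$, a direct computation gives
$$
\bigl(\tau^{t}\;\;I_g\bigr)\begin{pmatrix}0 & -I_g\\ I_g & 0\end{pmatrix}\begin{pmatrix}\tau\\ I_g\end{pmatrix}=\tau^{t}-\tau=0,
$$
because $\tau$ is symmetric; this gives $\psi(F^1_\tau,F^1_\tau)=0$. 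An analogous computation yields
$$
\sqrt{-1}\,\bigl(\bar\tau^{t}\;\;I_g\bigr)\begin{pmatrix}0 & -I_g\\ I_g & 0\end{pmatrix}\begin{pmatrix}\tau\\ I_g\end{pmatrix}=\sqrt{-1}(\bar\tau^{t}-\tau)=2\,\mathrm{Im}(\tau)>0,
$$
so $h(\tau)\in\mathfrak{S}_g$. The map is manifestly holomorphic since $\tau\mapsto F^1_\tau$ is described by a holomorphic family of $g$-planes.

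Second, I would prove bijectivity. Injectivity is immediate: if $F^1_\tau=F^1_{\tau'}$, both spaces have the unique basis whose lower $g\times g$ block is $I_g$ (using a basis-change matrix to reduce the lower block to $I_g$), hence $\tau=\tau'$. For surjectivity let $F^1\in\mathfrak{S}_g$ and pick any $\C$-basis organized as the columns of $\bigl(\begin{smallmatrix}A\\ B\end{smallmatrix}\bigr)$ with $A,B\in M_g(\C)$. The key claim is that $B$ is invertible: if $Bv=0$ for some $0\neq v\in\C^g$, then $\bigl(\begin{smallmatrix}Av\\ 0\end{smallmatrix}\bigr)$ is a nonzero element of $F^1$ (nonzero because the columns are independent), and
$$
\sqrt{-1}\,\psi\Bigl(\begin{pmatrix}Av\\ 0\end{pmatrix},\overline{\begin{pmatrix}Av\\ 0\end{pmatrix}}\Bigr)=0,
$$
contradicting the strict positivity in the definition of $\mathfrak{S}_g$. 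Right-multiplying the basis by $B^{-1}$, we may assume $B=I_g$, and setting $\tau:=AB^{-1}=A$ the isotropy condition $\psi(F^1,F^1)=0$ gives $\tau^{t}=\tau$ exactly as in the well-definedness computation, while the positivity condition gives $\mathrm{Im}(\tau)>0$. Thus $\tau\in\mathfrak{H}_g$ and $h(\tau)=F^1$.

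Finally, for biholomorphicity I would construct the inverse holomorphically. On an open neighborhood of any $F^1_0\in\mathfrak{S}_g$ in the Grassmannian, choose a local holomorphic frame $\bigl(\begin{smallmatrix}A(F^1)\\ B(F^1)\end{smallmatrix}\bigr)$; the invertibility of $B(F^1_0)$ persists in a neighborhood, and $F^1\mapsto A(F^1)\,B(F^1)^{-1}$ is a holomorphic local inverse of $h$ taking values in $\mathfrak{H}_g$. Since both sides are complex manifolds of dimension $g(g+1)/2$ and $h$ is a holomorphic bijection with a local holomorphic inverse at every point, $h$ is a biholomorphism.

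The only real content is the positivity argument that forces the lower block $B$ to be invertible; every other step is a direct matrix computation. I do not expect a serious obstacle—this is essentially the classical Borel embedding for $\mathrm{Sp}(g,\R)/\mathrm{U}(g)$ restated in Hodge-theoretic language, and the structure of $\psi$ together with the two Riemann--Hodge relations makes each verification mechanical.
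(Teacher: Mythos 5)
The paper itself offers no proof of this proposition; it is quoted as a classical result of Satake and Deligne, so there is no internal argument to compare yours against. Your proof is the standard one and is correct: the only non-mechanical point is exactly the one you isolate, that strict positivity of $\sqrt{-1}\psi(\cdot,\overline{\cdot})$ on $F^1$ forces the bottom block $B$ of any basis matrix to be invertible, after which $\tau=AB^{-1}$ is symmetric by isotropy, $\mathrm{Im}(\tau)>0$ by positivity, and the affine Grassmannian chart $F\mapsto A(F)B(F)^{-1}$ supplies the holomorphic inverse, so $h$ is a biholomorphism onto the open subset $\mathfrak{S}_g$ of the Lagrangian Grassmannian.

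One small caution on your second display: as written you are evaluating $\psi$ with the arguments in the order $\psi(\overline{F^1},F^1)$, and the matrix product there actually equals $\sqrt{-1}(\tau-{}^{t}\bar{\tau})=-2\,\mathrm{Im}(\tau)$, not $\sqrt{-1}({}^{t}\bar{\tau}-\tau)$. Since $\psi$ is alternating this differs by a sign from the quantity demanded by the definition of $\mathfrak{S}_g$, namely $\sqrt{-1}\psi(u,\bar{u})=2\,{}^{t}x\,\mathrm{Im}(\tau)\,\bar{x}>0$ for $u$ the combination of columns with coefficient vector $x$; your two slips cancel and the conclusion is right, but you should put $F^1$ in the first slot so the computation matches the paper's convention. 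The same order convention should be used when you invoke positivity in the surjectivity step.
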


Let $\pi: \Sp(g,\R)\to \mathfrak{H}_g\cong \Sp(g,\R)/ \mathrm{U}(g)$
be the standard projection mapping the identity $e$ to the point
$o=\sqrt{-1}I_g.$ Then, we have the following commutative diagrams of differentials
\begin{equation}\label{automorphism-to-lie morphism-1}
\begin{CDS}
\mathfrak{g}\> \Ad h_o(z)  >> \mathfrak{g} \\
\V d\pi V  V \novarr \V   V d\pi V \\
T_o(\mathfrak{H}_g) \>  d(h_o(z))|_o >> T_o(\mathfrak{H}_g),
\end{CDS} \,\, \forall z\in \mathrm{U}^1.
\end{equation}
In particular, we obtain that $\sigma:=\Ad(h_o(\sqrt{-1}))$ is an
involution of the Lie algebra $\mathfrak{g}=\mathrm{Lie}(\Sp(g,\R)),$
and there is a decomposition of $\mathfrak{g}$(orthogonal under the no-degenerate
Killing form)
$\mathfrak{g}=\mathfrak{f}\oplus\mathfrak{p}$ such that
$\mathfrak{f}=\mathrm{Lie}(K)\cong \mathrm{Lie}(\mathrm{U}(g))$ and
$\mathfrak{p}\cong T_o(\mathfrak{H}_g).$ Since $h_o(\mathrm{U}^{1})$ is a
commutative subgroup in $\Sp(g,\R),$ both  $\mathfrak{f}$ and
$\mathfrak{p}$
are $\Ad(h_o(\mathrm{U}^{1}))$-invariant. 
Moreover, $\Ad(h_o(\exp(\frac{2\pi\sqrt{-1}}{8})))|_\mathfrak{p}$ is compatible
with the complex structure $J_o$ of the tangent space
$T_o(\mathfrak{H}_g).$  Thus, the collection $\{J_{\tau}\}_{\tau\in
\mathfrak{H}_g}$ is $\Sp(g,\R)$-invariant and is same as the original
global complex structure $J$ on $\mathfrak{H}_g.$

Since $J_{\mathfrak{o}}$ gives a
decomposition $\mathfrak{p}_\C=\mathfrak{p}^{+}\oplus\mathfrak{p}^{-}$ into
$\pm\sqrt{-1}$-eigenspaces, we have :
\begin{corollary}\label{character of adjoint homomorphism}
For every $z\in \mathrm{U}^1,$ the adjoint homomorphism $\Ad(h_o(z)): \mathfrak{g}_\C\to \mathfrak{g}_\C$ is given  by
\begin{eqnarray*}
  \Ad(h_o(z))|_{\mathfrak{p}^{+}}: \mathfrak{p}^{+} \>>> \mathfrak{p}^{+} & & v\longmapsto z^2v, \\
  \Ad(h_o(z))|_{\mathfrak{p}^{-}}: \mathfrak{p}^{-} \>>> \mathfrak{p}^{-} & & v\longmapsto z^{-2}v, \\
  \Ad(h_o(z))|_{\mathfrak{f}_\C}\,\, :\,\,\,\mathfrak{f}_\C  \>>> \mathfrak{f}_\C  & & v\longmapsto v.
 \end{eqnarray*}
\end{corollary}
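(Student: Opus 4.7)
The plan is to read off the action of $\Ad(h_o(z))$ on $\mathfrak{p}$ from the commutative diagram (\ref{automorphism-to-lie morphism-1}) together with Lemma \ref{basic lemma}, and then to extend this to $\mathfrak{f}_\C$ via the Harish-Chandra bracket relations of the Hermitian symmetric pair $(\mathfrak{g},\mathfrak{f})$.

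First, $d\pi\big|_{\mathfrak{p}}\colon \mathfrak{p} \xrightarrow{\sim} T_o(\mathfrak{H}_g)$ is an isomorphism, and by the remarks preceding the corollary $\Ad(h_o(z))$ preserves the Cartan decomposition $\mathfrak{g}=\mathfrak{f}\oplus\mathfrak{p}$. Since $h_o$ lifts $u_o^2$, the differential $d(h_o(z))\big|_o$ on $T_o(\mathfrak{H}_g)$ equals $d(u_o(z^2))\big|_o$, and Lemma \ref{basic lemma} identifies this with the $\R$-linear map $v\mapsto \mathrm{Re}(z^2)\,v + \mathrm{Im}(z^2)\,J_o(v)$. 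Hence, transported through $d\pi$, this same formula describes $\Ad(h_o(z))\big|_{\mathfrak{p}}$. Extending $\C$-linearly to $\mathfrak{p}_\C=\mathfrak{p}^+\oplus\mathfrak{p}^-$, on $\mathfrak{p}^+$ (where $J_o$ acts as $+\sqrt{-1}$) this reduces to $v\mapsto z^2 v$, and on $\mathfrak{p}^-$ (where $J_o$ acts as $-\sqrt{-1}$) to $v\mapsto \overline{z^2}\,v=z^{-2}v$, using $|z|=1$. This yields the first two formulas.

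For the third formula I would invoke the Harish-Chandra decomposition of the Hermitian symmetric pair $(\Sp(g,\R),\mathrm{U}(g))$: the subspaces $\mathfrak{p}^+$ and $\mathfrak{p}^-$ are abelian subalgebras of $\mathfrak{g}_\C$, and $\mathfrak{f}_\C=[\mathfrak{p}^+,\mathfrak{p}^-]$. (For $\Sp(g,\R)$ this is verified directly in block form, taking $\mathfrak{p}^+$ to consist of symmetric upper-right blocks and $\mathfrak{p}^-$ of symmetric lower-left blocks, and checking that their mutual bracket exhausts the $\mathfrak{gl}(g,\C)$ block inside $\mathfrak{f}_\C$.) Since $\Ad(h_o(z))$ is a Lie algebra automorphism of $\mathfrak{g}_\C$, for $X\in\mathfrak{p}^+$ and $Y\in\mathfrak{p}^-$ we compute
\begin{equation*}
\Ad(h_o(z))[X,Y]=[\Ad(h_o(z))X,\,\Ad(h_o(z))Y]=[z^2 X,\,z^{-2} Y]=[X,Y].
\end{equation*}
Since such brackets span $\mathfrak{f}_\C$, $\Ad(h_o(z))$ is the identity on $\mathfrak{f}_\C$, giving the third formula.

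The main potential obstacle is the verification of the Harish-Chandra relations $[\mathfrak{p}^\pm,\mathfrak{p}^\pm]=0$ and $\mathfrak{f}_\C=[\mathfrak{p}^+,\mathfrak{p}^-]$, but these are standard for any Hermitian symmetric space and for $\Sp(g,\R)/\mathrm{U}(g)$ reduce to a brief block-matrix computation, so no substantive difficulty is anticipated.
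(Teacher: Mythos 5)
Your proof is correct, and since the paper states this corollary without any written proof (it is presented as an immediate consequence of Lemma \ref{basic lemma}, the commutative diagram \ref{automorphism-to-lie morphism-1}, and the preceding remark that $\mathfrak{f}$ and $\mathfrak{p}$ are $\Ad(h_o(\mathrm{U}^1))$-invariant), your write-up in fact supplies the missing argument. Your treatment of $\mathfrak{p}^{\pm}$ is exactly the intended route: $h_o(z)$ acts on $\mathfrak{H}_g$ as $u_o(z^2)$, Lemma \ref{basic lemma} gives $d(h_o(z))|_o=\mathrm{Re}(z^2)\,\mathrm{id}+\mathrm{Im}(z^2)J_o$, and transporting through $d\pi|_{\mathfrak p}$ and complexifying yields $z^{2}$ and $z^{-2}$ on the $\pm\sqrt{-1}$-eigenspaces, using $|z|=1$. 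Where you genuinely diverge is the third line: the shortest argument, and presumably the one the authors have in mind, is that $h_o(\mathrm{U}^1)$ lies in the center of $K_\R\cong \mathrm{U}(g)$ (under $A+\sqrt{-1}B\mapsto\left(\begin{smallmatrix}A&B\\-B&A\end{smallmatrix}\right)$ the element $h_o(z)$ is the unitary scalar $zI_g$, consistently with $h_o(\sqrt{-1})=s_o$), so $\Ad(h_o(z))$ is trivial on $\mathfrak{f}$; you instead deduce triviality from the Harish-Chandra relations $[\mathfrak{p}^{\pm},\mathfrak{p}^{\pm}]=0$ and $\mathfrak{f}_\C=[\mathfrak{p}^{+},\mathfrak{p}^{-}]$ plus the fact that $\Ad(h_o(z))$ is a Lie algebra automorphism. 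Both routes work; yours avoids computing $h_o(z)$ explicitly but imports the bracket relations, which the paper nowhere states (the commutativity of $\mathfrak{g}^{\mp1,\pm1}$ is only recorded later, as a remark after Lemma \ref{Hodge-filtration-on-Lie-alg}), so you do need the independent verification you sketch. Note only that the ``symmetric upper-right/lower-left block'' picture you invoke is the Cayley-transformed realization in which $\mathfrak{f}_\C$ sits block-diagonally as $\gl(g,\C)$, not the realization fixed in the paper's Notation, where $\mathfrak{f}$ consists of matrices $\left(\begin{smallmatrix}A&B\\-B&A\end{smallmatrix}\right)$; since $\mathfrak{f}_\C=[\mathfrak{p}^{+},\mathfrak{p}^{-}]$ is conjugation-invariant this is purely presentational, but you should either work in the paper's basis or say you are conjugating.
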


Now, we have the following composite homomorphism
\begin{equation}\label{Hodge-filtration-on-Lie-alg-1}
    \mathrm{U}^{1} \>h_o>>\Sp(g,\R)\>\Ad>>\GL(\mathfrak{g}).
\end{equation}
It determines a weight zero real Hodge structure on $\mathfrak{g}$(cf.\cite{Del73},\cite{Del79}). Actually, this real Hodge structure on $\mathfrak{g}$  has
  type of $\{(-1,1),(0,0),(1,-1)\}$  by the corollary \ref{character of adjoint homomorphism}  :
$$\mathfrak{g}^{0,0}=\mathfrak{f}_\C \mbox{ , }\mathfrak{g}^{-1,1}=\mathfrak{p}^{+}\mbox{ and }\mathfrak{g}^{1,-1}=\mathfrak{p}^{-}.$$

Denote by
$$F^1_o(\mathfrak{g}):=\mathfrak{g}^{1,-1},\,\, F^0_o(\mathfrak{g}):=\mathfrak{g}^{0,0}\oplus\mathfrak{g}^{1,-1}\mbox{ and } F^{-1}_o(\mathfrak{g})=\mathfrak{g}_\C.$$
\begin{lemma}\label{Hodge-filtration-on-Lie-alg}For $\mathfrak{g}=\mathrm{Lie}(\Sp(g,\R)),$ we have that
\begin{equation*}
    F^p_o(\mathfrak{g})=\{X\in \mathfrak{g}_\C \,|\, X(F^s_o)\subset F^{s+p}_o\,\,\forall s\} \,\, p=-1,0,1,
 \end{equation*}
where  $F^{\bullet}_o$ is the Hodge filtration on $V_\C$ given by $h_o.$
\end{lemma}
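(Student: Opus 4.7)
The plan is to unpack the Hodge decomposition of $\mathfrak{g}_\C$ determined by \eqref{Hodge-filtration-on-Lie-alg-1} and compare it, piece by piece, with the filtration-shifting condition on the right-hand side.

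First I would record both sides explicitly. By definition of the Hodge filtration attached to a pure Hodge structure, together with the identifications $\mathfrak{g}^{-1,1}=\mathfrak{p}^{+}$, $\mathfrak{g}^{0,0}=\mathfrak{f}_\C$, $\mathfrak{g}^{1,-1}=\mathfrak{p}^{-}$ recorded just before the statement, the left-hand sides are $F^{1}_o(\mathfrak{g})=\mathfrak{p}^{-}$, $F^{0}_o(\mathfrak{g})=\mathfrak{f}_\C\oplus\mathfrak{p}^{-}$, $F^{-1}_o(\mathfrak{g})=\mathfrak{g}_\C$. On the other side, the Hodge filtration on $V_\C$ itself is $F^{2}_o=0$, $F^{1}_o=V^{1,0}_o$ (the column span of $(\sqrt{-1}I_g,I_g)^t$) and $F^{s}_o=V_\C$ for $s\leq 0$.

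Next I would invoke the standard principle that for any pure Hodge structure $V_\C=\bigoplus V^{a,b}$, the induced Hodge structure on $\End(V_\C)$ satisfies $\End(V_\C)^{r,-r}=\bigoplus_{a,b}\Hom(V^{a,b},V^{a+r,b-r})$, and that an element of $\End(V_\C)$ sends $F^{s}V$ into $F^{s+p}V$ for every $s$ if and only if it lies in $\bigoplus_{r\geq p}\End(V_\C)^{r,-r}$. Because the Hodge structure on $\mathfrak{g}$ is obtained from $\Ad\circ h_o$, which is the restriction of conjugation in $\End(V_\C)$ to the Lie subalgebra $\mathfrak{g}_\C\subset \End(V_\C)$, the inclusion $\mathfrak{g}\hookrightarrow\End(V)$ is a morphism of Hodge structures, and thus the right-hand side of the lemma equals $F^{p}(\End V)\cap \mathfrak{g}_\C$.

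Third, I would dispose of the three cases directly. For $p=-1$ both sides are $\mathfrak{g}_\C$. For $p=0$ the filtration-shift condition reduces to $X(F^{1}_o)\subset F^{1}_o$, which picks out precisely the $(0,0)$- and $(1,-1)$-pieces of $\End(V_\C)$ and therefore intersects $\mathfrak{g}_\C$ in $\mathfrak{f}_\C\oplus\mathfrak{p}^{-}$. For $p=1$ the condition forces $X(F^{1}_o)=0$ together with $X(V_\C)\subset F^{1}_o$, so $X$ is entirely of type $(1,-1)$, and intersected with $\mathfrak{g}_\C$ this yields $\mathfrak{p}^{-}$. Each case matches the left-hand side computed in the first step.

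The only subtle point is the compatibility between the bigrading on $\mathfrak{g}_\C$ coming from Corollary~\ref{character of adjoint homomorphism} and the filtration-shifting description on $\End(V_\C)$; this amounts to observing that the $\Sp(V,\psi)$-adjoint action is the restriction of conjugation by $\GL(V)$, which is immediate from the inclusion $\Sp(V,\psi)\hookrightarrow\GL(V)$. With this compatibility in hand, everything else is bookkeeping of Hodge types and requires no further matrix calculation beyond what is already in the excerpt.
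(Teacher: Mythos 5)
Your proof is correct and follows essentially the same route as the paper: both arguments realize the Hodge structure on $\mathfrak{g}$ as a sub-Hodge structure of $\End(V_\R)$ via the compatibility of $\Ad$ with conjugation in $\GL(V)$, and then identify $F^p_o(\mathfrak{g})$ with the filtration-shifting operators using the standard type-shifting description of the induced Hodge structure on $\End(V_\C)$. Your explicit case-by-case check for $p=-1,0,1$ is just a more detailed spelling-out of the bookkeeping the paper leaves implicit.
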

\begin{proof}
The composite homomorphism
$\mathrm{U}^{1} \>h_o>> \Sp(g,\R) \>\Ad
>> \GL(\End(V_\R))$ determines a weight zero  real Hodge
structure on $\End(V_\R)=\mathrm{Lie}(\GL(V_\R)).$ Also, the
composite homomorphism $\mathrm{U}^{1} \>h_o>> \Sp(g,\R) \>\Ad
>> \mathfrak{g}$  gives  a weight zero  real Hodge
structure on $\mathfrak{g}.$
The inclusion $\mathfrak{g}\subset \End(V_\R)$ is  a morphism of
Hodge structures by the following commutative diagrams
\begin{equation}\label{Hodge-filtration-on-Lie-alg-2}
  \begin{CDS}
\mathfrak{g}\>\subset >>\End(V_\R)  \\
\V \Ad(g)  V V \novarr \V V \Ad(g) V \\
\mathfrak{g}\>\subset >>\End(V_\R)
\end{CDS}  \,\,\,\hspace{1cm} \forall g \in \Sp(g,R).
\end{equation}

Write $H^{s,1-s}_o:=F^s_o/F^{s+1}_o$ for $s=0, 1.$ We
obtain that
\begin{equation*}
    \mathfrak{g}^{i,-i}=\{X\in \mathfrak{g}_\C \,|\, X(H^{s,1-s}_o)\subset
    H^{s+i,1-s-i}_o\,\,\forall s\} \,\,\mbox{ for } i=-1,0,1
\end{equation*}
as a subset of $\End(V_\R)^{i,-i}.$
\end{proof}
\begin{myrem}
$F^0_o(\mathfrak{g})$ and $F^1_o(\mathfrak{g})$ are then Lie subalgebras of
$\mathfrak{g}_\C.$ In particular, both $\mathfrak{g}^{-1,1}$ and $\mathfrak{g}^{1,1}$ are commutative complex Lie subalgebras of $\mathfrak{p}_\C.$
\end{myrem}

\begin{corollary}[Deligne \cite{Del79}]\label{homogeneus vhs on Siegel space}
Gluing  Hodge structures $h_{\tau}\, \forall\tau \in\mathfrak{H}_g$ altogether,
the local system $\V:=V_\Q\times \mathfrak{H}_g$  underlies a
homogenous rational variation of  polarized Hodge structure of
weight one on $\mathfrak{H}_g.$
\end{corollary}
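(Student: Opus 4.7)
The plan is to verify, in turn, the four ingredients of a homogeneous rational polarized variation of Hodge structure on $\mathfrak{H}_g$: (i) a rational local system $\mathbb{V}$, (ii) a holomorphic Hodge filtration $F^\bullet$ on $\mathcal{V}:=\mathbb{V}\otimes\mathcal{O}_{\mathfrak{H}_g}$, (iii) Griffiths transversality with respect to the Gauss--Manin connection, and (iv) a flat rational polarization, and then to check that the whole package is equivariant under $\mathrm{Sp}(g,\mathbb{R})$.

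First I would identify $\mathbb{V}=V_\mathbb{Q}\times\mathfrak{H}_g$ with the constant local system; its Gauss--Manin connection $\nabla$ is simply exterior differentiation of sections regarded as $V_\mathbb{C}$-valued functions on $\mathfrak{H}_g$. Next, to build the Hodge filtration, I would use the formula already stated in the excerpt, namely that $F^1_\tau\subset V_\mathbb{C}$ is spanned by the columns of $\binom{\tau}{I_g}$. Since these column vectors depend holomorphically on $\tau\in\mathfrak{H}_g$, the assignment $\tau\mapsto F^1_\tau$ defines a rank $g$ holomorphic subbundle of $\mathcal{V}$; together with $F^0=\mathcal{V}$ and $F^2=0$ this gives a holomorphic filtration of the expected weight--one type.

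For the polarization, I would take $\psi$ itself, viewed as a flat rational bilinear form on $\mathbb{V}$. The two Riemann--Hodge bilinear relations $\psi(F^1_\tau,F^1_\tau)=0$ and $\sqrt{-1}\psi(F^1_\tau,\overline{F^1_\tau})>0$ have already been recorded in the excerpt and follow by direct matrix computation from $\binom{\tau}{I_g}^t\psi\binom{\tau}{I_g}=\tau-{}^t\!\tau=0$ and $\binom{\tau}{I_g}^t\psi\,\overline{\binom{\tau}{I_g}}=\tau-\bar\tau=2\sqrt{-1}\,\mathrm{Im}(\tau)$, which is positive definite because $\tau\in\mathfrak{H}_g$. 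Griffiths transversality is automatic in weight one: I need $\nabla F^1\subset F^0\otimes\Omega^1_{\mathfrak{H}_g}=\mathcal{V}\otimes\Omega^1_{\mathfrak{H}_g}$, which holds trivially. So there is no non-trivial transversality condition to verify here.

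Finally, for the homogeneity, I would invoke relation \eqref{Hodge-structure-1}, $h_{M(\tau)}=Mh_\tau M^{-1}$, which combined with the biholomorphism $h:\mathfrak{H}_g\xrightarrow{\cong}\mathfrak{S}_g$ of Proposition \ref{Borel-embedding} translates into $F^1_{M(\tau)}=M\cdot F^1_\tau$ for every $M\in\mathrm{Sp}(g,\mathbb{R})$. Hence the diagonal action of $\mathrm{Sp}(g,\mathbb{R})$ on $\mathbb{V}=V_\mathbb{Q}\times\mathfrak{H}_g$ preserves $F^\bullet$ and $\psi$, so the VHS is $\mathrm{Sp}(g,\mathbb{R})$-equivariant, i.e.\ homogeneous. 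There is essentially no obstacle: the construction is formal once Proposition \ref{Borel-embedding} and the explicit description of $F^1_\tau$ are in hand; the only thing to watch is the sign and normalization of $\psi$ so that the positivity in the second bilinear relation comes out with the correct orientation.
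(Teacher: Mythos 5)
Your proposal is correct, but it proves the corollary by a more hands-on route than the paper does. The paper's proof consists of a single observation: by Lemma \ref{Hodge-filtration-on-Lie-alg}, $\mathfrak{g}^{-1,1}$ is exactly the set of endomorphisms shifting the Hodge filtration by $-1$, and since $\mathfrak{p}^{+}\cong\mathfrak{g}^{-1,1}$ is the holomorphic tangent space, the holomorphic tangent bundle of $\mathfrak{H}_g$ is horizontal in Schmid's sense; everything else (the fiberwise polarized Hodge structures $h_\tau$, their polarization by $\psi$, the identification $\tau\mapsto F^1_\tau$ of Proposition \ref{Borel-embedding}, and the equivariance $h_{M(\tau)}=Mh_\tau M^{-1}$) is taken as already established in the preceding discussion. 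You instead verify the ingredients directly: holomorphy of $\tau\mapsto F^1_\tau$ from the explicit frame $\binom{\tau}{I_g}$, the two Riemann bilinear relations by matrix computation, equivariance from \eqref{Hodge-structure-1}, and — the genuinely different step — you dispose of Griffiths transversality by noting it is vacuous in weight one, since the only condition is $\nabla F^1\subset F^0\otimes\Omega^1_{\mathfrak{H}_g}$ with $F^0=\mathcal{V}$. That observation is correct and makes your argument more elementary and self-contained; what the paper's Lie-algebra argument buys is a verification that works uniformly in any weight and sets up the homogeneous (equivariant) formalism that is reused immediately afterwards, e.g.\ in Proposition \ref{vhs on Siegel varieties} when descending the VHS to $\sA_{g,\Gamma}$ via Zucker's arguments. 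The only point to watch in your write-up is the one you already flag: the order of arguments in $\psi$ (whether $\psi(u,v)={}^tu\,\psi\,v$ or its transpose) flips the sign of $\psi(F^1_\tau,\overline{F^1_\tau})$, so the second bilinear relation should be stated so that $\sqrt{-1}\,\psi(F^1,\overline{F^1})>0$ comes out as $2\,\mathrm{Im}(\tau)>0$ with the paper's conventions.
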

\begin{proof}
The relation
$
    \mathfrak{g}^{-1,1}=\{X\in \mathfrak{g}_\C \,|\, X(H^{s,1-s}_o)\subset H^{s-1,2-s}_o\,\,\forall s\}
$
in the lemma \ref{Hodge-filtration-on-Lie-alg} shows that the
holomorphic tangent bundle of $\mathfrak{H}_g$ is horizontal(cf
\cite{Sch73}).
\end{proof}


Suppose that  $\Gamma$ is a neat arithmetic subgroup of $\Sp(g,\Q),$  we immediately have:
\begin{itemize}
  \item Since $\mathfrak{H}_g$ is simply connected, the fundamental group of $\sA_{g,\Gamma}$
has
$\pi_1(\sA_{g,\Gamma}, o)\cong\Gamma.$

  \item There is  a natural local system
$\V_{g,\Gamma}:=V_\Q\times_{\Gamma}\mathfrak{H}_g$
 on $\sA_{g,\Gamma}$ given by the fundamental representation
$\rho:\pi_1(\sA_{g,\Gamma}, o)\>>> \mathrm{GSp}(V,\psi)(\Q).$
\end{itemize}

\begin{proposition}\label{vhs on Siegel varieties}
Let $\Gamma$ be a neat  arithmetic subgroup of $\Sp(g,\Q).$ We have :
\begin{myenumi}
\item The local system  $\V_{g,\Gamma}$ underlies a rational variation of
 polarized Hodge structure on $\sA_{\Gamma}.$ Moreover,  the
associated period map
\begin{equation}\label{period-mapping}
 h_{\Gamma}: \sA_{g,\Gamma}\>\cong>> \Gamma \backslash\mathfrak{S}_g
\end{equation}
 is induced by the isomorphism $h$ in the proposition \ref{Borel-embedding}.

\item Let  $\widetilde{\sA}_{g,\Gamma}$ be an arbitrary smooth
compactification of $\sA_{g,\Gamma}$ with simple normal crossing
divisor
 $D_\infty:=\widetilde{\sA}_{g,\Gamma}\setminus\sA_{g,\Gamma}.$ Around the boundary divisor
 $D_\infty,$ all local monodromies of any rational PVHS $\widetilde{\mathbf{H}}$ on $\sA_{g,\Gamma}$  are unipotent.
\end{myenumi}
\end{proposition}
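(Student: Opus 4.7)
Part (1) is a descent argument. The key input is the equivariance relation $h_{M(\tau)} = M h_\tau M^{-1}$ from equation (1.1.1), which shows that the family of Hodge filtrations $\{F^\bullet_\tau\}_{\tau\in\mathfrak{H}_g}$ constituting the homogeneous PVHS of Corollary \ref{homogeneus vhs on Siegel space} is $\Sp(g,\R)$-equivariant, and in particular $\Gamma$-invariant. Since $\Gamma$ is neat it acts freely and properly discontinuously on $\mathfrak{H}_g$, so the $\Gamma$-invariant rational PVHS on $\mathfrak{H}_g$ descends to a rational PVHS on the quotient $\sA_{g,\Gamma}$ whose underlying local system is precisely $\V_{g,\Gamma} = V_\Q \times_\Gamma \mathfrak{H}_g$. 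The associated period map $h_\Gamma$ is then obtained simply by passing to $\Gamma$-quotients in the $\Sp(g,\R)$-equivariant biholomorphism $h$ of Proposition \ref{Borel-embedding}.

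For Part (2) the plan is to combine two ingredients. The first is the Borel monodromy theorem (equivalent to the quasi-unipotence assertion of Schmid's nilpotent orbit theorem): for any rational PVHS on a smooth quasi-projective variety admitting a smooth compactification with SNC boundary, the local monodromy around each component of the boundary divisor is quasi-unipotent, i.e.\ all of its eigenvalues are roots of unity. This is a general result and uses no special feature of Siegel varieties. The second ingredient is the neatness of $\Gamma = \pi_1(\sA_{g,\Gamma})$: by the Borel characterization of neatness recalled in the Notation section, the image $\rho(\gamma)$ of any $\gamma \in \Gamma$ under any representation of $\Sp(V,\psi)$ has eigenvalues generating a torsion-free subgroup of $\C^\times$. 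Since a root of unity lying in a torsion-free subgroup of $\C^\times$ must equal $1$, a local monodromy operator $\rho(\gamma)$ then has all eigenvalues equal to $1$, hence is unipotent.

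The main obstacle is justifying that the Borel neatness property is applicable to the monodromy representation of an arbitrary rational PVHS $\widetilde{\mathbf{H}}$, which is a priori only a finite-dimensional rational representation of the abstract group $\Gamma$ and need not come directly from an algebraic representation of $\Sp(V,\psi)$. Since $g \geq 2$, the real group $\Sp(g,\R)$ has real rank at least $2$, so Margulis superrigidity guarantees that (after passing to a finite-index subgroup if needed) every such finite-dimensional representation is the restriction of an algebraic representation of $\Sp(V,\psi)$. Alternatively one can argue via the Mumford–Tate group of $\widetilde{\mathbf{H}}$, which is an algebraic $\Q$-subgroup of $\GL(\widetilde V)$ containing the image of $\rho$. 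Either viewpoint reduces the problem to the Borel statement in the Notation section, whereupon the two-line argument of the previous paragraph finishes the proof.
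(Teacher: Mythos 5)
Part (1) is correct and is essentially the paper's own argument: the paper routes the same descent through Zucker's locally homogeneous VHS construction (Section 4 of \cite{Zuc81}) rather than spelling out the $\Sp(g,\R)$-equivariance directly, but the content is identical.

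In part (2) you have correctly isolated the step that the paper's two-line proof leaves implicit: neatness, as defined in the Notation section, controls eigenvalues only for representations of the algebraic group $\Sp(V,\psi)$, not for an arbitrary representation of the abstract group $\Gamma$. However, neither of your proposed repairs closes this gap. Margulis superrigidity does not make every finite-dimensional representation of $\Gamma$ (even virtually) the restriction of an algebraic representation of $\Sp(V,\psi)$: its conclusion always permits a constituent with bounded, in particular finite, image, and that is precisely where quasi-unipotent eigenvalues which are nontrivial roots of unity can live. This loophole is real here: for $\Gamma=\Gamma_g(n)$ the assignment $I_{2g}+nA\mapsto A\bmod n$ is a homomorphism to the finite abelian group $\mathfrak{sp}_{2g}(\Z/n)$ with nontrivial image, and it is nonzero on the unipotent elements $I_{2g}+n\left(\begin{smallmatrix}0&S\\0&0\end{smallmatrix}\right)$ of $\Gamma\cap U^{\mathfrak{F}_0}(\Z)$, which are exactly the local monodromy elements around the boundary divisors of a smooth toroidal compactification (an admissible choice of $\widetilde{\sA}_{g,\Gamma}$ in the statement). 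Composing with a character valued in $\mu_n$ and realizing it rationally on $\Q(\mu_n)$ produces a rational, unitary, pure type $(0,0)$ polarized VHS whose local monodromy around such a divisor is a nontrivial root of unity, hence not unipotent. So for literally arbitrary rational PVHS the conclusion cannot be forced by superrigidity; it holds when the monodromy representation factors through an algebraic representation of $\Sp(V,\psi)$ (the only situation the paper ever uses), and your argument should carry that hypothesis. Two further local defects in your sketch: the phrase ``after passing to a finite-index subgroup'' is not harmless, since unipotence of $\rho(\gamma^k)$ only forces the eigenvalues of $\rho(\gamma)$ to be $k$-th roots of unity, not $1$; and the Mumford--Tate alternative does not help, because knowing that the monodromy image lies in some algebraic $\Q$-subgroup of $\GL(\widetilde{V})$ gives no relation between the eigenvalues of $\rho(\gamma)$ and those of $\gamma$ in representations of $\Sp(V,\psi)$, so Borel's neatness-transfer statement (a statement about representations of the source group) cannot be invoked. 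For comparison, the paper's own proof of (2) consists only of the quasi-unipotence-plus-neatness assertion and leaves the very issue you identified unaddressed.
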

\begin{proof}
By the corollary \ref{homogeneus vhs on Siegel space}, the local
system $\V=V_\Q\times \mathfrak{H}_g$ admits a homogenous rational variation of
 polarized Hodge structure on $\mathfrak{H}_g.$ The arguments
in Section $4$ of \cite{Zuc81} show that the VHS attached to $\V$
can induce a locally homogenous rational  variation of polarized Hodge
structure on the local system $\V_{\Gamma},$ and so the period
map $h_{\Gamma}$ is given by the $\Sp(g,\R)$-equivariant
isomorphism $h: \mathfrak{H}_g\>\cong>> \mathfrak{S}_g$ in  the
proposition \ref{Borel-embedding}.

It is well-known that  all local monodromies of the  rational PVHS
$\widetilde{\mathbf{H}}$ around  $D_\infty$ are
quasi-unipotent(i.e., all eigenvalues of monodromies are roots of
the unit). Since $\mathfrak{H}_{g}$ is simply-connected and $\Gamma$
is neat,
all eigenvalues of monodromies must be the unity, and so these monodromies are unipotent.
\end{proof}

\subsection{Cusps on Siegel spaces}

Let  $F^0_o(G_\C)$(resp. $F^1_o(G_\C)$) be the subgroup of $G_\C$ satisfying that
$\mathrm{Lie}(F^0_o(G_\C))=F^0_o(\mathfrak{g})$ (resp. $\mathrm{Lie}(F^1_o(G_\C))=F^1_o(\mathfrak{g})$).
The lemma \ref{Hodge-filtration-on-Lie-alg} shows immediately that
$F^1_o(G_\C)$ is the parabolic subgroup preserving the Hodge
filtration $F_o^\bullet,$ and that $F^1_o(G_\C)$ is the unipotent
radical of $F^0_o(G_\C).$

\begin{proposition}[Harish-Chandra Embedding Theorem cf.\cite{Del73}]\label{Harish-Chandra-embedding}
 The set $\mathfrak{S}_g$ is contained in the largest cell of
$\check{\mathfrak{S}_g}.$ Precisely,  the map
$$\zeta:\mathfrak{S}_g\>>> \overline{F^1_o(\mathfrak{g})}\,\,\, h\longmapsto n_h $$
identifies $\mathfrak{S}_g$ with
a bounded open subset of  $\overline{F^1_o(\mathfrak{g})},$
where the space $\overline{F^1_o(\mathfrak{g})}$ is the closure of $F^1_o(\mathfrak{g})$ in $\mathfrak{g}=\mathrm{Lie}(\Sp(g,\R))$ and  the element $n_h\in \overline{F^1_o(\mathfrak{g})}$ is determined by $h=\exp(n_h)h_o.$
\end{proposition}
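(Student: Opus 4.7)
The plan is to establish the theorem in three stages: first set up the Borel embedding into the compact dual $\check{\mathfrak{S}}_g$, then identify an open ``big cell'' of this dual with $\overline{F^1_o(\mathfrak{g})}$ via the exponential map, then verify that $\mathfrak{S}_g$ sits inside this cell by a Riemann--Hodge positivity argument, and finally check boundedness of the image.

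First I would realize the compact dual as $\check{\mathfrak{S}}_g\subset\mathrm{Grass}(g,V_\C)$, the complex Lagrangian Grassmannian, on which $G_\C=\Sp(V,\psi)(\C)$ acts transitively with isotropy $F^0_o(G_\C)$ at the base point $F^1_o$, giving $\check{\mathfrak{S}}_g\cong G_\C/F^0_o(G_\C)$. Using the Harish-Chandra decomposition $\mathfrak{g}_\C=\overline{F^1_o(\mathfrak{g})}\oplus F^0_o(\mathfrak{g})$, which is immediate from Corollary~\ref{character of adjoint homomorphism} (so that $\overline{F^1_o(\mathfrak{g})}=\mathfrak{p}^+$ and $F^0_o(\mathfrak{g})=\mathfrak{f}_\C\oplus\mathfrak{p}^-$), and the fact that $\overline{F^1_o(\mathfrak{g})}$ is an abelian complex Lie subalgebra (the remark after Lemma~\ref{Hodge-filtration-on-Lie-alg}), the map $n\longmapsto\exp(n)\cdot F^1_o$ becomes a biholomorphism from $\overline{F^1_o(\mathfrak{g})}$ onto the open cell $\mathcal{U}\subset\check{\mathfrak{S}}_g$ of Lagrangians transverse to $\overline{F^1_o}$.

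Next I would show $\mathfrak{S}_g\subset\mathcal{U}$. If $F^1\in\mathfrak{S}_g$ and $0\neq v\in F^1\cap\overline{F^1_o}$, then $v\in F^1$ forces $\sqrt{-1}\psi(v,\bar v)>0$ by the Riemann--Hodge positivity for $F^1$, while $\bar v\in F^1_o$ together with the same positivity applied to $F^1_o$ gives $\sqrt{-1}\psi(\bar v,v)>0$, contradicting $\psi(v,\bar v)=-\psi(\bar v,v)$. Hence $F^1\cap\overline{F^1_o}=0$, so $\mathfrak{S}_g\subset\mathcal{U}$, and $\zeta$ (the inverse of $n\mapsto\exp(n)\cdot F^1_o$ restricted to $\mathfrak{S}_g$) is a holomorphic open embedding.

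The remaining and hardest step is boundedness of $\zeta(\mathfrak{S}_g)$. For the Siegel case this can be made completely explicit: identify $\overline{F^1_o(\mathfrak{g})}\cong\mathfrak{p}^+$ with the space $M_g^{\mathrm{sym}}(\C)$ of symmetric $g\times g$ complex matrices, and verify by a direct matrix computation (using the explicit form $F^1_\tau=\mathrm{span}\bigl(\begin{smallmatrix}\tau\\I_g\end{smallmatrix}\bigr)$) that $\zeta$ coincides with the Cayley transform $\tau\longmapsto(\tau-\sqrt{-1}I_g)(\tau+\sqrt{-1}I_g)^{-1}$. Its image is then the Siegel disc $\{W\in M_g^{\mathrm{sym}}(\C)\,|\,I_g-W\bar W>0\}$, which is manifestly a bounded subset of $\C^{g(g+1)/2}$. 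Alternatively and more intrinsically, one combines the $K_\R$-equivariance of $\zeta$ (since $K_\R$ fixes $F^1_o$, preserves $\mathfrak{S}_g$, and acts linearly on $\overline{F^1_o(\mathfrak{g})}$ by $\Ad$) with the Cartan-type decomposition $\mathfrak{p}=\Ad(K_\R)\cdot\mathfrak{a}$ along a maximal abelian flat $\mathfrak{a}\subset\mathfrak{p}$ to reduce boundedness to a one-parameter computation on $\mathfrak{a}$; a $K_\R$-invariant Hermitian norm then bounds $\zeta(\mathfrak{S}_g)$ uniformly. The obstacle is really packaged into this last step: the algebraic setup of stages one and two is formal, but boundedness is where the specific geometry of $\mathfrak{H}_g$ as a tube-type bounded symmetric domain must intervene.
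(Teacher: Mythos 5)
The paper gives no proof of this proposition: it is quoted as the classical Harish--Chandra embedding theorem with a pointer to \cite{Del73}, and the text proceeds directly to use it to define $\overline{\mathfrak{S}}_g$ and $\partial\mathfrak{S}_g$. So there is no internal argument to compare yours against; judged on its own, your proposal is correct and is essentially the standard proof that the cited source supplies, specialized to $\mathfrak{H}_g$. Your reading of $\overline{F^1_o(\mathfrak{g})}$ as the complex conjugate $\mathfrak{g}^{-1,1}=\mathfrak{p}^{+}$ (rather than a literal ``closure'', which would be vacuous for a linear subspace) is the intended one, and it is the only reading compatible with $h=\exp(n_h)h_o$ and with the isotropy algebra $F^0_o(\mathfrak{g})=\mathfrak{f}_\C\oplus\mathfrak{p}^{-}$ at the base point of $\check{\mathfrak{S}}_g$. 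The three structural steps are sound: $N\mapsto\exp(N)F^1_o$ identifies $\mathfrak{p}^{+}$ with the cell of Lagrangians transverse to $\overline{F^1_o}$ (graphs of symmetric maps $F^1_o\to\overline{F^1_o}$); the positivity argument $\sqrt{-1}\psi(v,\overline{v})>0$ and $\sqrt{-1}\psi(\overline{v},v)>0$ contradicting skew-symmetry correctly forces $F^1\cap\overline{F^1_o}=0$ for $F^1\in\mathfrak{S}_g$; and you rightly isolate boundedness as the only step with real content. For that step your explicit route is fine: in the identification $\mathfrak{p}^{+}\cong\mathrm{Sym}_g(\C)$ the map $\zeta$ is the Cayley transform $\tau\mapsto(\tau-\sqrt{-1}I_g)(\tau+\sqrt{-1}I_g)^{-1}$ up to normalization (sign and ordering conventions in the identification do not affect boundedness, and the two factors commute), with image the Siegel disc, which is bounded; the intrinsic alternative via $K_\R$-equivariance and a maximal flat is the argument one would use for a general bounded symmetric domain, which is how \cite{Del73} treats it. The only caution is bookkeeping of the paper's conventions ($F^1_o(\mathfrak{g})=\mathfrak{g}^{1,-1}=\mathfrak{p}^{-}$, so the parabolic stabilizing $F^\bullet_o$ is $F^0_o(G_\C)$ with unipotent radical $F^1_o(G_\C)$), which you have handled consistently.
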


This Harish-Chandra embedding allows us to
define the closure
$
\overline{\mathfrak{S}}_{g}:= \exp(\overline{\zeta(\mathfrak{S}_g)}^{cl})h_o
$ and the boundary $\partial\mathfrak{S}_g:=\overline{\mathfrak{S}}_{g}\setminus\mathfrak{S}_g,$ where $\overline{\zeta(\mathfrak{S}_g)}^{cl}$
is the closure of $\zeta(\mathfrak{S}_g)$ in
$\overline{F^1_o(\mathfrak{g})}.$
\begin{corollary}\label{boundary of Siegel spaces}
\begin{eqnarray*}
\overline{\mathfrak{S}}_{g}   &=& \{F^1\in \mathrm{Grass}(g, V_\C)\,\,|\,\, \psi(F^1,F^1)=0, \sqrt{-1}\psi(F^1, \overline{F^1})\geq 0\}, \\
 \partial\mathfrak{S}_g  &=& \{F^1\in \check{\mathfrak{S}}_g \,\,|\sqrt{-1}\psi(F^1, \overline{F^1})\geq 0, \psi(\cdot, \overline{\cdot}) \mbox{ is degenerate on } F^1\}.\\
                     &=&\{F^1\in \overline{\mathfrak{S}}_{g} \,\,|F^1\cap \overline{F^1} \mbox{ is a non trivial isotropic space }\}
\end{eqnarray*}
\end{corollary}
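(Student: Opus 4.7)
The plan is to establish the three equalities in turn, using the Harish-Chandra embedding of Proposition \ref{Harish-Chandra-embedding} together with elementary symplectic linear algebra.

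For the first equality denote the right-hand side by $R$. It is cut out of the compact Grassmannian $\mathrm{Grass}(g,V_\C)$ by a polynomial equation and a Hermitian non-negativity inequality, hence is closed, and it contains $\mathfrak{S}_g$, so $\overline{\mathfrak{S}}_g\subseteq R$. For the reverse inclusion I prove density. First I observe that every $F^1\in R$ lies in the big cell of the Harish-Chandra chart: if $0\neq v\in F^1\cap\overline{F^1_o}$ then $\sqrt{-1}\psi(v,\overline{v})$ would have to be simultaneously $\geq 0$ (by $F^1\in R$) and $<0$ (the Hermitian form is negative definite on $\overline{F^1_o}$, as a direct computation in the fixed symplectic basis confirms). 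Then I pass to the bounded realization, where $\zeta(\mathfrak{S}_g)$ becomes a bounded domain of symmetric complex matrices $W$ satisfying $I_g-W\overline{W}>0$, and the closure relaxes to $I_g-W\overline{W}\geq 0$. For any such boundary $W$, the radial scaling $W_t:=(1-t)W$ with $t\to 0^+$ satisfies
\[
I_g-W_t\overline{W_t} \;=\; (2t-t^2)I_g + (1-t)^2\bigl(I_g-W\overline{W}\bigr) \;>\;0,
\]
so $W_t\in\zeta(\mathfrak{S}_g)$ and $W_t\to W$, giving the required approximation.

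The second equality is immediate from the first together with the definition $\partial\mathfrak{S}_g=\overline{\mathfrak{S}}_g\setminus\mathfrak{S}_g$: subtracting the strict inequality from the non-strict one is exactly the condition that $\sqrt{-1}\psi(\cdot,\overline{\cdot})$ be positive semi-definite but degenerate on $F^1$.

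For the third equality I identify the radical of the Hermitian form $h(x,y):=\sqrt{-1}\psi(x,\overline{y})$ on $F^1$. Since $\overline{F^1}$ is $\psi$-isotropic of dimension $g$ (by conjugating $\psi(F^1,F^1)=0$), it is Lagrangian and therefore equal to its own symplectic orthogonal, whence
\[
\mathrm{rad}(h|_{F^1}) \;=\; \{x\in F^1:\psi(x,\overline{F^1})=0\} \;=\; F^1\cap(\overline{F^1})^{\perp_\psi} \;=\; F^1\cap\overline{F^1}.
\]
Thus $h|_{F^1}$ is degenerate iff $F^1\cap\overline{F^1}\neq 0$, and this intersection is automatically $\psi$-isotropic because it sits inside the Lagrangian $F^1$. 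The main obstacle is the density step in the first equality --- one must simultaneously verify that $R$ is contained in the big cell and that the chosen perturbation preserves the symplectic isotropy condition --- but once phrased in the bounded realization both become elementary linear-algebra checks.
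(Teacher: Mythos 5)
Your argument is correct, and it is essentially the route the paper intends: the corollary is stated there without proof, as an immediate consequence of the Harish-Chandra embedding (Proposition \ref{Harish-Chandra-embedding}), and your closedness-plus-radial-scaling argument in the bounded realization, combined with identifying the radical of $\sqrt{-1}\psi(\cdot,\overline{\cdot})|_{F^1}$ with $F^1\cap\overline{F^1}$ via the Lagrangian condition, is exactly the standard way to make that deduction precise. The only steps you assert rather than compute --- that the big cell consists of the Lagrangians transverse to $\overline{F^1_o}$, and that $\sqrt{-1}\psi(\cdot,\overline{\cdot})$ restricted to the Lagrangian with Harish-Chandra coordinate $W$ is congruent to $I_g-W\overline{W}$ --- are the classical Siegel-domain computations already implicit in Propositions \ref{Borel-embedding} and \ref{Harish-Chandra-embedding}, so no genuine gap remains.
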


A \textbf{boundary component} of the space $\mathfrak{S}_g=\mathfrak{S}(V,\psi)$ is a subset in $\partial\mathfrak{S}_g$ of type
$$\mathfrak{F}(W_\R):=\{ F^1\in \overline{\mathfrak{S}}_{g} \,\,|\,\,  F^1\cap \overline{F^1}=W_\R\otimes\C \mbox{ where $W_\R$ is an isotropic real subspace of $V_\R$ } \}.$$
A \textbf{$k$-th boundary component} of $\mathfrak{S}(V_\R,\psi)$ is a $\mathfrak{F}(W_\R)$ with $\dim_\R W_\R=k.$
We note $\mathfrak{F}(\{0\})=\mathfrak{S}_g$ and other boundary
components are subsets of $\partial\mathfrak{S}_g.$
The group $\Sp(g,\R)$
has a natural action  on the set of all boundary components as
$M\bullet\mathfrak{F}(W_\R):=\mathfrak{F}(M(W_\R))\,\,\,\,\forall M\in \Sp(g,\R).$
The compact space $\overline{\mathfrak{S}}_{g}$ is a disjoint union of all boundary components, and
$$\partial\mathfrak{S}_g=\bigcup^{\circ}_{\mbox{nontrivial isotropic }W_\R\subset V_\R}\mathfrak{F}(W_\R).$$
The \textbf{normalizer} of a boundary component $\mathfrak{F}$ is the
subgroup $\sN(\mathfrak{F})$ of $\Sp(g,\R)$ containing of those $g$
such that $g\mathfrak{F}=\mathfrak{F}.$
A boundary component $\mathfrak{F}$ is said to be \textbf{rational} if
its normalizer $\sN(\mathfrak{F})$ is defined over $\Q$(i.e., there is an algebraic subgroup $N^{\mathfrak{F}} \subset \Sp(V,\psi)$
defined over $\Q$ such that $\sN(\mathfrak{F})^{+}=N^{\mathfrak{F}}(\R)^{+}$ cf.\cite{Mil}). For convenience, a($k$-th) rational boundary component is called to be a($k$-th) \textbf{cusp}(or a cusp of depth $k$).
We note that for any two isotropic rationally-defined subspaces
$W_2\subset W_1$ of $V_\R,$ the set $\{F/W_2\,\,|\,\, F\in
\mathfrak{F}(W_1)\}$ is a cusp of $\mathfrak{S}(W_2^\perp/W_2,\psi).$
A $k$-th cusp of $\mathfrak{H}_g$ always corresponds to a  Siegel space of genus $g-k,$ in particular $\mathfrak{F}(V^{(k)})\cong \mathfrak{H}_k.$
\begin{remark}[Cf.(4.15)-(4.16) $\S5$ \cite{Nam}]\label{remark on rational boundary component}
The following conditions are equivalent for a $k$-th boundary
component $\mathfrak{F}(W)$ :
\begin{myenumiii}
\item $\mathfrak{F}(W)$ is rational;

\item $W$ is an isotropic rationally-defined subspaces, i.e.,
$W=W_\Q\otimes\R$ where $W_\Q$ is an isotropic subspace of
$V_\Q$(and so $W_\R=(W_\R\cap V_\Q)\otimes \R$);

\item $\exists M\in \Sp(g,\Q)$ such that $W=M(V^{(k)});$

\item $\exists M\in \Sp(g,\Z)$ such that $W=M(V^{(k)}).$
\end{myenumiii}
\end{remark}
Since the symplectic  group $\Sp(V,\psi)$ is simple, we have :
\begin{proposition}[Baily-Borel  cf.\cite{BB66},\cite{Del73},\cite{AMRT}]
The map $\mathfrak{F}\mapsto N^{\mathfrak{F}}$ is a bijection between the set of proper boundary components
of $\mathfrak{S}_g$ to the set of  maximal parabolic algebraic subgroups of $\Sp(V,\psi).$ Moreover,
the boundary component $\mathfrak{F}$ is rational if and only if $ N^{\mathfrak{F}}$  is a
maximal rational parabolic algebraic subgroup of $\Sp(V,\psi).$
\end{proposition}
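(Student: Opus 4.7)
\medskip
\noindent\textbf{Proof plan.}

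The plan is to show that the assignment $\mathfrak{F}\mapsto N^{\mathfrak{F}}$ factors through the map $W\mapsto \mathrm{Stab}_{\Sp(V,\psi)}(W)$ on the set of nontrivial isotropic $\R$-subspaces $W\subset V_\R$, and then invoke standard structure theory of the symplectic group. First I would unfold the normalizer. Since $\mathfrak{F}(W)=\{F^1\in\overline{\mathfrak{S}}_g\,|\,F^1\cap\overline{F^1}=W\otimes\C\}$ and any $g\in\Sp(g,\R)$ commutes with complex conjugation of $V_\C$, we have $g(F^1)\cap\overline{g(F^1)}=g(F^1\cap\overline{F^1})$; consequently $g\bullet\mathfrak{F}(W)=\mathfrak{F}(W)$ if and only if $g(W)=W$. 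So $\sN(\mathfrak{F}(W))^{+}$ is exactly the identity component of the $\R$-points of the algebraic subgroup $P_W:=\mathrm{Stab}_{\Sp(V,\psi)}(W)$, and I take $N^{\mathfrak{F}(W)}:=P_W$.

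Next I would invoke the parabolic classification in $\Sp(V,\psi)$. For a symplectic space, the stabilizer of any nontrivial isotropic flag is a parabolic subgroup, and the maximal parabolics are precisely the stabilizers of single isotropic subspaces; up to $\Sp(g,\R)$-conjugation these are the standard ones $P_k:=\mathrm{Stab}(V^{(g-k)})$ for $k=1,\ldots,g$, corresponding to the $g$ nodes of the Dynkin diagram $C_g$. By Witt's extension theorem $\Sp(g,\R)$ acts transitively on isotropic subspaces of each fixed dimension $k$, and the orbit of $P_k$ under conjugation consists of all stabilizers of $k$-dimensional isotropic subspaces. Hence $W\mapsto P_W$ lands in the set of maximal parabolic algebraic subgroups of $\Sp(V,\psi)$, and it is surjective since every maximal parabolic is $\Sp(g,\R)$-conjugate to some $P_k$, which is the stabilizer of the translated isotropic subspace.

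To get injectivity I would recover $W$ from $P_W$. The unipotent radical $U_W$ of $P_W$ has a natural decreasing filtration by the powers $U_W^{(i)}$ of its derived series, and $W$ appears intrinsically as the image in $V$ of the center of $U_W$ acting on $V$ (equivalently, as the unique minimal nonzero isotropic subspace on which $U_W$ acts trivially); in particular $P_{W_1}=P_{W_2}$ forces $W_1=W_2$. Combining this with the computation of the previous paragraph yields the asserted bijection between proper boundary components and maximal parabolic algebraic subgroups. Finally, for rationality I would apply Remark \ref{remark on rational boundary component}: $\mathfrak{F}(W)$ is rational exactly when $W$ is rationally defined, i.e.\ $W=W_\Q\otimes\R$ for some isotropic $W_\Q\subset V_\Q$; on the algebraic side, $P_W=\mathrm{Stab}_{\Sp(V,\psi)}(W)$ is defined over $\Q$ precisely when its defining ideal is, which is equivalent to $W$ being rationally defined. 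Thus the bijection restricts to the claimed bijection between cusps and maximal $\Q$-parabolic subgroups.

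I expect the main obstacle to be the intrinsic recovery of $W$ from the algebraic datum of $P_W$; the transitivity and surjectivity parts are fairly formal consequences of Witt's theorem and the $C_g$ root-system classification, but uniqueness of the underlying isotropic subspace must appeal to the internal Lie-algebra structure of the unipotent radical (its center, or equivalently the fixed subspace of a generic unipotent element) rather than to stabilizer arguments alone.
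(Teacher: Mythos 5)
The paper does not actually prove this proposition: it is quoted from the classical literature (Baily--Borel, Deligne, AMRT) with only the one-line remark that $\Sp(V,\psi)$ is simple, so there is no internal argument to compare yours against. Your proposal is the standard proof and is essentially sound: identifying $\sN(\mathfrak{F}(W))$ with the real points of $\mathrm{Stab}_{\Sp(V,\psi)}(W)$ (which is the paper's $N^{\mathfrak{F}}$ from Corollary \ref{algebraic subgroups by boundary-component}), invoking the classification of maximal parabolics of a split symplectic group as stabilizers of nontrivial isotropic subspaces together with Witt transitivity, recovering $W$ intrinsically from the parabolic, and then settling rationality via Remark \ref{remark on rational boundary component} and the fact that the recovery of $W$ is defined over the field of definition of the parabolic. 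Two small repairs: first, your parenthetical reformulation of the recovery step is false as stated --- $U^{\mathfrak{F}(W)}$ acts trivially on all of $W^{\perp}$, so $W$ is certainly not the unique minimal nonzero isotropic subspace fixed pointwise by it; keep instead your primary characterization, $W=\sum_{u\in U^{\mathfrak{F}(W)}(\R)}\mathrm{Im}(u-\mathrm{id})$, which is correct (each $u-\mathrm{id}$ has image in $W$, and elements of the cone $C(\mathfrak{F}(W))$ have image equal to $W$), and which also yields the $\Q$-descent of $W$ needed for the rationality claim. Second, the step ``$g\bullet\mathfrak{F}(W)=\mathfrak{F}(W)$ iff $g(W)=W$'' silently uses that $W\mapsto\mathfrak{F}(W)$ is injective, i.e.\ that each boundary component is nonempty so that $W_\C$ can be read off as $F^1\cap\overline{F^1}$; this is true (each $\mathfrak{F}(W)$ is a copy of a lower-genus Siegel space) but should be said.
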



For any  proper boundary component  $\mathfrak{F}$  of $\mathfrak{S}_g,$
there is an isotropic subspace $V_{\mathfrak{F}}$ of $(V_\R,\psi)$
with $\mathfrak{F}(V_{\mathfrak{F}})=\mathfrak{F},$ and so there will be an
increasing filtration of $V_\R$
\begin{equation}\label{Weight-filtration-1}
   W_{\bullet}^{\mathfrak{F}}(\R)=\big(0\subset  W_{0}^\mathfrak{F}\subset W_{1}^\mathfrak{F}
   \subset W_{2}^\mathfrak{F}\big):=\big(0\subset  V_{\mathfrak{F}}\subset(V_{\mathfrak{F}})^\perp
   \subset V_\R\big).
\end{equation}
This filtration \ref{Weight-filtration-1} corresponds to a unique
morphism
$w^{'}_{\mathfrak{F}}: \G_m\to \mathrm{GSp}(V,\psi)$ defined over $\R.$
We also define the cocharacter
$w_{\mathfrak{F}}:= w^{'}_{\mathfrak{F}}w_0^{-1} : \G_m\to\Sp(V,\psi)$
where
$$w_0^{-1} :
\G_m\>(e,(\cdot)^{-1})>>\Sp(V,\psi)\times\G_{m}\>>>\mathrm{GSp}(V,\psi).$$
We note that $w_{\mathfrak{F}}$ is defined over $\R,$ and
that $w_{\mathfrak{F}}$ is defined over $\Q$ if and only if the boundary
component $\mathfrak{F}$ is rational.
We have the following composed homomorphism :
\begin{equation}\label{weight-morphism-2}
   \G_m(\R)\> w_{\mathfrak{F}} >>\Sp(g,\R)\>\Ad>>\GL(\mathfrak{g})\subset \GL(\End(V_\R)).
\end{equation}
Define
$\End(V_\R)^{i}:=\{ X\in\End(V_\R)\,\,|\,\, \mathrm{Ad}(w_{\mathfrak{F}}(\lambda))X=\lambda^i X,\forall \lambda\in \G_m(\R) \}$
and
\begin{equation}\label{decomposition of lie algebra}
 \mathfrak{g}^{i}:=\End(V_\R)^{i}\cap \mathfrak{g}=\{ X\in\mathfrak{g}\,\,|\,\, \mathrm{Ad}(w_{\mathfrak{F}}(\lambda))X=\lambda^i X,\forall \lambda\in \R^\times \}.
\end{equation}
We then have that
\begin{eqnarray*}
  \mathfrak{g} &=&\mathfrak{g}^{-2}\oplus\mathfrak{g}^{-1}\oplus\mathfrak{g}^{-0}\oplus\mathfrak{g}^{1}\oplus\mathfrak{g}^{2} \\
  & \subset & \End(V_\R)^{-2}\oplus\End(V_\R)^{-1}\oplus\End(V_\R)^{0}\oplus\End(V_\R)^{1}\oplus\End(V_\R)^{2}=\End(V_\R)
\end{eqnarray*}
by \ref{Hodge-filtration-on-Lie-alg-2} and \ref{weight-morphism-2}. Thus the weight morphism $w_{\mathfrak{F}}$ endows an increasing filtration on Lie algebra $\mathfrak{g}$(respectively on $\End(V_\R)$)
$W_{\bullet}^\mathfrak{F}(\mathfrak{g})=(0\subset W_{-2}^\mathfrak{F}(\mathfrak{g})\subset \cdots\subset W_{2}^\mathfrak{F}(\mathfrak{g})=\mathfrak{g})$
with
\begin{equation}\label{parabolic Lie algebra}
 W_{-2}^\mathfrak{F}(\mathfrak{g})=\mathfrak{g}^{-2}, W_{-1}^\mathfrak{F}(\mathfrak{g})=\mathfrak{g}^{-2}\oplus \mathfrak{g}^{-1}
\mbox{ and } W_{0}^\mathfrak{F}(\mathfrak{g})=\mathfrak{g}^{-2}\oplus \mathfrak{g}^{-1}\oplus \mathfrak{g}^{0}.
\end{equation}

\begin{lemma}\label{weight-morphism-3}For $\mathfrak{g}=\mathrm{Lie}(\Sp(g,\R)),$
$$
W_{s}^\mathfrak{F}(\mathfrak{g})=\{X\in \mathfrak{g}\,\,|\,\, X(W^\mathfrak{F}_{l})\subset W_{s+l}^\mathfrak{F}\,\,\forall l\}
$$
and
\begin{equation*}
  [W_{s}^\mathfrak{F}(\mathfrak{g}),W_{t}^\mathfrak{F}(\mathfrak{g})]\subset W_{s+t}^\mathfrak{F}(\mathfrak{g}).
\end{equation*}
\end{lemma}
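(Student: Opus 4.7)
The plan is to reduce both assertions to the eigenspace decomposition of $V_\R$ induced by the cocharacter $w_\mathfrak{F}$ and then transport the analysis to $\mathfrak{g}$ via the adjoint representation. Since $w_\mathfrak{F}:\G_m\to \Sp(V,\psi)$ is an $\R$-morphism, it yields a decomposition $V_\R=\bigoplus_k V^k$ with $w_\mathfrak{F}(\lambda)$ acting on $V^k$ as multiplication by $\lambda^k$; by the construction of $w_\mathfrak{F}$ from the filtration \ref{Weight-filtration-1}, the increasing filtration $W_l^\mathfrak{F}$ is precisely the sum of the eigenspaces up to a fixed shift (which will cancel on both sides of every displayed inequality). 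Applying $\Ad$ then yields $\End(V_\R)=\bigoplus_i \End(V_\R)^i$ with $\End(V_\R)^i=\{X\mid X(V^k)\subset V^{k+i}\ \forall k\}$, and because $w_\mathfrak{F}$ factors through $\Sp(V,\psi)$ the adjoint action preserves $\mathfrak{g}$, so $\mathfrak{g}^i=\mathfrak{g}\cap \End(V_\R)^i$ and $W_s^\mathfrak{F}(\mathfrak{g})=\bigoplus_{i\leq s}\mathfrak{g}^i$.

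For the first identity, I write $\widetilde W_s=\{X\in \mathfrak{g}\mid X(W_l^\mathfrak{F})\subset W_{s+l}^\mathfrak{F}\ \forall l\}$. The inclusion $W_s^\mathfrak{F}(\mathfrak{g})\subset \widetilde W_s$ is immediate: an element of $\mathfrak{g}^i$ with $i\leq s$ shifts the grading by exactly $i$, hence sends $W_l^\mathfrak{F}$ into $W_{l+i}^\mathfrak{F}\subset W_{l+s}^\mathfrak{F}$. For the reverse inclusion, I would take $X=\sum_i X_i\in \widetilde W_s$ with $X_i\in \mathfrak{g}^i$ and argue that if some $X_{i_0}$ with $i_0>s$ were nonzero, then choosing $v\in V^k$ with $X_{i_0}(v)\neq 0$ would place the $V^{k+i_0}$-component of $X(v)=\sum_i X_i(v)$ into a distinct eigenspace disjoint from $W_{l+s}^\mathfrak{F}$ (where $l$ is smallest with $v\in W_l^\mathfrak{F}$), contradicting the assumption $X(v)\in W_{l+s}^\mathfrak{F}$.

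The bracket inclusion is then clean, and can be obtained by either route: directly, using the first identity, $[X,Y](W_l^\mathfrak{F})\subset X(W_{l+t}^\mathfrak{F})+Y(W_{l+s}^\mathfrak{F})\subset W_{l+s+t}^\mathfrak{F}$; or spectrally, using that $\Ad(w_\mathfrak{F}(\lambda))$ is a Lie algebra automorphism of $\mathfrak{g}$, so that $\Ad(w_\mathfrak{F}(\lambda))[X,Y]=[\lambda^s X,\lambda^t Y]=\lambda^{s+t}[X,Y]$ for $X\in \mathfrak{g}^s$ and $Y\in \mathfrak{g}^t$, placing $[X,Y]$ in $\mathfrak{g}^{s+t}$, and then summing over eigenspaces. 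No serious obstacle is expected; the statement is a formal consequence of the compatibility between the grading induced by a cocharacter on a representation and the one induced on its endomorphism algebra. The only delicate point is the harmless bookkeeping between the indexings of $W_\bullet^\mathfrak{F}$ on $V_\R$ (degrees $0,1,2$) and on $\mathfrak{g}$ (degrees $-2,\ldots,2$), whose uniform shift cancels in every inequality.
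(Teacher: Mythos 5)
Your proof is correct and follows essentially the same route as the paper: split $V_\R$ into the eigenspaces of the cocharacter $w_\mathfrak{F}$ (so that $W^\mathfrak{F}_\bullet$ consists of the partial sums, up to the harmless central shift), identify $\mathfrak{g}^i$ inside $\End(V_\R)^i$ as the operators shifting this grading by $i$, and deduce both the filtration identity and the bracket inclusion. Your write-up merely makes explicit the reverse-inclusion step (vanishing of the top graded components of $X$) and the two equivalent derivations of $[W_s^\mathfrak{F}(\mathfrak{g}),W_t^\mathfrak{F}(\mathfrak{g})]\subset W_{s+t}^\mathfrak{F}(\mathfrak{g})$, which the paper leaves implicit.
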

\begin{proof}
Choose subspaces $V^j$ of $V_\R$ such that we can write
$W^{\mathfrak{F}}_{i} =\oplus_{j\leq i}V^j,$ and define $V^j=\{0\}$ if
$j\notin [-2,2].$ Similarly as in the proof of the lemma
\ref{Hodge-filtration-on-Lie-alg}, we obtain
\begin{equation*}
\mathfrak{g}^i=\{X\in \mathfrak{g}\,\,|\,\, X(V^j)\subset V^{j+i} \,\,
\forall j \}\,\, \mbox{ for any integer } i\in[-2,2].
\end{equation*}
by the commutative diagram \ref{Hodge-filtration-on-Lie-alg-2} and
the definition of $\End(V_\R)^{i}$ show that

Therefore, the first statement is true and the second statement
follows it.
\end{proof}

\begin{corollary}[Cf.\cite{Del73},\cite{AMRT},\cite{Mil90}]\label{algebraic subgroups by boundary-component}
Let $\mathfrak{F}=\mathfrak{F}(W)$ be a boundary component of the Siegel space $\mathfrak{H}_g.$ We have :
\begin{itemize}
    \item $W_{0}^\mathfrak{F}(\mathfrak{g}),$  $W_{-1}^\mathfrak{F}(\mathfrak{g})$ and
$\mathfrak{g}^0$ are Lie subalgebras of $\mathfrak{g}=\mathrm{Lie}(\Sp(g,\R));$
    \item $W_{-2}^\mathfrak{F}(\mathfrak{g})$ and $\mathfrak{g}^2$ are commutative Lie
subalgebras of $\mathfrak{g}.$
\end{itemize}

Let
$\big(W_{-1}\subset  W_{0}\subset W_{1}\subset W_2\big):= \big(0\subset  W\subset W^{\perp}\subset V_\R\big)$
be the filtration corresponding to $\mathfrak{F}.$ For each integer  $i$ in $[-2,0],$ we define  $W_{i}^\mathfrak{F}(\Sp(V,\psi))$ to be the algebraic subgroup of $\Sp(V,\psi)$
of elements acting as the identity map on $\bigoplus\limits_{p}W_p/W_{p-i}.$
We have :
\begin{itemize}
  \item $W_0^\mathfrak{F}(\Sp(V,\psi))=N^\mathfrak{F}$ is a parabolic subgroup of $\Sp(V,\psi)$ with Lie algebra $W_{0}^\mathfrak{F}(\mathfrak{g});$
  \item $\sW^\mathfrak{F}:=W_{-1}^\mathfrak{F}(\Sp(V,\psi))$ has Lie algebra $W_{-1}^\mathfrak{F}(\mathfrak{g}),$ and it is the unipotent radical of $N^\mathfrak{F};$
  \item $U^\mathfrak{F}:=W_{-2}^\mathfrak{F}(\Sp(V,\psi))$ is the center of $\sW^\mathfrak{F},$ its  Lie algebra $W_{-2}^\mathfrak{F}(\mathfrak{g})=\mathfrak{g}^{-2}$ is commutative;
  \item  $Z^\mathfrak{F}:=\mbox{the centralizer of the morphism $w_{\mathfrak{F}}$ in  $N^{\mathfrak{F}}$ },$ it has Lie algebra  $\mathfrak{g}^{0};$
\item $V^\mathfrak{F}:=\sW^\mathfrak{F}/U^{\mathfrak{F}}$ is an Abelian group
whose Lie algebra  identifies with
the space $\mathfrak{g}^{-1}.$
\end{itemize}
All above algebraic subgroups will be defined over $\Q$ if $\mathfrak{F}$ is
rational.
Similarly, we can define  Lie subgroups $W_{-2}^\mathfrak{F}(G),W_{-1}^\mathfrak{F}(G),W_{0}^\mathfrak{F}(G)$ of $G=\Sp(g,\R).$
\end{corollary}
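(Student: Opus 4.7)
The plan is to derive everything from the graded-bracket relation $[W_s^\mathfrak{F}(\mathfrak{g}), W_t^\mathfrak{F}(\mathfrak{g})] \subset W_{s+t}^\mathfrak{F}(\mathfrak{g})$ of Lemma \ref{weight-morphism-3}, together with the $w_\mathfrak{F}$-eigenspace decomposition $\mathfrak{g} = \bigoplus_{i=-2}^{2}\mathfrak{g}^i$. The key numerical input is that the filtration $W_\bullet^\mathfrak{F}(\R)$ on $V_\R$ has only three proper nontrivial steps $0 \subset W_0 \subset W_1 \subset W_2 = V_\R$, so that the weights of $\mathrm{Ad}\circ w_\mathfrak{F}$ on $\mathfrak{g}$ lie in $\{-2,-1,0,1,2\}$ and consequently $W_s^\mathfrak{F}(\mathfrak{g}) = 0$ for $s \leq -3$ while $\mathfrak{g}^s = 0$ for $|s| \geq 3$.

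The Lie-subalgebra statements then follow at once: $W_0^\mathfrak{F}(\mathfrak{g})$ and $W_{-1}^\mathfrak{F}(\mathfrak{g})$ are Lie subalgebras because $[W_0,W_0] \subset W_0$ and $[W_{-1},W_{-1}] \subset W_{-2} \subset W_{-1}$. The refined relation $[\mathfrak{g}^i,\mathfrak{g}^j] \subset \mathfrak{g}^{i+j}$, obtained by applying Lemma \ref{weight-morphism-3} to the eigenspace decomposition, gives $[\mathfrak{g}^0,\mathfrak{g}^0] \subset \mathfrak{g}^0$ and hence $\mathfrak{g}^0$ is a Lie subalgebra, while $[\mathfrak{g}^{\pm 2}, \mathfrak{g}^{\pm 2}] \subset \mathfrak{g}^{\pm 4} = 0$ shows that $W_{-2}^\mathfrak{F}(\mathfrak{g}) = \mathfrak{g}^{-2}$ and $\mathfrak{g}^2$ are commutative.

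For the algebraic-group statements, I would first identify $\mathrm{Lie}(W_i^\mathfrak{F}(\Sp(V,\psi))) = W_i^\mathfrak{F}(\mathfrak{g})$ for $i \in \{-2,-1,0\}$ by differentiating the filtration-preserving defining condition at the identity and applying Lemma \ref{weight-morphism-3}. Then I would verify in turn: (i) $N^\mathfrak{F} = W_0^\mathfrak{F}(\Sp(V,\psi))$ is the stabilizer of the isotropic subspace $W$ in $\Sp(V,\psi)$, hence by the Baily--Borel proposition the maximal parabolic attached to $\mathfrak{F}$; (ii) $\sW^\mathfrak{F}$ consists of elements acting trivially on every graded quotient $W_p/W_{p-1}$, so it is unipotent, and its Lie algebra being the nilpotent radical of $W_0^\mathfrak{F}(\mathfrak{g})$ identifies it with the unipotent radical of $N^\mathfrak{F}$; (iii) the vanishing $[W_{-1}^\mathfrak{F}(\mathfrak{g}), W_{-2}^\mathfrak{F}(\mathfrak{g})] \subset W_{-3}^\mathfrak{F}(\mathfrak{g}) = 0$ puts $W_{-2}^\mathfrak{F}(\mathfrak{g})$ centrally inside $W_{-1}^\mathfrak{F}(\mathfrak{g})$, so by Baker--Campbell--Hausdorff in the nilpotent group $\sW^\mathfrak{F}$ the subgroup $U^\mathfrak{F}$ is central in $\sW^\mathfrak{F}$, with equality to the full center following from matching Lie algebras; (iv) $\mathrm{Lie}(Z^\mathfrak{F}) = \mathfrak{g}^0$ is immediate from the definition \ref{decomposition of lie algebra} of $\mathfrak{g}^0$ as the $\mathrm{Ad}\circ w_\mathfrak{F}$-fixed subspace; (v) $V^\mathfrak{F} = \sW^\mathfrak{F}/U^\mathfrak{F}$ has Lie algebra $W_{-1}^\mathfrak{F}(\mathfrak{g})/W_{-2}^\mathfrak{F}(\mathfrak{g}) \cong \mathfrak{g}^{-1}$, and is Abelian because $[W_{-1},W_{-1}] \subset W_{-2}$ vanishes in the quotient.

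Finally, if $\mathfrak{F}$ is rational then by Remark \ref{remark on rational boundary component} the subspace $W$ admits a $\Q$-structure, so the entire filtration $W_\bullet$ descends to $V_\Q$; each of the algebraic subgroups above is then cut out of $\Sp(V,\psi)$ by $\Q$-polynomial conditions, and so is defined over $\Q$. The real Lie subgroups $W_i^\mathfrak{F}(G)$ of $G = \Sp(g,\R)$ arise by taking real points of these algebraic subgroups. The main obstacle is not substantive but organizational: one must carefully match each algebraic subgroup's defining filtration-condition to its weight-graded Lie algebra description, after which all the asserted structural properties reduce to Lemma \ref{weight-morphism-3} and standard parabolic-subgroup theory.
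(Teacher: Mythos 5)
Your proposal is correct and follows essentially the route the paper intends: the paper states this corollary without an explicit proof, presenting it as an immediate consequence of Lemma \ref{weight-morphism-3} (the relations $[W_{s}^\mathfrak{F}(\mathfrak{g}),W_{t}^\mathfrak{F}(\mathfrak{g})]\subset W_{s+t}^\mathfrak{F}(\mathfrak{g})$ and the $w_{\mathfrak{F}}$-grading) together with standard parabolic-subgroup theory from the cited references, which is exactly your argument. The only step you state a bit thinly is that $U^\mathfrak{F}$ is the \emph{full} center of $\sW^\mathfrak{F}$ (not merely central), which requires the nondegeneracy of the bracket pairing $\mathfrak{g}^{-1}\times\mathfrak{g}^{-1}\to\mathfrak{g}^{-2}$, a routine coordinate check in the Siegel case.
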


\subsection{Polarized Mixed Hodge structures attached to cusps}
For an isotropic real subspace $L_\R$ of $(V_\R,\psi)$(resp.
rational subspace $L_\Q$ of $(V_\Q,\psi)$), let
$$L_\R^{\perp}:=\{v\in V_\R\,\,|\,\, \psi(v,L_\R)=0\}\mbox{(resp. $ L_\Q^{\perp}:=\{v\in V_\Q\,\,|\,\, \psi(v,L_\Q) \}$ )},$$
let $L_\R^{\vee}$ be the dual space of $L_\R$ in
$V_\R$ with respect to
$(V_\R,\psi)$(resp. $L_\Q^{\vee}$ the dual space of $L_\Q$ in
$V_\Q$ with respect to
$(V_\Q,\psi)$).
For any $N\in\mathfrak{g},$ there is a symmetric bilinear form $\psi_N : V_\R\times V_\R\to \R$ given by
\begin{equation}\label{symmetric-bilinear-form}
  \psi_N(v, u):= \psi(v,N(u)).
\end{equation}

\begin{lemma}\label{weight filtration-positive cone}
Let $\mathfrak{F}=\mathfrak{F}(W)$ be a cusp  of $\mathfrak{H}_g$ and let $N$ be an arbitrary nonzero element in
$W_{-2}^{\mathfrak{F}}(\mathfrak{g})=\mathrm{Lie}(U^{\mathfrak{F}}(\R)).$
Let $W^{\mathfrak{F}}_\bullet(\R)=\big(0\subset
W^{\mathfrak{F}}_{\R,0} \subset W^{\mathfrak{F}}_{\R,1}\subset W^{\mathfrak{F}}_{\R,2}\big):=
\big(0\subset W \subset W^{\perp}\subset V_\R\big)$ be
the weight filtration associated to the cusp $\mathfrak{F}.$
\begin{myenumi}
\item The inclusions $\mathrm{Im}(N)\subset W $ and $W^\perp \subset \Ker(N)$ are held. The element $N$ induces a weight filtration $W_\bullet(N)=(0\subset W_{-1}(N) \subset W_0(N)\subset W_1(N))$
 by setting  $ W_1(N):=V_\R, W_{-1}(N):=\mathrm{Im}(N: V_\R\to V_\R)$ and $W_{0}(N):=\Ker(N: V_\R\to V_\R).$

\item For any two $N_1, N_2\in
W_{-2}^{\mathfrak{F}}(\mathfrak{g}),$ $N_1N_2=N_2N_1=0.$

\item The $\psi_N$ can be regarded as a symmetric bilinear form on $V_\R/ W^{\perp}.$
 If  $\psi_N$ is non-degenerate on $V_\R/ W^{\perp}$ then
   $W^{\mathfrak{F}}_\bullet(\R)=(W(N)[-1])_\bullet,$
  where $(W(N)[-1])_\bullet$ is the weight filtration given by $(W(N)[-1])_j:=W_{j-1}(N)\,\, \forall j.$
\end{myenumi}
\end{lemma}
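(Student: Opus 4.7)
The plan is to prove the three parts consecutively: parts~(1) and~(2) should follow directly from Lemma~\ref{weight-morphism-3} combined with the isotropy of $W$, while part~(3) will require a short linear-algebra argument to upgrade the inclusions of~(1) to equalities under the non-degeneracy hypothesis.

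For part~(1), I would simply unwind Lemma~\ref{weight-morphism-3} for $N\in W_{-2}^{\mathfrak{F}}(\mathfrak{g})$, which gives $N(W_l^{\mathfrak{F}})\subset W_{l-2}^{\mathfrak{F}}$ for every $l$. Taking $l=2$ yields $\mathrm{Im}(N)\subset W_0^{\mathfrak{F}}=W$, and taking $l=1$ yields $N(W^{\perp})\subset W_{-1}^{\mathfrak{F}}=0$, i.e.\ $W^{\perp}\subset\Ker(N)$. The description of $W_\bullet(N)$ is then immediate from its definition. Part~(2) will be a one-line composition argument: by~(1), $N_2(V_\R)\subset W$, and since $W$ is isotropic, $W\subset W^{\perp}\subset\Ker(N_1)$, so $N_1N_2=0$; the other composition follows by symmetry.

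For part~(3), my strategy will be the following. Since $N$ kills $W^{\perp}$ and lands in $W$, it descends to a linear map $\bar{N}:V_\R/W^{\perp}\to W$. I would then check that the symmetric form $\psi_N(v,u)=\psi(v,N(u))$ is well-defined on $V_\R/W^{\perp}\times V_\R/W^{\perp}$, using $N(W^{\perp})=0$ to kill the second argument on $W^{\perp}$ and the fact that $\mathrm{Im}(N)\subset W$ is $\psi$-orthogonal to $W^{\perp}$ to kill the first. The assumed non-degeneracy of this descended form forces $\bar{N}$ to be injective, so $\Ker(N)=W^{\perp}$; the symplectic identity $\dim W+\dim W^{\perp}=2g$ then produces $\dim\mathrm{Im}(N)=\dim W$, whence $\mathrm{Im}(N)=W$. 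Finally, rewriting these equalities in filtration language and matching the shift $[-1]$ yields $W_\bullet^{\mathfrak{F}}(\R)=(W(N)[-1])_\bullet$.

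The main obstacle will be purely organizational: lining up the index shift $W(N)[-1]$ correctly with the cusp filtration $(0\subset W\subset W^{\perp}\subset V_\R)$ and confirming that $\psi_N$ descends cleanly to a form on $V_\R/W^{\perp}$. No deeper difficulty is anticipated, as every step rests on a result already established in the excerpt.
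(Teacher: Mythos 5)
Your proposal is correct and follows essentially the same route as the paper: Lemma \ref{weight-morphism-3} for part (1), composition through the isotropic $W$ for part (2), and in part (3) the non-degeneracy of $\psi_N$ on $V_\R/W^\perp$ forcing $\Ker(N)=W^\perp$, followed by a dimension count (rank-nullity plus $\dim W+\dim W^\perp=2g$) giving $\mathrm{Im}(N)=W$ and hence the match of filtrations after the shift $[-1]$. The only cosmetic differences are that you read $W^\perp\subset\Ker(N)$ directly off the filtration lemma and organize (3) via the descended map $\bar N$, whereas the paper obtains the kernel inclusion from $\mathrm{Im}(N)\subset W$ by $\psi$-duality and records the equivalence $\mathrm{Im}(N)=W\Leftrightarrow\Ker(N)=W^\perp$ as an explicit claim.
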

\begin{proof}
\begin{myenumi}

\item  The lemma \ref{weight-morphism-3} shows that $\mathrm{Im}(N)\subset W $ as $N\in W_{-2}^{\mathfrak{F}}(\mathfrak{g}).$
It is easy to obtain that
$\mathrm{Im}(N)\subset W  \Longleftrightarrow W^\perp \subset \Ker(N).$

\item It is obvious by that $\mathrm{Im}(N_2)\subset W\subset W^\perp\subset \Ker(N_1).$

\item If there is a vector $0\neq v\in W_0(N)\setminus W^\perp$
then  $N(v)=0,$ and so $\psi(v, N(v))=0.$ Thus, we must have
$W^\perp=W_0(N)=\Ker(N: V_\R\to V_\R).$

 \noindent\textbf{Claim}: $\mathrm{Im}(N)=W
\Longleftrightarrow W^\perp = \Ker(N).$

\noindent{\it Proof of the claim.}
\begin{itemize}
  \item "$\Rightarrow$" : Suppose $\mathrm{Im}(N)= W.$ Then,
$\psi(y, W)=\psi(y,\mathrm{Im}(N))=-\psi(N(y),V_\R)=0$
for any $y\in \Ker(N).$ Thus $\Ker(N)\subset W^\perp.$

  \item "$\Leftarrow$" : Suppose $W^\perp = \Ker(N).$ Since
$N: \frac{V_\R}{\Ker(N) }\>\cong>>\mathrm{Im}(N),$
we get $$\dim_\R\mathrm{Im}(N)=\dim_\R V_\R-\dim_\R\Ker(N)=\dim_\R V_\R-\dim_\R W^\perp=\dim_\R W.$$
\end{itemize}
\end{myenumi}
\end{proof}

Given a rational boundary component $\mathfrak{F}(W),$ we have a convex cone in $\mathfrak{g}$ which does not contain any linear subspace :
\begin{equation}\label{positive-cone}
 C(\mathfrak{F}(W)):=\{ N\in W_{-2}^{\mathfrak{F}(W)}(\mathfrak{g})\,\,|\,\, \psi_{N}>0 \mbox{ on } \frac{V_\R}{W^\perp}\}
\end{equation}
where $\psi_N$ is a symmetric bilinear form on $V_\R$ defined in \ref{symmetric-bilinear-form}(cf.\cite{AMRT},\cite{CCK79}), we always call $C(\mathfrak{F}(W))$
positive cone in $W_{-2}^{\mathfrak{F}(W)}(\mathfrak{g}).$
With respect to the cusp $\mathfrak{F}_k:=\mathfrak{F}(V^{(k)}),$ the positive
cone is
$$C(\mathfrak{F}_k)= \{ \left(
   \begin{array}{cccc}
     0_{k} & 0     & 0     & 0 \\
     0       & 0_{g-k} & 0     & u \\
     0       & 0     &0_{k}& 0 \\
     0       & 0     & 0     & 0_{g-k}\\
   \end{array}
 \right) \,\, |\,\, u\in \mathrm{Sym}^{+}_{g-k}(\R) \},$$
where $\mathrm{Sym}^{+}_{g-k}(\R)$ is defined to be the set of positive-definite matrices in
$\mathrm{Sym}_{g-k}(\R).$

\begin{corollary}\label{weight filtration-positive cone-1}
Let $\mathfrak{F}=\mathfrak{F}(W)$ be a cusp of $\mathfrak{H}_g$ and
$N$  an arbitrary nonzero elements in $C(\mathfrak{F}).$
We have $N^2=0$ and
$$W(N)[-1]_\bullet= W^{\mathfrak{F}}_\bullet(\R)=\big(0\subset
W^{\mathfrak{F}}_{\R,0}(=W) \subset W^{\mathfrak{F}}_{\R,1}(=W^{\perp})\subset W^{\mathfrak{F}}_{2,\R}(=V_\R)\big).$$
\end{corollary}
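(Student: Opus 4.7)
The statement is a direct corollary of Lemma~\ref{weight filtration-positive cone}, and the plan is essentially to check that the hypotheses of parts (2) and (3) of that lemma apply to any $N\in C(\mathfrak{F})$.

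First, for the vanishing $N^{2}=0$, I would invoke part (2) of Lemma~\ref{weight filtration-positive cone} with $N_{1}=N_{2}=N$. Since $C(\mathfrak{F})\subset W_{-2}^{\mathfrak{F}}(\mathfrak{g})$ by definition of the positive cone \eqref{positive-cone}, both elements lie in $W_{-2}^{\mathfrak{F}}(\mathfrak{g})$, and the lemma yields $N\cdot N=N^{2}=0$ immediately. (One can also see this directly: $\mathrm{Im}(N)\subset W\subset W^{\perp}\subset\Ker(N)$ by part (1) of the same lemma.)

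Second, for the identification of the weight filtrations, the key observation is that the defining condition $\psi_{N}>0$ on $V_{\R}/W^{\perp}$ for membership in $C(\mathfrak{F})$ is exactly positive-definiteness, hence in particular non-degeneracy of $\psi_{N}$ on $V_{\R}/W^{\perp}$. This is precisely the hypothesis of part (3) of Lemma~\ref{weight filtration-positive cone}, whose conclusion reads
\begin{equation*}
W^{\mathfrak{F}}_{\bullet}(\R)=(W(N)[-1])_{\bullet}.
\end{equation*}
Unpacking the shift, this gives $W_{0}^{\mathfrak{F}}(\R)=W_{-1}(N)=\mathrm{Im}(N)$ and $W_{1}^{\mathfrak{F}}(\R)=W_{0}(N)=\Ker(N)$, so $\mathrm{Im}(N)=W$ and $\Ker(N)=W^{\perp}$ as expected.

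There is essentially no obstacle here: the whole content is recognizing that positive-definiteness implies non-degeneracy and that $N$ belongs to $W_{-2}^{\mathfrak{F}}(\mathfrak{g})$. If anything needs care, it is only to remark that the filtration $(0\subset W\subset W^{\perp}\subset V_{\R})$ corresponding to $\mathfrak{F}(W)$ (see \eqref{Weight-filtration-1}) is indeed the filtration denoted $W_{\bullet}^{\mathfrak{F}}(\R)$ in the statement, so that the conclusions $\mathrm{Im}(N)=W$ and $\Ker(N)=W^{\perp}$ match the claimed identification.
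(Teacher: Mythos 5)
Your proposal is correct and follows the same route as the paper, whose proof simply cites Lemma~\ref{weight filtration-positive cone}; you merely make explicit that positive-definiteness of $\psi_N$ on $V_\R/W^\perp$ gives the non-degeneracy hypothesis of part (3) and that $N\in C(\mathfrak{F})\subset W_{-2}^{\mathfrak{F}}(\mathfrak{g})$ lets part (2) (or part (1)) yield $N^2=0$. Nothing further is needed.
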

\begin{proof}
It is obvious by  the lemma \ref{weight filtration-positive cone}.\\
\end{proof}

For any isotropic rationally-defined subspace $W$ of
$V_\R,$ the  dual space $W^\vee$ of $W$  is also an isotropic rationally-defined subspace of
$V_\R$ satisfying $\dim_\R W^\vee=\dim_\R W.$ For any cusp $\mathfrak{F}=\mathfrak{F}(W)$ of the Siegel
space $\mathfrak{H}_g,$
\begin{equation}\label{dual-cusp }
  \mathfrak{F}^\vee:=\mathfrak{F}(W^\vee)
\end{equation}
is defined to be the \textbf{dual cusp} of $\mathfrak{F}.$

We observe that the space $\check{\mathfrak{S}}_g$ can be identified with a set of
Hodge filtrations
$$\{F^{\bullet}=(0\subset F^1\subset V_\C)\,\, | \,\,F^1\in \mathrm{Grass}(g, V_\C),\,\, \psi(F^1,F^1)=0\}$$
and  any cusp $\mathfrak{F}$ can also be identified with a set of certain Hodge filtrations.

\begin{lemma}\label{nilpotent orbit}
Let $\mathfrak{F}=\mathfrak{F}(W)$ be a cusp of the Siegel space
$\mathfrak{H}_g$ and $W^{\mathfrak{F}}_\bullet=(0\subset W_\Q\subset W^{\perp}_\Q\subset
V_\Q)$ the corresponding weight filtration on $V_\Q$ where  $W_\Q$ is an isotropic subspace of $V_\Q$
given by $W=W_\Q\otimes\R.$ Let $ N\in C(\mathfrak{F})$ be an element in the positive cone $C(\mathfrak{F})\subset W^{\mathfrak{F}}_{-2}(\mathfrak{g})$
and $F^\bullet=(0\subset F^1\subset V_\C)$ a filtration in
$\mathfrak{F}.$
\begin{myenumi}
\item There are following direct sum decompositions
\begin{eqnarray*}
  W^\perp_\Q &=&W_\Q\bigoplus((W^{\vee}_\Q)^\perp\cap W^\perp_\Q),  \\
 (W^\vee_\Q)^\perp &=&W^\vee_\Q\bigoplus((W^{\vee}_\Q)^\perp\cap W^\perp_\Q),  \\
  F^1  &=&  W_\C \bigoplus((W^{\vee}_\C)^\perp\cap W^\perp_\C\cap F^1).
\end{eqnarray*}
Each above decomposition is orthogonal under the form $\psi.$

Define $$\check{F}^1:=W^\vee_\C\bigoplus
((W^{\vee}_\C)^\perp\cap W^\perp_\C\cap F^1).$$ The dual
filtration $\check{F}^\bullet:=(0\subset \check{F}^1 \subset
V_\C)$ is in $\mathfrak{F}^\vee,$ and there is a bijection $$
\mathfrak{F}\>>> \mathfrak{F}^\vee\, \,\,  F^\bullet \longmapsto
\check{F}^\bullet.$$

\item  That $N(V_\R)=N(W^\vee_\R)=W_\R,$ $N(\check{F}^1)=W_\C$ and $N(F^1)=0.$ Also, there is an isomorphism $N|_{W^\vee_\R}:
W^\vee_\R\>\cong>>W_\R.$

\item There holds $\exp(\sqrt{-1}N)\check{F}^1\in \mathfrak{S}_g.$
\end{myenumi}
\end{lemma}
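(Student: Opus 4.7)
The plan is to exploit that $W$ and $W^\vee$ are isotropic rationally-defined subspaces of equal dimension $k$ with $\psi$ pairing them non-degenerately. This forces $W_\Q \cap W^\vee_\Q = 0$ and makes $W_\Q + W^\vee_\Q$ a symplectic subspace of dimension $2k$, with symplectic complement $W^\perp_\Q \cap (W^\vee_\Q)^\perp = (W_\Q + W^\vee_\Q)^\perp$ of dimension $2g-2k$. From the ambient splitting $V_\Q = (W_\Q + W^\vee_\Q) \oplus (W_\Q + W^\vee_\Q)^\perp$ together with $W \subset W^\perp$ and $W^\vee \subset (W^\vee)^\perp$, the first two direct sum decompositions of (1) drop out by dimension count and a check that the intersections $W_\Q \cap ((W^\vee_\Q)^\perp \cap W^\perp_\Q)$ and $W^\vee_\Q \cap ((W^\vee_\Q)^\perp \cap W^\perp_\Q)$ vanish (both reduce to $W \cap (W^\vee)^\perp = 0$). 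Orthogonality under $\psi$ is immediate from the defining property of $\perp$.

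For the third decomposition, note that $W_\C \subset F^1 \cap \overline{F^1} \subset F^1$, and isotropy of $F^1$ together with $W_\C \subset F^1$ forces $F^1 \subset W^\perp_\C$. Intersecting the complexified first decomposition with $F^1$ yields $F^1 = W_\C \oplus ((W^\vee_\C)^\perp \cap W^\perp_\C \cap F^1)$. To verify $\check{F}^\bullet \in \mathfrak{F}^\vee$, I will check three things: isotropy of $\check{F}^1$ (using that $W^\vee$ is isotropic, $\psi((W^\vee_\C)^\perp, W^\vee_\C) = 0$, and $F^1$ isotropic); dimension $g$ (from $k + (g-k)$); and $\check{F}^1 \cap \overline{\check{F}^1} = W^\vee_\C$, which reduces via the second decomposition to the identity $F^1 \cap \overline{F^1} \cap (W^\vee_\C)^\perp = W_\C \cap (W^\vee_\C)^\perp = 0$. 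The bijection $\mathfrak{F} \to \mathfrak{F}^\vee$ follows by symmetry: repeating the construction with $W^\vee$ in place of $W$ recovers the original $F^\bullet$ from $\check{F}^\bullet$.

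Part (2) is a direct consequence of Lemma \ref{weight filtration-positive cone} and Corollary \ref{weight filtration-positive cone-1}: since $N \in C(\mathfrak{F})$, we have $\Ker(N) = W^\perp_\R$ and $\mathrm{Im}(N) = W_\R$, giving $N(V_\R) = W_\R$ and $N(F^1) = 0$ (as $F^1 \subset W^\perp_\C$). Because $W^\vee \cap W^\perp = 0$, the restriction $N|_{W^\vee_\R}$ is injective, and dimension count forces it to surject onto $W_\R$. Applying this to $\check{F}^1$, the second summand $(W^\vee_\C)^\perp \cap W^\perp_\C \cap F^1$ sits in $\Ker(N)$, so $N(\check{F}^1) = N(W^\vee_\C) = W_\C$.

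The main computation is (3). Since $N^2 = 0$ by Corollary \ref{weight filtration-positive cone-1}, $\exp(\sqrt{-1}N) = I + \sqrt{-1}N \in \Sp(V,\psi)(\C)$, so $\exp(\sqrt{-1}N)\check{F}^1$ is automatically a $g$-dimensional isotropic subspace of $V_\C$; only the positivity $\sqrt{-1}\psi(\cdot, \overline{\cdot}) > 0$ needs work. I will decompose $u \in \check{F}^1$ as $u = w^\vee + x$ with $w^\vee \in W^\vee_\C$ and $x \in (W^\vee_\C)^\perp \cap W^\perp_\C \cap F^1$, and expand $\psi((I+\sqrt{-1}N)u, \overline{(I+\sqrt{-1}N)u})$ using $\psi(Nu,v) + \psi(u,Nv) = 0$, $N^2 = 0$, and the vanishings $\psi(W^\vee_\C, W^\vee_\C) = 0$, $\psi(W^\perp_\C, W_\C) = 0$, $\psi(x, W^\vee_\C) = 0$. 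All cross terms cancel and the expression collapses to
\[
\sqrt{-1}\psi((I+\sqrt{-1}N)u, \overline{(I+\sqrt{-1}N)u}) \;=\; \sqrt{-1}\psi(x, \overline{x}) + 2\psi_N(w^\vee, \overline{w^\vee}).
\]
The first summand is nonnegative because $F^\bullet \in \overline{\mathfrak{S}}_g$, and is strictly positive on the $W_\C$-complement of $F^1$ (since $\sqrt{-1}\psi(\cdot,\overline{\cdot})|_{F^1}$ has kernel exactly $F^1 \cap \overline{F^1} = W_\C$). The second is strictly positive for $w^\vee \neq 0$ because $N \in C(\mathfrak{F})$ means $\psi_N$ is positive definite on $V_\R / W^\perp_\R$, and the composition $W^\vee_\R \hookrightarrow V_\R \twoheadrightarrow V_\R/W^\perp_\R$ is an isomorphism (its kernel is $W^\vee \cap W^\perp = 0$). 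The main obstacle is the careful bookkeeping of these vanishings in the positivity computation and the passage from the real positive-definiteness of $\psi_N$ to its Hermitian extension on $W^\vee_\C$.
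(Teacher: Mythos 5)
Your proof is correct and follows essentially the same route as the paper: the same three $\psi$-orthogonal decompositions, the same collapse of the Hermitian form to $\sqrt{-1}\psi\big(\exp(\sqrt{-1}N)u,\overline{\exp(\sqrt{-1}N)u}\big)=\sqrt{-1}\psi(x,\overline{x})+2\psi_N(w^\vee,\overline{w^\vee})$, and positivity drawn from the semi-positivity of $\sqrt{-1}\psi(\cdot,\overline{\cdot})$ on $F^1$ (with radical $F^1\cap\overline{F^1}=W_\C$) together with the positive-definiteness of $\psi_N$ transferred to $W^\vee$. The only difference is presentational: the paper first conjugates to the standard cusp $\mathfrak{F}_k$ and computes with the explicit block form of $N$, whereas you argue invariantly via $\Ker(N)=W^\perp_\R$, $\mathrm{Im}(N)=W_\R$ and the isomorphism $W^\vee_\R\cong V_\R/W^\perp_\R$, and you are in fact somewhat more explicit than the paper in verifying $\check{F}^\bullet\in\mathfrak{F}^\vee$ (just remember that membership in $\mathfrak{F}^\vee$ also asks for $\sqrt{-1}\psi(\check{F}^1,\overline{\check{F}^1})\geq 0$, which follows at once from the same orthogonality relations).
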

\begin{proof}
There  exists a $M\in \Sp(g,V_\Q)$ such that
$\mathfrak{F}_k:=\mathfrak{F}(V^{(k)})=M(\mathfrak{F})$ for some  $k,$ we then  have
$w_{\mathfrak{F}_{k}}=Mw_{\mathfrak{F}}M^{-1}$ and
$$W^{\mathfrak{F}_k}_\bullet=MW^{\mathfrak{F}}_\bullet M^{-1},\,\,
W_\bullet^{\mathfrak{F}_k}(\mathfrak{g})=
MW_\bullet^{\mathfrak{F}}(\mathfrak{g})M^{-1} \mbox{ and }
C(\mathfrak{F}_k)=MC(\mathfrak{F})M^{-1}. $$ In particular, we have
$(V^{(k)_\Q})^{\vee}=M(W^{\vee}_\Q) \mbox{ and }
((V^{(k)}_\Q)^{\vee})^\perp=M((W^{\vee}_\Q)^\perp).$

Thus, it is
sufficient to prove the statements in the case of
$\mathfrak{F}=\mathfrak{F}_k.$ Now, we obtain :
$$(V^{(k)}_\Q)^\vee=\mathrm{span}_\Q\{e_{g+k+1},\cdots, e_{2g}\},
(V^{(k)}_\Q)^\perp=\mathrm{spac}_\Q\{e_1,\cdots, e_g,
e_{g+1},\cdots, e_{g+k}\}$$ and $((V^{(k)}_\Q)^{\vee})^\perp\cap
(V^{(k)}_\Q)^\perp=\mathrm{span}_\Q\{e_1,\cdots, e_k,
e_{g+1},\cdots,e_{g+k}\}.$
\begin{myenumi}
\item
Obviously, there is a direct sum decomposition
$$(V^{(k)}_\Q)^\perp=V^{(k)}_\Q\oplus(((V^{(k)}_\Q)^\vee)^\perp\cap(V^{(k)}_\Q)^\perp).$$
This decomposition is orthogonal with respect to the form $\psi.$ By duality, we also get the second equality in the statement (1).
Since $\psi(F^1, F^1)=0,$ we have $F^1\subset W_\C^\perp.$ By the
first equality, any $f\in F^1$ can be written as $f=v_1+v_2$ where
$v_1\in W_\C$ and $v_2\in(W^{\vee}_\C)^\perp\cap W_\C^\perp.$ Due
to $W_\C\subset F^1,$ we have $v_2=f-v_1\in F^1$ and so $v_2\in
(W^{\vee}_\C)^\perp\cap W^\perp_\C\cap F^1.$ Thus, the third
equality in the statement (1) is
true.

\item  Since $$N=\left(
   \begin{array}{cccc}
     0_{k} & 0     & 0     & 0 \\
     0       & 0_{g-k} & 0     & B \\
     0       & 0     &0_{k}& 0 \\
     0       & 0     & 0     & 0_{g-k}\\
   \end{array}
 \right)$$ where $B$ is a positive-definite matrix in  $\mathrm{Sym}_k(\R),$
 We must have
$$N(W^\vee_\R)=W_\R\,\,\mbox{ and }\,N(W^\perp_\R)=0.$$
The other equalities in the statement (2) are obvious.

\item Let $f$ be a  an arbitrary nontrivial vector in $\check{F}^1.$ By the statement (1), we can write
$$f=w_1+w_2 \mbox{ where } w_1\in W_\C^\vee \mbox{ and }w_2\in(W^{\vee}_\C)^\perp\cap W^\perp_\C\cap F^1.$$
Then we have : 
\begin{eqnarray*}
  \sqrt{-1}\psi(\exp(\sqrt{-1}N)f, \overline{\exp(\sqrt{-1}N)f} ) &=& \sqrt{-1}\psi(\exp(\sqrt{-1}N)f, \exp(-\sqrt{-1}tN)\overline{f} ) \\
   &=& \sqrt{-1}\psi(f, \exp(-2\sqrt{-1}N)\overline{f}) \\
   &=& \sqrt{-1}\psi(f, \overline{f})+2\psi(f, N\overline{f}) \\
   &=& \sqrt{-1}\psi(f, \overline{f})+2\psi(w_1, N\overline{w_1})+ 2\psi(w_2, N\overline{w_1})\\
   &=& \sqrt{-1}\psi(f, \overline{f})+2\psi(w_1, N\overline{w_1})+ 2\psi(\overline{w_1}, Nw_2)\\
   &=& \sqrt{-1}\psi(w_2, \overline{w_2})+2\psi(w_1, B\overline{w_1}).
\end{eqnarray*}
We get
$\sqrt{-1}\psi(w_2, \overline{w_2})\geq 0,$ and
obtain that $\sqrt{-1}\psi(w_2, \overline{w_2})=0$ if and only if $w_2=0.$
Also, we get $\psi(w_1,
B\overline{w_1})\geq 0$  by $N\in C(\mathfrak{F}),$ and obtain that $\psi(w_1, B\overline{w_1})=0$ if and
only if $w_1=0.$ Therefore $\sqrt{-1}\psi(\exp(\sqrt{-1}N)f,
\overline{\exp(\sqrt{-1}N)f} ))>0$ and so $\sqrt{-1}\psi(\cdot,
\overline{\cdot})$ is Hermitian positive on
$\exp(\sqrt{-1}N)\check{F}^1.$ Now we can finish the proof of the statement (3) by the fact that a point $F\in \overline{\mathfrak{S}}_{g}$
is in $\mathfrak{S}_g$ if and only if $\sqrt{-1}\psi(\cdot, \overline{\cdot})$ is Hermitian positive on $F.$

\end{myenumi}

\end{proof}

Some calculations in the lemma \ref{nilpotent orbit} are taken from  \cite{CCK79} and \cite{Fal} with minor modification.
In \cite{CCK79}, Carlson, Cattani and Kaplan  originally use Hodge theory to construct toroidal compactifications of Siegel varieties. In the following theorem \ref{PMHS-rational boundary component}, we further observe that the Hodge-theoretic interpretation of boundary components naturally coincides with the classic description given by Satake-Baily-Borel in \cite{Sat} and \cite{BB66}:
Any cusp of Siegel space $\mathfrak{H}_g$ can be  identified as a set of certain weight one polarized mixed Hodge structures.

A \textbf{mixed Hodge structure}(MHS) on $V_\Q$(resp.$V_\R$) consists of two filtrations,
an increasing filtration on $V_\Q$(resp. $V_\R$)-the weight
filtration $W_\bullet,$ and a decreasing filtration $F^\bullet$ on
$V_\C$-the Hodge filtration, such that the filtration  $F^\bullet$
induces a Hodge structure on each $\Gr_r^{W}V_\Q:=W_{r}/W_{r-1}$
(resp. $\Gr_r^{W}V_\R:=W_{r}/W_{r-1}$) of pure weight $r,$ where
$F^p(\Gr_r^{W}V_\C):= \frac{F^p\cap(W_r\otimes \C)}{F^p\cap
(W_{r-1}\otimes \C)}\,\,\forall p$(cf.\cite{Del71},\cite{Sch73}).

\begin{definition}[Cf.\cite{CKS86}\&\cite{CK87}]
A \textbf{polarized mixed Hodge structure}(PMHS) of  weight $l$ on $V_\R$ consists of a MHS $(W_\bullet,F^\bullet)$
on $V_\R,$ a nilpotent element $N\in F^{-1}\mathfrak{g}_\C\cap\mathfrak{g}_\R$ and a non-degenerate bilinear form
$Q$ such that
\begin{itemize}
\item $N^{l+1}=0,$ and
$W_\bullet=(W(N)[-l])_\bullet$ where $W(N)[-l]_j:=W_{j-l}(N)\,\,\forall j;$

\item $Q(F^p, F^{l-p+1})=0\,\,\forall p;$

\item $N(F^p)\subset F^{p-1}\,\,\forall p;$

\item the weight $l+r$ Hodge structure induced by $F^\bullet$ on $P_{l+r}:=\ker(N^{r+1}:\Gr_{l+r}^{W_\bullet}\to \Gr_{l-r-2}^{W_\bullet})$ is polarized by $Q(\cdot, N^r(\cdot)),$ i.e., that $Q(\cdot, N^r(\cdot))$ is $(-1)^{l+r}$-symmetric on $P_{l+r}$ and
    \begin{eqnarray*}
      Q(P_{l+r,\C}^{p_1,q_1}, N^r(P_{l+r,\C}^{p_2, q_2})) &=& 0\,\,\,\,\mbox{ unless } p_1=q_2 \mbox{ and } p_2=q_1 \\
      (\sqrt{-1})^{p-q}Q(v, N^r(\overline{v}))           &>& 0\,\,\,\,\mbox{ for any nonzero } v\in P_{l+r,\C}^{p,q}.
    \end{eqnarray*}
\end{itemize}
\end{definition}

\begin{theorem}\label{PMHS-rational boundary component}
 Let $\mathfrak{F}$ be a rational component of Siegel space
$\mathfrak{H}_g$ and
$$W^\mathfrak{F}_\bullet=\big(0\subset W^\mathfrak{F}_0\subset W^\mathfrak{F}_1\subset W^\mathfrak{F}_2(=V_\Q)\big)$$
the corresponding weight filtration on the rational space $V_\Q.$ Let $\psi$ be the standard symplectic form  on $V_\R=V_\Q\otimes_\Q\R.$
We have :
\begin{myenumi}
\item For any Hodge filtration $F^\bullet \in \mathfrak{F}^\vee$ and any element $N\in C(\mathfrak{F}),$ the quadruple
 $(F^\bullet_{\tau}, W^{\mathfrak{F}}_{\bullet},N, \psi)$  determines a polarized mixed Hodge structure of weight one on $V_\R.$

\item Any pair $(F^\bullet_{\tau}, W^{\mathfrak{F}}_{\bullet})$ with
$F^\bullet_\tau \in \mathfrak{S}_g$ is a mixed Hodge structure of
weight one on $V_\Q.$

\item Moreover,
$$\mathfrak{F}^\vee=\{F^\bullet \in \check{\mathfrak{S}}_g \,\,|\,\,(F^\bullet, W^{\mathfrak{F}}_{\bullet},N,\psi) \mbox{ is a PMHS of weight one for all } N\in C(\mathfrak{F})\}.$$
\end{myenumi}
\end{theorem}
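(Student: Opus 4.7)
The plan is to reduce to the model cusp $\mathfrak{F}_k=\mathfrak{F}(V^{(k)})$ by $\Sp(g,\Q)$-equivariance (Remark \ref{remark on rational boundary component}) and then exploit in all three parts the explicit decomposition of $F^1$ given by Lemma \ref{nilpotent orbit}(1): for $F^\bullet\in\mathfrak{F}^\vee$ one has the $\psi$-orthogonal decomposition $F^1=W^\vee_\C\oplus R$ with $R=F^1\cap W^\perp_\C\cap (W^\vee)^\perp_\C$, and a parallel decomposition must be recovered from PMHS data under the hypothesis of part (3).

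For statement (1), I would first verify the MHS axioms. The two key identities are $F^1\cap W_\C=0$ and $F^1+W^\perp_\C=V_\C$: the first follows from the decomposition above together with the nondegeneracy of the $\psi$-pairing $W\times W^\vee\to\R$, the second is Lagrangian duality. These give the types $(0,0)$ on $\Gr_0^W$ and $(1,1)$ on $\Gr_2^W$; for $\Gr_1^W=W^\perp_\C/W_\C$ I check that $F^1\cap\overline{F^1}\cap W^\perp_\C=W^\vee_\C\cap W^\perp_\C=0$, yielding a pure weight-one Hodge structure on the middle piece. The remaining PMHS data is then immediate: $N^2=0$ and $W^\mathfrak{F}_\bullet=W(N)[-1]_\bullet$ from Corollary \ref{weight filtration-positive cone-1}; $\psi(F^1,F^1)=0$ from $F^\bullet\in\check{\mathfrak{S}}_g$; the transversality $N(F^p)\subset F^{p-1}$ is vacuous in weight one. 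Hermitian positivity on $P_1$ holds because $\sqrt{-1}\,\psi(\cdot,\overline{\cdot})|_{F^1}$ has radical $F^1\cap\overline{F^1}=W^\vee_\C$ and hence descends to a positive definite form on the complement $R$, while positivity on $P_2$ is precisely the computation already appearing in the proof of Lemma \ref{nilpotent orbit}(3), since $N\in C(\mathfrak{F})$. Statement (2) is a minor variant of the same argument: replacing $F^1\cap\overline{F^1}=W^\vee_\C$ by $F^1\cap\overline{F^1}=0$, the step $F^1\cap W_\C=0$ now comes from $W$ being isotropic and $\sqrt{-1}\,\psi>0$ on $F^1\setminus 0$, and the MHS conditions follow verbatim.

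Statement (3) is the substantive part. The inclusion $\mathfrak{F}^\vee\subseteq\{F^\bullet:\text{PMHS for all }N\in C(\mathfrak{F})\}$ is (1). For the reverse direction I would use the polarization of $P_2$ together with the sign factor $(\sqrt{-1})^{p-q}$ and the positivity of $\psi_N$ on $V_\R/W^\perp$ to rule out the Hodge types $(0,2)$ and $(2,0)$ on $\Gr_2^W$, forcing pure type $(1,1)$; this recovers $F^1+W^\perp_\C=V_\C$ and by Lagrangian duality $F^1\cap W_\C=0$. Setting $R=F^1\cap W^\perp_\C$ and combining Hermitian positivity on $R$ (from $P_1$) with positivity on $P_2$, I would re-derive the identity from Lemma \ref{nilpotent orbit}(3),
\[
\sqrt{-1}\,\psi\bigl(\exp(\sqrt{-1}N)f,\overline{\exp(\sqrt{-1}N)f}\bigr)=\sqrt{-1}\,\psi(f,\overline{f})+2\,\psi(f,N\overline{f}),
\]
to show $\exp(\sqrt{-1}N)F^\bullet\in\mathfrak{S}_g$. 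Running the bijection $\mathfrak{F}\leftrightarrow\mathfrak{F}^\vee$ of Lemma \ref{nilpotent orbit}(1) in reverse then presents $F^\bullet$ as the dual of a unique $F'^\bullet\in\mathfrak{F}$ with $F'^1=W_\C\oplus R$, which gives $F^\bullet\in\mathfrak{F}^\vee$ and in particular $F^1\cap\overline{F^1}=W^\vee_\C$.

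The hardest step I anticipate is the converse of (3): the PMHS hypothesis provides Hermitian positivity only on the piece $R\subset F^1$ and $\psi_N$-positivity only on $V_\R/W^\perp$, and these two partial positivities must be spliced into the single global positivity required to place $\exp(\sqrt{-1}N)F^\bullet$ in $\mathfrak{S}_g$. Controlling the cross term $\psi(f,N\overline{f})$ for arbitrary $f=w^\vee+r\in F^1=W^\vee_\C\oplus R$ is where one essentially uses $N^2=0$, the $\psi$-skew property of $N$, and the orthogonality $\psi(W^\perp,W)=0$ to kill the $R$-contribution and isolate the positive $\psi_N$-contribution from the $W^\vee_\C$-component.
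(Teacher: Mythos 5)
Your parts (1) and (2) are essentially fine. For (1) you spell out what the paper compresses into ``Claim i'' plus the remark that the polarization is automatic from the definition of $C(\mathfrak{F})$, working from the same decomposition $F^1=W^\vee_\C\oplus R$ of Lemma \ref{nilpotent orbit}; the only compressed point is the direct-sum statement on $\Gr_1^{W^{\mathfrak{F}}}$, which needs the small extra observation $W_\C\cap(W^\vee)^\perp_\C=0$, but the idea is right. For (2) you take a genuinely different route: a direct verification from $\sqrt{-1}\psi>0$ on $F^1$, whereas the paper first produces one mixed Hodge structure at the special point $\exp(\sqrt{-1}N)\check{F}^1$ and then transports it over $\mathfrak{S}_g$ via $G=W_0^{\mathfrak{F}}(G)K_\tau$ (its Claim ii). Your version is shorter and self-contained, and the intersection computation it requires does go through.

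The converse inclusion in (3) is where your proposal has a genuine gap, and it is a circularity. From the PMHS hypothesis you control only: $F^1\cap W_\C=0$ and $F^1+W^\perp_\C=V_\C$ (these come from the graded pieces of the MHS alone -- note the types $(2,0),(0,2)$ on $\Gr_2^{W^{\mathfrak{F}}}$ are excluded simply because $F^2=0$, no polarization is needed there), strict positivity of $\sqrt{-1}\psi$ on $R:=F^1\cap W^\perp_\C$ (from $P_1$), and positivity of $\psi_N$ on $\Gr_2^{W^{\mathfrak{F}}}$ (from $P_2$, which constrains $N$, not $F$). None of this says anything about where the $k$-dimensional complement of $R$ sits inside $F^1$, so you may not write ``$f=w^\vee+r\in F^1=W^\vee_\C\oplus R$'': the containment $W^\vee_\C\subset F^1$, equivalently $\mathrm{Im}(N)^\vee_\C\subseteq F^1\cap\overline{F^1}$, is exactly the heart of the converse. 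This is Step B of the paper's Claim iii, where that inclusion is extracted from the polarization of $\Gr_2^{W^{\mathfrak{F}}}$ by $\psi(\cdot,N(\cdot))$, and the reverse inclusion $F^1\cap\overline{F^1}\subseteq\mathrm{Im}(N)^\vee_\C$ from the $\Gr_1$-polarization together with the isotropy of $F^1\cap\overline{F^1}$; your proposal never proves either inclusion.

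Your intended detour does not repair this. Even granting that you show $\exp(\sqrt{-1}N)F^\bullet\in\mathfrak{S}_g$, that property cannot certify $F^\bullet\in\mathfrak{F}^\vee$: by the very identity you quote, and since $\psi_N\geq 0$ on all of $V_\R$, every interior point $F^\bullet\in\mathfrak{S}_g$ also satisfies $\exp(\sqrt{-1}N)F^\bullet\in\mathfrak{S}_g$, yet lies in no $\mathfrak{F}^\vee$. Likewise ``running the bijection of Lemma \ref{nilpotent orbit}(1) in reverse'' is circular: the candidate $F'^1:=W_\C\oplus R$ does lie in $\mathfrak{F}$, but its dual is $W^\vee_\C\oplus\bigl((W^\vee)^\perp_\C\cap W^\perp_\C\cap F'^1\bigr)$, and identifying this with $F^1$ presupposes precisely $W^\vee_\C\subset F^1$. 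Membership in $\mathfrak{F}^\vee$ means $F^\bullet\in\overline{\mathfrak{S}}_g$ together with $F^1\cap\overline{F^1}=W^\vee_\C$, so you must first establish $W^\vee_\C\subseteq F^1\cap\overline{F^1}$ from the polarization data, as the paper does; only after that are the decomposition, the cross-term computation and the splicing of the two positivities you describe even well-posed -- and at that point the conclusion follows directly, with no need for the $\exp(\sqrt{-1}N)$ step at all.
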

\begin{proof}
Let $G=\Sp(g,\R).$
\begin{myenumi}
\item  By the corollary \ref{weight
filtration-positive cone-1}, we have that
$$
 (W(N)[-1])_\bullet=(W^{\mathfrak{F}}\otimes\R)_\bullet:=\big(0\subset W^\mathfrak{F}_{0,\R}\subset W^\mathfrak{F}_{1,\R}\subset W^\mathfrak{F}_{2,\R}(=V_\R)\big)\,\, \forall N\in C(\mathfrak{F}).
 $$
Let $F^\bullet=(0\subset F^1\subset F^0=V_\C)$ be a
filtration $\mathfrak{F}^\vee.$ By definition, we have :
$$ F^1\cap \overline{F^1}=\mathrm{Im}(N)^\vee_\C=(W^{\mathfrak{F}}_{0})^\vee\otimes\C \,\, \forall N\in C(\mathfrak{F}).
$$
Since $\psi(F^1, F^1)=\psi(\overline{F^1},\overline{F^1})=0,$ we
have that
$$F^1\subset(\mathrm{Im}(N)^\vee_\C)^\perp \mbox{ and }
\overline{F^1}\subset(\mathrm{Im}(N)^\vee_\C)^\perp.$$

\noindent\textbf{Claim i}. For any integer $r$ in $[0,2],$
$$F^p(\Gr_r^{W^\mathfrak{F}}V_\C)\oplus \overline{F^{r-p+1}(\Gr_r^{W^\mathfrak{F}}V_\C)}
\>\cong>>\Gr_r^{W^{\mathfrak{F}}}V_\C \,\, \forall p.$$

The claim i is not difficult to check.  Then, we obtain the decreasing filtration
$F^\bullet(\Gr_r^{W^\mathfrak{F}}V_\C)$ on $\Gr_r^{W^{\mathfrak{F}}}V_\Q$
arises as a Hodge structure on $\Gr_r^{W^{\mathfrak{F}}}V_\Q$ of pure
weight $r$ for $r=0,1,2$(cf.\cite{Gri}\&\cite{Sch73}), i.e., the
pair $(F^\bullet, W^{\mathfrak{F}}_\bullet)$ is a mixed Hodge
structure on $V_\Q.$ The polarization is automatically true by the definition of $C(\mathfrak{F})$(cf.\ref{positive-cone}).

\item Let $\check{F}^\bullet=(0\subset
\check{F}^1\subset\check{F}^0(= V_\C))$ be a fixed decreasing
filtration in $\mathfrak{F}^\vee.$ The lemma \ref{nilpotent orbit}
shows that
$$F^1_\delta:=\exp(\sqrt{-1}N)\check{F}^1 \in \mathfrak{S}_g.$$

Write $F^\bullet_\delta:=( 0\subset F^1_\delta \subset
F^0_{\delta}(=V_\C)).$ We begin to show that
the  pair $(F^\bullet_\delta, W^{\mathfrak{F}}_\bullet)$ is a mixed Hodge structure on $V_\Q$
with same Hodge numbers as the MHS $(\check{F}^\bullet, W^{\mathfrak{F}}_\bullet).$

Since $F^p_{\delta}\oplus
\overline{F^{2-p}_\delta}=V_\C \,\forall p,$ we get
$F^p_{\delta} \bigcap \overline{F^{2+k-p}_\delta}\subset F^p_{\delta} \bigcap \overline{F^{2-p}_\delta}=\{0\}\,\forall k\geq 0 \,\,\forall p$
and so
\begin{equation}\label{*-condition}
F^p_{\delta}\Gr_{a}^{W^{\mathfrak{F}}}V_\C \bigcap
\overline{F^{1+a-p}_\delta \Gr_{a}^{W^{\mathfrak{F}}}V_\C}=\{0\}\,\,
\forall a\geq 1\,\, \forall p.
\end{equation}

Let $M:=\exp(\sqrt{-1}N).$ Since $M\in W_{-2}^{\mathfrak{F}}(G)$ is unipotent, $M$ respects the weight filtration $(W^{\mathfrak{F}}\otimes\C)_\bullet$ and acts as identity on $\Gr_{a}^{W^{\mathfrak{F}}}V_\C$ for all $a.$
Moreover, $M$ induces a complex linear isomorphisms
$M: \check{F}^p\Gr^{W^{\mathfrak{F}}}_a V_\C \>\cong >>
 F_\delta^p\Gr^{W^{\mathfrak{F}}}_aV_\C \,\forall a,p.$
Since
$$F^1_{\delta}\Gr^{W^{\mathfrak{F}}}_0V_\C\cong\check{F}^1\Gr^{W^{\mathfrak{F}}}_0V_\C=\{0\}\mbox{ and }
F^0_{\delta}\Gr^{W^{\mathfrak{F}}}_0V_\C\cong
\check{F}^0\Gr^{W^{\mathfrak{F}}}_0V_\C\cong \Gr^{W^{\mathfrak{F}}}_0V_\C,
$$  by the above \ref{*-condition} we  have that
$$F^p_{\delta}\Gr_{a}^{W^{\mathfrak{F}}}V_\C \bigoplus \overline{F^{1+a-p}_\delta \Gr_{a}^{W^{\mathfrak{F}}}V_\C}
\cong \Gr_{a}^{W^{\mathfrak{F}}}V_\C \,\,\forall a,p. $$

The following claim ii guarantees the statement 2 is true.

\noindent\textbf{Claim ii}.  Let $\tau$ be an arbitrary point in $\mathfrak{H}_g.$  If $(F^\bullet_\tau,W^{\mathfrak{F}}_\bullet) $ is a mixed Hodge structure on $V_\Q$ then
for any $\tau'\in \mathfrak{H}_g,$ $(F^\bullet_{\tau'},W^{\mathfrak{F}}_\bullet)$ is again a MHS on $V_\Q$ and
there is an isomorphism  of MHSs $(F^\bullet_\tau,W^{\mathfrak{F}}_\bullet)\>\cong>>(F^\bullet_{\tau^{'}},W^{\mathfrak{F}}_\bullet).$

\noindent{\it Proof of the claim ii.}
It is known that $G$ acts transitively on $\mathfrak{S}_g,$ and
$$\mathfrak{S}_g\cong G/K_\tau,\,\,\,\mbox{  where  }K_\tau=\{M\in G\,\,|\,\, M F^\bullet_\tau=F^\bullet_\tau\}.$$
There always holds
$G= W_{0}^{\mathfrak{F}}(G)K_\tau$(cf.\cite{Sch73} (5.24),p. 242). Thus the group $W_{0}^{\mathfrak{F}}(G)$ acts transitively on $\mathfrak{S}_g$ and
there is a $M\in W_{0}^{\mathfrak{F}}(G)$ such that
$$\tau^{'}=M(\tau) \mbox{ and }
F^\bullet_{\tau^{'}}=M F^\bullet_\tau, \,\,  \overline{F^\bullet_{\tau^{'}}}=\overline{M F^\bullet_\tau} =M \overline{F^\bullet_\tau}.$$ Since $M$ respects
$(W^{\mathfrak{F}}\otimes\C)_\bullet,$ we have isomorphisms
\begin{eqnarray*}
  M &:& F^\bullet_\tau\Gr_{a}^{W^{\mathfrak{F}}}V_\C \>\cong >> F^\bullet_{\tau^{'}}\Gr_{a}^{W^{\mathfrak{F}}}V_\C,\,\,\forall a\\
  M &:& \overline{F^\bullet_\tau\Gr_{a}^{W^{\mathfrak{F}}}V_\C} \>\cong >> \overline{F^\bullet_{\tau^{'}}\Gr_{a}^{W^{\mathfrak{F}}}V_\C }\,\,\forall a.
\end{eqnarray*}

Thus, $(F^\bullet_{\tau^{'}},W^{\mathfrak{F}}_\bullet)$ is  a MHS on $V_\Q$ and $M$ induces an isomorphism  of MHSs $$M:(F^\bullet_\tau,W^{\mathfrak{F}}_\bullet)\>\cong>>(F^\bullet_{\tau^{'}},W^{\mathfrak{F}}_\bullet).$$


\item Let $F^\bullet=(0\subset F^1\subset F^2(=V_\C))$ be a Hodge filtration in $\check{\mathfrak{S}}_g$ and
$N\in C(\mathfrak{F}).$\\
\noindent\textbf{Claim iii}. Suppose that $(F^\bullet, W^{\mathfrak{F}}_{\bullet},N,\psi)$ is a PMHS,
then $F^\bullet$ is in $\mathfrak{F}^{\vee}.$

After the claim iii, the statement 3 will be true by using the corollary \ref{weight filtration-positive
cone-1} and the statement 1 in this theorem : We have the fact that if
$(F^\bullet, W^{\mathfrak{F}}_{\bullet},N_1,\psi)$ is a PMHS for one
certain $N_1\in C(\mathfrak{F})$ then $(F^\bullet,
W^{\mathfrak{F}}_{\bullet},N,\psi)$ is again a  PMHS for any $ N\in
C(\mathfrak{F}).$

We now begin to prove the claim iii by the following steps (A) to (C).
\begin{description}
\item[(A)] From the MHS $(F^\bullet, W^{\mathfrak{F}}_{\bullet}),$ we
obtain the following facts :
\begin{itemize}
 \item $(\Gr_0^{W^{\mathfrak{F}}}V_\Q,
F^\bullet\Gr_0^{W^{\mathfrak{F}}}V_\C)$ is a Hodge structure of pure
weight zero, and so
$$F^1\cap \mathrm{Im}(N)_\C=\overline{F^1}\cap \mathrm{Im}(N)_\C=\{0\}.$$

\item $(\Gr_1^{W^{\mathfrak{F}}}V_\Q,
F^\bullet\Gr_1^{W^{\mathfrak{F}}}V_\C)$ is a Hodge structure of pure
weight one. By definition,
$$F^1\Gr_1^{W^{\mathfrak{F}}}V_\C\cong F^1\cap \Ker(N)_\C.$$

\item $(\Gr_2^{W^{\mathfrak{F}}}V_\Q,
F^\bullet\Gr_2^{W^{\mathfrak{F}}}V_\C)$ has a rational Hodge structure
of pure type $(1,1).$ Thus
$$F^1\Gr_2^{W^{\mathfrak{F}}}V_\C=\overline{F^1\Gr_2^{W^{\mathfrak{F}}}V_\C}=\Gr_2^{W^{\mathfrak{F}}}V_\C,$$
and so $ N(F^1)=N(\overline{F^1})=\mathrm{Im}(N)_\C,\, \ker(N)_\C+F^1=\Ker(N)+\overline{F^1}=V_\C.$
\end{itemize}

\item[(B)]  We show that $F^1\cap
\overline{F^1}=\mathrm{Im}(N)_\C^\vee=(W_0^{\mathfrak{F}}\otimes\C)^\vee.$
\begin{itemize}
  \item $\mathrm{Im}(N)_\C^\vee\subseteq F^1\cap\overline{F^1}$ :
  Since $(\Gr_2^{W^{\mathfrak{F}}}V_\Q, F^\bullet\Gr_2^{W^{\mathfrak{F}}}V_\C)$ is polarized $\psi(\cdot, N(\cdot)),$
we have that $\mathrm{Im}(N)_\C^\vee=(N(\overline{F^1}))^\vee\subset
F^1$ and $\mathrm{Im}(N)_\C^\vee=(N(F^1))^\vee\subset
\overline{F^1}.$ Then
 $$\mathrm{Im}(N)_\C^\vee\subseteq F^1\cap\overline{F^1}\mbox{ and } F^1\subset(\mathrm{Im}(N)_\C^\vee)^\perp.$$

  \item $F^1\cap\overline{F^1}\subseteq \mathrm{Im}(N)_\C^\vee$ :
Let $v$ be an arbitrary vector in $ E:=F^1\cap \overline{F^1}.$ By
the second equality in(1) of the lemma \ref{nilpotent orbit}, $v$
can be written uniquely as
$$v=v_1+v_2, \mbox{ where } v_1\in \mathrm{Im}(N)^\vee_\C, v_2\in \Ker(N)_\C.$$
As $\mathrm{Im}(N)_\C^\vee\subset E,$ we have $v_2=v-v_1\in E$ and
so $v_2\in \Ker(N)_\C\cap E.$
Since the weight one Hodge structure $(\Gr_1^{W^{\mathfrak{F}}}V_\Q,
F^\bullet\Gr_1^{W^{\mathfrak{F}}}V_\C)$ is polarized by the form $\psi,$ we
must have that $\sqrt{-1}\psi( F^1\cap\Ker(N)_\C, \overline{F^1\cap\Ker(N)_\C})>0.$
On the other hand, the $\sqrt{-1}\psi(v_2,\overline{v_2})$ is zero by
that $\psi(E, \overline{E})=\psi(E, E)=0.$
Thus $v_2=0$ and  $v\in F^1\cap \overline{F^1}.$
\end{itemize}

\item[(C)] Due to the fact $\mathrm{Im}(N)_\C^\vee\cap\Ker(N)_\C=\{0\},$ we get
that $$V_\C=\mathrm{Im}(N)_\C^\vee\oplus\Ker(N)_\C\,\, \mbox{ and }
F^1=\mathrm{Im}(N)_\C^\vee \bigoplus  F^1\cap\Ker(N)_\C.$$
Since that $\psi(F^1, F^1)=\psi(\overline{F^1}, \overline{F^1})=0$ and
$$\sqrt{-1}\psi( F^1\cap\Ker(N)_\C,
\overline{F^1\cap\Ker(N)_\C})>0,$$ we must have that
$\sqrt{-1}\psi(F^1, \overline{F^1})\geq 0,$
which is equivalent to that $F^\bullet\in \overline{\mathfrak{S}}_{g}.$
\end{description}
\end{myenumi}
\end{proof}



\section{Toroidal compactifications and their infinity boundary divisors}

\vspace{0.5cm}

Let $\Gamma\subset \Sp(g,\Z)$ be an arithmetic subgroup.Denote by $\mathfrak{F}_{k}$ the cusp $\mathfrak{F}(V^{(k)}),$ which is a cusp of depth $g-k$ isomorphic to $\mathfrak{H}_k.$

According to  the
corollary \ref{algebraic subgroups by boundary-component}, we define
$u^{\mathfrak{F}}:=\mathrm{Lie}(U^\mathfrak{F}(\R))=W_{-2}^\mathfrak{F}(\mathfrak{g})$ and $v^{\mathfrak{F}}:=\mathrm{Lie}(V^\mathfrak{F}(\R))=W_{-2}^\mathfrak{F}(\mathfrak{g})$ for any cusp $\mathfrak{F}.$
We note that the Lie algebra $v^{\mathfrak{F}}$ is  identified with the
space $\mathfrak{g}^{-1}$ and that
$\dim_\R(C(\mathfrak{F}))=k(k+1)/2$ if $\mathfrak{F}$ is a $k$-th cusp.

\subsection{Equivalent toroidal embedding}
Let $N$ be a lattice, i.e., a  free  $\Z$-module of finite rank
and $M=N^\vee:=\Hom_\Z(N,\Z)$ its dual. We fix isomorphisms
$M\cong \Z^d,\,\, N\cong \Z^d.$
The lattice $N$ can be regarded as the group of $1$-parameter
subgroups of the algebraic torus $T_N:=\Spec \C[M].$ Actually, any $a=(a_1,\cdots,a_d)\in N\cong \Z^d$ corresponds to a unique
one-parameter subgroup  $\lambda_a:\G_m \to T_N$ given by
$\lambda_a(t)=(t_1^{a_1}, \cdots,t_d^{a_d})\in T_N(\C)\forall t\in \G_m(\C).$
We also note that $T_N\cong
\Hom_\Z(M,\G_m).$ On the other hand, the dual lattice $M$ can be
regarded as $X(T_N)$(the group of characters of $T_N$).
Any $m=(m_1,\cdots,m_d)\in M$ corresponds to a unique $\chi^m\in
X(T_N)$ given by
$$\chi^m(t_1,\cdots, t_d)= t_1^{m_1}\cdots t_d^{m_d}\in \G_m(\C)=\C^*\,\forall (t_1,\cdots, t_d)\in T_N(\C)\cong(\C^*)^d.$$ Obviously, the lattices $M,N$
are related by a non-degenerated canonical pairing
$$M\times N \longrightarrow \Z\,\,(m,a)\longmapsto <m,a>,$$
where $<m,a>$ is determined by
$\chi^m(\lambda_a(t))=t^{<m,a>}\,\,\,t\in \C^*.$

A \textbf{convex rational polyhedral cone} in $N_\R$ is a subset
$\sigma\subset N_\R$ such that
$$\sigma=\{\sum_{i=1}^t \lambda_iy_i | \lambda_i\in \R_{\geq 0},\, \,\, i=1,\cdots, t\}$$
for a finite number of vectors $y_i\in N_\Q,i=1,\cdots,t;$ its
dual of $\sigma$ is defined to be
$$\sigma^{\vee}= \{m\in M_\R \, |\, <m,u>\geq 0 \,\,\forall u \in
\sigma  \},$$ which in fact is  a convex rational polyhedral cone
in $M_\R;$ $\dim\sigma$ is defined to be the
dimension of the smallest subspace of $N_\R$ containing $\sigma;$
a \textbf{face} of $\sigma$ is a convex rational polyhedral cone
$\sigma'$ in $N_\R$ such that $\sigma'=\sigma\cap\{v\in
N_\R|\lambda(x)=0\}$ for some $\lambda\in \sigma^{\vee}\cap M_\Q,$
denoted by $\sigma'\prec\sigma.$ A $1$-dimensional convex rational
polyhedral cone is called an \textbf{edge}. Any convex rational
polyhedral cone $\sigma$ endows an \textbf{affine
 toric variety} $X_{\sigma}=\Spec \C[\sigma^{\vee}\cap M]$
where $\C[\sigma^{\vee}\cap M]:=\{\sum\limits_{m=(m_1,\cdots,m_d)\in
\sigma^{\vee}\cap M}a_m x_1^{m_1}\cdots x_d^{m_d}\,\,|\,\,
a_m\in \C \}.$
 A face $\tau$ of $\sigma$ induces an open
immersion $X_\tau\hookrightarrow X_\sigma$ of affine varieties, so
that $X_\tau$ can be identified with an open subvariety of
$X_\sigma.$

A convex rational polyhedral cone $\sigma$ of $N_\R$ is called
\textbf{strong} if and only if $\sigma\cap(-\sigma)=\{0\}.$ A
strong rational convex polyhedral cone $\sigma$ in $N_\R$ is said
\textbf{regular}(with respect to $N$) provided that $\sigma$ is generated by part of a
$\Z$-basis of $N.$ A \textbf{fan} of $N$ is defined to be a
nonempty collection $\Sigma=\{\sigma_\alpha\}$ of convex rational
polyhedral cones in $N_\R$ such that
\begin{itemize}
\item all cones in $\Sigma$ are strong;

\item if $\sigma'$ is a face of a cone $\sigma\in \Sigma,$ then
$\sigma'\in\Sigma;$

\item for any two $\sigma,\tau\in \Sigma,$ the intersection
$\sigma\cap \tau$ is a face of both $\sigma$ and $\tau.$
\end{itemize}
The set of all edges in a fan $\Sigma$ is denoted by $\Sigma(1).$
A fan $\Sigma$ is said \textbf{regular}(with respect to $N$) if all $\sigma\in \Sigma$
are regular. A fan $\Sigma=\{\sigma_\alpha\}$ of $N$ determines a
separate scheme
$X_{\Sigma}:=\bigcup\limits_{\sigma\in
\Sigma}X_\sigma$ by patching together the $X_{\sigma_\alpha}$'s
along the $X_{\sigma_\alpha\cap\sigma_\beta}$'s.

Let $\Sigma$ be an arbitrary fan of $N.$ In general, the
associated scheme $X_\Sigma$ is normal and locally of finite
type over $\C.$ The $X_\Sigma$ is smooth if and only if $\Sigma$
is regular, moreover $X_\Sigma$ is of finite type over $\C$ if and only if
$\Sigma$ is a finite collection of convex rational polyhedral
cones. Let $\sigma_\alpha$ be any cone in $\Sigma.$ Since
$\sigma^{\vee}_\alpha$ spans $M_\R,$ there is an open immersion of
the algebraic torus $T_N:=\Spec \C[M]$ in $X_\sigma=\Spec
k[\sigma^{\vee}_\alpha\cap M]$(We call $T_N\subset X_\sigma$ a \textbf{ toric embedding}). The action $T_N\times T_N\to T_N$
given by the translation in $T_N$ can be extended to an action $T_N\times
X_{\sigma_\alpha} \to X_{\sigma_\alpha}.$ The open immersion
$X_\tau\hookrightarrow X_{\sigma_\alpha}$ induced by a face
$\tau\prec \sigma_\alpha$ is certainly equivariant with respect to
the actions of $T_N.$ Therefore, there is a  natural open
immersion $T_N\>\hookrightarrow>>X_\Sigma$ with the unique action
$T_N$ on $X_\Sigma$ extending $T_N$'s action on each
$X_{\sigma_\beta}.$

\begin{proposition}[Cf.\cite{AMRT},\cite{KKMS}\&\cite{Ful}]\label{kkms-orbit-cone-correspondence}
Let $\Sigma=\{\sigma_\alpha\}$ be a fan of a lattice $N.$
\begin{myenumi}

\item There is a bijection 
between the set of cones in $\Sigma$ and the set of orbits in
$X_{\Sigma},$ and there holds $\dim \sigma_\alpha +
\dim_\C\sO^{\sigma_\alpha}=\dim_\C T_N.$ Moreover, $\sigma_\alpha\subseteq \sigma_\beta$ 
if and only if $\sO^{\sigma_\beta}\subset
\overline{\sO^{\sigma_\alpha}}^{\mathrm{cl}},$ where
$\overline{\sO^{\sigma_\alpha}}^{\mathrm{cl}}$ denotes the closure
in both the classical and Zariski topologies of $X_\Sigma.$ In
particular, each edge $\rho\in\Sigma$ gives a codimension one
closed subscheme $D_\rho:=\overline{\sO^\rho}^{\mathrm{cl}}$ in
$X_\Sigma,$ which actually is a $T_N$-invariant prime divisor of
$X_\Sigma.$

\item The collection $\{\sO^{\sigma_\alpha}\}$ is a stratification of $X_\Sigma$
in the classical analytic topology. Furthermore,
$
X_\tau=\coprod\limits_{\delta\prec \tau} \sO^\delta, \,\, \,\,
\overline{\sO^\tau}^{\mathrm{cl}}=\coprod\limits_{\delta\in\Sigma,\delta\succeq
\tau}\sO^\delta
$
for any cone $\tau\in\Sigma.$
\end{myenumi}
\end{proposition}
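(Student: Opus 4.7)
The plan is to establish the orbit-cone correspondence locally on each affine toric chart $X_{\sigma_\alpha} = \Spec \C[\sigma_\alpha^\vee \cap M]$, and then glue. For a single cone $\sigma \in \Sigma$, I would begin by introducing the distinguished point $x_\sigma \in X_\sigma$ defined as the semigroup homomorphism
\begin{equation*}
x_\sigma : \sigma^\vee \cap M \longrightarrow \C, \qquad m \longmapsto \begin{cases} 1 & \text{if } m \in \sigma^\perp \cap M, \\ 0 & \text{otherwise.} \end{cases}
\end{equation*}
One checks this is multiplicative because $\sigma^\perp \cap M$ is a face of the saturated semigroup $\sigma^\vee \cap M$ (so if $m_1+m_2 \in \sigma^\perp$ then each $m_i \in \sigma^\perp$). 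Then I would define $\sO^\sigma := T_N \cdot x_\sigma$ as the $T_N$-orbit of $x_\sigma$ inside $X_\sigma \subset X_\Sigma$.

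Next I would identify this orbit explicitly. The stabilizer of $x_\sigma$ in $T_N = \Hom_\Z(M, \C^*)$ consists of characters that equal $1$ on $\sigma^\perp \cap M$; hence
\begin{equation*}
\sO^\sigma \;\cong\; \Hom_\Z\bigl(\sigma^\perp \cap M,\, \C^*\bigr) \;\cong\; (\C^*)^{\dim_\C T_N - \dim \sigma},
\end{equation*}
which gives the dimension formula $\dim\sigma + \dim_\C \sO^\sigma = \dim_\C T_N$. To prove the bijection, I would show that every $T_N$-orbit in $X_\sigma$ is of this form by the following argument: a point $p \in X_\sigma$ is a semigroup homomorphism $\sigma^\vee \cap M \to \C$, the subset $S_p := \{m \in \sigma^\vee \cap M \,|\, p(m) \neq 0\}$ is a face of $\sigma^\vee \cap M$, and faces of $\sigma^\vee$ are in order-reversing bijection with faces of $\sigma$ via $\tau \mapsto \tau^\perp \cap \sigma^\vee$. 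Acting by $T_N$ kills the choice of nonzero values, so each orbit contains exactly one $x_\tau$ with $\tau \prec \sigma$. This yields
\begin{equation*}
X_\sigma \;=\; \coprod_{\tau \prec \sigma} \sO^\tau.
\end{equation*}

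For the global statement, I would use that for a face $\tau \prec \sigma$ the inclusion $X_\tau \hookrightarrow X_\sigma$ is $T_N$-equivariant and the distinguished point $x_\tau$ coincides in both charts; hence the $\sO^\tau$ are well-defined across the gluing and $X_\Sigma = \coprod_{\sigma \in \Sigma} \sO^\sigma$ is a stratification. The closure relation $\overline{\sO^\tau}^{\mathrm{cl}} = \coprod_{\delta \succeq \tau} \sO^\delta$ would follow from the affine formula applied on each $X_\sigma$ with $\sigma \supseteq \tau$: indeed, the one-parameter subgroup $\lambda_u$ for $u$ in the relative interior of $\sigma$ satisfies $\lim_{t\to 0}\lambda_u(t)\cdot x_\tau = x_\sigma$ whenever $\tau \prec \sigma$, because the pairing $\langle m, u\rangle$ is strictly positive on $\sigma^\vee \setminus \sigma^\perp$ and vanishes exactly on the face corresponding to $\sigma$. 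This gives $x_\sigma \in \overline{\sO^\tau}^{\mathrm{cl}}$ and, combined with the partition, both inclusion statements in (1) and the descriptions in (2).

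The main technical obstacle is the precise bijection between faces of $\sigma$ and faces of $\sigma^\vee$, and the verification that the limits along one-parameter subgroups produce exactly the distinguished points $x_\sigma$ — this is where one uses that $\sigma$ is a strong rational polyhedral cone (so $\sigma^\perp$ has the correct codimension) and that $\C[\sigma^\vee \cap M]$ is the coordinate ring of a normal affine variety. Once the affine orbit description is nailed down, the global statements reduce to bookkeeping over the fan.
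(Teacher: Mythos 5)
The paper gives no proof of this proposition at all---it is quoted as a standard result with references to \cite{AMRT}, \cite{KKMS} and \cite{Ful}---and your argument is precisely the standard orbit--cone correspondence proof from those sources (distinguished points $x_\sigma$, the stabilizer computation giving $\dim\sigma+\dim_\C\sO^\sigma=\dim_\C T_N$, the face analysis of $\sigma^\vee\cap M$ yielding $X_\sigma=\coprod_{\tau\prec\sigma}\sO^\tau$, and limits along one-parameter subgroups $\lambda_u$ for the closure relation), so it is correct and in line with what the paper relies on. The only points you leave implicit are routine: the converse closure inclusion (if $\sO^\delta\subset\overline{\sO^\tau}^{\mathrm{cl}}$ then $\tau\prec\delta$, which follows because the open $T_N$-invariant chart $X_\delta$ must then meet $\sO^\tau$ itself, forcing $\sO^\tau\subset X_\delta$) and the agreement of classical and Zariski closures of the orbits, which holds since orbits are constructible.
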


\begin{corollary}\label{divisor-cone}
Let $\Sigma$ be a  fan of a lattice $N$ and $X_\Sigma$ the associated
scheme  with torus embedding $T_N:=\Spec \C[N^\vee].$
\begin{myenumi}

\item For any cone $\sigma\in\Sigma,$
$\overline{\sO^\sigma}^{\mathrm{cl}}$ is a closed subscheme of $X_{\Sigma}$ with only normal
singularities; moreover, $\overline{\sO^\sigma}^{\mathrm{cl}}$ is  smooth if $\Sigma$ is a regular fan.

\item Assume that every low-dimensional cone $\tau\in \Sigma$($\dim
\tau< \rank N$) is a face of some top-dimensional cone $\sigma_{\max}\in \Sigma$(i.e., $\dim\sigma_{\max}=\rank N$).
If $\Sigma$ is a regular fan then the infinity boundary $D_\infty:=X_\Sigma\setminus T_N$ is simple normal crossing, i.e.,
all irreducible components of $D_\infty$ are smooth and they intersect each other transversely.
\end{myenumi}
\end{corollary}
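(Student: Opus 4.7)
The strategy is to identify each orbit closure $\overline{\sO^\sigma}^{\mathrm{cl}}$ with a lower-dimensional toric variety, whence (1) will follow from the standard normality/smoothness criteria for toric varieties, and (2) will reduce to a direct local computation on affine toric charts of $X_\Sigma$.

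For statement (1), I would first set $N(\sigma):=N/(N\cap\mathrm{span}_\R\sigma)$, $M(\sigma):=\sigma^\perp\cap M$, and introduce the \emph{star fan}
\[
\mathrm{Star}(\sigma):=\{\overline{\tau}\subset N(\sigma)_\R\,\,|\,\,\tau\in\Sigma,\ \tau\succeq\sigma\},
\]
where $\overline{\tau}$ denotes the image of $\tau$ under the projection $N_\R\to N(\sigma)_\R$. By Proposition~\ref{kkms-orbit-cone-correspondence}(2), $\overline{\sO^\sigma}^{\mathrm{cl}}=\coprod_{\tau\succeq\sigma}\sO^\tau$, and on each affine patch $X_\tau=\Spec\C[\tau^\vee\cap M]$ the ideal of $\overline{\sO^\sigma}^{\mathrm{cl}}\cap X_\tau$ is generated by $\{\chi^m\,|\,m\in(\tau^\vee\cap M)\setminus\sigma^\perp\}$, leaving the residual coordinate ring $\C[\tau^\vee\cap\sigma^\perp\cap M]=\C[\overline{\tau}^\vee\cap M(\sigma)]$. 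Gluing these pieces yields an isomorphism $\overline{\sO^\sigma}^{\mathrm{cl}}\cong X_{\mathrm{Star}(\sigma)}$ onto the toric variety attached to the star fan in $N(\sigma)$. Since each defining semigroup $\overline{\tau}^\vee\cap M(\sigma)$ is saturated, $X_{\mathrm{Star}(\sigma)}$ is normal; and when $\Sigma$ is regular, every $\tau\succeq\sigma$ is generated by part of a $\Z$-basis of $N$ extending generators of $N\cap\mathrm{span}_\R\sigma$, so $\overline{\tau}$ is generated by part of a $\Z$-basis of $N(\sigma)$. Hence $\mathrm{Star}(\sigma)$ is regular and $\overline{\sO^\sigma}^{\mathrm{cl}}$ is smooth, establishing (1).

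For statement (2), the complement $D_\infty=X_\Sigma\setminus T_N$ decomposes as $\bigcup_{\rho\in\Sigma(1)}D_\rho$, and each $D_\rho$ is smooth by (1). To check transversality at a point $x\in D_{\rho_{i_1}}\cap\cdots\cap D_{\rho_{i_k}}$, note that $x$ lies in a single orbit $\sO^\tau$ whose cone $\tau\in\Sigma$ must contain each $\rho_{i_j}$ as an edge; the hypothesis supplies a top-dimensional cone $\sigma_{\max}\succeq\tau$, and thus $x\in X_{\sigma_{\max}}$. Because $\sigma_{\max}$ is regular, its edges are the rays generated by a $\Z$-basis $v_1,\dots,v_n$ of $N$, and the induced isomorphism $X_{\sigma_{\max}}\cong\A^n_\C$ with coordinates $z_1,\dots,z_n$ dual to $v_1,\dots,v_n$ presents each $D_{\rho_{i_j}}$ as a coordinate hyperplane $\{z_{m_j}=0\}$. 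Transverse intersection of distinct coordinate hyperplanes in $\A^n_\C$ is automatic, so $D_\infty$ is simple normal crossing. The main technical step here is the identification $\overline{\sO^\sigma}^{\mathrm{cl}}\cong X_{\mathrm{Star}(\sigma)}$; this is a routine unwinding of the orbit-cone correspondence in Proposition~\ref{kkms-orbit-cone-correspondence}, but once it is in place, normality, smoothness, and the coordinate-hyperplane description of $D_\infty$ all drop out immediately.
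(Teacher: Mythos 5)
Your proof is correct and follows essentially the same route as the paper, which simply declares the corollary a direct consequence of Proposition \ref{kkms-orbit-cone-correspondence} and the standard toric facts in the cited references; you have merely filled in the standard details (the identification $\overline{\sO^\sigma}^{\mathrm{cl}}\cong X_{\mathrm{Star}(\sigma)}$, normality from saturation, smoothness from regularity, and the coordinate-hyperplane picture in a regular top-dimensional chart). No gaps.
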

\begin{proof}
It is a direct consequence of the proposition \ref{kkms-orbit-cone-correspondence}.
\end{proof}


\subsection{Admissible families of polyhedral decompositions}

In \cite{AMRT}, Mumford and his coworkers have constructed explicitly a class of
toroidal compactifications of $D/\Gamma$ for each bounded symmetric domain $D$
with an arithmetic  subgroup $\Gamma\subset \Aut(D).$
Actually, the compactification is determined by a certain combinatorial
$\Gamma$-admissible rational polyhedral cone decompositions.

We define a partial order on the set of cusps of $\mathfrak{H}_g$
: For any two cusps $\mathfrak{F}(W_1)$ and $\mathfrak{F}(W_2),$ we say
$\mathfrak{F}(W_1)\prec \mathfrak{F}(W_2)$ if and only if $W_2 \subset
W_1.$ According to this partial order,
$\mathfrak{F}(\{0\})=\mathfrak{H}_g$ is the unique maximal element, and a
cusp of depth  $g$ is called a \textbf{minimal cusp}(or \textbf{minimal rational boundary component}) of $\mathfrak{H}_g.$ We call  $\mathfrak{F}_0$
the \textbf{standard minimal cusp} of $\mathfrak{H}_g.$

\begin{definition}[Cf.\cite{Nam}]\label{proper-boundary-component-of-cone}
Suppose that $C$ is an open cone in a real vector space $E_\R,$
where $E_\R$ has an underlying integral structure $E_\Z,$
i.e.,$E_\R=E_\Z\otimes_\Z\R.$ A \textbf{(rational) boundary
component of $C$ is a cone } $C^{'}=(\overline{C}\cap
E^{'})^{\circ}$(denote by $C^{'}\prec C$) given by a linear
(rationally-defined) subspace $E^{'}$ of $E_\R$ with $E^{'}\cap
C=\emptyset,$ where $\overline{C}$ is the closure of the cone $C$ in $E_\R.$ The  \textbf{rational closure}
$\overline{C}^{\mathrm{rc}}$ of $C$ is the union of all rational
boundary components of $C.$
\end{definition}
We note that any proper rational boundary component of $C(\mathfrak{F}_0)$ is
of form $C(\mathfrak{F}^{'})$ where $\mathfrak{F}^{'}$ is a cusp
with $\mathfrak{F}_0\prec\mathfrak{F}^{'}$(cf.Theorem 3 in $\S4.4$ of Chap.III \cite{AMRT}).
\begin{lemma}\label{convex-cone}
Let $\mathfrak{F}(W_1)$ and $\mathfrak{F}(W_2)$ be two cusps of
$\mathfrak{H}_g.$ The cusp $\mathfrak{F}(W_1\cap W_2)$ has following
properties :
\begin{myenumi}
\item There is $u^{\mathfrak{F}(W_1\cap W_2)}\subset u^{\mathfrak{F}(W_1)}\cap u^{\mathfrak{F}(W_2)}.$
Moreover, if there is a maximal isotropic subspace $W$ of $V_\R$ containing $W_1\cup W_2$
then
$u^{\mathfrak{F}(W_1\cap W_2)}=  u^{\mathfrak{F}(W_1)}\cap u^{\mathfrak{F}(W_2)}.$

\item If $W_1\cap W_2$ is a proper subspace of $W_1$ then
$$u^{\mathfrak{F}(W_1\cap W_2)}\cap C(\mathfrak{F}(W_1))=\emptyset\mbox{ and }\,C(\mathfrak{F}(W_1\cap W_2))\cap C(\mathfrak{F}(W_1))=\emptyset.$$

\item The equalities $$\overline{C(\mathfrak{F}(W_1\cap
W_2))}=\overline{C(\mathfrak{F}(W_1))}\cap u^{\mathfrak{F}(W_1\cap W_2)}
=\overline{C(\mathfrak{F}(W_2))}\cap u^{\mathfrak{F}(W_1\cap W_2)}$$ are held.
If there is a maximal isotropic subspace $W$ of $V_\R$
containing $W_1\cup W_2$ then
$$\overline{C(\mathfrak{F}(W_1))}\cap
\overline{C(\mathfrak{F}(W_2))}=\overline{C(\mathfrak{F}(W_1\cap W_2))}.$$
\end{myenumi}
\end{lemma}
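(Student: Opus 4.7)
The plan is to unwind everything into explicit conditions on symplectic endomorphisms. By Lemma \ref{weight-morphism-3} applied to the weight filtration $0\subset W\subset W^\perp\subset V_\R$, an element $X\in\mathfrak{g}$ lies in $u^{\mathfrak{F}(W)}$ if and only if $X(V_\R)\subset W$ and $X(W^\perp)=0$. A key symplectic identity, valid for any $N\in\mathfrak{g}$, is that $\ker(N)=N(V_\R)^\perp$: indeed, $\psi(Nu,v)=-\psi(u,Nv)$, so $u\perp N(V_\R)$ if and only if $Nu=0$ by non-degeneracy of $\psi$. Combined with the elementary identity $(W_1\cap W_2)^\perp=W_1^\perp+W_2^\perp$, this is the workhorse of the entire proof.

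For part (1), the inclusion $u^{\mathfrak{F}(W_1\cap W_2)}\subset u^{\mathfrak{F}(W_1)}\cap u^{\mathfrak{F}(W_2)}$ is immediate from $W_1\cap W_2\subset W_i$ together with $W_i^\perp\subset (W_1\cap W_2)^\perp$. For the reverse, if $X$ lies in both $u^{\mathfrak{F}(W_i)}$ then $X(V_\R)\subset W_1\cap W_2$ and $X$ vanishes on $W_1^\perp+W_2^\perp=(W_1\cap W_2)^\perp$, so $X\in u^{\mathfrak{F}(W_1\cap W_2)}$. For part (2), if $N\in C(\mathfrak{F}(W_1))$ then $\psi_N$ is positive definite on the quotient $V_\R/W_1^\perp$ of dimension $\dim W_1$; since the radical of $\psi_N$ on $V_\R$ equals $N(V_\R)^\perp=\ker(N)$, positive definiteness on the quotient forces $\ker(N)=W_1^\perp$, equivalently $N(V_\R)=W_1$. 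Were such an $N$ also in $u^{\mathfrak{F}(W_1\cap W_2)}$, we would get $N(V_\R)\subset W_1\cap W_2\subsetneq W_1$, a contradiction; the disjointness $C(\mathfrak{F}(W_1\cap W_2))\cap C(\mathfrak{F}(W_1))=\emptyset$ then follows from $C(\mathfrak{F}(W_1\cap W_2))\subset u^{\mathfrak{F}(W_1\cap W_2)}$.

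For part (3), the first chain of equalities rests on the observation that for $N\in u^{\mathfrak{F}(W_1\cap W_2)}$ one has $\ker(N)\supset(W_1\cap W_2)^\perp\supset W_1^\perp$, so $\psi_N$ factors through $V_\R/\ker(N)$ in exactly the same way regardless of whether one descends via $V_\R/W_1^\perp$ or via $V_\R/(W_1\cap W_2)^\perp$; hence (semi-)positivity on either quotient is equivalent, and the same holds with $W_2$ in place of $W_1$. The final equality under the max-isotropic hypothesis combines this with (1): any $N\in\overline{C(\mathfrak{F}(W_1))}\cap\overline{C(\mathfrak{F}(W_2))}$ already lies in $u^{\mathfrak{F}(W_1)}\cap u^{\mathfrak{F}(W_2)}=u^{\mathfrak{F}(W_1\cap W_2)}$ by part (1), and the preceding equivalences then place it in $\overline{C(\mathfrak{F}(W_1\cap W_2))}$; the reverse containment is immediate from the same equivalences. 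I do not anticipate any serious obstacle, the only care needed being to track how $\psi_N$ descends through the tower $V_\R\twoheadrightarrow V_\R/W_1^\perp\twoheadrightarrow V_\R/(W_1\cap W_2)^\perp$ and to verify that the closure of $C(\mathfrak{F}(W))$ in $u^{\mathfrak{F}(W)}$ is indeed the positive semi-definite cone, which holds because $N\mapsto\psi_N$ is a linear surjection onto $\mathrm{Sym}(V_\R/W^\perp)$.
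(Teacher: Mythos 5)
Your proof is correct and follows essentially the same route as the paper: both characterize $u^{\mathfrak{F}(W)}$ by the conditions $X(V_\R)\subset W$ and $X(W^\perp)=0$ (via Lemma \ref{weight-morphism-3}) and then translate membership in $C(\mathfrak{F}(W))$ and $\overline{C(\mathfrak{F}(W))}$ into (semi-)positivity of $\psi_N$ on the quotients $V_\R/W^\perp$, using descent of $\psi_N$ through $V_\R/W_1^\perp\twoheadrightarrow V_\R/(W_1\cap W_2)^\perp$ exactly as the paper does in (2) and (3). The only (harmless) deviation is that you observe $(W_1\cap W_2)^\perp=W_1^\perp+W_2^\perp$ holds unconditionally by non-degeneracy of $\psi$, so your argument for the equality in (1) does not actually invoke the maximal-isotropic hypothesis that the paper uses at that step.
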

\begin{myrem}
The  lemma implies that $C(\mathfrak{F}(W_1\cap W_2))$ is a rational
boundary component of both $C(\mathfrak{F}(W_1))$ and
$C(\mathfrak{F}(W_2)).$ The equalities in (3) are also true
even if we replace $\overline{C(\mathfrak{F})}$ with
$\overline{C(\mathfrak{F})}^{\mathrm{rc}}.$
\end{myrem}
\begin{proof}[Proof of the lemma \ref{convex-cone}]
 Let $U=W_1\cap W_2.$ Then $U^\vee \subset W_1^\vee\cap W_2^\vee$ and $W_1^\perp+W_2^\perp \subset U^\perp.$
For any cusp $\mathfrak{F}(W),$ we have
\begin{equation}\label{convex-cone-1}
 u^{\mathfrak{F}(W)}=W_{-2}^{\mathfrak{F}(W)}=\{H\in\mathfrak{g}\,\,|\,\, H(W^\vee)\subset W \mbox{ and } H(W^\perp)=0\}.
\end{equation}

\begin{myenumi}
\item Since $W_1^\vee\oplus W_1^\perp= W_2^\vee\oplus W_2^\perp=U^\vee\oplus U^\perp=V_\R,$
the equality \ref{convex-cone-1} says there holds
$$u^{\mathfrak{F}(U)}\subset u^{\mathfrak{F}(W_1)}\cap u^{\mathfrak{F}(W_2)}.$$
Because there is a maximal isotropic subspace $W$ of $V_\R$ containing both $W_1$ and $W_2,$
the space $W_1+W_2$ is  isotropic in $W$ so that $\dim(W_1+W_2)^\perp =2g-\dim(W_1+W_2)$
and $U^\perp=W_1^\perp+W_2^\perp.$
Let $N\in u^{\mathfrak{F}(W_1)}\cap u^{\mathfrak{F}(W_2)}.$  Then we have
$$N(U^\vee)\subset N(W_1^\vee\cap W_2^\vee) \subset N(W_1^\vee)\cap N(W_2^\vee)\subset W_1\cap W_2=U$$
and $N(U^\perp)=N(W_1^\perp+W_2^\perp)=0.$
Thus $u^{\mathfrak{F}(W_1)}\cap u^{\mathfrak{F}(W_2)}\subset u^{\mathfrak{F}(U)}.$

\item
Recall
$C(\mathfrak{F}(W)):=\{ N\in u^{\mathfrak{F}(W)}\,\,|\,\, \psi(\cdot,N(\cdot))>0 \mbox{ on } \frac{V_\R}{W^\perp}\}.$
Let $N\in u^{\mathfrak{F}(U)}$ be an arbitrary element.
Consider the filtration
$0\subset U\subsetneq W_1\subset W_1^\perp\subset U^\perp \subsetneq V_\R,$
we obtain that $\psi(\cdot,N(\cdot))$ is semi-positive but not positive on $V_\R/W_1^\perp$ since
$N(U^\perp)=0.$ Thus, $u^{\mathfrak{F}(U)}\cap C(\mathfrak{F}(W_1))=\emptyset.$

\item It is sufficient to prove that
$\overline{C(\mathfrak{F}(U))}=\overline{C(\mathfrak{F}(W_1))}\cap u^{\mathfrak{F}(U)}
=\overline{C(\mathfrak{F}(W_2))}\cap u^{\mathfrak{F}(U)}$
for any rational defined subspace $U$ of $W_1$ with $U\subsetneq W_1,U\subsetneq W_2.$
Let $U$ be an arbitrary rational defined subspace $U$ of $W_1$ such that $U\subsetneq W_1,U\subsetneq W_2.$ We have
that $\overline{C(\mathfrak{F}(W))}=\{ N\in u^{\mathfrak{F}(W)}\,\,|\,\, \psi(\cdot,N(\cdot)) \mbox{ is semi-positive on } \frac{V_\R}{W^\perp}\}.$
Let $N\in \overline{C(\mathfrak{F}(U))}$ be an arbitrary element. Using the above  argument in (2), we get that the bilinear form
$\psi(\cdot,N(\cdot))$ is semi-positive on $V_\R/W_1^\perp$ and
$N\in \overline{C(\mathfrak{F}(W_1))}\cap u^{\mathfrak{F}(U)}.$
On the other hand, $\overline{C(\mathfrak{F}(W_1))}\cap
u^{\mathfrak{F}(U)} \subset \overline{C(\mathfrak{F}(U))}$ is clear.
\end{myenumi}
\end{proof}

Let $\mathfrak{F}$ be an arbitrary cusp of
$\mathfrak{H}_g.$
Since the Lie group $U^{\mathfrak{F}}(\C)$ is connected  and
$N^2=0$ for any $N\in u^{\mathfrak{F}}_\C$ by the lemma \ref{weight
filtration-positive cone}, the exponential map
$\exp :  u^\mathfrak{F}_\C \>\cong >> U^\mathfrak{F}(\C)\,\,\, \zeta \mapsto I_{2g}+ \zeta$
is an isomorphism. We can identify $U^{\mathfrak{F}}(\C)$ with its Lie algebra $u^\mathfrak{F}_\C$ by this isomorphism and regard $U^{\mathfrak{F}}(\C)$ as a complex space.
Moreover, for any ring $\mathfrak{R}$ in $\{\Z,\Q,\R,\C\},$ $U^{\mathfrak{F}}(\mathfrak{R})$(the set of all $\mathfrak{R}$-points of the algebraic group $U^{\mathfrak{F}}$) can  be regarded
as an $\mathfrak{R}$-module by
\begin{equation}\label{multiplication group- additive group}
 U^{\mathfrak{F}}(\mathfrak{R})\cong M_{k(k+1)/2}(\mathfrak{R})\cap u^{\mathfrak{F}}_\C,
\end{equation}
Therefore $U^{\mathfrak{F}}(\C)$ has a natural integer structure $U^{\mathfrak{F}}(\Z)$
and for any ring $\mathfrak{R}$ in $\{\Z,\Q,\R,\C\},$ there is an isomorphism $U^{\mathfrak{F}}(\Z)\otimes_\Z\mathfrak{R}=U^{\mathfrak{F}}(\mathfrak{R}).$
The corollary \ref{algebraic subgroups by boundary-component} ensures
that any element $\gamma \in N^{\mathfrak{F}}(\Z)$ defines an automorphism
$\overline{\gamma}:  U^{\mathfrak{F}}(\Z)\>>> U^{\mathfrak{F}}(\Z), \,\,\, u\mapsto \gamma u\gamma^{-1}.$
Thus we obtain a group morphism 
$j_{\mathfrak{F}}: N^{\mathfrak{F}}\to
\Aut(U^{\mathfrak{F}})$
such that there is $$j_{\mathfrak{F}}: N^{\mathfrak{F}}(\mathfrak{R})\to
\Aut(U^{\mathfrak{F}}(\mathfrak{R}))\,\, \gamma\mapsto \overline{\gamma}:=((\cdot) \mapsto \gamma(\cdot) \gamma^{-1})$$ for any $\Z$-algebra $\mathfrak{R}.$
We see that if $\gamma \in U^{\mathfrak{F}}$ then
$\overline{\gamma}$ is the identity in $\Aut(U^{\mathfrak{F}}).$

In general, there is the Levi-decomposition of $N^{\mathfrak{F}},$ i.e., a
semi-product of rational algebraic groups(cf.\cite{Del73}\&\cite{AMRT}):
$N^{\mathfrak{F}}=\underbrace{(G_h^{\mathfrak{F}}\times G_l^{\mathfrak{F}})}_{\mbox{direct product }}\cdot \sW^{\mathfrak{F}}.$
Moreover, we have :
\begin{itemize}
\item The
$p_{h,\mathfrak{F}}: N^{\mathfrak{F}}\to  G_h^{\mathfrak{F}} \,\,\mbox{ and }\,\,p_{l,\mathfrak{F}}: N^{\mathfrak{F}}\to G_l^{\mathfrak{F}}$ are surjective and defined over $\Q,$
\item the $G_l^{\mathfrak{F}}(\R)\cdot \sW^{\mathfrak{F}}(\R)$ acts trivially on $\mathfrak{F}$
and the $G_h^{\mathfrak{F}}$ is semi-simple,

\item the $G_h^{\mathfrak{F}}\cdot \sW^{\mathfrak{F}}$ centralizes $U^{\mathfrak{F}}$
 and  the $G_l^{\mathfrak{F}}$ is reductive without compact factors.
\end{itemize}

\begin{example}[Cf.\cite{Chai86} and \cite{Nam}]\label{example-Levi-decomposition}
Consider the cusp $\mathfrak{F}_k,$ we compute that

\begin{eqnarray*}
\sN(\mathfrak{F}_k)=\{\left(
  \begin{array}{ccccc}
   A_{11} & 0_{k,g-k} &A_{12}  & *   \\
   *        & f      & *      & *    \\
   A_{21}& 0_{k,g-k} & A_{22}  &*  \\
   0_{g-k,k}    & 0_{g-k,g-k}  & 0_{g-k,k}     & -^tf^{-1}  \\
  \end{array}
\right)\in \Sp(g,\R)   &|& \\ \left(
               \begin{array}{cc}
                  A_{11} &  A_{12} \\
                  A_{21} &  A_{22} \\
               \end{array}
             \right)\in \Sp(k,\R),
   &&  f\in\GL(g-k,\R)\}.
\end{eqnarray*}
\begin{eqnarray*}
G_l^{\mathfrak{F}_k}(\R)^+ &=&  \{ \left(
   \begin{array}{cccc}
     I_{k} & 0     & 0     & 0 \\
     0       & f & 0     & 0 \\
     0       & 0     &I_{k}& 0 \\
     0       & 0     & 0     & ^tf^{-1}\\
   \end{array}
 \right) \,\, |\,\, f\in \mathrm{GL}(g-k,\R),\,\det f>0\},\\
 &\cong& \mathrm{GL}(g-k,\R)^{+}  \\
G_h^{\mathfrak{F}_k}(\R)=G_h^{\mathfrak{F}_k}(\R)^+  &=&  \{\left(
  \begin{array}{ccccc}
   A_{11} & 0_{k,g-k} &A_{12}  & 0  \\
   0        & I_{g-k}      & 0      & 0   \\
   A_{21}& 0_{k,g-k} & A_{22}  & 0  \\
   0_{g-k,k}    & 0_{g-k,g-k}  & 0_{g-k,k}     & I_{g-k}  \\
  \end{array}
\right)   |  \left(
               \begin{array}{cc}
                  A_{11} &  A_{12} \\
                  A_{21} &  A_{22} \\
               \end{array}
             \right)\in \Sp(k,\R) \}\\
      &\cong & \Sp(k,\R).
\end{eqnarray*}
Thus, the action
$G_l^{\mathfrak{F}_k}(\R) \times C(\mathfrak{F}_k)\>>> C(\mathfrak{F}_k)\,\,\, (M, A)\longmapsto MAM^{-1}$
is equivalent to the action
$\GL(g-k,\R)\times \mathrm{Sym}^{+}_{g-k}(\R) \>>> \mathrm{Sym}^{+}_{g-k}(\R)\,\,\,(f, u)
\longmapsto f\cdot u\cdot ^tf.$
\end{example}
\begin{lemma}Let $\mathfrak{F}=\mathfrak{F}(W_\R)$ be a cusp of $\mathfrak{H}_g$ where $W_\R=W_\Z\otimes\R$ is a rationally define subspace of $V_\R.$
Let $W$ represent a linear space over $\Z$ given by $W(\mathfrak{R}):=W_\Z\otimes_{\Z} \mathfrak{R}$
   for any $\Z$-algebra $\mathfrak{R}.$
Both $ G_l^{\mathfrak{F}}$ and $G_h^{\mathfrak{F}}$ are algebraic group defined over $\Q,$ and there are two isomorphisms
$ G_l^{\mathfrak{F}}\cong \GL(W) \,\,\mbox{ and }\,\, G_h^{\mathfrak{F}}\cong \Sp(W^{\bot}/W,\psi).$
\end{lemma}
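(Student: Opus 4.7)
The approach will be to give an intrinsic description of the Levi factors in terms of the symplectic geometry of the filtration $0 \subset W_\R \subset W_\R^\perp \subset V_\R$, and then to confirm this description via the explicit standard-cusp computation already carried out in Example \ref{example-Levi-decomposition}. The first thing I would do is unpack what $N^{\mathfrak{F}}$ and its Levi decomposition mean concretely. By Corollary \ref{algebraic subgroups by boundary-component}, $N^{\mathfrak{F}}=W_0^{\mathfrak{F}}(\Sp(V,\psi))$ is the parabolic subgroup stabilizing the isotropic flag $0\subset W\subset W^\perp\subset V$, and $\sW^{\mathfrak{F}}$ is its unipotent radical, consisting of elements acting trivially on each of the three graded pieces $W$, $W^\perp/W$, $V/W^\perp$. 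Because the flag is defined over $\Q$ (the cusp $\mathfrak{F}$ is rational), both $N^{\mathfrak{F}}$ and $\sW^{\mathfrak{F}}$ are defined over $\Q$; a natural Levi complement is $Z^{\mathfrak{F}}$, the centralizer in $N^{\mathfrak{F}}$ of the cocharacter $w_{\mathfrak{F}}\colon\G_m\to\Sp(V,\psi)$, which is likewise defined over $\Q$.

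Next, I would note that an element of $Z^{\mathfrak{F}}$ acts separately on each graded piece, and that the action on $V/W^\perp$ is already determined by the action on $W$ through the non-degenerate pairing $\psi\colon W\times V/W^\perp\to \Q$, while the induced action on $W^\perp/W$ must preserve the quotient symplectic form inherited from $\psi$. This gives a $\Q$-morphism of algebraic groups
\[
\Phi\colon Z^{\mathfrak{F}}\longrightarrow \GL(W)\times \Sp(W^\perp/W,\psi),
\]
which I would show is an isomorphism: injectivity is immediate from the fact that an element of $Z^{\mathfrak{F}}$ acting as identity on all three graded pieces lies in $\sW^{\mathfrak{F}}\cap Z^{\mathfrak{F}}$, a unipotent subgroup centralizing a cocharacter with nontrivial weights on it, hence trivial; surjectivity is then obtained by constructing sections using the grading provided by $w_{\mathfrak{F}}$, lifting a pair of graded automorphisms to a block-diagonal element of $N^{\mathfrak{F}}$. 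Composing $\Phi$ with the two projections recovers $p_{l,\mathfrak{F}}$ and $p_{h,\mathfrak{F}}$, which yields $G_l^{\mathfrak{F}}\cong\GL(W)$ and $G_h^{\mathfrak{F}}\cong\Sp(W^\perp/W,\psi)$ over $\Q$.

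Finally, to double-check the identification and to render the abstract argument concrete, I would reduce to the standard cusp $\mathfrak{F}_k$: by Remark \ref{remark on rational boundary component} there exists $M\in\Sp(g,\Z)$ with $W_\R=M(V^{(k)})$, and conjugation by $M$ is a $\Q$-isomorphism $\Sp(V,\psi)\to\Sp(V,\psi)$ sending $N^{\mathfrak{F}_k}$ to $N^{\mathfrak{F}}$, $\sW^{\mathfrak{F}_k}$ to $\sW^{\mathfrak{F}}$, and the cocharacter $w_{\mathfrak{F}_k}$ to $w_{\mathfrak{F}}$; hence it intertwines the two Levi factors. At the standard cusp, Example \ref{example-Levi-decomposition} exhibits $G_l^{\mathfrak{F}_k}\cong\GL(g-k)=\GL(V^{(k)})$ and $G_h^{\mathfrak{F}_k}\cong\Sp(k)=\Sp((V^{(k)})^\perp/V^{(k)},\psi)$, matching the intrinsic description above, so transport by $M$ delivers the isomorphisms in the general case and certifies they are defined over $\Q$.

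The main obstacle I foresee is the surjectivity of $\Phi$: to write it down cleanly one must explicitly produce, for each automorphism $f\in\GL(W)$, a block lift in $Z^{\mathfrak{F}}$ compatible with $\psi$ (and dually for $h\in\Sp(W^\perp/W,\psi)$). The cocharacter $w_{\mathfrak{F}}$ provides a canonical $\Q$-splitting of the filtration that makes this lift essentially unique and $\Q$-rational, after which the identification with $p_{l,\mathfrak{F}}$ and $p_{h,\mathfrak{F}}$ is a formality coming from the uniqueness of the Levi decomposition.
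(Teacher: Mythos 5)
Your argument is correct; note that the paper itself states this lemma without any proof, so there is nothing to compare it against line by line — what you have written supplies the missing argument, and it does so using exactly the ingredients the paper has already set up (Corollary \ref{algebraic subgroups by boundary-component}, Example \ref{example-Levi-decomposition}, Remark \ref{remark on rational boundary component}). Your intrinsic description — the Levi factor as the centralizer $Z^{\mathfrak{F}}$ of the cocharacter $w_{\mathfrak{F}}$, acting on the graded pieces $W$, $W^\perp/W$, $V/W^\perp$, with the action on $V/W^\perp$ forced by the nondegenerate pairing $\psi\colon W\times V/W^\perp\to\Q$ — is the standard one and gives $Z^{\mathfrak{F}}\cong\GL(W)\times\Sp(W^\perp/W,\psi)$ over $\Q$, and the reduction to the standard cusp by conjugation with $M\in\Sp(g,\Z)$ is consistent with the paper's own conventions (it uses $w_{\mathfrak{F}_k}=Mw_{\mathfrak{F}}M^{-1}$ in the proof of Lemma \ref{nilpotent orbit}). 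The only point worth making explicit is the one you implicitly assume: that the paper's Levi factor $G_h^{\mathfrak{F}}\times G_l^{\mathfrak{F}}$ is the cocharacter-centralizer $Z^{\mathfrak{F}}$ (rather than some other $\sW^{\mathfrak{F}}$-conjugate Levi complement); this is the choice made in the sources the paper follows and is confirmed by Example \ref{example-Levi-decomposition}, where the displayed $G_l^{\mathfrak{F}_k}$ and $G_h^{\mathfrak{F}_k}$ visibly commute with $w_{\mathfrak{F}_k}$ and which factor is which is pinned down by the listed properties ($G_l\cdot\sW$ acts trivially on $\mathfrak{F}$, $G_h\cdot\sW$ centralizes $U^{\mathfrak{F}}$), exactly as your cross-check uses.
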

\begin{lemma}\label{key-in-Gamma_F}
The homomorphism $j_{\mathfrak{F}}: N^{\mathfrak{F}}(\R)\to
\Aut(U^{\mathfrak{F}}(\R))$ induces a homomorphism $j_{C(\mathfrak{F})}: N^{\mathfrak{F}}(\R)\to
\Aut(C(\mathfrak{F})).$ Moreover, $j_{C(\mathfrak{F})}$ factors through $p_{l,\mathfrak{F}}: N^{\mathfrak{F}}(\R)\>>> G_l^{\mathfrak{F}}(\R),$
i.e, there is a commutative diagram
$$
\begin{TriCDA}
{G_l^{\mathfrak{F}}(\R)} {\NE  E p_{l,\mathfrak{F}} E}{\SE E {j_{C(\mathfrak{F})}} E} {N^{\mathfrak{F}}(\R)}
{\>j_{C(\mathfrak{F})}>>} {\Aut(C(\mathfrak{F}))}
\end{TriCDA}.
$$
\end{lemma}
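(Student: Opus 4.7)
The proof has two parts: (a) show conjugation by $N^{\mathfrak{F}}(\R)$ preserves $C(\mathfrak{F})$, and (b) show the kernel of $p_{l,\mathfrak{F}}$ acts trivially on $C(\mathfrak{F})$. The first part is essentially a calculation with the bilinear form $\psi_N$; the second part reduces to a centralizer statement already recorded in the excerpt.

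First I would verify that the given $j_{\mathfrak{F}}$ restricts to an action on $C(\mathfrak{F})$. For $\gamma\in N^{\mathfrak{F}}(\R)$, corollary \ref{algebraic subgroups by boundary-component} tells us that $\gamma$ stabilizes the weight filtration $W_{\bullet}^{\mathfrak{F}}$, so in particular $\gamma(W)=W$ and $\gamma(W^{\perp})=W^{\perp}$. Since $U^{\mathfrak{F}}$ is normal in $N^{\mathfrak{F}}$, conjugation sends $u^{\mathfrak{F}}$ to itself; thus for $N\in u^{\mathfrak{F}}$ the element $\gamma N\gamma^{-1}\in u^{\mathfrak{F}}$, and I need only check the positivity condition in the definition \eqref{positive-cone}. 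Using that $\gamma\in \Sp(g,\R)$ preserves $\psi$, one computes
\[
\psi_{\gamma N\gamma^{-1}}(v,u)=\psi\bigl(v,\gamma N\gamma^{-1}u\bigr)=\psi\bigl(\gamma^{-1}v,N\gamma^{-1}u\bigr)=\psi_N\bigl(\gamma^{-1}v,\gamma^{-1}u\bigr).
\]
Because $\gamma$ preserves $W^{\perp}$, it induces an $\R$-linear automorphism of $V_{\R}/W^{\perp}$, and this computation shows that $\psi_{\gamma N \gamma^{-1}}$ on $V_\R/W^\perp$ is simply the pullback of $\psi_N$ along $\gamma^{-1}$. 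Therefore $\psi_{\gamma N\gamma^{-1}}$ is positive definite on $V_{\R}/W^{\perp}$ if and only if $\psi_N$ is, and so $\gamma N\gamma^{-1}\in C(\mathfrak{F})$ whenever $N\in C(\mathfrak{F})$. This yields the induced homomorphism $j_{C(\mathfrak{F})}\colon N^{\mathfrak{F}}(\R)\to \Aut(C(\mathfrak{F}))$.

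Next I would show $j_{C(\mathfrak{F})}$ factors through $p_{l,\mathfrak{F}}$, which amounts to proving that $\ker(p_{l,\mathfrak{F}})$ acts trivially on $C(\mathfrak{F})$. From the Levi decomposition $N^{\mathfrak{F}}=(G_h^{\mathfrak{F}}\times G_l^{\mathfrak{F}})\cdot \sW^{\mathfrak{F}}$, the kernel of $p_{l,\mathfrak{F}}$ is exactly $G_h^{\mathfrak{F}}(\R)\cdot \sW^{\mathfrak{F}}(\R)$. The excerpt records (just after the Levi decomposition) that $G_h^{\mathfrak{F}}\cdot \sW^{\mathfrak{F}}$ centralizes $U^{\mathfrak{F}}$. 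Hence for every $\gamma\in G_h^{\mathfrak{F}}(\R)\cdot \sW^{\mathfrak{F}}(\R)$ and every $u\in U^{\mathfrak{F}}(\R)$ we have $\gamma u\gamma^{-1}=u$, and so $j_{\mathfrak{F}}(\gamma)=\mathrm{id}$ on $U^{\mathfrak{F}}(\R)$. Passing to Lie algebras via the exponential isomorphism from \eqref{multiplication group- additive group}, conjugation by $\gamma$ is also the identity on $u^{\mathfrak{F}}$, and therefore fixes every element of $C(\mathfrak{F})\subset u^{\mathfrak{F}}$ pointwise. This gives $j_{C(\mathfrak{F})}|_{\ker p_{l,\mathfrak{F}}}=\mathrm{id}$, and the universal property of the quotient furnishes the dashed arrow making the triangle commute.

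The one mildly delicate point is the first part — namely that conjugation genuinely preserves the positive cone rather than merely its closure — but this is immediate once one observes that the change of variables $\gamma^{-1}$ is a linear isomorphism of $V_{\R}/W^{\perp}$, so positive definiteness is preserved intact. The second part is essentially bookkeeping against the structure theory of $N^{\mathfrak{F}}$ already established. I expect no genuine obstacle; the substance of the lemma lies in correctly identifying which subgroups act and which act trivially, and the calculations above make both explicit.
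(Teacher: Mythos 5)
Your proof is correct, but it takes a genuinely different route from the paper's in its first half. The paper reduces at once to the standard cusp $\mathfrak{F}=\mathfrak{F}_k$ and then invokes the explicit computation of Example \ref{example-Levi-decomposition}: the adjoint action of $G_l^{\mathfrak{F}_k}(\R)$ on $C(\mathfrak{F}_k)$ is the action $(f,u)\mapsto f\cdot u\cdot {}^tf$ of $\GL(g-k,\R)$ on $\mathrm{Sym}^{+}_{g-k}(\R)$, which is transitive; combined with the fact that $G_h^{\mathfrak{F}}\cdot\sW^{\mathfrak{F}}$ centralizes $U^{\mathfrak{F}}$, this exhibits $C(\mathfrak{F})$ as a single orbit of a base point $\Omega_{\mathfrak{F}}$ under both $G_l^{\mathfrak{F}}(\R)$ and $N^{\mathfrak{F}}(\R)$, from which invariance of the cone and the factorization both follow. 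You instead prove invariance directly: since $\gamma\in N^{\mathfrak{F}}(\R)$ preserves the weight filtration and $\psi$, the identity $\psi_{\gamma N\gamma^{-1}}(v,u)=\psi_N(\gamma^{-1}v,\gamma^{-1}u)$ shows that $\psi_{\gamma N\gamma^{-1}}$ on $V_\R/W^{\perp}$ is the pullback of $\psi_N$ by a linear automorphism, so positivity is preserved and no reduction to $\mathfrak{F}_k$ or transitivity statement is needed; your second half (the kernel $G_h^{\mathfrak{F}}(\R)\cdot\sW^{\mathfrak{F}}(\R)$ of $p_{l,\mathfrak{F}}$ acts trivially because it centralizes $U^{\mathfrak{F}}$, then quotient out) uses the same ingredient as the paper. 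Your argument is more elementary and uniform over all cusps; the paper's argument buys the extra information, used implicitly later, that $G_l^{\mathfrak{F}}(\R)$ (hence $N^{\mathfrak{F}}(\R)$) acts transitively on $C(\mathfrak{F})$, i.e.\ that the cone is homogeneous. The only points you leave tacit are routine: that $U^{\mathfrak{F}}$ is normal in $N^{\mathfrak{F}}$ (it is the center of the unipotent radical, consistent with Corollary \ref{algebraic subgroups by boundary-component}), and that the Levi decomposition and surjectivity of $p_{l,\mathfrak{F}}$ persist on real points so that the kernel is exactly $G_h^{\mathfrak{F}}(\R)\cdot\sW^{\mathfrak{F}}(\R)$; both are standard and compatible with the structure theory the paper records.
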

\begin{proof}It is sufficient to prove the statements in case of $\mathfrak{F}=\mathfrak{F}_k.$
Let $\Omega_{\mathfrak{F}}$ be a fixed point in $C(\mathfrak{F}).$
Since $G_h^{\mathfrak{F}}\cdot \sW^{\mathfrak{F}}$ centralizes $U^{\mathfrak{F}},$
We obtain that
\begin{eqnarray*}
  C(\mathfrak{F}) &=&\mbox{the orbit of $\Omega_{\mathfrak{F}}$ for the adjoint action of $G_l^{\mathfrak{F}}(\R)$ on $U^{\mathfrak{F}}(\R)$} \\
   &=& \mbox{the orbit of $\Omega_{\mathfrak{F}}$ for the adjoint action of $N^{\mathfrak{F}}(\R)$ on $U^{\mathfrak{F}}(\R)$ }.
\end{eqnarray*}
The first equality is given by the computation in the example \ref{example-Levi-decomposition}.
Therefore, we get the $j_{C(\mathfrak{F})}: N^{\mathfrak{F}}(\R)\to
\Aut(C(\mathfrak{F}))$ and $j_{C(\mathfrak{F})}$ factors through the morphism $p_{l,\mathfrak{F}}.$
\end{proof}

Define $\Gamma_{\mathfrak{F}}:=\Gamma\cap
\sN(\mathfrak{F}),\,\,\, \overline{\Gamma_{\mathfrak{F}}}:= j_{C(\mathfrak{F})}(\Gamma_{\mathfrak{F}}).$
Since $\sN(\mathfrak{F})\subset N^{\mathfrak{F}}(\R)$ and $\sN(\mathfrak{F})^{+}=N^{\mathfrak{F}}(\R)^{+},$
the group  $\Gamma_{\mathfrak{F}}$ is  a discrete subgroup of the real Lie group $\sN(\mathfrak{F})$(cf. \cite{Mil}),
and there is an inclusion
$ \overline{\Gamma_{\mathfrak{F}}}=j_{C(\mathfrak{F})}(\Gamma\cap G_l^{\mathfrak{F}}(\R))\>\subset>>\Aut(U^{\mathfrak{F}}(\Z)) \cap \Aut(C(\mathfrak{F})).$
\begin{definition}[Cf.\cite{AMRT}\&\cite{FC}]\label{admissible polyhedral decomposition}
Let $\mathfrak{F}$ be a cusp  of $\mathfrak{H}_g$ and $\mathbb{G}$ an
arithmetic subgroup of $\Sp(g,\Q)$ with an action on $C(\mathfrak{F}).$
A \textbf{$\mathbb{G}$-admissible polyhedral decomposition} of
$C(\mathfrak{F})$ is a collection of convex rational polyhedral cones
$\Sigma_{\mathfrak{F}}=\{\sigma_\alpha^{\mathfrak{F}}\}_{\alpha}\subset
\overline{C(\mathfrak{F})}$ satisfying  that $\Sigma_{\mathfrak{F}}$ is a fan,
$\overline{C(\mathfrak{F})}^{\mathrm{rc}}=\bigcup\limits_{\alpha}
\sigma_\alpha^{\mathfrak{F}}$ and
$\mathbb{G}$ has an action on the set $\Sigma_{\mathfrak{F}}$ with finitely many orbits.
A $\mathbb{G}$-admissible polyhedral decompositions
 $\Sigma_{\mathfrak{F}}$ of $C(\mathfrak{F})$
is \textbf{regular} with respect to  an arithmetic subgroup $\Gamma^{'}\subset \Sp(g,\Z)$ if $\Sigma_{\mathfrak{F}}$ is regular  with respect to the lattice
$\Gamma^{'}\cap U^{\mathfrak{F}}(\Z).$
\end{definition}
\begin{myrem}
Each convex rational polyhedral cone in $C(\mathfrak{F})$
is automatically strong since $C(\mathfrak{F})$ is
non-degenerate. Moreover, the construction in \cite{AMRT} implies that
every cone in a $\overline{\Gamma_{\mathfrak{F}}}$-admissible
polyhedral decomposition $\Sigma_{\mathfrak{F}}=\{\sigma_\alpha\}$ of
$C(\mathfrak{F})$ is actual a face of one top-dimensional cone in $\Sigma_{\mathfrak{F}}$(i.e., a cone
$\sigma_{\max}^\mathfrak{F}\in\Sigma_{\mathfrak{F}}$ with $\dim\sigma_{\max}^\mathfrak{F}=\dim_\R
C(\mathfrak{F})$).
\end{myrem}

\begin{definition}[Cf.\cite{AMRT}]\label{admissible-family-polyhedral-decomposition}
Let $\Gamma\subset \Sp(g,\Z)$ be an arithmetic subgroup.
\begin{myenumi}
\item
A \textbf{$\Gamma$-admissible family of polyhedral decompositions}
is a collection $\{\Sigma_{\mathfrak{F}}\}_{\mathfrak{F}}$ of $\overline{\Gamma_{\mathfrak{F}}}$-admissible polyhedral decompositions
 $\Sigma_{\mathfrak{F}}=\{\sigma_\alpha^{\mathfrak{F}}\}$ of $C(\mathfrak{F}),$ $\mathfrak{F}$ running over the cusps of $\mathfrak{H}_g$ such that
\begin{itemize}
  \item if $\mathfrak{F}^2=\gamma\mathfrak{F}^1$ for some $\gamma\in
\Gamma$ then
$\Sigma_{\mathfrak{F}^2}=\gamma(\Sigma_{\mathfrak{F}^1}),$
  \item if $\mathfrak{F}^1\prec\mathfrak{F}^2$ then
$\Sigma_{\mathfrak{F}^2}=\{\sigma_\alpha^{\mathfrak{F}^1}\cap \overline{C(\mathfrak{F}^2)} \,\,|\,\,
\sigma_\alpha^{\mathfrak{F}^1}\in \Sigma_{\mathfrak{F}^1}\}.$
\end{itemize}

\item A $\Gamma$-admissible family of polyhedral decompositions
$\{\Sigma_{\mathfrak{F}}\}_{\mathfrak{F}}$ is called \textbf{regular} if
for any cusp $\mathfrak{F}$ the $\overline{\Gamma_{\mathfrak{F}}}$-admissible polyhedral decompositions
 $\Sigma_{\mathfrak{F}}$ of $C(\mathfrak{F})$
is regular with respect to $\Gamma.$
\end{myenumi}
\end{definition}
\begin{lemma}[Cf.\cite{Chai85} and \&\cite{Nam}]\label{family-decomposion from a minimal boundary}
Let $\Gamma\subset \Sp(g,\Z)$ be an arithmetic subgroup. Let
$\Sigma_{\mathfrak{F}_0}:=\{\sigma_\alpha^{\mathfrak{F}_0}\}$ be a
$\overline{\Gamma_{\mathfrak{F}_0}}$(or $\mathrm{GL}(g,\Z)$)-admissible polyhedral decomposition of
$C(\mathfrak{F}_0),$ where $\mathfrak{F}_0$ is the standard minimal cusp of $\mathfrak{H}_g.$
The $\Sigma_{\mathfrak{F}_0}$ endows a $\Gamma$-admissible family of
polyhedral decompositions $\{\Sigma_{\mathfrak{F}}\}_{\mathfrak{F}}$
 as follows :
\begin{description}
  \item[Step 1]For any minimal cusp $\mathfrak{F}_{\mathrm{min}}=M(\mathfrak{F}_0)$ with $M\in\Sp(g,\Z),$
we define
  $$\Sigma_{\mathfrak{F}_{\mathrm{min}}}:=M(\Sigma_{\mathfrak{F}_0})=\{ M\sigma^{\mathfrak{F}_0}_\alpha M^{-1}\,\,|\,\, \sigma^{\mathfrak{F}_{0}}_\alpha\in \Sigma_{\mathfrak{F}_0}\}.$$

  \item[Step 2] For any cusp $\mathfrak{F},$  if $\mathfrak{F}_{\mathrm{min}}$ is a minimal cusp with $\mathfrak{F}_{\mathrm{min}}\prec\mathfrak{F}$
then we define
$$\Sigma_{\mathfrak{F}}:=\Sigma_{\mathfrak{F}_{\mathrm{min}}}|_{\overline{C(\mathfrak{F})}}
  =\{\sigma^{\mathfrak{F}_{\mathrm{min}}}_\alpha \cap \overline{C(\mathfrak{F})} \,\,|\,\, \sigma^{\mathfrak{F}_{\mathrm{min}}}_\alpha\in \Sigma_{\mathfrak{F}_{\mathrm{min}}}\}.$$

  \end{description}
Moreover, we have :
\begin{myenumi}
\item  The $\Sigma_{\mathfrak{F}_0}$ is regular with respect to $\Gamma$ if $\Sigma_{\mathfrak{F}_0}$ is regular with respect to $\Sp(g,\Z).$

\item If $\Sigma_{\mathfrak{F}_0}$ is regular with respect to $\Gamma$ then the family $\{\Sigma_{\mathfrak{F}}\}_{\mathfrak{F}}$ is regular.
\end{myenumi}

\end{lemma}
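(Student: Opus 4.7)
The plan is to verify in order: (i) well-definedness of Step 1, (ii) well-definedness of Step 2, (iii) the two compatibility axioms of Definition~\ref{admissible-family-polyhedral-decomposition}, and finally (iv) the two regularity statements.

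For (i), suppose $M,M'\in\Sp(g,\Z)$ both satisfy $M(\mathfrak{F}_0)=M'(\mathfrak{F}_0)=\mathfrak{F}_{\mathrm{min}}$. Then $h:=M^{-1}M'\in \sN(\mathfrak{F}_0)\cap\Sp(g,\Z)$, and by the explicit computation of the Levi decomposition in Example~\ref{example-Levi-decomposition} together with Lemma~\ref{key-in-Gamma_F}, the image $j_{C(\mathfrak{F}_0)}(h)$ lies in $\GL(g,\Z)$. The $\GL(g,\Z)$-admissibility of $\Sigma_{\mathfrak{F}_0}$ then forces $h\cdot \Sigma_{\mathfrak{F}_0}=\Sigma_{\mathfrak{F}_0}$, hence $M\Sigma_{\mathfrak{F}_0}M^{-1}=M'\Sigma_{\mathfrak{F}_0}M'^{-1}$, so $\Sigma_{\mathfrak{F}_{\mathrm{min}}}$ is intrinsically defined. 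That it is $\overline{\Gamma_{\mathfrak{F}_{\mathrm{min}}}}$-admissible follows by transporting the admissibility of $\Sigma_{\mathfrak{F}_0}$ via conjugation by $M$.

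Step (ii) is the main obstacle. Given two minimal cusps $\mathfrak{F}_{\mathrm{min},1}\prec\mathfrak{F}$ and $\mathfrak{F}_{\mathrm{min},2}\prec\mathfrak{F}$ corresponding to maximal isotropic rationally-defined subspaces $W_1,W_2\supset W$ (where $\mathfrak{F}=\mathfrak{F}(W)$), I will produce $h\in\sN(\mathfrak{F})\cap\Sp(g,\Z)$ sending $\mathfrak{F}_{\mathrm{min},1}$ to $\mathfrak{F}_{\mathrm{min},2}$. Existence uses the Levi factor $G_h^{\mathfrak{F}}\cong\Sp(W^{\perp}/W,\psi)$ from Example~\ref{example-Levi-decomposition}: its integral points act transitively on the set of maximal isotropic rational subspaces of $W^{\perp}/W$, hence on the set of rational minimal cusps below $\mathfrak{F}$. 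By (i), $\Sigma_{\mathfrak{F}_{\mathrm{min},2}}=h\Sigma_{\mathfrak{F}_{\mathrm{min},1}}h^{-1}$, and because $h\in\sN(\mathfrak{F})$ conjugation preserves $\overline{C(\mathfrak{F})}$, so
\begin{equation*}
\Sigma_{\mathfrak{F}_{\mathrm{min},2}}\big|_{\overline{C(\mathfrak{F})}}=h\bigl(\Sigma_{\mathfrak{F}_{\mathrm{min},1}}\big|_{\overline{C(\mathfrak{F})}}\bigr)h^{-1}.
\end{equation*}
The desired equality $\Sigma_{\mathfrak{F}_{\mathrm{min},2}}\big|_{\overline{C(\mathfrak{F})}}=\Sigma_{\mathfrak{F}_{\mathrm{min},1}}\big|_{\overline{C(\mathfrak{F})}}$ then reduces to showing that $j_{C(\mathfrak{F})}(h)$ preserves the restricted fan; this invariance follows from the $\GL(g,\Z)$-admissibility of $\Sigma_{\mathfrak{F}_0}$ combined with Lemma~\ref{convex-cone}, since the restriction operation $\sigma\mapsto \sigma\cap\overline{C(\mathfrak{F})}$ is intrinsic to the pair $(W_i,W)$ and factors through $W_1\cap W_2$.

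For (iii), $\Gamma$-equivariance follows directly from Step 1: for $\gamma\in\Gamma$ and $\mathfrak{F}^2=\gamma\mathfrak{F}^1$, pick any $\mathfrak{F}_{\mathrm{min}}\prec\mathfrak{F}^1$; then $\gamma\mathfrak{F}_{\mathrm{min}}\prec\mathfrak{F}^2$, and $\Sigma_{\gamma\mathfrak{F}_{\mathrm{min}}}=\gamma\Sigma_{\mathfrak{F}_{\mathrm{min}}}\gamma^{-1}$, which transports the face decomposition to $\mathfrak{F}^2$. The face axiom for $\mathfrak{F}^1\prec\mathfrak{F}^2$ is the transitivity of restriction: if $\mathfrak{F}_{\mathrm{min}}\prec\mathfrak{F}^1\prec\mathfrak{F}^2$, restricting the cones from $\overline{C(\mathfrak{F}_{\mathrm{min}})}$ through $\overline{C(\mathfrak{F}^1)}$ to $\overline{C(\mathfrak{F}^2)}$ agrees with restricting directly, using Lemma~\ref{convex-cone}(3).

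For (iv), statement (1) follows from scaling: each primitive edge generator of a cone in $\Sigma_{\mathfrak{F}_0}$ lies in $U^{\mathfrak{F}_0}(\Z)$, and its smallest positive integer multiple in $\Gamma\cap U^{\mathfrak{F}_0}(\Z)$ still generates the same ray. A Smith-normal-form argument applied to the finite-index inclusion $\Gamma\cap U^{\mathfrak{F}_0}(\Z)\subset U^{\mathfrak{F}_0}(\Z)$ shows these scaled generators extend to a $\Z$-basis of the sublattice, preserving regularity. For statement (2), the minimal cusps $\mathfrak{F}_{\mathrm{min}}=M(\mathfrak{F}_0)$ acquire decompositions $\Sigma_{\mathfrak{F}_{\mathrm{min}}}=M\Sigma_{\mathfrak{F}_0}M^{-1}$, which are regular w.r.t.\ the conjugate lattice $M(\Gamma\cap U^{\mathfrak{F}_0}(\Z))M^{-1}=\Gamma\cap U^{\mathfrak{F}_{\mathrm{min}}}(\Z)$ (using $M\in\Sp(g,\Z)$ and that $\Gamma$ is normal in its commensurator after intersection with the unipotent radical). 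Finally, the restriction of a regular cone to a rational face $\overline{C(\mathfrak{F})}$ is again regular because a face of a simplicial cone is simplicial, completing the regularity of the entire family.
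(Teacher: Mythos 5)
Your architecture (well-definedness of Step 1, well-definedness of Step 2, the two axioms of Definition~\ref{admissible-family-polyhedral-decomposition}, then regularity) is the right one — the paper itself gives no proof, quoting Chai and Namikawa — but two steps as written do not go through. The crux is Step 2: you correctly reduce to producing $h\in\sN(\mathfrak{F})\cap\Sp(g,\Z)$ with $h\mathfrak{F}_{\mathrm{min},1}=\mathfrak{F}_{\mathrm{min},2}$ and then to showing that $j_{C(\mathfrak{F})}(h)$ preserves the restricted fan, but your justification of that last invariance is not a proof: it is exactly the statement at stake, and the appeal to Lemma~\ref{convex-cone}(3) is inapplicable because its hypothesis (a maximal isotropic subspace containing $W_1\cup W_2$) fails whenever $W_1\neq W_2$ are themselves maximal isotropic. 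The repair is already in your hands: you produce $h$ through the hermitian Levi factor $G_h^{\mathfrak{F}}(\Z)\cong\Sp(W^\perp/W,\psi)(\Z)$, and the paper records (just before Example~\ref{example-Levi-decomposition}) that $G_h^{\mathfrak{F}}\cdot\sW^{\mathfrak{F}}$ centralizes $U^{\mathfrak{F}}$; hence such an $h$ acts as the identity on $u^{\mathfrak{F}}\supset\overline{C(\mathfrak{F})}$, so
$\Sigma_{\mathfrak{F}_{\mathrm{min},2}}\big|_{\overline{C(\mathfrak{F})}}=\mathrm{Ad}(h)\bigl(\Sigma_{\mathfrak{F}_{\mathrm{min},1}}\bigr)\big|_{\overline{C(\mathfrak{F})}}=\Sigma_{\mathfrak{F}_{\mathrm{min},1}}\big|_{\overline{C(\mathfrak{F})}}$,
with no further invariance claim needed. (Note also that steps (i)--(iii) genuinely use $\GL(g,\Z)$-invariance of $\Sigma_{\mathfrak{F}_0}$; with only $\overline{\Gamma_{\mathfrak{F}_0}}$-admissibility the ``any $M\in\Sp(g,\Z)$'' formulation of Step 1 is not well defined and one must instead work with $\Gamma$-orbit representatives of minimal cusps.)

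The regularity part (iv) also has a real error: the Smith-normal-form claim is false. If $v_1,\dots,v_r$ are part of a $\Z$-basis of a lattice $L$ and $L'\subset L$ has finite index, the primitive multiples of the $v_i$ in $L'$ need not extend to a $\Z$-basis of $L'$ (take $L=\Z^2$, $L'=\{(a,b):a\equiv b \bmod 2\}$, $v_1=e_1,v_2=e_2$: then $2e_1,2e_2$ generate an index-two sublattice of $L'$). Statement (1) is immediate only when $\Gamma\cap U^{\mathfrak{F}_0}(\Z)=n\,U^{\mathfrak{F}_0}(\Z)$ (e.g.\ $\Gamma=\Gamma_g(n)$), by pure rescaling; for a general arithmetic $\Gamma$ your argument proves nothing and the claim requires separate care. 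Likewise in (2) the identity $M(\Gamma\cap U^{\mathfrak{F}_0}(\Z))M^{-1}=\Gamma\cap U^{\mathfrak{F}_{\mathrm{min}}}(\Z)$ needs $\Gamma$ normal in $\Sp(g,\Z)$ or a comparable hypothesis; the parenthetical about commensurators does not supply it. The final assertion of (2) is essentially correct but for the wrong reason: the point is not that a face of a simplicial cone is simplicial, but that $\sigma\cap\overline{C(\mathfrak{F})}$ is a face of the regular cone $\sigma$, hence generated by a subset of the chosen basis vectors, and that subset spans a primitive sublattice of $\Gamma\cap U^{\mathfrak{F}}(\Z)$, so it is again part of a $\Z$-basis there.
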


\subsection{General toroidal compactifications of $\sA_{g,\Gamma}$}
Let
$D(\mathfrak{F}):=\bigcup\limits_{\alpha\in \mathrm{U}^{\mathfrak{F}}(\C)}\alpha\mathfrak{S}_g$
for any cusp $\mathfrak{F}.$ Actually,
$D(\mathfrak{F})=\bigcup\limits_{C\in u^\mathfrak{F}}\exp(\sqrt{-1}C)\mathfrak{S}_g\subset \check{\mathfrak{S}}_g$
since the group $W_{0}^{\mathfrak{F}}(G)$ acts transitively on $\mathfrak{S}_g.$ Here is another version of the lemma \ref{nilpotent orbit}:
\begin{proposition}[Cf.\cite{Sat},\cite{AMRT},\cite{KW65},\cite{Fal}]\label{embedding of the third type}
Let $\mathfrak{F}=\mathfrak{F}(W)$ be a cusp of $\mathfrak{S}_g.$ We have that
$
  D(\mathfrak{F})=\{F\in \check{\mathfrak{S}}_g\,\,|\,\, \sqrt{-1}\psi(v,\overline{v})>0\,\, \forall \,0\neq v\in F\cap W^\perp
  \}
$
and a diffeomorphism
$$\varphi : u^{\mathfrak{F}}_\C\times v^{\mathfrak{F}}_\R\times \mathfrak{F}\>\cong>> D(\mathfrak{F})\,\,\,(a+\sqrt{-1}b, c,F)\longmapsto \exp(a+\sqrt{-1}b)\exp(c)(\check{F})$$
such that
$ \varphi^{-1}(\mathfrak{S}_g)=(u^{\mathfrak{F}}+\sqrt{-1}C(\mathfrak{F}))\times v^{\mathfrak{F}}_\R\times \mathfrak{F},$ where $\check{F}$ is defined in the lemma \ref{nilpotent orbit}.
\end{proposition}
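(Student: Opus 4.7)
The plan is to reduce to the standard cusp $\mathfrak{F}_k$ via $\Sp(g,\Q)$-equivariance and then work in explicit matrix coordinates on $\check{\mathfrak{S}}_g$. All objects appearing in the statement---the set $D(\mathfrak{F})$, the Lie algebras $u^{\mathfrak{F}}, v^{\mathfrak{F}}_\R$, the cone $C(\mathfrak{F})$, the dual-filtration construction $F\mapsto\check{F}$ from Lemma~\ref{nilpotent orbit}, and the Hermitian positivity condition on $F\cap W^\perp_\C$---are equivariant under $\Sp(g,\Q)$, and by Remark~\ref{remark on rational boundary component} every cusp is $\Sp(g,\Z)$-conjugate to some $\mathfrak{F}_k=\mathfrak{F}(V^{(k)})$. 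Hence it suffices to treat $\mathfrak{F}=\mathfrak{F}_k$.

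Using Proposition~\ref{Borel-embedding}, I identify the big cell of $\check{\mathfrak{S}}_g$ with $\mathrm{Sym}_g(\C)$ and write any $\tau$ in block form $\tau=\bigl(\begin{smallmatrix}\tau_1&\tau_{12}\\ {}^t\tau_{12}&\tau_2\end{smallmatrix}\bigr)$ with $\tau_1\in\mathrm{Sym}_k(\C)$, $\tau_2\in\mathrm{Sym}_{g-k}(\C)$, $\tau_{12}\in M_{k,g-k}(\C)$. Lemma~\ref{nilpotent orbit} then shows that for $F\in\mathfrak{F}_k\cong\mathfrak{H}_k$ corresponding to $\tau_1\in\mathfrak{H}_k$, the dual filtration $\check{F}\in\mathfrak{F}_k^\vee$ corresponds to the symmetric matrix $\bigl(\begin{smallmatrix}\tau_1&0\\ 0&0\end{smallmatrix}\bigr)$. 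From the explicit matrix form of $U^{\mathfrak{F}_k}\subset\Sp(g,\R)$ already computed in Section~1 and the analogous description of $V^{\mathfrak{F}_k}$, the M\"obius action of $\Sp(g,\R)$ on $\tau$ restricts to $\exp(a+\sqrt{-1}b)(\tau)=\tau+\bigl(\begin{smallmatrix}0&0\\ 0&a+\sqrt{-1}b\end{smallmatrix}\bigr)$ and $\exp(c)(\tau)=\tau+\bigl(\begin{smallmatrix}0&c\\ {}^tc&0\end{smallmatrix}\bigr)$ (modulo a quadratic correction that vanishes on $\check{F}$ since $\tau_2=0$ there), and the two exponentials commute because $U^{\mathfrak{F}_k}$ is central in $\sW^{\mathfrak{F}_k}$. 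Consequently $\varphi(a+\sqrt{-1}b,c,F)$ is simply the matrix $\bigl(\begin{smallmatrix}\tau_1&c\\ {}^tc&a+\sqrt{-1}b\end{smallmatrix}\bigr)$, from which bijectivity onto its image, and smoothness of the inverse (read the three blocks off $\tau$), are immediate.

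For the Hermitian-positivity description of $D(\mathfrak{F}_k)$, I would represent $F^1_\tau$ as the column span of $\bigl(\begin{smallmatrix}\tau\\ I_g\end{smallmatrix}\bigr)$ and use $W^\perp_\C=\mathrm{span}_\C\{e_1,\ldots,e_{g+k}\}$ as already computed in the proof of Lemma~\ref{nilpotent orbit}. A direct linear-algebra calculation identifies $F^1_\tau\cap W^\perp_\C$ with the span of the first $k$ columns of $\bigl(\begin{smallmatrix}\tau\\ I_g\end{smallmatrix}\bigr)$, and shows that the restriction of $\sqrt{-1}\psi(\cdot,\overline{\cdot})$ to this subspace equals (a positive multiple of) $\mathrm{Im}(\tau_1)$. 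Positivity there is therefore equivalent to $\tau_1\in\mathfrak{H}_k$, which is exactly the image of $\varphi$; this yields simultaneously the equality $D(\mathfrak{F}_k)=\mathrm{image}(\varphi)$ and the stated set-theoretic description.

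Finally, $\tau\in\mathfrak{H}_g$ iff $\mathrm{Im}(\tau)>0$; applying the Schur-complement criterion together with $\mathrm{Im}(c)=0$ (since $c$ is real) reduces this to $\mathrm{Im}(\tau_1)>0$ and $\mathrm{Im}(\tau_2)=b>0$, i.e. $b\in\mathrm{Sym}^+_{g-k}(\R)=C(\mathfrak{F}_k)$, giving $\varphi^{-1}(\mathfrak{S}_g)=(u^{\mathfrak{F}_k}+\sqrt{-1}C(\mathfrak{F}_k))\times v^{\mathfrak{F}_k}_\R\times\mathfrak{F}_k$. The main obstacle, and the only genuinely nontrivial calculation, is the careful identification of $u^{\mathfrak{F}_k}$ with $\mathrm{Sym}_{g-k}(\R)$ and of $v^{\mathfrak{F}_k}_\R$ with $M_{k,g-k}(\R)$ in such a way that the adjoint action of their exponentials on $\check{\mathfrak{S}}_g$ genuinely realizes the claimed translations in the $(\tau_{12},\tau_2)$-coordinates; this is a bookkeeping check against the standard symplectic form $\psi$, after which the proposition follows formally.
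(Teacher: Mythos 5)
There is a genuine gap, and it sits exactly at the step you yourself flag as ``the only genuinely nontrivial calculation'': your identification of $v^{\mathfrak{F}}_\R$ and of the action of $\exp(c)$ is wrong. By Corollary \ref{Siegel domain of the third type}, $V^{\mathfrak{F}}(\R)\cong M_{g-k,k}(\C)$, so $v^{\mathfrak{F}}_\R$ has real dimension $2k(g-k)$, not $k(g-k)$ as in your identification with $M_{k,g-k}(\R)$; with your identification the source of $\varphi$ has real dimension $g(g+1)-k(g-k)<\dim_\R D(\mathfrak{F})=g(g+1)$, so no diffeomorphism of the asserted kind can exist, and indeed the image of your map is only the locus where the off-diagonal block of $\tau$ is real. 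Concretely, writing things in the block convention of the paper's own computation in the proof of Proposition \ref{Calculation-volume-form-1} (cusp block $\tau'$ of size $g-k$, $u^{\mathfrak{F}}$-block of size $k$, and $c$ parametrized by a pair $(A,B)$ of real $(g-k)\times k$ matrices, i.e.\ $c\leftrightarrow A+\sqrt{-1}B$), one has
$$\exp(a+\sqrt{-1}b)\exp(c)\,(\check{F}_{\tau'})\;=\;\begin{pmatrix} \tau' & A-\tau'B \\ {}^t(A-\tau'B) & (a+\sqrt{-1}b)+{}^tB\tau'B-\tfrac{{}^tAB+{}^tBA}{2}\end{pmatrix},$$
so the quadratic correction does \emph{not} vanish on $\check{F}$: it involves $\tau'$ (your $\tau_1$), not $\tau_2$, and the off-diagonal block is $A-\tau'B$, a complex matrix whose complex structure depends on the base point $F\in\mathfrak{F}$. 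This dependence is precisely why $\varphi$ is only a (real) diffeomorphism and not a product biholomorphism, and it is where the content of the third assertion lies: the correct check uses $\mathrm{Im}\,\tau=\left(\begin{smallmatrix}\mathrm{Im}\tau' & -\mathrm{Im}(\tau')B\\ -{}^tB\,\mathrm{Im}\tau' & \mathrm{Im}Z+{}^tB\,\mathrm{Im}(\tau')B\end{smallmatrix}\right)$, whence $\det\mathrm{Im}\tau=\det\mathrm{Im}\tau'\cdot\det\mathrm{Im}Z$ and positivity is equivalent to $\mathrm{Im}\tau'>0$ and $b=\mathrm{Im}Z\in C(\mathfrak{F})$ --- not your step ``$\mathrm{Im}(c)=0$ since $c$ is real,'' which is an artifact of the wrong parametrization.

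The rest of your outline is sound and can be repaired: reduction to the standard cusp by equivariance is fine; $F\cap W^\perp_\C$ is indeed spanned by the cusp-block columns of $\left(\begin{smallmatrix}\tau\\ I_g\end{smallmatrix}\right)$ with $\sqrt{-1}\psi(\cdot,\overline{\cdot})$ restricting to a positive multiple of $\mathrm{Im}\tau'$; and once the displayed formula above is used, injectivity, surjectivity onto $\{\mathrm{Im}\tau'>0\}$ and smoothness of the inverse follow by solving $B=-(\mathrm{Im}\tau')^{-1}\mathrm{Im}\tau_{12}$, $A=\mathrm{Re}\tau_{12}+\mathrm{Re}(\tau')B$, $Z=\tau_2-{}^tB\tau'B+\tfrac{{}^tAB+{}^tBA}{2}$. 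One further point you conflate: $D(\mathfrak{F})$ is \emph{defined} as $\bigcup_{\alpha\in U^{\mathfrak{F}}(\C)}\alpha\mathfrak{S}_g$, so equating it with $\{\mathrm{Im}\tau'>0\}$ (equivalently with the image of $\varphi$, or with the Hermitian-positivity locus in the statement) requires a short separate argument: $U^{\mathfrak{F}}(\C)$ translates only the $u$-block, hence preserves $\mathrm{Im}\tau'>0$, and conversely one enters $\mathfrak{S}_g$ by making that block sufficiently positive imaginary (Schur complement). Finally, note the paper itself does not prove this proposition but cites Satake, AMRT, Kor\'anyi--Wolf and Faltings; the internal benchmark is the Section 3 computation just quoted, and your explicit formulas contradict it.
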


\begin{corollary}\label{Siegel domain of the third type}
Let $\mathfrak{F}=\mathfrak{F}(W)$ be a cusp of
$\mathfrak{S}_g$ with $\dim_\R W=k.$ The space $V^\mathfrak{F}(\R)$ has a
natural complex structure such that it is isomorphic to
$M_{g-k,k}(\C).$ Moreover, there is an isomorphism
$\Phi: U^\mathfrak{F}(\C)\times M_{g-k,k}(\C)\times \mathfrak{F} \>\cong>>D(\mathfrak{F}).$
\end{corollary}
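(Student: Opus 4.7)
The plan is to reduce to the standard cusp $\mathfrak{F}_k=\mathfrak{F}(V^{(k)})$ via $\Sp(g,\Q)$-equivariance, and then to make everything concrete using Siegel coordinates, promoting the diffeomorphism $\varphi$ of Proposition \ref{embedding of the third type} to a biholomorphism. By Remark \ref{remark on rational boundary component}, every $(g-k)$-th cusp has the form $M\cdot\mathfrak{F}_k$ for some $M\in\Sp(g,\Z)$, and all the objects $U^{\mathfrak{F}}$, $V^{\mathfrak{F}}$, $\mathfrak{F}$, $D(\mathfrak{F})$ transform equivariantly under conjugation. Hence it suffices to establish the claim for $\mathfrak{F}=\mathfrak{F}_k$.

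First, I would compute $v^{\mathfrak{F}_k}_{\R}=\mathfrak{g}^{-1}$ explicitly from the block description of $\mathfrak{g}=\mathrm{Lie}(\Sp(g,\R))$ given in the Notation section: combining Lemma \ref{weight-morphism-3} with the weight eigenvalue condition $\Ad(w_{\mathfrak{F}_k}(\lambda))X=\lambda^{-1}X$, one obtains a $2k(g-k)$-dimensional real subspace parametrized by two free $M_{g-k,k}(\R)$ blocks. The natural complex structure on $V^{\mathfrak{F}_k}(\R)\cong v^{\mathfrak{F}_k}_{\R}$ is then defined by the Hodge decomposition $v^{\mathfrak{F}_k}_{\C}=(\mathfrak{g}^{-1})^{-1,0}\oplus(\mathfrak{g}^{-1})^{0,-1}$ at any base point of $\mathfrak{F}_k$: the real space $v^{\mathfrak{F}_k}_{\R}$ projects $\R$-linearly isomorphically onto $(\mathfrak{g}^{-1})^{-1,0}$, and in matrix terms a pair of blocks $(A,B)$ is sent to $A+\sqrt{-1}B\in M_{g-k,k}(\C)$.

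Second, I would write $\Phi$ in Siegel coordinates. Any $\tau_g\in\mathfrak{H}_g$ decomposes uniquely as
\[
\tau_g=\begin{pmatrix}\tau_1 & z^{T}\\ z & w\end{pmatrix},\qquad \tau_1\in\mathfrak{H}_k,\ z\in M_{g-k,k}(\C),\ w\in\mathrm{Sym}_{g-k}(\C).
\]
Under the Borel embedding $h:\mathfrak{H}_g\hookrightarrow\check{\mathfrak{S}}_g$ of Proposition \ref{Borel-embedding}, together with Proposition \ref{embedding of the third type}, this parametrization extends holomorphically to all of $D(\mathfrak{F}_k)$: on the enlarged domain only the partial positivity $\im(\tau_1)>0$ survives, while $z$ and $w$ become unconstrained complex variables. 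Identifying $U^{\mathfrak{F}_k}(\C)\cong u^{\mathfrak{F}_k}_{\C}\cong\mathrm{Sym}_{g-k}(\C)$ via the exponential (as noted before Definition \ref{admissible polyhedral decomposition}) and $\mathfrak{F}_k\cong\mathfrak{H}_k$, the inverse assignment $(w,z,\tau_1)\mapsto\tau_g$ defines the desired biholomorphism $\Phi$.

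The main obstacle will be verifying the naturality of the complex structure on $V^{\mathfrak{F}_k}(\R)$, i.e., its independence of the auxiliary point of $\mathfrak{F}_k$ used to define it, so that $\Phi$ truly decomposes $D(\mathfrak{F}_k)$ as an honest product rather than as a twisted bundle over $\mathfrak{F}_k$. This reduces to the Siegel coordinate computation above, in which the product decomposition is visibly global, canonical, and compatible with the $G_h^{\mathfrak{F}_k}$-action on $\mathfrak{F}_k$.
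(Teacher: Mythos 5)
Your proposal is correct and is essentially the paper's own route: the corollary is deduced directly from Proposition \ref{embedding of the third type} (itself proved by reduction to a standard cusp as in Lemma \ref{nilpotent orbit}), and the explicit block-coordinate computation you outline, identifying $v^{\mathfrak{F}}_\R\ni(A,B)$ with $A+\sqrt{-1}B\in M_{g-k,k}(\C)$ and exhibiting $D(\mathfrak{F})$ as the locus where only the positivity of $\mathrm{Im}(\tau')$ survives, is exactly the computation the paper carries out in Section 3.1 (Proposition \ref{Calculation-volume-form-1}). One caveat: the complex structures obtained by transporting the holomorphic fibres of $D(\mathfrak{F})\to\mathfrak{F}$ back to $v^{\mathfrak{F}}_\R$ through $\varphi$ do depend on the base point (in these coordinates the holomorphic fibre coordinate is $A-\tau'B$, not $A+\sqrt{-1}B$), so your claimed base-point independence is not quite what holds, but this does not affect the corollary, which needs only one fixed identification together with the product decomposition that your block coordinates provide.
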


We sketch the construction of a general toroidal compactification
$\overline{\sA}^{\mathrm{tor}}_{g,\Gamma}$ of
$\sA_{g,\Gamma}$ by following \cite{AMRT} in the  complex
analytic topology. Let $\Sigma^\Gamma_{\mathrm{tor}}=\{\Sigma_\mathfrak{F}\}_{\mathfrak{F}}$
be a general $\Gamma$-admissible family of polyhedral
decompositions.
Let $\mathfrak{F}$ be an arbitrary cusp of
$\mathfrak{H}_g.$ Define
$L_{\mathfrak{F}}:=\Gamma\cap U^{\mathfrak{F}}(\Q),  \,\, M_\mathfrak{F}=L_{\mathfrak{F}}^{\vee}:=\Hom_\Z(L_{\mathfrak{F}},\Z).$
We have $L_{\mathfrak{F}}=\Gamma\cap U^{\mathfrak{F}}(\Z)$ as $\Gamma\subset \Sp(g,\Z).$
Using \ref{multiplication group- additive group}, $L_{\mathfrak{F}}$ is a full
lattice in the vector space $U^{\mathfrak{F}}(\C)\cong u^{\mathfrak{F}}_\C.$
The algebraic torus
$T_{\mathfrak{F}}:=\Spec\C[M_\mathfrak{F}]$ is isomorphic analytically to
$\frac{U^{\mathfrak{F}}(\C)}{\Gamma\cap U^{\mathfrak{F}}(\Q)}=(\C^{\times})^{\dim u^{\mathfrak{F}}}.$
Then, there is an analytic isomorphism
$\frac{D(\mathfrak{F})}{\Gamma\cap U^{\mathfrak{F}}(\Q)}\cong T_{\mathfrak{F}}\times v^{\mathfrak{F}}_\R\times \mathfrak{F}$
by the embedding of Siegel domain of third type
$D(\mathfrak{F})\cong u^{\mathfrak{F}}_\C\times v^{\mathfrak{F}}_\R\times\mathfrak{F},$
so that there is a principal $T_{\mathfrak{F}}$-bundle
$\frac{D(\mathfrak{F})}{\Gamma\cap U^{\mathfrak{F}}(\Q)} \to \frac{D(\mathfrak{F})}{u^{\mathfrak{F}}_\C}.$
We note that if $\mathfrak{F}_{\min}$ is a minimal cusp of $\mathfrak{H}_g$ then $D(\mathfrak{F})/u^{\mathfrak{F}}_\C$ is a space of one single point.
For any cone $\sigma\in \Sigma_{\mathfrak{F}},$
we replace $T_{\mathfrak{F}}$ with $X_\sigma$ by the open embedding $T_{\mathfrak{F}}\>\hookrightarrow>>X_\sigma,$
and obtain a fiber bundle
$P_{\mathfrak{F},\sigma}: X_{\sigma}\times_{T_{\mathfrak{F}}} \frac{D(\mathfrak{F})}{\Gamma\cap U^{\mathfrak{F}}(\Q)}\to \frac{D(\mathfrak{F})}{u^{\mathfrak{F}}_\C}.$
Define
\begin{equation}\label{neighborhood-toiroidal-embedding}
\mbox{$\widetilde{\Delta}_{\mathfrak{F},\sigma}$= the interior of the
closure of $\frac{\mathfrak{H}_g}{\Gamma\cap U^{\mathfrak{F}}(\Q)}$ in $
X_{\sigma}\times_{T_{\mathfrak{F}}} \frac{D(\mathfrak{F})}{\Gamma\cap
U^{\mathfrak{F}}(\Q)}.$}
\end{equation}
Using the similar method that gluing $X_{\sigma}$'s to construct the scheme $X_{\Sigma_\mathfrak{F}},$
we glue all $P_{\mathfrak{F},\sigma}: X_{\sigma}\times_{T_{\mathfrak{F}}} \frac{D(\mathfrak{F})}{\Gamma\cap U^{\mathfrak{F}}(\Q)}\to \frac{D(\mathfrak{F})}{u^{\mathfrak{F}}_\C}$'s to obtain a fiber bundle
$P_{\mathfrak{F}}: X_{\Sigma_{\mathfrak{F}}}\times_{T_{\mathfrak{F}}} \frac{D(\mathfrak{F})}{\Gamma\cap U^{\mathfrak{F}}(\Q)}\to \frac{D(\mathfrak{F})}{u^{\mathfrak{F}}_\C}$
with fiber $X_{\Sigma_\mathfrak{F}},$ and we also glue
$\Delta_{\mathfrak{F},\sigma}$'s altogether to obtain an analytic space
$Z_{\mathfrak{F}}^{'}.$ We call $Z_{\mathfrak{F}}^{'}$ the \textbf{partial
compactification in the direction $\mathfrak{F}$} of the Siegel
variety $\sA_{g,\Gamma}.$ For
any cusp $\mathfrak{F}$ and any element $\gamma\in \Gamma$ there is
an analytic isomorphism
$
\Pi_{\mathfrak{F},\gamma\mathfrak{F}}^{'}: Z_{\mathfrak{F}}^{'}\>\cong>>
Z_{\mathfrak{\gamma F}}^{'};
$
and for any two cusps $\mathfrak{F}_1\prec \mathfrak{F}_2$ there is an
analytic \'etale morphism
$\Pi_{\mathfrak{F}_2,\mathfrak{F}_1}^{'}: Z_{\mathfrak{F_2}}^{'}\to
Z_{\mathfrak{F_1}}^{'}$
(cf.Lemma 1 in $\S 5$ Chap.III \cite{AMRT}).

\begin{example}[Cf.\cite{Chai86}]\label{local-coordinate-system}Let $\mathfrak{F}=\mathfrak{F}(W)$ be  a cusp of $\dim_\R
W=k>0.$ Assume $\Gamma$ is neat. We can describe
$Z_{\mathfrak{F},\sigma}$ in a local coordinate system : Let $\sigma$
be a regular cone in $C(\mathfrak{F})$ of top-dimension
$k(k+1)/2.$
 As in \cite{AMRT} and \cite{Mum77}, we take
 a $\Z$-basis $\{\zeta_\alpha\}_{1}^{k(k+1)/2}$ of $\Gamma\cap
U^\mathfrak{F}(\Q)$ such that
$\R_{+}\zeta_1,\cdots,\R_{+}\zeta_{k(k+1)/2}$ are  all edges of
$\sigma.$ Any $u\in
U^{\mathfrak{F}}(\C)$ can be written as $u=\sum_{\alpha}u_\alpha\zeta_{\alpha}.$ Then we get an open
embedding
$$\delta_{\mathfrak{F}}: \mathfrak{H}_g \>\hookrightarrow >> U^{\mathfrak{F}}(\C)\times M_{g-k,k}(\C)\times \mathfrak{F}(W)\>\cong>>
\C^{k(k+1)/2}\times
M_{g-k,k}(\C)\times\mathfrak{S}(W^\perp/W,\psi_W),$$
so that
$
(u_\alpha, s_i, t_j)\in \C^{k(k+1)/2}\times
M_{g-k,k}(\C)\times\mathfrak{S}(W^\perp/W,\psi_W)
$
endows a coordinate system of $\mathfrak{H}_g.$ Thus, we have the following commutative diagram
\begin{equation}\label{local-coordinate-1-1}
 \begin{CDS}
\mathfrak{H}_g \>\subset>>\C^{k(k+1)/2}\times M_{k,g-k}(\C)\times\mathfrak{S}(W^\perp/W,\psi_W) \\
\V V  V \novarr \V V(z_\alpha:=\exp(2\pi\sqrt{-1}u_{\alpha}), \,s_i, \,t_j) V  \\
\frac{ \mathfrak{H}_{g}}{\Gamma\cap U^{\mathfrak{F}}(\Q)}\>\subset >>
(\C^*)^{k(k+1)/2}\times
M_{k,g-k}(\C)\times\mathfrak{S}(W^\perp/W,\psi_W)
\end{CDS}
\end{equation}
and there holds
$
\bigcup\limits_\alpha\{(z_\alpha,
s_i,t_j)\in\widetilde{\Delta}_{\mathfrak{F},\sigma} \,\,|\,\,
z_\beta=0 \} \>\subset >>
\widetilde{\Delta}_{\mathfrak{F},\sigma}\setminus \frac{\mathfrak{H}_{g}}{\Gamma\cap U^{\mathfrak{F}}(\Q)}.
$
\end{example}

\begin{lemma}\label{toroidal-embedding-partial-compactification}
Let $\Sigma^\Gamma_{\mathrm{tor}}=\{\Sigma_\mathfrak{F}\}_{\mathfrak{F}}$
be a $\Gamma$-admissible family of polyhedral decompositions.
\begin{myenumi}
\item Let $\mathfrak{F}$ be a cusp of $\mathfrak{H}_g.$
The collection $\{\sS(\mathfrak{F},\sigma)\}_{\sigma\in
\Sigma_\mathfrak{F}}$ is a stratification of $Z_{\mathfrak{F}}^{'}.$ In
particular,
$\overline{\sS(\mathfrak{F},\sigma)}^{\mathrm{cl}}=\coprod_{\delta\in \Sigma_{\mathfrak{F}},\delta\succeq\sigma}
\sS(\mathfrak{F},\delta)\,\,  \forall \sigma\in \Sigma_{\mathfrak{F}},$
where $\overline{\sS(\mathfrak{F}, \sigma)}^{\mathrm{cl}}$ is  the
closure of $\sS(\mathfrak{F}, \sigma)$ in $Z_{\mathfrak{F}}^{'}.$
 Moreover, the open embedding
$\mathbb{U}_{\mathfrak{F}}(:=\frac{\mathfrak{H}_g}{\Gamma\cap U^{\mathfrak{F}}(\Q)})\>\subset>>Z_{\mathfrak{F}}^{'}$
is a toroidal embedding without self-intersections, i.e., every
irreducible component of $Z_{\mathfrak{F}}^{'}\setminus
\mathbb{U}_{\mathfrak{F}}$ is normal.

\item For any two cusps $\mathfrak{F}^1, \mathfrak{F}^2$  with
$\mathfrak{F}^1\prec\mathfrak{F}^2,$
$\Pi_{\mathfrak{F}^2,\mathfrak{F}^1}^{'}:(Z_{\mathfrak{F^2}}^{'},
\mathbb{U}_{\mathfrak{F}^2})\to
(Z_{\mathfrak{F^1}}^{'},\mathbb{U}_{\mathfrak{F}^1})$ is a toroidal
morphism.
\end{myenumi}
\end{lemma}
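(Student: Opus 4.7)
The plan is to deduce the lemma from the toric-geometric orbit-cone correspondence (Proposition \ref{kkms-orbit-cone-correspondence}) together with the fiber-bundle construction of $Z_\mathfrak{F}'$. Recall $Z_\mathfrak{F}'$ is glued from pieces $\widetilde{\Delta}_{\mathfrak{F},\sigma}$ sitting inside the bundles $P_{\mathfrak{F},\sigma}\colon X_\sigma \times_{T_\mathfrak{F}} (D(\mathfrak{F})/(\Gamma\cap U^\mathfrak{F}(\Q))) \to D(\mathfrak{F})/u^\mathfrak{F}_\C$, and on the toric fiber $X_{\Sigma_\mathfrak{F}}$ Proposition \ref{kkms-orbit-cone-correspondence} already supplies the required stratification by $T_\mathfrak{F}$-orbits together with its closure relations. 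The idea is to transfer this stratification along the bundle structure.

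For statement (1), I would first define $\sS(\mathfrak{F},\sigma)$ as the locally closed subset of $Z_\mathfrak{F}'$ corresponding, on each chart $\widetilde{\Delta}_{\mathfrak{F},\sigma'}$ with $\sigma \prec \sigma'$, to the pullback of the orbit $\sO^\sigma \subset X_{\sigma'}$ under the projection to the toric fiber. The local-coordinate description in Example \ref{local-coordinate-system} makes this transparent: for a top-dimensional regular cone $\sigma'$ with edge generators $\zeta_1,\ldots,\zeta_{k(k+1)/2}$ and coordinates $(z_\alpha, s_i, t_j)$, the cone $\sigma = \sum_{\alpha \in I}\R_+\zeta_\alpha$ corresponds exactly to the locus $\{z_\alpha=0\ \forall \alpha\in I,\ z_\beta\neq 0\ \forall \beta\notin I\}$. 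The disjointness, the identity $\overline{\sS(\mathfrak{F},\sigma)}^{\mathrm{cl}}=\coprod_{\delta\succeq\sigma}\sS(\mathfrak{F},\delta)$, and the coherence of these definitions across the gluings are then reduced fiberwise to the analogous statements in Proposition \ref{kkms-orbit-cone-correspondence}, the base factor $D(\mathfrak{F})/u^\mathfrak{F}_\C$ playing no role in the closure combinatorics. For the "without self-intersections" part, each irreducible component of $Z_\mathfrak{F}'\setminus\mathbb{U}_\mathfrak{F}$ is (locally over the base) a bundle with fiber $\overline{\sO^\rho}^{\mathrm{cl}}\subset X_{\Sigma_\mathfrak{F}}$ for some edge $\rho$, and Corollary \ref{divisor-cone}(1) asserts that this fiber is a normal closed toric subvariety; normality is preserved by the smooth base-change implicit in the bundle structure, giving normality of each boundary component.

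For statement (2), the admissibility condition $\Sigma_{\mathfrak{F}^2}=\{\sigma_\alpha^{\mathfrak{F}^1}\cap \overline{C(\mathfrak{F}^2)} : \sigma_\alpha^{\mathfrak{F}^1}\in\Sigma_{\mathfrak{F}^1}\}$ in Definition \ref{admissible-family-polyhedral-decomposition} says the two fans are related by restriction to a subspace, and by Corollary \ref{algebraic subgroups by boundary-component} together with Lemma \ref{convex-cone}(1) applied to $\mathfrak{F}^1 \prec \mathfrak{F}^2$ we have a natural inclusion $U^{\mathfrak{F}^1}\subset U^{\mathfrak{F}^2}$, hence an inclusion of lattices $L_{\mathfrak{F}^1}\subset L_{\mathfrak{F}^2}$. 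This yields a compatible map of fans and hence an equivariant toric morphism $X_{\Sigma_{\mathfrak{F}^2}}\to X_{\Sigma_{\mathfrak{F}^1}}$ which, when assembled with the bundle projection, induces $\Pi_{\mathfrak{F}^2,\mathfrak{F}^1}^{'}$. By construction it sends strata to strata indexed compatibly with the face relation on cones, which is the defining property of a toroidal morphism.

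The main obstacle, as I see it, is justifying that forming the interior of the closure in \ref{neighborhood-toiroidal-embedding} really produces exactly the stratified space described above — in particular, that only points whose toric coordinate lies in the rational closure $\overline{C(\mathfrak{F})}^{\mathrm{rc}}$ appear in $\widetilde{\Delta}_{\mathfrak{F},\sigma}$, and none outside it. This requires checking that a boundary point of $X_\sigma \times_{T_\mathfrak{F}} (D(\mathfrak{F})/(\Gamma\cap U^\mathfrak{F}(\Q)))$ is a limit of points of $\mathfrak{H}_g/(\Gamma\cap U^\mathfrak{F}(\Q))$ precisely when its cone lies in $\overline{C(\mathfrak{F})}^{\mathrm{rc}}$, which I would verify using the Siegel-domain-of-third-type embedding $\varphi^{-1}(\mathfrak{S}_g)=(u^\mathfrak{F}+\sqrt{-1}C(\mathfrak{F}))\times v^\mathfrak{F}_\R\times\mathfrak{F}$ from Proposition \ref{embedding of the third type}, combined with the positivity characterisation of $C(\mathfrak{F})$ in Lemma \ref{weight filtration-positive cone} and Corollary \ref{weight filtration-positive cone-1}: in the coordinates of Example \ref{local-coordinate-system}, sending $\mathrm{Im}(u_\alpha)\to+\infty$ along a ray corresponds to the toric degeneration $z_\alpha\to 0$, and this sequence stays inside $\mathfrak{S}_g$ exactly when the ray lies in $C(\mathfrak{F})$.
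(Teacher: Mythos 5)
Your argument is essentially the paper's: the authors' proof of this lemma is a one-line appeal to \cite{AMRT}, and what you spell out — transferring the orbit--cone stratification of the toric fibre through the bundle $X_{\Sigma_{\mathfrak{F}}}\times_{T_{\mathfrak{F}}}\frac{D(\mathfrak{F})}{\Gamma\cap U^{\mathfrak{F}}(\Q)}\to D(\mathfrak{F})/u^{\mathfrak{F}}_\C$ via Proposition \ref{kkms-orbit-cone-correspondence} and Example \ref{local-coordinate-system}, with the interior-of-closure (cylindrical) check carried out through the Siegel-domain realization of Proposition \ref{embedding of the third type} — is exactly the AMRT construction the paper relies on. One correction to make: since $\mathfrak{F}^1\prec\mathfrak{F}^2$ means $W_2\subset W_1$, Lemma \ref{convex-cone}(1) gives $u^{\mathfrak{F}^2}\subset u^{\mathfrak{F}^1}$, hence $U^{\mathfrak{F}^2}\subset U^{\mathfrak{F}^1}$ and $L_{\mathfrak{F}^2}\subset L_{\mathfrak{F}^1}$ — the reverse of the inclusion you wrote, and the direction actually needed to get the equivariant toric morphism $X_{\Sigma_{\mathfrak{F}^2}}\to X_{\Sigma_{\mathfrak{F}^1}}$ (each cone of $\Sigma_{\mathfrak{F}^2}$ being contained in a cone of $\Sigma_{\mathfrak{F}^1}$) that covers $\Pi^{'}_{\mathfrak{F}^2,\mathfrak{F}^1}$.
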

\begin{proof}
By carefully reading \cite{AMRT}, one can get the proof easily.
\end{proof}

The disjoint union
$\widetilde{\sA_{g,\Gamma}}:=\bigsqcup\limits^{\circ}_{\mathfrak{F}}Z_{\mathfrak{F}}^{'}$
has a natural $\Gamma$-action. An equivalent relation $R$ is
defined on $\widetilde{\sA_{g,\Gamma}}$ : We say $x\sim^R y$
for $x\in Z_{\mathfrak{F}_1}^{'}, y\in Z_{\mathfrak{F}_2}^{'}$  if and
only if there exists a cusp $\mathfrak{F}_3,$ a point $z\in Z_{\mathfrak{F}_3}^{'}$ and  a $\gamma\in \Gamma$ such that
$\mathfrak{F}_1\preceq\mathfrak{F}_3, \gamma\mathfrak{F}_2\preceq\mathfrak{F}_3$
and
\begin{equation}\label{gluing-condition}
\mbox{($\Pi_{\mathfrak{F}_3,\mathfrak{F}_1}^{'}:
Z_{\mathfrak{F}_3}^{'}\to Z_{\mathfrak{F}_1}^{'}$) maps
$z$ to $x,$ \,\,\,
   ($\Pi_{\mathfrak{F}_3,\gamma \mathfrak{F}_2}^{'}: Z_{\mathfrak{F}_3}^{'}\to Z_{\gamma\mathfrak{F}_2}^{'}$) maps $z$ to
  $\Pi_{\mathfrak{F}_2,\gamma\mathfrak{F}_2}^{'}(y).$}
\end{equation}
Shown in $\S5-\S6$ Chap. III. \cite{AMRT}, the transitivity
condition of the relation $R$ holds and the relation graph in
$\widetilde{\sA_{g,\Gamma}}\times
\widetilde{\sA_{g,\Gamma}}$ is closed. We then obtain a compact
Hausdorff analytic variety
$\overline{\sA}^{\mathrm{tor}}_{g,\Gamma}:=\frac{\widetilde{\sA_{g,\Gamma}}}{\sim^R},$
which is called a  \textbf{toroidal compactification} of
$\sA_{g,\Gamma}.$ The $\overline{\sA}^{\mathrm{tor}}_{g,\Gamma}$
is an algebraic space, but not projective in general. However, a
theorem of Tai(cf.Chap.IV \cite{AMRT}) shows that if
$\Sigma_{\mathrm{tor}}^\Gamma$ is projective(cf. Chap. IV. of \cite{AMRT}) then
$\overline{\sA}^{\mathrm{tor}}_{g,\Gamma}$ is a projective
variety.
The main theorem I in \cite{AMRT} shows that
$\overline{\sA}^{\mathrm{tor}}_{g,\Gamma}$ is the unique
Hausdorff analytic variety containing $\sA_{g,\Gamma}$ as an
open dense subset such that $\overline{\sA}^{\mathrm{tor}}_{g,\Gamma}=\bigcup\limits_{\mathfrak{F}}\pi_{\mathfrak{F}}^{'}(Z_{\mathfrak{F}}^{'})$
and for every cusp $\mathfrak{F}$ of $\mathfrak{H}_g$ there is an open
morphisms $\pi_{\mathfrak{F}}^{'}$ making the following  diagram
commutative
\begin{equation}\label{compactification-digram-1}
\begin{CDS}
\frac{ \mathfrak{H}_{g}}{\Gamma\cap U^{\mathfrak{F}}(\Q)} \> \hookrightarrow >> Z_{\mathfrak{F}}^{'}\\
\V V  V \novarr \V V \pi_{\mathfrak{F}}^{'} V \\
  \sA_{g,\Gamma} \> \hookrightarrow >>
  \overline{\sA}^{\mathrm{tor}}_{g,\Gamma}.
\end{CDS}\\
\end{equation}

\subsection{Infinity boundary divisors on toroidal compactifications}
For a polyhedral decomposition
$\Sigma_{\mathfrak{F}_0}:=\{\sigma_\alpha^{\mathfrak{F}_0}\}$ of
$C(\mathfrak{F}_0),$ all edges in
$\Sigma_{\mathfrak{F}_0}$ are taken into two disjoint sets :
\begin{itemize}
    \item Interior-edge=$\{\rho\in \Sigma_{\mathfrak{F}_0}(1)\,\,|\,\, \mathrm{Int}(\rho)
\subset C(\mathfrak{F}_0) \},$

    \item Boundary-edge=$\{\rho\in \Sigma_{\mathfrak{F}_0}(1)\,\,|\,\, \mathrm{Int}(\rho)
\cap C(\mathfrak{F}_0)= \emptyset  \},$
\end{itemize}
where $\mathrm{Int}(\sigma)$ is defined to be the set of relative interior points of $\sigma\in \Sigma_{\mathfrak{F}_0}.$
These two sets are both $\Gamma_{\mathfrak{F}_0}(=\Gamma\cap
\sN(\mathfrak{F}_0))$-invariant for any arithmetic subgroup $\Gamma$ of
$\Sp(g,\Z).$

\begin{lemma}\label{edge-to-boundary-component}
Let $\Gamma$ be a neat arithmetic subgroup of $\Sp(g,\Q)$ and  $\Sigma_{\mathfrak{F}_0}=\{ \sigma_\alpha^{\mathfrak{F}_0} \}_\alpha$ a $\overline{\Gamma_{\mathfrak{F}_0}}$-admissible polyhedral decomposition of
$C(\mathfrak{F}).$ Let $\rho$ be an edge in the set Boundary-edge.

Assume that $\Sigma_{\mathfrak{F}_0}$ is regular with respect to $\Gamma.$
There is  a unique rationally-defined one dimensional isotropic
subspace $W_\rho$ of $V(=V^{(0)})$ such that $\mathrm{Int}(\rho)=C(\mathfrak{F}(W_\rho)).$
Moreover, for any cone $\sigma\in \Sigma_{\mathfrak{F}_0}$ there exists a unique cusp $\mathfrak{F}_\sigma$ such that
$\mathfrak{F}_0\preceq \mathfrak{F}_\sigma$ and $\mathrm{Int}(\sigma)\subset C(\mathfrak{F}_\sigma).$
\end{lemma}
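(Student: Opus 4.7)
My strategy is to attach to each cone $\sigma\in\Sigma_{\mathfrak{F}_0}$ a rational isotropic subspace $W_\sigma\subset V^{(0)}$ by computing $\mathrm{Im}(N)$ for any interior point $N\in\mathrm{Int}(\sigma)$, and then to deduce the uniqueness of $\mathfrak{F}_\sigma:=\mathfrak{F}(W_\sigma)$ from the disjoint stratification $\overline{C(\mathfrak{F}_0)}^{\mathrm{rc}}=\bigsqcup_W C(\mathfrak{F}(W))$ indexed by rationally-defined isotropic subspaces $W\subset V^{(0)}$; this stratification follows from the remark after Definition~\ref{proper-boundary-component-of-cone}, and by Lemma~\ref{weight filtration-positive cone}(3) together with its proof, any $N\in C(\mathfrak{F}(W))$ satisfies $\mathrm{Im}(N)=W$ and $\ker(N)=W^\perp$, so the assignment $N\mapsto\mathrm{Im}(N)$ reads off the stratum containing $N$.

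Next, for $\sigma\in\Sigma_{\mathfrak{F}_0}$ with primitive edge generators $N_1,\ldots,N_k\in L_{\mathfrak{F}_0}$, each $N_i$ corresponds under the identification $u^{\mathfrak{F}_0}\cong\mathrm{Sym}_g(\R)$ to a positive semi-definite symmetric form on $V_\R/V^{(0)}$. For any positive combination $N=\sum_i t_iN_i$ with $t_i>0$, term-by-term positivity gives $\ker N=\bigcap_i\ker N_i$, whence $\mathrm{Im}(N)=\sum_i\mathrm{Im}(N_i)=:W_\sigma$. Thus $W_\sigma$ is a rational isotropic subspace of $V^{(0)}$ independent of the interior point chosen, and $\mathfrak{F}_\sigma:=\mathfrak{F}(W_\sigma)$ is the unique cusp with $\mathrm{Int}(\sigma)\subset C(\mathfrak{F}_\sigma)$ by disjointness of the strata, yielding the second assertion of the lemma.

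For the first assertion, specializing to $\sigma=\rho$ a boundary edge with primitive generator $N$ yields $W_\rho:=\mathrm{Im}(N)$, rational and isotropic in $V^{(0)}$, with $\mathrm{Int}(\rho)\subset C(\mathfrak{F}(W_\rho))$. The hard part will be promoting this inclusion to the equality $\mathrm{Int}(\rho)=C(\mathfrak{F}(W_\rho))$, equivalently $\dim W_\rho=1$; this is where the regularity of $\Sigma_{\mathfrak{F}_0}$ with respect to $L_{\mathfrak{F}_0}$ must be invoked. The plan is to pick a top-dimensional cone $\sigma_{\max}\succ\rho$ in $\Sigma_{\mathfrak{F}_0}$, use regularity to express its edge generators as part of a $\Z$-basis of $L_{\mathfrak{F}_0}$, and combine the Levi decomposition of $N^{\mathfrak{F}(W_\rho)}$ from Example~\ref{example-Levi-decomposition} with the restriction compatibility of admissible families from Definition~\ref{admissible-family-polyhedral-decomposition} applied to $\overline{C(\mathfrak{F}(W_\rho))}^{\mathrm{rc}}$, forcing $N$ to be a rank-one form $v\otimes v$ for a primitive lattice vector $v$ spanning $W_\rho$. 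Once $\dim W_\rho=1$ is established, uniqueness is immediate since distinct one-dimensional isotropic subspaces give distinct rank-one rays in $\overline{C(\mathfrak{F}_0)}$.
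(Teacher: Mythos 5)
Your reduction of the problem is sound as far as it goes: identifying the stratum of a point $N$ of $\overline{C(\mathfrak{F}_0)}^{\mathrm{rc}}$ by $\mathrm{Im}(N)$ (via Lemma \ref{weight filtration-positive cone}), observing that for an interior point $N=\sum_i t_iN_i$ of a cone with semi-positive edge generators one has $\ker N=\bigcap_i\ker N_i$ and hence $W_\sigma:=\mathrm{Im}(N)=\sum_i\mathrm{Im}(N_i)$ independent of $N$, and deducing existence and uniqueness of $\mathfrak{F}_\sigma$ from the disjointness of the strata $C(\mathfrak{F}(W))$. This actually gives a cleaner and more explicit proof of the second assertion than the paper, which disposes of it with "the rest can be obtained by similar method," and your observation that $\mathrm{Int}(\rho)=C(\mathfrak{F}(W_\rho))$ is equivalent to $\dim W_\rho=1$ is correct.

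However, the first assertion is precisely that equality, and there your proposal stops at a plan rather than an argument: you say regularity, the Levi decomposition of Example \ref{example-Levi-decomposition}, and the restriction compatibility of admissible families will "force $N$ to be a rank-one form," but you give no mechanism by which they do so, and it is not apparent how the Levi decomposition of $N^{\mathfrak{F}(W_\rho)}$ enters at all (it plays no role in the paper's proof). The missing idea is the combinatorial descent the paper carries out: assuming $\mathrm{Int}(\rho)\subset\overline{C(\mathfrak{F}(W'))}$ with $\dim W'\geq 2$, pick a top-dimensional cone $\sigma_{\max}\in\Sigma_{\mathfrak{F}_0}$ having $\rho$ as an edge; regularity makes $\sigma_{\max}$ simplicial with $N=\tfrac{g(g+1)}{2}$ edges, and since $\dim\overline{C(\mathfrak{F}(W'))}<N$ some edge $\rho'$ of $\sigma_{\max}$ is not contained in $\overline{C(\mathfrak{F}(W'))}$, so the two-dimensional face $\delta=\rho+\rho'$ (a face because $\sigma_{\max}$ is simplicial) lies in $\overline{C(\mathfrak{F}(W''))}$ for some cusp with $W''\not\subseteq W'$; then Lemma \ref{convex-cone}(3) gives $\rho\subset\overline{C(\mathfrak{F}(W'))}\cap\overline{C(\mathfrak{F}(W''))}=\overline{C(\mathfrak{F}(W'\cap W''))}$ with $W'\cap W''\subsetneq W'$, and iterating strictly decreases $\dim W'$ until it equals one, which pins $\mathrm{Int}(\rho)=C(\mathfrak{F}(W_\rho))$ with $\dim W_\rho=1$. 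Without this (or some substitute argument showing a boundary edge of a $\Gamma$-regular admissible decomposition cannot lie in the interior of a stratum $C(\mathfrak{F}(W))$ with $\dim W\geq 2$), your proof of the main assertion is incomplete.
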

\begin{proof}
By Theorem 3 in $\S4.4$ of Chap.III \cite{AMRT}, any proper
rational boundary component of $C(\mathfrak{F}_0)$(cf.Definition \ref{proper-boundary-component-of-cone}) is of form
$C(\mathfrak{F}_1)$ by a cusp $\mathfrak{F}_1$ with
$\mathfrak{F}_0\prec\mathfrak{F}_1.$
Thus, there is a cusp $\mathfrak{F}^{'}$ different with $\mathfrak{F}_0$ such that
$\mathfrak{F}_0\prec\mathfrak{F}^{'}$ and $\rho \in \overline{C(\mathfrak{F}^{'})}.$

Suppose $\mathfrak{F}^{'}=\mathfrak{F}(W^{'})$ has $\dim_\Q W^{'}\geq 2.$
By the lemma \ref{family-decomposion from a minimal boundary}, $\Sigma_{\mathfrak{F}_0}$ endows a $\Gamma$-admissible family of
polyhedral decompositions $\{\Sigma_{\mathfrak{F}}\}_{\mathfrak{F}}$ and the decomposition $\Sigma_{\mathfrak{F}^{'}}$ is regular to with respect to $\Gamma.$
Since  $\overline{C(\mathfrak{F}_0)}^{\mathrm{rc}}=\bigcup\limits_{\alpha}
\sigma_\alpha^{\mathfrak{F}_0},$
there exists a top-dimensional cone $\sigma_{\max}\in \Sigma_{\mathfrak{F}_0}$ and a face $\tau$ of $\sigma_{\max}$
such that $\rho$ is an edge of $\tau$ and $\tau\in \Sigma_{\mathfrak{F}^{'}}$ with $\dim\tau=\dim C(\mathfrak{F}^{'}).$
Since the cone $\sigma_{\max}$ is regular, there is a face $\delta$ of $\sigma$ satisfying  $\rho\in \delta$ and $\delta\notin \Sigma_{\mathfrak{F}^{'}}.$
Thus, we have another cusp $\mathfrak{F}^{''}=\mathfrak{F}(W^{''})$ that
$W^{''}$ is not a subspace of $W^{'}$ and $\delta \in \Sigma_{\mathfrak{F}^{''}}.$
Then, $\rho\in u^{\mathfrak{F}(W^{'}\cap W^{''})}$ by the lemma \ref{convex-cone}, and so $\mathrm{Int}(\rho)$ is in a proper rational boundary component of $C(\mathfrak{F}^{'}).$

By recursion, we obtain that $\mathrm{Int}(\rho)$ is a proper rational boundary component of $C(\mathfrak{F}_0)$
and  there is a rationally-defined one dimensional isotropic
subspace $W_\rho$ of $V$ such that $\mathrm{Int}(\rho)=C(\mathfrak{F}(W_\rho)).$
The uniqueness is due to(3) of the lemma \ref{convex-cone}.

The rest can be obtained by similar method.
\end{proof}

\begin{definition} Let $\Gamma\subset \Sp(g,\Z)$ be an arithmetic subgroup and $\mathfrak{F}$ a cusp of $\mathfrak{H}_g.$
A $\overline{\Gamma_{\mathfrak{F}}}$-admissible polyhedral decomposition
$\Sigma_{\mathfrak{F}}$ of $C(\mathfrak{F})$ is \textbf{$\Gamma$-separable} if a $\gamma\in \overline{\Gamma_{\mathfrak{F}}}$ satisfies
$\gamma(\sigma)\cap \sigma\neq \{0\}$ for a cone $\sigma\in\Sigma_{\mathfrak{F}}$
then $\gamma$ acts as the identity on the cone $\sigma.$
\end{definition}
\begin{myrem}
Note that any $\overline{\Gamma_{\mathfrak{F}}}$-admissible polyhedral decomposition
$\Sigma_{\mathfrak{F}}$ of $C(\mathfrak{F})$ can be subdivided into
another regular $\overline{\Gamma_{\mathfrak{F}}}$-admissible polyhedral decomposition $\widetilde{\Sigma}_{\mathfrak{F}}$(cf.\cite{AMRT},\cite{FC}),
and it is obvious that the regular refinement $\widetilde{\Sigma}_{\mathfrak{F}}$ is also  $\Gamma$-separable provided that  $\Sigma_{\mathfrak{F}}$ is  $\Gamma$-separable.
\end{myrem}

In fact, our definition of a $\overline{\Gamma_{\mathfrak{F}}}$-admissible polyhedral decomposition
with $\Gamma$-separability is compatible with the condition (ii) in \S 2.4 Chap IV \cite{FC}.The following is easy :
\begin{lemma}\label{lemma for non-selfintersection-1}
Let $\Gamma$ be an arithmetic subgroup of $\Sp(g,\Q).$ Let $\Sigma_{\mathfrak{F}}$  be a $\overline{\Gamma_{\mathfrak{F}}}$-admissible polyhedral decomposition of $C(\mathfrak{F}),$  where $\mathfrak{F}$ is a cusp of $\mathfrak{H}_g.$

Assume that the decomposition $\Sigma_{\mathfrak{F}}$ is regular with respect to $\Gamma.$ The following two conditions are equivalent :
(i) $\Sigma_{\mathfrak{F}}$ is $\Gamma$-separable; (ii) if an element  $\gamma\in \overline{\Gamma_{\mathfrak{F}}}$ satisfies
$\gamma(\sigma)\cap \sigma\neq \{0\}$ for a cone $\sigma\in\Sigma_{\mathfrak{F}}$
then $\gamma$ acts as the identity on $C(\mathfrak{F}).$
\end{lemma}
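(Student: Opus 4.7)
The plan is to handle the two implications separately and note that (ii) $\Rightarrow$ (i) is essentially free. The direction (ii) $\Rightarrow$ (i) is immediate from linearity: the action of $\overline{\Gamma_{\mathfrak{F}}}$ on $u^{\mathfrak{F}}$ arises, via $j_{C(\mathfrak{F})}$, from the adjoint representation and is therefore $\R$-linear, so pointwise fixation of the full-dimensional open cone $C(\mathfrak{F})$ propagates immediately to the spanned vector space $u^{\mathfrak{F}}$ and hence to every cone $\sigma \subset u^{\mathfrak{F}}$.

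For the nontrivial direction (i) $\Rightarrow$ (ii), I would first invoke the Remark after Definition \ref{admissible polyhedral decomposition}, which ensures that every cone $\sigma \in \Sigma_{\mathfrak{F}}$ is a face of some top-dimensional cone $\sigma_{\max} \in \Sigma_{\mathfrak{F}}$ of dimension $\dim_{\R} C(\mathfrak{F})$. Given $\gamma \in \overline{\Gamma_{\mathfrak{F}}}$ with $\gamma(\sigma) \cap \sigma \neq \{0\}$, the inclusions $\sigma \subseteq \sigma_{\max}$ and $\gamma(\sigma) \subseteq \gamma(\sigma_{\max})$ yield
$$
\sigma_{\max} \cap \gamma(\sigma_{\max}) \;\supseteq\; \sigma \cap \gamma(\sigma) \;\neq\; \{0\}.
$$
Re-applying the $\Gamma$-separability hypothesis (i) to the top-dimensional cone $\sigma_{\max}$, I conclude that $\gamma$ acts as the identity on $\sigma_{\max}$. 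The regularity of $\Sigma_{\mathfrak{F}}$ with respect to $\Gamma$ now forces $\sigma_{\max}$ to be generated by a full $\Z$-basis of the lattice $L_{\mathfrak{F}} = \Gamma \cap U^{\mathfrak{F}}(\Z)$, so that $\sigma_{\max}$ spans $u^{\mathfrak{F}}$. Pointwise fixation of a spanning set by a linear map forces $\gamma = \id$ on all of $u^{\mathfrak{F}}$, hence on the subcone $C(\mathfrak{F})$, which is exactly (ii).

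The main place where one might fear difficulty is in trying to understand, after establishing only $\gamma|_{\sigma} = \id$, how $\gamma$ could permute the finitely many top-dimensional cones in the star of $\sigma$, or whether $\gamma(\sigma_{\max}) = \sigma_{\max}$. The observation above bypasses this completely: the intersection $\sigma_{\max} \cap \gamma(\sigma_{\max})$ is visibly nonzero because it contains $\sigma \cap \gamma(\sigma)$, so a single application of (i) at the top-dimensional level is enough, and no further combinatorial analysis of the local fan structure is required.
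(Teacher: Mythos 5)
Your proof is correct and follows essentially the same route as the paper's: both reduce to a top-dimensional cone of $\Sigma_{\mathfrak{F}}$ (via the remark that every cone is a face of one), apply $\Gamma$-separability to that cone, and then use that a regular top-dimensional cone is generated by a $\Z$-basis of $\Gamma\cap U^{\mathfrak{F}}(\Z)$, so pointwise fixation extends by linearity to all of $u^{\mathfrak{F}}\supset C(\mathfrak{F})$. Your observation $\sigma\cap\gamma(\sigma)\subseteq\sigma_{\max}\cap\gamma(\sigma_{\max})$ merely streamlines the paper's intermediate propagation step, and the easy implication (ii)$\Rightarrow$(i), which you justify by linearity, is left implicit in the paper.
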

\begin{proof}A regular top-dimensional cone $\sigma_{\max}^\mathfrak{F}=\{\sum\limits_{i=1}^t \lambda_iy_i | \lambda_i\in \R_{\geq 0},\, \,\, i=1,\cdots, t\}$
has $\{y_1,\cdots,y_t\}$ as a $\Z$-basis of $\Gamma\cap U^{\mathfrak{F}}(\Z).$
Then that $\Sigma_{\mathfrak{F}}$ is $\Gamma$-separable if and only if that any $\gamma\in \overline{\Gamma_{\mathfrak{F}}}$ satisfying
$\gamma(\sigma)\cap \sigma\neq \{0\}$ for some cone $\sigma\in\Sigma_{\mathfrak{F}}$
acts as the identity on all cones in $\{\tau\in\Sigma_{\mathfrak{F}}\,\,|\,\, \tau\cap\gamma(\sigma)\cap \sigma\neq \{0\}\}.$
So if $\gamma$ acts as the identity on $\sigma_{\max}^\mathfrak{F}$ then $\gamma$ must act as the identity on $C(\mathfrak{F}).$\\
\end{proof}

\begin{definition}
 Let $\Gamma\subset \Sp(g,\Z)$ be an arithmetic subgroup. A
 \textbf{symmetric $\Gamma$-admissible family of polyhedral
 decompositions} is the $\Gamma$-admissible family of polyhedral
 decompositions induced by a
 $\overline{\Gamma_{\mathfrak{F}_0}}$(or $\mathrm{GL}(g,\Z)$)-admissible polyhedral decomposition of
 $C(\mathfrak{F}_0)$ as in the lemma \ref{family-decomposion from a minimal
 boundary}. A \textbf{symmetric toroidal compactification} of a Siegel variety
$\sA_{g,\Gamma}$ is a compactification constructed by a symmetric admissible family of polyhedral
 decompositions.
\end{definition}
When we say a toroidal compactification constructed by some admissible polyhedral decomposition of
 $C(\mathfrak{F}_0),$ we always mean a symmetric toroidal compactification.

Due to the lemma \ref{family-decomposion from a minimal boundary}, we have :
\begin{lemma}\label{lemma for non-selfintersection-2} Let $\Gamma\subset \Sp(g,\Z)$ be an arithmetic subgroup.
Let $\Sigma_{\mathfrak{F}_0}$ be a $\overline{\Gamma_{\mathfrak{F}_0}}$(or $\mathrm{GL}(g,\Z)$)-admissible polyhedral decomposition of
$C(\mathfrak{F}_0).$ Let $\{\Sigma_{\mathfrak{F}}\}_{\mathfrak{F}}$ be the  symmetric $\Gamma$-admissible family of polyhedral decompositions induced by $\Sigma_{\mathfrak{F}_0}$(
cf.Lemma \ref{family-decomposion from a minimal boundary}).
\begin{myenumi}
\item  For any cusp $\mathfrak{F}$ of $\mathfrak{H}_g,$
the induced $\overline{\Gamma_{\mathfrak{F}}}$-admissible polyhedral decomposition
$\Sigma_{\mathfrak{F}}$ of $C(\mathfrak{F})$ is $\Gamma$-separable.

\item For any subgroup $\Gamma^{'}$ of $\Gamma$ with finite index,
the decomposition $\Sigma_{\mathfrak{F}_0}$ is $\Gamma^{'}$-separable as a $\overline{\Gamma^{'}_{\mathfrak{F}_0}}$-admissible polyhedral decomposition.

\end{myenumi}
\end{lemma}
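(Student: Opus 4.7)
The plan is to prove (1) first and derive (2) as an easy corollary. For (1), I reduce via the symmetric construction to $\Gamma$-separability of any regular $\overline{\Gamma_\mathfrak{F}}$-admissible decomposition of $C(\mathfrak{F})$, and settle this by a neatness argument on the edges of regular cones.

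\textbf{Reduction.} By Lemma \ref{family-decomposion from a minimal boundary}, every cusp $\mathfrak{F}$ lies above a minimal cusp $\mathfrak{F}_{\min}=M(\mathfrak{F}_0)$ with $M\in\Sp(g,\Z)$, with $\Sigma_{\mathfrak{F}_{\min}}=M(\Sigma_{\mathfrak{F}_0})$ and $\Sigma_\mathfrak{F}=\Sigma_{\mathfrak{F}_{\min}}|_{\overline{C(\mathfrak{F})}}$, and each $\Sigma_\mathfrak{F}$ is regular with respect to $\Gamma$. Since conjugation by $M$ carries $\Gamma\cap\sN(\mathfrak{F}_{\min})$ to $(M^{-1}\Gamma M)\cap\sN(\mathfrak{F}_0)$ and intertwines the decompositions, and since passing to $C(\mathfrak{F})\subset\overline{C(\mathfrak{F}_{\min})}$ only cuts cones by rational linear subspaces (preserving edges and faces), the $\Gamma$-separability question at an arbitrary cusp reduces to verifying the condition directly for a regular $\overline{\Gamma_\mathfrak{F}}$-admissible decomposition at a given $\mathfrak{F}$.

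\textbf{Core step.} Suppose $\gamma\in\overline{\Gamma_\mathfrak{F}}$ satisfies $\gamma(\sigma)\cap\sigma\neq\{0\}$ for some $\sigma\in\Sigma_\mathfrak{F}$. By admissibility $\gamma(\sigma)\in\Sigma_\mathfrak{F}$, so the fan axiom makes $\gamma(\sigma)\cap\sigma$ a common face; the standard argument (cf.\ \cite{FC}, Ch.\,IV \S2) uses regularity and the fact that the primitive edge vectors of the common face belong to a $\Z$-basis of $L_\mathfrak{F}=\Gamma\cap U^\mathfrak{F}(\Z)$ to upgrade the intersection hypothesis to the set-theoretic equality $\gamma(\sigma)=\sigma$. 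Writing $\sigma=\sum_{i=1}^{d}\R_{\geq 0}y_i$ with $\{y_1,\ldots,y_d\}$ part of a $\Z$-basis of $L_\mathfrak{F}$, the action of $\gamma$ permutes $\{y_1,\ldots,y_d\}$. Lifting $\gamma=j_{C(\mathfrak{F})}(\tilde\gamma)$ to $\tilde\gamma\in\Gamma_\mathfrak{F}\subseteq\Gamma$, neatness of $\tilde\gamma$ implies that in any rational representation of $\Sp(V,\psi)$ the eigenvalues of $\tilde\gamma$ generate a torsion-free subgroup of $\C^\times$; in particular so do those of $\gamma$ on $U^\mathfrak{F}(\C)$, and thus on $\mathrm{span}(y_1,\ldots,y_d)$. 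But on that span the eigenvalues of $\gamma$ are roots of unity, being those of a permutation of a $\Z$-basis, so they must all equal $1$. Since a permutation matrix is diagonalisable, $\gamma$ acts as the identity on $\mathrm{span}(y_1,\ldots,y_d)$, hence on $\sigma$, which is the $\Gamma$-separability condition.

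\textbf{Part (2) and main obstacle.} For $\Gamma'\subseteq\Gamma$ of finite index, $\overline{\Gamma'_{\mathfrak{F}_0}}\subseteq\overline{\Gamma_{\mathfrak{F}_0}}$ with $[\overline{\Gamma_{\mathfrak{F}_0}}:\overline{\Gamma'_{\mathfrak{F}_0}}]<\infty$, so $\Sigma_{\mathfrak{F}_0}$ inherits $\overline{\Gamma'_{\mathfrak{F}_0}}$-admissibility (finite orbits), and $\Gamma'$-separability follows from (1) applied to $\mathfrak{F}_0$ since any $\gamma\in\overline{\Gamma'_{\mathfrak{F}_0}}$ already lies in $\overline{\Gamma_{\mathfrak{F}_0}}$. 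The most delicate point in the whole argument is the passage from the hypothesis $\gamma(\sigma)\cap\sigma\neq\{0\}$ to the set-theoretic equality $\gamma(\sigma)=\sigma$; this must be established \emph{before} the neatness/permutation machinery can apply to the whole cone rather than to a proper face, and it is the only place requiring substantive input beyond the bookkeeping of admissibility and regularity.
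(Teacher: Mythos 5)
The decisive gap is your ``core step'': the passage from $\gamma(\sigma)\cap\sigma\neq\{0\}$ to $\gamma(\sigma)=\sigma$ is not proved, and it cannot be proved in the generality in which you work. The condition you cite from Chap.~IV \S 2.4 of \cite{FC} is a hypothesis that Faltings--Chai \emph{impose} on the cone decomposition (the paper only remarks that its notion of $\Gamma$-separability is compatible with that condition); it is not a consequence of admissibility, the fan axioms and regularity. Distinct cones in the same $\overline{\Gamma_{\mathfrak{F}}}$-orbit can perfectly well share a nonzero proper face, and this actually happens: for $g=3$ and $\Gamma=\Sp(g,\Z)$ the central cone decomposition of Example \ref{central-cone-decomposition} is $\GL(g,\Z)$-admissible and regular with respect to $\Sp(g,\Z)$, yet it is \emph{not} $\Sp(g,\Z)$-separable, precisely because $\overline{\Gamma_{\mathfrak{F}_0}}$-translates of the principal cone $\sigma_0$ meet along common facets. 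Since your argument uses no hypothesis on $\Sigma_{\mathfrak{F}_0}$ beyond admissibility and regularity, it would prove that \emph{every} such decomposition is separable, hence (via Theorem \ref{geometrically-fine-1-1-to non-self-intersections}) that every smooth toroidal compactification has simple normal crossing boundary; this contradicts Example \ref{central-cone-decomposition} and the paper's explicit remark that boundary divisors of smooth toroidal compactifications may have self-intersections. You flag this step yourself as the only one requiring substantive input, but that input is exactly what is missing. There are also secondary mismatches with the hypotheses: your eigenvalue argument needs $\Gamma$ (hence $\overline{\Gamma_{\mathfrak{F}}}$) to be neat, and your use of $\Z$-bases of edge generators needs regularity with respect to $\Gamma$; neither is assumed in the lemma, and part (2) concerns arbitrary finite-index subgroups.

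The underlying issue is how the statement must be read. Taken literally (with $\mathfrak{F}=\mathfrak{F}_0$) it would assert that every admissible decomposition is $\Gamma$-separable, which the central cone example refutes; note that the paper gives no proof at all here (the lemma is asserted to follow from Lemma \ref{family-decomposion from a minimal boundary}), and it is only ever invoked under the standing hypothesis that $\Sigma_{\mathfrak{F}_0}$ is $\Gamma$-separable, in the proofs of Theorem \ref{Infity-divisor-on-toroidal-compactification}(2) and Theorem \ref{geometrically-fine-1-1-to non-self-intersections}. So the intended content is propagation of that hypothesis: separability assumed at the standard minimal cusp passes, through the symmetric construction $\Sigma_{\gamma\mathfrak{F}}=\gamma(\Sigma_{\mathfrak{F}})$, $\Sigma_{\mathfrak{F}}=\Sigma_{\mathfrak{F}_{\min}}|_{\overline{C(\mathfrak{F})}}$ of Lemma \ref{family-decomposion from a minimal boundary}, to the induced decomposition at every cusp, and it descends to finite-index subgroups exactly by your observation $\overline{\Gamma^{'}_{\mathfrak{F}_0}}\subset\overline{\Gamma_{\mathfrak{F}_0}}$ --- the only part of your argument that survives as stated. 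A correct write-up should therefore start from the separability of $\Sigma_{\mathfrak{F}_0}$ and carry out the conjugation/restriction bookkeeping (with care, since $M^{-1}\Gamma M$ need not lie in $\Gamma$ for $M\in\Sp(g,\Z)$), rather than attempt the unconditional claim.
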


In general, given a toroidal
compactification $\overline{D/\Gamma}$ of a locally symmetric variety $D/\Gamma,$
the toroidal embedding $D/\Gamma\subset \overline{D/\Gamma}$  is not without self-intersection,
the main theorems II in \cite{AMRT} says that $D/\Gamma\subset \overline{D/\Gamma}$
is without monodromy  in sense that for each stratum $\mathcal{O}$ the branches of $\overline{D/\Gamma}\setminus D/\Gamma$ through $\mathcal{O}$ are not permuted by going around loops in $\mathcal{O}.$
For more efficient applications of toroidal compactifications in geometry,  we continue to
exploit the infinity boundary divisors explicitly.
\begin{theorem}\label{Infity-divisor-on-toroidal-compactification}
Let $\Gamma\subset \Sp(g,\Z)$ be an arithmetic subgroup and
 let $\Sigma_{\mathfrak{F}_0}:=\{\sigma_\alpha^{\mathfrak{F}_0}\}$ be a
 $\overline{\Gamma_{\mathfrak{F}_0}}$(or $\mathrm{GL}(g,\Z)$)-admissible polyhedral decomposition of $C(\mathfrak{F}_0)$
 where $\mathfrak{F}_0$ is the standard minimal cusp of $\mathfrak{H}_g.$
Let  $\overline{\sA}_{g,\Gamma}$ be the toroidal
compactification of $\sA_{g,\Gamma}$ constructed by $\Sigma_{\mathfrak{F}_0}$ and $D_\infty:=
\overline{\sA}_{g,\Gamma}\setminus\sA_{g,\Gamma}$
the boundary divisor.

Assume that the
decomposition $\Sigma_{\mathfrak{F}_0}$ is  regular with respect to $\Gamma.$

\begin{myenumi}
\item The number of irreducible components of $D_\infty$ is equal
to
$$[\Sp(g,\Z): \Gamma]+[\Sp(g,\Z): \Gamma]\times\#\{ \mbox{$\Gamma_{\mathfrak{F}_0}$-orbits in Interior-edge}\}. $$

\item The compactification
$\overline{\sA}_{g,\Gamma}$ is a smooth compact
analytic variety with simple normal crossing boundary divisor $D_\infty,$ if the group $\Gamma$ is neat and the decomposition  $\Sigma_{\mathfrak{F}_0}$
is $\Gamma$-separable.
\end{myenumi}
\end{theorem}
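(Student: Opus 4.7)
The plan is to handle the two claims separately, using the toroidal structure described in Lemma \ref{toroidal-embedding-partial-compactification} together with the orbit-cone correspondence of Proposition \ref{kkms-orbit-cone-correspondence}. The unifying observation is that irreducible components of $D_\infty$ correspond bijectively to $\Gamma$-orbits of edges in the full symmetric family $\{\Sigma_{\mathfrak{F}}\}_{\mathfrak{F}}$, and each edge then falls into the dichotomy (Interior-edge vs Boundary-edge) provided by Lemma \ref{edge-to-boundary-component}.

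For (1), I first note that by Proposition \ref{kkms-orbit-cone-correspondence} every edge of $\Sigma_{\mathfrak{F}}$ produces a $T_\mathfrak{F}$-invariant prime divisor inside the partial compactification $Z_\mathfrak{F}^{'}$, and conversely every irreducible component of $Z_\mathfrak{F}^{'}\setminus \mathbb{U}_\mathfrak{F}$ arises this way. Passing through the gluing equivalence $\sim^R$ of \ref{gluing-condition} and the disjoint union over cusps amounts to counting $\Gamma$-orbits of edges in $\bigsqcup_\mathfrak{F}\Sigma_\mathfrak{F}(1)$. Since the symmetric family is built by $\Sp(g,\Z)$-translation from $\Sigma_{\mathfrak{F}_0}$ (Lemma \ref{family-decomposion from a minimal boundary}), it is enough to count $\Gamma$-orbits of pairs $(M,\rho)$ with $M$ representing a minimal cusp of $\sA_{g,\Gamma}$ and $\rho$ an edge of $M(\Sigma_{\mathfrak{F}_0})$. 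Applying Lemma \ref{edge-to-boundary-component} splits the count: Interior-edges of $\Sigma_{\mathfrak{F}_0}$ contribute one divisor per $\Gamma_{\mathfrak{F}_0}$-orbit per minimal cusp, while Boundary-edges correspond bijectively, via $\rho\mapsto W_\rho$, to depth-one rational cusps $\mathfrak{F}(W_\rho)$, each counted once. A strong-approximation bookkeeping for $\Sp(2g)$ identifies the number of minimal cusps of $\sA_{g,\Gamma}$, as well as the number of $\Gamma$-orbits of depth-one cusps visible to $\Sigma_{\mathfrak{F}_0}$, as $[\Sp(g,\Z):\Gamma]$, producing exactly the additive formula.

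For (2), smoothness and the SNC property are both local statements. By Example \ref{local-coordinate-system}, every chart $\widetilde{\Delta}_{\mathfrak{F},\sigma}\hookrightarrow Z_\mathfrak{F}^{'}$ is an open subset of an analytic space fibered with toric fiber over a lower-genus Siegel quotient; regularity of $\Sigma_{\mathfrak{F}_0}$ with respect to $\Gamma$, propagated by Lemma \ref{family-decomposion from a minimal boundary}(2) to each $\Sigma_\mathfrak{F}$, makes every such fiber a smooth toric variety with simple normal crossing boundary by Corollary \ref{divisor-cone}. Neatness of $\Gamma$ rules out torsion in the relevant stabilizers, so $\overline{\Gamma_\mathfrak{F}}$ acts freely on the smooth locus in the interior.

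The main obstacle, and technical heart of the argument, is to show that the gluing relation $\sim^R$ does not force distinct smooth branches of the boundary to meet themselves, i.e., does not create self-intersections in $\overline{\sA}_{g,\Gamma}$. This is precisely where $\Gamma$-separability enters: by Lemma \ref{lemma for non-selfintersection-2}(1) every $\Sigma_\mathfrak{F}$ of the symmetric family is $\Gamma$-separable, and by Lemma \ref{lemma for non-selfintersection-1} any $\gamma\in\overline{\Gamma_\mathfrak{F}}$ moving a cone $\sigma\in\Sigma_\mathfrak{F}$ with $\gamma(\sigma)\cap\sigma\neq\{0\}$ must act as the identity on all of $C(\mathfrak{F})$. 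Combining this with neatness, I would argue that each map $\widetilde{\Delta}_{\mathfrak{F},\sigma}\to \pi_\mathfrak{F}^{'}(Z_\mathfrak{F}^{'})\subset \overline{\sA}_{g,\Gamma}$ is an \'etale local isomorphism onto its image preserving the closed stratification, so distinct $T_\mathfrak{F}$-invariant boundary divisors in $Z_\mathfrak{F}^{'}$ descend to distinct components of $D_\infty$. Transversality across boundary divisors arising from different cusps is then enforced by the compatibility axioms of Definition \ref{admissible-family-polyhedral-decomposition} together with toroidality of the transition morphisms $\Pi_{\mathfrak{F}_2,\mathfrak{F}_1}^{'}$ from Lemma \ref{toroidal-embedding-partial-compactification}(2), and the SNC property of $D_\infty$ follows by assembling these local \'etale models.
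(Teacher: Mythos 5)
Your skeleton is the paper's: reduce to the partial compactifications $Z_{\mathfrak{F}}^{'}$, split edges by the Interior/Boundary dichotomy of Lemma \ref{edge-to-boundary-component} for the count, and use regularity (Corollary \ref{divisor-cone}), neatness and $\Gamma$-separability for smoothness and the SNC property. The gap is that you treat the crux of (1) as formal. The assertion that passing through the relation $\sim^R$ ``amounts to counting $\Gamma$-orbits of edges'' is precisely what the paper has to prove, not a consequence of the orbit--cone correspondence: one needs (a) that $R$ restricted to a minimal-cusp chart reduces to the $\Gamma_{\mathfrak{F}_{\min}}$-action, so that $Z_{\mathfrak{F}_{\min}}$ maps isomorphically onto $\mathfrak{U}_{[\mathfrak{F}_{\min}]}$ and distinct $\Gamma_{\mathfrak{F}_0}$-orbits of interior edges give distinct divisors; (b) the paper's Claim 1, locating exactly when a codimension-one stratum of $\mathfrak{U}_{[\mathfrak{F}_0]}$ meets another chart, which keeps interior-edge divisors inside one chart and forces the boundary-edge pieces $\sO_1,\dots,\sO_l$ from several charts to be glued along the stratum of the depth-one cusp $\mathfrak{F}_\rho$ into one irreducible component. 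Moreover, your ``strong-approximation bookkeeping'' is not the paper's step and, carried out literally, computes double cosets of $\Gamma$ against the stabilizer of a cusp rather than the index $[\Sp(g,\Z):\Gamma]$; the paper gets both factors from its covering of $\overline{\sA}_{g,\Gamma}$ by the opens $\mathfrak{U}_{[\mathfrak{F}^i_{\min}]}$ and from $\Sp(g,\Z)$-transitivity on depth-one cusps, and you would need to argue that way.

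In (2) you invoke the right lemmas, but two statements are off. The map $\widetilde{\Delta}_{\mathfrak{F},\sigma}\to\overline{\sA}_{g,\Gamma}$ is never an isomorphism onto its image (already on the interior it is the quotient by $\Gamma/\Gamma\cap U^{\mathfrak{F}}(\Q)$), and ``distinct boundary divisors descend to distinct components'' is both false (divisors in one $\Gamma_{\mathfrak{F}}$-orbit coincide downstairs) and not what SNC requires. What separability must yield --- and what the paper's Claims 2 and 3 prove --- is that any $\gamma\in\Gamma_{\mathfrak{F}}$ with $\gamma(\sigma)\cap\sigma\neq\{0\}$ fixes every cone containing $\sigma$, so each closure $\overline{\sS(\mathfrak{F},\sigma)}^{\mathrm{cl}}$ injects into the quotient, i.e.\ each boundary divisor maps isomorphically onto its image; only then do Corollary \ref{divisor-cone} and neatness give smoothness, absence of self-intersection, and transversality.
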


\begin{proof}
We define an  equivalent relation on the set of cusps :
$\mathfrak{F}\sim^{\Gamma}\mathfrak{F}^{'}$ if and only if there exists  an element
$\gamma \in \Gamma$ such that $\mathfrak{F}^{'}=\gamma\mathfrak{F}.$ The
equivalent class is denoted by $[\cdot].$

Let $\{\Sigma_{\mathfrak{F}}\}_{\mathfrak{F}}$ be the symmetric
$\Gamma$-admissible family of polyhedral decompositions induced by
the given decomposition $\Sigma_{\mathfrak{F}_0}.$ For any cusp
$\mathfrak{F},$ we also define an  equivalent relation on
$\Sigma_{\mathfrak{F}}$ : $\sigma^{\mathfrak{F}}_\alpha
\sim^{\Gamma_{\mathfrak{F}}} \sigma^{\mathfrak{F}}_\beta$ if and only if
there exists an element $\gamma \in \Gamma_{\mathfrak{F}}$ such that
$\sigma^{\mathfrak{F}}_\beta=\gamma(\sigma^{\mathfrak{F}}_\alpha).$ Denoted this
equivalent class  by $[\cdot]_{\mathfrak{F}}.$

For each cusp $\mathfrak{F},$ a basic fact is that the group
$\Gamma_{\mathfrak{F}}/\Gamma\cap U^{\mathfrak{F}}(\R)$ acts properly
discontinuously on $Z_{\mathfrak{F}}^{'}$(cf.Proposition 1 in \S6.3
Chap.III\cite{AMRT}), thus the morphism $\pi_{\mathfrak{F}}^{'}$
factors through a morphism $\pi_{\mathfrak{F}}:Z_{\mathfrak{F}} \to
\overline{\sA}_{g,\Gamma}$ so that there is a commutative diagram :
\begin{equation}\label{compactification-digram-2}
\begin{CDS}
Z_{\mathfrak{F}}^{'}  \>\mathrm{pr}_{\mathfrak{F}} >> Z_{\mathfrak{F}}\\
\novarr\SE \pi_{\mathfrak{F}}^{'} EE  \V V \pi_{\mathfrak{F}} V  \\
\novarr \novarr \overline{\sA}_{g,\Gamma},
\end{CDS}
\end{equation}
where $Z_{\mathfrak{F}}$ is  the quotient of $Z_{\mathfrak{F}}^{'}$ by
$\Gamma_{\mathfrak{F}}/\Gamma\cap U^{\mathfrak{F}}(\Q)$ and $\mathrm{pr}_{\mathfrak{F}}$ is a quotient morphism.

Let $\mathfrak{F}^1,\mathfrak{F}^2$ be two arbitrary cusps with
$\mathfrak{F}^2\prec\mathfrak{F}^1.$ We actually have
        $$\overline{\Gamma_{\mathfrak{F}^1}}-\mbox{orbit of
$\sigma_{\alpha}^{\mathfrak{F}^1}$ in
$\Sigma_{\mathfrak{F}^1}$}=(\overline{\Gamma_{\mathfrak{F}^2}}-\mbox{orbit
of $\sigma_{\alpha}^{\mathfrak{F}^1}$ in
$\Sigma_{\mathfrak{F}^2}$})\bigcap \Sigma_{\mathfrak{F}^1}\,\,\,\,
\forall \sigma_{\alpha}^{\mathfrak{F}^1}\in \Sigma_{\mathfrak{F}^1}$$
by the fact of $\Sigma_{\mathfrak{F}^1}=\{\sigma_{\beta}\in
\Sigma_{\mathfrak{F}^2} \,\,|\,\, \sigma_\beta\subset
\overline{C(\mathfrak{F}^1)}\}.$
Then, we get an induced morphism $\Pi_{\mathfrak{F}^1,\mathfrak{F}^2} : Z_{\mathfrak{F}^1}  \>  >>
Z_{\mathfrak{F}^2}.$ Moreover, $\Pi_{\mathfrak{F}^1,\mathfrak{F}^2}$ is a local isomorphism
satisfying  the following commutative diagram
$$
\begin{CDS}
Z_{\mathfrak{F}^1}^{'}  \> \Pi_{\mathfrak{F}^1,\mathfrak{F}^2}^{'}  >>
Z_{\mathfrak{F}^2}^{'}\\
\V\mathrm{pr}_{\mathfrak{F}^1} V V \novarr  \V V \mathrm{pr}_{\mathfrak{F}^2}  V  \\
Z_{\mathfrak{F}^1}  \> \Pi_{\mathfrak{F}^1,\mathfrak{F}^2} >> Z_{\mathfrak{F}^2}.
\end{CDS}.
$$

Recall the construction of a general toroidal compactification of
$\mathfrak{H}_g/\Gamma,$ the condition \ref{gluing-condition} of the
relation $R$ ensures that the following diagram is commutative :
\begin{equation}\label{1-proof-Infity-divisor-on-toroidal-compactification}
\begin{CDS}
Z_{\mathfrak{F}_2}  \> \Pi_{\mathfrak{F}_2,\mathfrak{F}_1} >> Z_{\mathfrak{F}_1}\\
\novarr\SE \pi_{\mathfrak{F}_2} EE  \V V \pi_{\mathfrak{F}_1} V  \\
\novarr \novarr \overline{\sA}_{g,\Gamma}
\end{CDS}
\end{equation}
for any two cusps $\mathfrak{F}_1, \mathfrak{F}_2$ with $\mathfrak{F}_1\prec
\mathfrak{F}_2.$

Let $\mathfrak{F}_{\min }$ be an arbitrary minimal cusp. Denote by $\mathfrak{U}_{[\mathfrak{F}_{\min }]}:=
\pi^{'}_{\mathfrak{F}_{\min }}(Z_{\mathfrak{F}_{\min }}^{'}).$ It is
well-defined as
$\pi^{'}_{\mathfrak{F}}(Z_{\mathfrak{F}}^{'})=\pi^{'}_{\gamma\mathfrak{F}}(Z_{\gamma\mathfrak{F}}^{'})$
for $\gamma\in \Gamma,\,\forall \mathfrak{F}.$ Since
$\pi^{'}_{\mathfrak{F}_{\min}}$ is an open morphism, the set
$\mathfrak{U}_{[\mathfrak{F}_{\min }]}$ is open in
$\overline{\sA}_{g,\Gamma}.$ There are useful properties F1-F4 :
\begin{description}
    \item[F1]  Let $\mathfrak{F}$ be an arbitrary cusp and $\gamma$ an arbitrary element in $\Gamma.$
It is obvious that there is an induced isomorphism
$\Pi_{\mathfrak{F},\gamma\mathfrak{F}} : Z_{\mathfrak{F}} \to
Z_{\gamma\mathfrak{F}}$ such that the following diagram is commutative
$$
\begin{CDS}
Z_{\mathfrak{F}}^{'}  \> \Pi_{\mathfrak{F},\gamma\mathfrak{F}}^{'}  >>
Z_{\gamma\mathfrak{F}}^{'}\\
\V\mathrm{pr}_{\mathfrak{F}} V V \novarr  \V V \mathrm{pr}_{\gamma\mathfrak{F}} V  \\
Z_{\mathfrak{F}}  \> \Pi_{\mathfrak{F},\gamma\mathfrak{F}} >>
Z_{\gamma\mathfrak{F}}.
\end{CDS}.
$$
Thus, it is easy to get
$\mathrm{pr}_{\mathfrak{F}}(\sS(\mathfrak{F}, \sigma^{\mathfrak{F}}_\alpha))
=\mathrm{pr}_{\mathfrak{F}}(\sS(\mathfrak{F}, \kappa(\sigma^{\mathfrak{F}}_\alpha)))\,\,
\forall \sigma^{\mathfrak{F}}_\alpha\in \Sigma_{\mathfrak{F}},\, \forall \kappa\in \Gamma_{\mathfrak{F}_0}.$
For any cone
$\sigma^{\mathfrak{F}}\in \Sigma_{\mathfrak{F}},$ we define
$\sY(\mathfrak{F},[\sigma^{\mathfrak{F}}]_{\mathfrak{F}}):=\mathrm{pr}_{\mathfrak{F}}(
\sS(\mathfrak{F}, \sigma^{\mathfrak{F}})).$  The collection
$\{\sY(\mathfrak{F},[\sigma^{\mathfrak{F}}_\alpha]_{\mathfrak{F}})\}_{\sigma^{\mathfrak{F}}_\alpha\in
\Sigma_{\mathfrak{F}}}$ is then a stratification of $Z_{\mathfrak{F}}.$

\item[F2] Let $\mathfrak{F}^1,\mathfrak{F}^2$ be two arbitrary cusps with
$\mathfrak{F}^2\prec\mathfrak{F}^1.$ By the lemma \ref{toroidal-embedding-partial-compactification},
we obtain that
\begin{equation}\label{2-proof-Infity-divisor-on-toroidal-compactification}
\Pi_{\mathfrak{F}^1,\mathfrak{F}^2}^{-1}(\sY(\mathfrak{F}^2,[\sigma_\alpha^{\mathfrak{F}^2}]_{\mathfrak{F}^2}))=
\left\{
  \begin{array}{ll}
    \sY(\mathfrak{F}^1,[\tau]_{\mathfrak{F}^1}) & \hbox{ if $\exists\tau \in [\sigma_\alpha^{\mathfrak{F}^2}]_{\mathfrak{F}^2}$ with $\tau\in  \Sigma_{\mathfrak{F}^1}$} \\
    \emptyset  & \hbox{ others}
  \end{array}
 \right.
\end{equation}
since $\Sigma_{\mathfrak{F}^1}=\{\sigma_{\beta}\in
\Sigma_{\mathfrak{F}^2} \,\,|\,\, \sigma_\beta\subset
\overline{C(\mathfrak{F}^1)}\}.$

   \item[F3] For  any cusp $\mathfrak{F}(W)$ and for any element $\gamma$ in $\Gamma,$
$\mathfrak{F}(W\cap \gamma(W))$ is the unique minimal one in the set of cusps
$\{\mathfrak{F}\,\,|\,\,\mathfrak{F}(W)\prec\mathfrak{F},\,\mathfrak{F}(\gamma(W))\prec\mathfrak{F}\},$
and so we can glue $Z_{\mathfrak{F}(W)}^{'}$ and $Z_{\mathfrak{F}(\gamma(W))}^{'}$ along
$Z_{\mathfrak{F}(W\cap \gamma(W))}^{'}.$

On the other hand, if we restrict the action of the relation $R$ on $Z_{\mathfrak{F}_{\min}}^{'}$ then this relation $R$ is reduced to the action of the $\Gamma_{\mathfrak{F}_{\min}}$ on
$Z_{\mathfrak{F}_{\min}}^{'}.$
We indeed obtain an analytic isomorphism
\begin{equation}\label{3-proof-Infity-divisor-on-toroidal-compactification}
\pi_{\mathfrak{F}_{\min}}: Z_{\mathfrak{F}_{\min}} \>\cong>>
\mathfrak{U}_{[\mathfrak{F}_{\min}]}.
\end{equation}
Therefore all $
\pi_{\mathfrak{F}}: Z_{\mathfrak{F}} \>>>
\overline{\sA}_{g,\Gamma}\,\,\forall \mathfrak{F}
$ are local isomorphism by the diagram
\ref{1-proof-Infity-divisor-on-toroidal-compactification}.

   \item[F4] Let $\mathfrak{F}$ be an arbitrary cusp. We define $$\sO(\mathfrak{F},
[\sigma^{\mathfrak{F}}]_{\mathfrak{F}}):=\pi_{\mathfrak{F}}(\sY(\mathfrak{F},
[\sigma^{\mathfrak{F}}]_{\mathfrak{F}}))=\pi_{\mathfrak{F}}^{'}(\sS(\mathfrak{F},
\sigma^{\mathfrak{F}}))\,\, \forall \sigma^{\mathfrak{F}}\in
\Sigma_{\mathfrak{F}}.$$ Since $\pi_{\mathfrak{F}}$ is a local
isomorphism,
 $\{ \sO(\mathfrak{F},
[\sigma^{\mathfrak{F}}_\alpha]_{\mathfrak{F}})\}_{\sigma^{\mathfrak{F}}_\alpha\in
\Sigma_{\mathfrak{F}}}$ is also a stratification of
$\pi_{\mathfrak{F}}(Z_{\mathfrak{F}}).$ In particular,  $\{
\sO(\mathfrak{F}_{\min},
[\sigma^{\mathfrak{F}_{\min}}_\alpha]_{\mathfrak{F}_{\min}})\}_{\sigma^{\mathfrak{F}_{\min}}_\alpha\in
\Sigma_{\mathfrak{F}_{\min}}}$ is a stratification of
$\mathfrak{U}_{\mathfrak{F}_{\min}}.$  Furthermore,  we have isomorphisms
\begin{equation}\label{4-proof-Infity-divisor-on-toroidal-compactification}
\pi_{\mathfrak{F}_{\min}}:
\sY(\mathfrak{F}_{\min},[\sigma_\alpha^{\mathfrak{F}_{\min}}]_{\mathfrak{F}_{\min}})\>\cong
>>\sO(\mathfrak{F}_{\min},[\sigma_\alpha^{\mathfrak{F}_{\min}}]_{\mathfrak{F}_{\min}})\,\,
 \forall \sigma_\alpha^{\mathfrak{F}_{\min}}\in
 \Sigma_{\mathfrak{F}_{\min}}.
\end{equation}
\end{description}

For any two cusps $\mathfrak{F}_{\min}$ and $\mathfrak{F}^{'}_{\min},$ we observe that $\mathfrak{U}_{[\mathfrak{F}_{\min }]}\neq
\mathfrak{U}_{[\mathfrak{F}^{'}_{\min }]}$ if and only if
$[\mathfrak{F}_{\min}]\neq [\mathfrak{F}^{'}_{\min}].$
Thus, the toroidal
compactification $\overline{\sA}_{g,\Gamma}$ is
covered by $[\Sp(g,\Z):\Gamma]$ different open sets, i.e.,
$\overline{\sA}_{g,\Gamma}=\bigcup\limits_{i=1}^{[\Sp(g,\Z):\Gamma]}\mathfrak{U}_{[\mathfrak{F}_{\min }^i]}$
where $\mathfrak{F}_{\min }^i $'s are minimal cusps such that $[\mathfrak{F}_{\min}^i]\neq [\mathfrak{F}^{j}_{\min}]$ if $i\neq j.$
Therefore, to study $D_\infty$ on
$\overline{\sA}_{g,\Gamma},$ it is sufficient to
study  all codimension-one strata on $\mathfrak{U}_{\mathfrak{F}_{0}}.$

We always fixed $\mathfrak{F}_{\min }^1$ as the standard minimal cusp $\mathfrak{F}_0.$ Let $\rho$ be an edge in
$\Sigma_{\mathfrak{F}_0}$ and
$\sO(\mathfrak{F}_{0},[\rho]_{\mathfrak{F}_{0}})$ the  associated stratum
in $\mathfrak{U}_{[\mathfrak{F}_{0}]}.$

\begin{myenumiii}
\item \textbf{Claim 1.} {\it Suppose that $[\mathfrak{F}_{\min}]\neq [\mathfrak{F}_0].$
That $\sO(\mathfrak{F}_{0},[\rho]_{\mathfrak{F}_{0}})\bigcap
    \mathfrak{U}_{[\mathfrak{F}_{\min}]}\neq \emptyset$ if and only if
     there is an element $\gamma\in\Gamma$  such that $$ \rho\subset
(\overline{C(\mathfrak{F}_0)}\setminus C(\mathfrak{F}_0))\bigcap
(\overline{C(\gamma\mathfrak{F}_{\min})}\setminus
C(\gamma\mathfrak{F}_{\min})).$$}

\noindent{Proof of Claim 1.}{\it  The "if" part : By the
lemma \ref{edge-to-boundary-component}, there is a cusp
$\mathfrak{F}_\rho$ of depth one
 such that $\mathfrak{F}_0\prec \mathfrak{F}_\rho$ and $\mathrm{Int}(\rho)=C(\mathfrak{F}_\rho).$
Clearly, $C(\mathfrak{F}_\rho)$ is also a rational boundary component
of $C(\gamma\mathfrak{F}_{\min}).$ Thus,
$\mathfrak{F}_0\prec\mathfrak{F}_\rho$ and
$\gamma\mathfrak{F}_{\min}\prec\mathfrak{F}_\rho.$ The gluing condition
\ref{gluing-condition}, together with
\ref{2-proof-Infity-divisor-on-toroidal-compactification} and
\ref{4-proof-Infity-divisor-on-toroidal-compactification} shows
that $\sO(\mathfrak{F}_{0},[\rho]_{\mathfrak{F}_{0}})\cap
    \mathfrak{U}_{[\mathfrak{F}_{\min}]}\neq \emptyset.$

 The "only if" part : We have a $z\in
 \sO(\mathfrak{F}_{0},[\rho]_{\mathfrak{F}_{0}})$ such that $z\in \mathfrak{U}_{[\mathfrak{F}_0]} \cap\mathfrak{U}_{[\mathfrak{F}_{\min}]}.$
By the gluing condition \ref{gluing-condition}, there exists a
$\gamma\in \Gamma,$ a cusp $\mathfrak{F}^{'}$ and a point in $x\in
Z_{\mathfrak{F}^{'}}^{'}$ such that $\mathfrak{F}_0\prec\mathfrak{F}^{'},
\gamma\mathfrak{F}_{\min}\prec\mathfrak{F}^{'}$ and
$\pi_{\mathfrak{F}^{'}}^{'}(x)=z.$ Thus the edge $\rho$ is in $
\Sigma_{\mathfrak{F}^{'}}$ by the lemma
\ref{toroidal-embedding-partial-compactification} and so $\rho\in
\Sigma_{\gamma\mathfrak{F}_{\min}}=\gamma(\Sigma_{\mathfrak{F}_{\min}}).$
The cusp $\mathfrak{F}^{'}$ can not
be $\mathfrak{F}_0$ by the  condition $[\mathfrak{F}_{\min}]\neq [\mathfrak{F}_0].$ Since $\rho\in \Sigma_{\mathfrak{F}^{'}} \subset \Sigma_{\mathfrak{F}_0}\cap \Sigma_{\gamma\mathfrak{F}_{\min}},$ we obtain
$\rho\subset(\overline{C(\mathfrak{F}_0)}\setminus C(\mathfrak{F}_0))\bigcap
(\overline{C(\gamma\mathfrak{F}_{\min})}\setminus C(\gamma\mathfrak{F}_{\min})).$}

\item We construct a global irreducible divisor $D_\rho$ in $\overline{\sA}_{g,\Gamma}$
by the edge $\rho$ as follows :
\begin{itemize}
\item Suppose that $\rho$ is in the set \rm{Interior-edge}. The claim 1 shows
that
$$\sO(\mathfrak{F}_{0},[\rho]_{\mathfrak{F}_{0}})\subset\overline{\sA}_{g,\Gamma}\setminus
\bigcup_{i=2}^{[\Sp(g,\Z):\Gamma]}\mathfrak{U}_{[\mathfrak{F}_{\min }^i]}.
$$
Let $D_\rho$ be the closure of
$\sO(\mathfrak{F}_{0},[\rho]_{\mathfrak{F}_{0}})$ in
$\overline{\sA}_{g,\Gamma}.$ $D_\rho$ is a
global divisor in $\overline{\sA}_{g,\Gamma},$
and
$$D_\rho\subset \overline{\sA}_{g,\Gamma}\setminus
\bigcup_{i=2}^{[\Sp(g,\Z):\Gamma]}\mathfrak{U}_{[\mathfrak{F}_{\min
}^i]}\subset\mathfrak{U}_{[\mathfrak{F}_{\min
}^1]}=\mathfrak{U}_{[\mathfrak{F}_{0}]}.$$

\item Suppose that $\rho$ is in the set \rm{boundary-edge}.
By the above case we also have that $D_{\rho'}\subset \mathfrak{U}_{[\mathfrak{F}_{0}]}$ if and only if $\rho^{'}$ is in the set \rm{Interior-edge}.
Thus we can rearrange the order of $\mathfrak{F}^i_{\min}$'s
and get an integer $l\geq 2$ such that
$$\left\{%
\begin{array}{ll}
  \sO(\mathfrak{F}_{0},[\rho]_{\mathfrak{F}_{0}})\cap
\mathfrak{U}_{[\mathfrak{F}_{\min }^i]} \neq \emptyset, & \hbox{for $i=1,\cdots l$;} \\
  \sO(\mathfrak{F}_{0},[\rho]_{\mathfrak{F}_{0}})\cap
\mathfrak{U}_{[\mathfrak{F}_{\min }^i]}  = \emptyset, & \hbox{for other $i.$} \\
\end{array}%
\right.    $$ Due to  the claim 1, we  let  $\rho\subset
\Sigma_{\mathfrak{F}_{\min }^i}$ and $\rho$ in
$\overline{C(\mathfrak{F}_{\min }^i)}\setminus C(\mathfrak{F}_{\min }^i)$
only for $i=1,\cdots, l.$ Then, there is a cusp $\mathfrak{F}_\rho$
of depth one such that $\rho\in \Sigma_{\mathfrak{F}_\rho}$ and $\mathfrak{F}_{\min}^{i}\prec \mathfrak{F}_\rho$ for all $i=1,\cdots, l$ by the lemma
\ref{edge-to-boundary-component}. For each integer $i$ in $[1,\cdots, l],$ the equality
\ref{2-proof-Infity-divisor-on-toroidal-compactification} says
that $\sO_\rho:=\pi_{\mathfrak{F}_{\rho}}( \sY(\mathfrak{F}_\rho,
[\rho]_{\mathfrak{F}_\rho}))$ is in the stratum
$\sO_{i}:=\sO(\mathfrak{F}_{\min}^{i}, [\rho]_{\mathfrak{F}_{\min}^{i}})$
of $\mathfrak{U}_{[\mathfrak{F}_{\min}^{i}]},$ and $\sO_\rho$ is an open subset in each $\sO_{i}$ since each $\pi_{\mathfrak{F}_\rho}$ is a local
isomorphism. We glue all
$\sO_1, \cdots, \sO_l$ together along $\sO_\rho$ to obtain an analytic
subspace $\sS_\rho.$ Therefore, the closure $D_\rho$ of $\sS_{\rho}$ is a global
divisor in $\overline{\sA}_{g,\Gamma}.$
\end{itemize}
\end{myenumiii}

Now we begin to prove the statements (1) and(2) in the theorem.

\begin{myenumi}

\item Define a set
$$\mbox{All-boundary-edge}:=\bigcup_{\rho \in\mbox{Boundary-edge}}\bigcup_{\gamma\in \Sp(g,\Z)}\gamma(\rho).$$
From the construction of the divisor by an edge in $\Sigma_{\mathfrak{F}_0},$ we immediately obtain :
\begin{eqnarray*}
   && \mbox{the number of irreducible components of $D_\infty$}\\
   &=&\# \{ \mbox{$\Gamma$-orbits in All-boundary-edge}\}+[\Sp(g,\Z): \Gamma]\times\#\{ \mbox{$\Gamma_{\mathfrak{F}_0}$-orbits in
       Interior-edge}\}\\
   &=&[\Sp(g,\Z): \Gamma]+[\Sp(g,\Z): \Gamma]\times\#\{ \mbox{$\Gamma_{\mathfrak{F}_0}$-orbits in Interior-edge}\}.
\end{eqnarray*}
The last equality is due to the lemma
\ref{edge-to-boundary-component} and the fact that every two cusps
of depth one are $\Sp(g,\Z)$-equivalent(cf.Remark(4.16) in $\S5$
\cite{Nam}).

\item Suppose that the decomposition of $\Sigma_{\mathfrak{F}_0}$
is $\Gamma$-separable.
Let $\mathfrak{F}$ be an arbitrary cusp. By the lemma \ref{lemma for non-selfintersection-2} the induced $\overline{\Gamma_{\mathfrak{F}}}$-admissible polyhedral decomposition
$\Sigma_{\mathfrak{F}}=\{\sigma^{\mathfrak{F}}\}$ of $C(\mathfrak{F})$ is $\Gamma$-separable.\\
\noindent\textbf{Claim 2.} {\it
For any $\gamma\in \Gamma_{\mathfrak{F}}$ and
nontrivial $\sigma^{\mathfrak{F}}\in\Sigma_{\mathfrak{F}},$ the following are
equivalent :
\begin{myenumiii}
\item $\overline{\sS(\mathfrak{F},
\sigma^{\mathfrak{F}})}^{\mathrm{cl}}\bigcap \overline{\sS(\mathfrak{F},
\gamma(\sigma^{\mathfrak{F}}))}^{\mathrm{cl}}\neq \emptyset,$

\item $\gamma$ acts as the identity on any cone in the set $\{\tau\in \Sigma_{\mathfrak{F}}\,\,|\,\,\tau\succeq\sigma^{\mathfrak{F}} \}.$
\end{myenumiii}}
\noindent{Proof of Claim 2.} {\it Suppose $\overline{\sS(\mathfrak{F},
\sigma^{\mathfrak{F}})}^{\mathrm{cl}}\bigcap \overline{\sS(\mathfrak{F},
\gamma(\sigma^{\mathfrak{F}}))}^{\mathrm{cl}}\neq \emptyset.$ We have
$$\overline{\sS(\mathfrak{F},\sigma^{\mathfrak{F}})}^{\mathrm{cl}}=\coprod_{\delta\in \Sigma_{\mathfrak{F}},\delta\succeq\sigma^{\mathfrak{F}}}
\sS(\mathfrak{F},\delta),\,\,\mbox{ and }\,\, \overline{\sS(\mathfrak{F},\gamma(\sigma^{\mathfrak{F}}))}^{\mathrm{cl}}=\coprod_{\delta\in \Sigma_{\mathfrak{F}},\delta\succeq\gamma(\sigma^{\mathfrak{F}})}
\sS(\mathfrak{F},\delta)$$
by the lemma \ref{toroidal-embedding-partial-compactification}.
 Because the collection $\{\sS(\mathfrak{F},\sigma)\}_{\sigma\in
\Sigma_\mathfrak{F}}$ is a stratification of $Z_{\mathfrak{F}}^{'},$ we a cone $\delta\in \Sigma_{\mathfrak{F}}$ such that  $\delta\succeq\sigma^{\mathfrak{F}}$ and $\delta\succeq\gamma(\sigma^{\mathfrak{F}}).$
Thus $\sigma^{\mathfrak{F}}\subset \delta\cap\gamma^{-1}(\delta).$ Since $\Sigma_{\mathfrak{F}}$ is $\Gamma$-separable,
$\gamma$ acts as the identity on any cone $\tau\in \Sigma_{\mathfrak{F}}$ containing $\sigma^{\mathfrak{F}}.$}

Using similar arguments in Claim 2, we also obtain :\\
\noindent\textbf{Claim 3.} {\it Let $\gamma\in \Gamma_{\mathfrak{F}}$ and let $\sigma^{\mathfrak{F}}\neq\{0\} $ be a cone in $\Sigma_{\mathfrak{F}}.$
If $\sS(\mathfrak{F},\sigma^{\mathfrak{F}})\cap S(\mathfrak{F},\gamma(\sigma^{\mathfrak{F}}))\neq\emptyset $ then
$\sS(\mathfrak{F},\sigma^{\mathfrak{F}})=S(\mathfrak{F},\gamma(\sigma^{\mathfrak{F}}))$ and
the restriction
$\Pi_{\mathfrak{F},\gamma\mathfrak{F}}^{'}|_{\sS(\mathfrak{F},\sigma^{\mathfrak{F}})}: \sS(\mathfrak{F},\sigma^{\mathfrak{F}}) \> >> S(\mathfrak{F},\gamma(\sigma^{\mathfrak{F}}))$  is just the identification on $\sS(\mathfrak{F},\sigma^{\mathfrak{F}}).$}

By the claims 2 and 3, we have
$\mathrm{pr}_{\mathfrak{F}}:\overline{\sS(\mathfrak{F},\sigma^{\mathfrak{F}})}^{\mathrm{cl}}\>\cong >>
\overline{\sY(\mathfrak{F},[\sigma^{\mathfrak{F}}]_{\mathfrak{F}})}^{\mathrm{cl}}\,\,\, \forall \{0\}\neq\sigma^{\mathfrak{F}}\in \Sigma_{\mathfrak{F}}.$
Moreover, The statement(1) of the corollary \ref{divisor-cone} guarantees that
$\overline{\sY(\mathfrak{F},[\sigma^{\mathfrak{F}}]_{\mathfrak{F}})}^{\mathrm{cl}}$
has only normal singularities for any nonzero cone $\sigma^{\mathfrak{F}}\in
\Sigma_{\mathfrak{F}}.$
By the symmetry, we only need to consider singularities in the
open set $\mathfrak{U}_{[\mathfrak{F}_0]}.$
Since $\pi_{\mathfrak{F}_{0}}: Z_{\mathfrak{F}_{0}} \>\cong>>
\mathfrak{U}_{[\mathfrak{F}_{0}]}$ is an isomorphism, there is an isomorphism
$
\pi_{\mathfrak{F}_{0}}:
\overline{\sY(\mathfrak{F}_{0},[\sigma_\alpha^{\mathfrak{F}_{0}}]_{\mathfrak{F}_{0}})}^{\mathrm{cl}}\>\cong
>>\overline{\sO(\mathfrak{F}_{0},[\sigma_\alpha^{\mathfrak{F}_{0}}]_{\mathfrak{F}_{0}})}^{\mathrm{cl}}
$
for any cone $\sigma_\alpha^{\mathfrak{F}_{0}}\in
 \Sigma_{\mathfrak{F}_{0}},$ where $\overline{\sO(\mathfrak{F}_{0},[\sigma_\alpha^{\mathfrak{F}_{0}}]_{\mathfrak{F}_{0}})}^{\mathrm{cl}}$
is the closure of $\sO(\mathfrak{F}_{0},[\sigma_\alpha^{\mathfrak{F}_{0}}]_{\mathfrak{F}_{0}})$
in $\mathfrak{U}_{[\mathfrak{F}_0]}.$

For each edge $\rho$ in $\Sigma_{\mathfrak{F}_0},$ the global divisor $D_\rho$ has that
$$D_\rho\cap \mathfrak{U}_{[\mathfrak{F}_0]}= \overline{\sO(\mathfrak{F}_{0},[\rho]_{\mathfrak{F}_{0}})}^{\mathrm{cl}}\cong \overline{\sY(\mathfrak{F}_{0},[\rho]_{\mathfrak{F}_{0}})}^{\mathrm{cl}}.$$
Thus $D_\rho$ is a normal variety. In particular $D_\rho$ has non self-intersections.

Since $\Sigma_{\mathfrak{F}_0}$ is  regular with respect to $\Gamma,$
    the statement (2) of the
    corollary \ref{divisor-cone} guarantees that
    $Z_{\mathfrak{F}_0}^{'}=\overline{\sS(\mathfrak{F},\{0\})}^{\mathrm{cl}}$
    is smooth and $
\overline{\sO(\mathfrak{F},[\sigma^{\mathfrak{F}}]_{\mathfrak{F}})}^{\mathrm{cl}}(
\cong \overline{\sS(\mathfrak{F},\sigma^{\mathfrak{F}})}^{\mathrm{cl}})$
is also smooth for any $\sigma^{\mathfrak{F}}\in \Sigma_{\mathfrak{F}}$
with $\sigma^{\mathfrak{F}}\neq \{0\}.$ Again by the statement (2) of the
 corollary \ref{divisor-cone}, we obtain that the irreducible
components of $D_\infty$ intersect transversely.

Now we suppose that $\Gamma$ is neat.
The fundamental group of $A_{g,\Gamma}$ is then
isomorphic to $\Gamma,$ and so
$\overline{\Gamma_{\mathfrak{F}}}/U^{\mathfrak{F}}\cap \Gamma$ acts freely
on $Z_{\mathfrak{F}}^{'}$ for any cusp $\mathfrak{F},$ thus the morphism
$\pi_{\mathfrak{F}}^{'}: Z_{\mathfrak{F}}^{'}\to
\overline{\sA}_{g,\Gamma}$ is \'etale.
Therefore, $\mathfrak{U}_{[\mathfrak{F}_0]}$ is smooth since that $Z_{\mathfrak{F}_0}^{'}$ is smooth.
\end{myenumi}
\end{proof}
\begin{myrem}Assume the condition that $\Gamma$ is neat and the decomposition $\Sigma_{\mathfrak{F}_0}$
is $\Gamma$-separable.
When an edge $\rho\in \Sigma_{\mathfrak{F}_0}$ is exactly in the set Boundary-edge, by using the argument in Theorem 2.2 of \cite{Wang93} we can assert that the associated
irreducible boundary divisor $D_\rho \subset \overline{\sA}_{g,\Gamma}\setminus\sA_{g,\Gamma}$ is actually a smooth toroidal compactification $\overline{X}$ of
some locally symmetric variety $X.$  The variety $X$ actually has a direct factor like a  Siegel variety $\sA_{g-1,\Gamma^{'}}$ for some arithmetic subgroup $\Gamma^{'}\subset \Sp(g-1,\Z)$ induced by $\Gamma.$ The toroidal compactification $\overline{X}$ is then constructed by a $\Gamma$-admissible family which is induced by the decomposition $\Sigma_{\mathfrak{F}_0}.$ Moreover, if two edges $\rho_2,\rho_2$ in $\Sigma_{\mathfrak{F}_0}$ are both in the set Boundary-edge then  $D_{\rho_1}\cong D_{\rho_2}.$\\
\end{myrem}

\begin{definition}\label{geometrically-fine-compactification}
Let $\Gamma\subset \Sp(g,\Z)$ be an arithmetic subgroup.
Let $\Sigma_{\mathfrak{F}_0}:=\{\sigma_\alpha^{\mathfrak{F}_0}\}$ be an
 $\overline{\Gamma_{\mathfrak{F}_0}}$(or $\mathrm{GL}(g,\Z)$)-admissible polyhedral decomposition of $C(\mathfrak{F}_0),$ and let  $\overline{\sA}_{g,\Gamma}$ be the symmetric toroidal
compactification of $\sA_{g,\Gamma}$ constructed by $\Sigma_{\mathfrak{F}_0}.$

With respect to the open morphism
$\pi_{\mathfrak{F}_0}^{'}:Z_{\mathfrak{F}_0}^{'} \to
\overline{\sA}_{g,\Gamma},$ we define :
\begin{myenumi}
\item A top-dimensional cone $\sigma_{\max}$ in $\Sigma_{\mathfrak{F}_0}$ is said to be \textbf{$\Gamma$-fine} if
the restriction $\pi_{\mathfrak{F}_0}^{'}|_{B_\rho}$ is an isomorphism onto its image for every edge $\rho$ of itself,
where $B_\rho$ is the divisor constructed by $\rho$ on $Z_{\mathfrak{F}_0}^{'}.$

\item The constructed symmetric toroidal compactification  $\overline{\sA}_{g,\Gamma}$ of $\sA_{g,\Gamma}$ is called
\textbf{geometrically $\Gamma$-fine} if the following condition is satisfied :
The restriction $\pi_{\mathfrak{F}_0}^{'}|_{B_\rho}$ is an isomorphism onto its image,
where $B_\rho$ is the  divisor on $Z_{\mathfrak{F}_0}^{'}$ constructed by $\rho,$ $\rho$ running over the edges of $\Sigma_{\mathfrak{F}_0}.$
\end{myenumi}
\end{definition}
The proof of the theorem \ref{Infity-divisor-on-toroidal-compactification} tells us that a  $\Gamma$-separable decomposition $\Sigma_{\mathfrak{F}_0}$
will induce a geometrically $\Gamma$-fine symmetric toroidal compactification  $\overline{\sA}_{g,\Gamma}$ of $\sA_{g,\Gamma}.$
\begin{theorem}\label{geometrically-fine-1-1-to non-self-intersections}
Let $\Gamma\subset \Sp(g,\Z)$ be a neat arithmetic subgroup and
let $\Sigma_{\mathfrak{F}_0}$ be a
 $\overline{\Gamma_{\mathfrak{F}_0}}$(or $\mathrm{GL}(g,\Z)$)-admissible polyhedral decomposition of $C(\mathfrak{F}_0).$
Let $\overline{\sA}_{g,\Gamma}$  be the toroidal compactification of $\sA_{g,\Gamma}$ constructed by $\Sigma_{\mathfrak{F}_0}.$

Assume that the decomposition $\Sigma_{\mathfrak{F}_0}$ is regular with respect to $\Gamma.$
The following four conditions are equivalent :
\begin{myenumiii}
\item Every irreducible component of $D_{\infty}= \overline{\sA}_{g,\Gamma}\setminus\sA_{g,\Gamma}$ has non self-intersections;
\item The compactification $\overline{\sA}_{g,\Gamma}$ is  geometrically $\Gamma$-fine;
\item the decomposition $\Sigma_{\mathfrak{F}_0}$
is $\Gamma$-separable;
\item The infinity boundary divisor $D_{\infty}= \overline{\sA}_{g,\Gamma}\setminus\sA_{g,\Gamma}$ is simple normal crossing.
\end{myenumiii}
\end{theorem}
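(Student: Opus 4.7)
The plan is to establish the cycle of implications (iv) $\Rightarrow$ (i) $\Rightarrow$ (iii) $\Rightarrow$ (iv) and separately verify (iii) $\Leftrightarrow$ (ii), so that all four conditions become equivalent. Two of these implications are essentially already in hand: (iv) $\Rightarrow$ (i) is immediate since simple normal crossing entails smoothness of components and hence absence of self-intersections, and (iii) $\Rightarrow$ (iv) is precisely Theorem \ref{Infity-divisor-on-toroidal-compactification}(2). The implication (iii) $\Rightarrow$ (ii) is the remark following Definition \ref{geometrically-fine-compactification}; the verification amounts to noting that under $\Gamma$-separability, two distinct points in $B_\rho \subset Z_{\mathfrak{F}_0}^{'}$ identified by $\pi_{\mathfrak{F}_0}^{'}$ must come from the action of some $\gamma \in \overline{\Gamma_{\mathfrak{F}_0}}$ preserving $\rho$, but then $\Gamma$-separability forces $\gamma$ to fix $\rho$ pointwise and (using Lemma \ref{lemma for non-selfintersection-1}) any cone containing it, yielding injectivity.

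The main new content is the implication (i) $\Rightarrow$ (iii), which I would prove by contraposition. Assume $\Sigma_{\mathfrak{F}_0}$ is not $\Gamma$-separable. Since every cone of $\Sigma_{\mathfrak{F}_0}$ is a face of some top-dimensional regular cone, one can choose a top-dimensional $\sigma_{\max} \in \Sigma_{\mathfrak{F}_0}$ and some $\gamma \in \overline{\Gamma_{\mathfrak{F}_0}}$ with $\gamma(\sigma_{\max}) \cap \sigma_{\max} \neq \{0\}$ such that $\gamma$ does not act as the identity on $\sigma_{\max}$. Because the edges of $\sigma_{\max}$ form a $\Z$-basis of $\Gamma \cap U^{\mathfrak{F}_0}(\Z)$, the failure of $\gamma$ to act trivially produces two distinct edges $\rho_1 \neq \rho_2$ of $\sigma_{\max}$ with $\rho_2 = \gamma(\rho_1)$. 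In $Z_{\mathfrak{F}_0}^{'}$ this yields two distinct irreducible components $B_{\rho_1}$ and $B_{\rho_2}$ lying in the closure of the open stratum attached to $\sigma_{\max}$, which intersect transversely there; under the projection to $\mathfrak{U}_{[\mathfrak{F}_0]}$ they become identified with the single divisor $D_\rho$ (since $\rho_1, \rho_2$ are in the same $\overline{\Gamma_{\mathfrak{F}_0}}$-orbit), so $D_\rho$ acquires a self-intersection locally on $\pi_{\mathfrak{F}_0}^{'}(\sS(\mathfrak{F}_0, \sigma_{\max}))$, contradicting (i).

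For (ii) $\Rightarrow$ (iii) I would argue along parallel lines: if $\Sigma_{\mathfrak{F}_0}$ is not $\Gamma$-separable, then the pair $\rho_1 \neq \rho_2 = \gamma(\rho_1)$ produced above is also a witness to failure of geometric fineness. Indeed, $B_{\rho_1}$ and $B_{\rho_2}$ are distinct divisors on $Z_{\mathfrak{F}_0}^{'}$ with non-empty images meeting in $\pi_{\mathfrak{F}_0}^{'}(\sS(\mathfrak{F}_0, \sigma_{\max}))$; working in the local coordinate description of Example \ref{local-coordinate-system}, this identification produces two distinct points of $B_{\rho_1}$ having the same image under $\pi_{\mathfrak{F}_0}^{'}$, so $\pi_{\mathfrak{F}_0}^{'}|_{B_{\rho_1}}$ is not an isomorphism onto its image.

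The hardest step will be (i) $\Rightarrow$ (iii): one must carefully extract, from a mere set-theoretic self-intersection of one of the $D_\rho$, a combinatorial witness $\gamma$ violating $\Gamma$-separability on the chosen top-dimensional cone. The difficulty lies in ensuring that the two branches of the self-intersection are genuinely distinct irreducible components of the preimage in $Z_{\mathfrak{F}_0}^{'}$ (rather than artifacts of the gluing across different cusps), which is where the reduction to the open set $\mathfrak{U}_{[\mathfrak{F}_0]}$ via (F1)--(F4) of the proof of Theorem \ref{Infity-divisor-on-toroidal-compactification} becomes essential. Regularity of $\Sigma_{\mathfrak{F}_0}$ with respect to $\Gamma$, combined with neatness of $\Gamma$ and Lemma \ref{lemma for non-selfintersection-1}, should make this reduction clean.
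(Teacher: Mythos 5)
Your overall logical scheme (the cycle (iv)$\Rightarrow$(i)$\Rightarrow$(iii)$\Rightarrow$(iv) together with (iii)$\Leftrightarrow$(ii)) is fine, and the easy implications are handled correctly. The gap is in the key step of both (i)$\Rightarrow$(iii) and (ii)$\Rightarrow$(iii): you assert that if $\gamma\in\overline{\Gamma_{\mathfrak{F}_0}}$ satisfies $\gamma(\sigma_{\max})\cap\sigma_{\max}\neq\{0\}$ but does not act as the identity on $\sigma_{\max}$, then there exist two \emph{distinct} edges $\rho_1\neq\rho_2$ of $\sigma_{\max}$ with $\rho_2=\gamma(\rho_1)$. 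This is unjustified and false in general. Set $\tau:=\gamma(\sigma_{\max})\cap\sigma_{\max}$, a common face; every edge $\rho$ of $\tau$ is an edge of both cones, so $\gamma^{-1}(\rho)$ is an edge of $\sigma_{\max}$, but nothing forces $\gamma^{-1}(\rho)\neq\rho$. It can happen that $\gamma$ fixes every edge of $\tau$ (hence acts as the identity on $\tau$) while $\gamma(\sigma_{\max})\neq\sigma_{\max}$, i.e.\ $\gamma$ moves the remaining edges of $\sigma_{\max}$ entirely out of $\sigma_{\max}$; in that configuration your witness pair of edges of a single top-dimensional cone simply does not exist, and the ``two distinct components upstairs crossing transversely'' mechanism you invoke is unavailable.

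This missing case is exactly what the paper's proof treats separately: after its Claim (*) (which is your argument, forcing $\gamma(\rho)=\rho$ for edges of $\tau$ on pain of a transverse crossing of $B_\rho$ and $B_{\gamma^{-1}(\rho)}$ contradicting fineness), the paper's Claim (**) shows that if $\gamma$ fixes $\tau$ pointwise but $\tau\neq\sigma$, then for an edge $\delta$ of $\tau$ the two \emph{disjoint} strata $\sO^{\sigma}$ and $\sO^{\gamma(\sigma)}$ both lie in the single divisor $B_\delta$ and have the same image under $\pi'_{\mathfrak{F}_0}$; hence $\pi'_{\mathfrak{F}_0}|_{B_\delta}$ is not injective (so (ii) fails) and the image of $B_\delta$ folds onto itself (so (i) fails). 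Here the obstruction comes from one boundary component upstairs being glued to itself across two different cones sharing the face $\tau$, not from two different components meeting. To repair your proof you must add this second argument; without it, contraposition only rules out the ``edge-swapping'' violations of $\Gamma$-separability and leaves the ``face-fixing'' violations untouched, so (i)$\Rightarrow$(iii) and (ii)$\Rightarrow$(iii) are not established.
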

\begin{proof}
By the corollary \ref{divisor-cone}, that (i)$\Longleftrightarrow$(iv) is obviously true.
The proof of the theorem \ref{Infity-divisor-on-toroidal-compactification} actually shows that (i)$\Longleftrightarrow$(ii) and
(iii)$\Longrightarrow$(ii).

We now begin to show that (ii)$\Longrightarrow$(iii).

Since $\mathfrak{F}_0$ is a minimal cusp of $\mathfrak{H}_g,$
$Z_{\mathfrak{F}_0}^{'}$ is isomorphic to the toroidal variety $X_{\mathfrak{F}_0}.$
Let $\sigma$ be an arbitrary cone in $\Sigma_{\mathfrak{F}_0}.$ Suppose that $\gamma\in \overline{\Gamma_{\mathfrak{F}}}$ is an element such that
$\gamma(\sigma)\cap \sigma\neq \{0\}.$

We know that $\tau:=\gamma(\sigma)\cap \sigma$ is a face of $\sigma.$ Let $\rho$ be an arbitrary edge of $\tau.$ Then, $\rho_1:=\gamma^{-1}(\rho)$ is also an edge of $\sigma.$ Let $B_\rho$(resp. $B_{\rho_1}$) be the divisor on $Z_{\mathfrak{F}_0}^{'}$ corresponding to the edge $\rho$(resp. $\rho_1$).
We must have $\pi_{\mathfrak{F}_0}^{'}(B_{\rho})=\pi_{\mathfrak{F}_0}^{'}(B_{\rho_1}).$

\noindent{\bf Claim}(*) $\gamma(\rho)=\rho$ : {\it Otherwise, there is a top-dimensional cone $\sigma_{\max}\in \Sigma_{\mathfrak{F}_0}$ containing $\sigma$ and so $B_{\rho}$ intersects with $B_{\rho_1}$ transversely by the corollary \ref{divisor-cone}. It contradicts the condition that $\overline{\sA}_{g,\Gamma}$ is  geometrically $\Gamma$-fine.}

Since $\gamma$ is an automorphism of the lattice $\Gamma\cap U^{\mathfrak{F}_0}(\Z),$ $\gamma$ acts  as the identity on the edge $\rho$ so that
$\gamma$ acts as the identity on the cone $\tau.$

\noindent{\bf Claim }(**) $\tau=\sigma$ : {\it Otherwise, $\gamma(\sigma)$ and $\sigma$ are two different cone in  $\Sigma_{\mathfrak{F}_0}.$
Let $\sO^{\sigma}$(resp. $\sO^{\gamma(\sigma)}$) be the orbit in  $Z_{\mathfrak{F}_0}^{'}$ corresponding to the cone $\sigma$(resp. $\gamma(\sigma)$).
Let $\delta$ be an edge of $\tau.$ Then $\sO^{\sigma}\cup \sO^{\gamma(\sigma)}\subset B_{\delta},\,\, \, \sO^{\sigma}\cap \sO^{\gamma(\sigma)}=\emptyset.$
On the other hand, we also have $\pi_{\mathfrak{F}_0}^{'}(\sO^{\sigma})=\pi_{\mathfrak{F}_0}^{'}(\sO^{\gamma(\sigma)}),$
so that the image $\pi_{\mathfrak{F}_0}^{'}(B_{\delta})$ must have self-intersections. It is a contradiction.}

Therefore the $\gamma$  acts as the identity on the cone $\sigma.$
\end{proof}

\begin{example}[Central cone decomposition]\label{central-cone-decomposition}
In \cite{Igu67} and \cite{Nam}, Igusa and Namikawa introduce a projective $\GL(g,\Z)$-admissible rational polyhedral
decomposition $\Sigma_{\mathrm{cent}}$(\textbf{central cone
decomposition}) of $C(\mathfrak{F}_0)$
containing \textbf{principal cone}
$$\sigma_0:=\{X=(x_{ij})\in\mathrm{Sym}_g(\R)\,\,|\,\, x_{ij}\leq0(i\neq j), \sum\limits_{j=1}^gx_{ij}\geq 0(\forall i)\},$$
which is top-dimensional regular cone
with respect to the lattice basis of $U^{\mathfrak{F}_0}(\Z).$
If $g\leq 3$ then the following  properties are satisfied:
\begin{itemize}
  \item That $\Sigma_{\mathrm{cent}}$ is  regular with respect to $\Sp(g,Z),$ and all edges of top-dimensional cones in the decomposition
  $\Sigma_{\mathrm{cent}}$ are on the boundary of $C(\mathfrak{F}_0);$
  \item  the principal cone $\sigma_0$ is the unique maximal cone in $\Sigma_{\mathrm{cent}}$ up to $\GL(g,\Z).$
\end{itemize}
Therefore, we obtain that {\it if the genus $g\leq 3$ then  the central cone decomposition $\Sigma_{\mathrm{cent}}$ can not be
$\Sp(g,\Z)$-separable so that the boundary divisor of the induced toroidal compactification is not normal crossing.}
\end{example}

\begin{corollary}\label{intersection-theory-boundary-divisor}
Let $\Gamma\subset \Sp(g,\Z)$ be a neat arithmetic subgroup and
let $\overline{\sA}_{g,\Gamma}$ be a geometrically $\Gamma$-fine  toroidal compactification of the Siegel variety $\sA_{g,\Gamma}:=\mathfrak{H}_g/\Gamma$ constructed by  a
 $\overline{\Gamma_{\mathfrak{F}_0}}$(or $\mathrm{GL}(g,\Z)$)-admissible polyhedral decomposition $\Sigma_{\mathfrak{F}_0}:=\{\sigma_\alpha^{\mathfrak{F}_0}\}$ of $C(\mathfrak{F}_0)$ regular with respect to $\Gamma,$  where $\mathfrak{F}_0$ is the standard minimal cusp of $\mathfrak{H}_g.$

Let $D_{1},\cdots, D_{d}$ be $d$ different irreducible components of the simple normal crossing boundary divisor $D_\infty=\overline{\sA}_{g,\Gamma}\setminus\sA_{g,\Gamma}.$
We have :
\begin{myenumi}
\item That $D_{1}\cap \cdots\cap D_{d}\neq \emptyset $
if and only if that $d\leq \dim_\C \sA_{g,\Gamma}$ and
there exists a minimal cusp $\mathfrak{F}_{\min}$ of $\mathfrak{H}_g$ and a top-dimensional cone $\sigma_{\max}$ in $\Sigma_{\mathfrak{F}_{\min}}$ with $d$ differential edges $\rho_i\,\, i=1,\cdots, d$ such that
$D_{i}=D_{\rho_i}\,\, i=1,\cdots, d,$
where  $D_\rho$ is the divisor constructed by an edge $\rho.$
\item Assume that $d=\dim_\C \sA_{g,\Gamma}.$ There are only two cases:
\begin{itemize}
  \item $D_{1}\cap \cdots\cap D_{d}=\emptyset$ and so $D_{1}\cdot D_{2}\cdots D_{d}=0.$
  \item $D_{1}\cap \cdots\cap D_{d}\neq \emptyset$ and the intersection number $D_{1}\cdot D_{2}\cdots D_{d}=1.$
\end{itemize}
\end{myenumi}
\end{corollary}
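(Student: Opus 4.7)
The plan is to reduce the global intersection-theoretic question to the local picture on a single toric chart, where the orbit-cone correspondence of Proposition \ref{kkms-orbit-cone-correspondence} gives the answer directly.

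First I will use the open covering $\overline{\sA}_{g,\Gamma}=\bigcup_{[\mathfrak{F}_{\min}]}\mathfrak{U}_{[\mathfrak{F}_{\min}]}$ established in the proof of Theorem \ref{Infity-divisor-on-toroidal-compactification}, where $\mathfrak{F}_{\min}$ ranges over a system of representatives of $\Gamma$-orbits of minimal cusps. Since each $D_i$ is irreducible and closed, if $D_1\cap\cdots\cap D_d\neq\emptyset$ then by choosing a point in the intersection and the open set $\mathfrak{U}_{[\mathfrak{F}_{\min}]}$ containing it, every $D_i$ must meet this single chart. Using property F3 from that same proof, $\pi_{\mathfrak{F}_{\min}}:Z_{\mathfrak{F}_{\min}}\to\mathfrak{U}_{[\mathfrak{F}_{\min}]}$ is an analytic isomorphism, so the intersection question on $\mathfrak{U}_{[\mathfrak{F}_{\min}]}$ transfers to the same question on the toric model $X_{\Sigma_{\mathfrak{F}_{\min}}}\cong Z_{\mathfrak{F}_{\min}}^{'}$ (note $\mathfrak{F}_{\min}$ being minimal forces $v^{\mathfrak{F}_{\min}}_\R=0$ and trivial base, so $Z_{\mathfrak{F}_{\min}}^{'}$ is literally the toric variety $X_{\Sigma_{\mathfrak{F}_{\min}}}$).

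Next, I will use the hypothesis of geometric $\Gamma$-fineness together with Theorem \ref{geometrically-fine-1-1-to non-self-intersections}: each global component $D_i$ meeting $\mathfrak{U}_{[\mathfrak{F}_{\min}]}$ restricts to a unique irreducible toric divisor $D_{\rho_i}\subset X_{\Sigma_{\mathfrak{F}_{\min}}}$ associated with an edge $\rho_i\in\Sigma_{\mathfrak{F}_{\min}}(1)$, and distinct $D_i$'s correspond to distinct edges $\rho_i$ (otherwise $\pi_{\mathfrak{F}_0}^{'}$ restricted to a boundary divisor would fail to be an isomorphism onto its image, contradicting Definition \ref{geometrically-fine-compactification}). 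By the orbit-cone correspondence, $D_{\rho_1}\cap\cdots\cap D_{\rho_d}=\bigcap_{i=1}^d\overline{\sO^{\rho_i}}^{\mathrm{cl}}\neq\emptyset$ precisely when the edges $\rho_1,\ldots,\rho_d$ are the edges of a common cone $\sigma\in\Sigma_{\mathfrak{F}_{\min}}$, which by regularity of the fan forces $d=\dim\sigma\leq\dim C(\mathfrak{F}_{\min})=g(g+1)/2=\dim_\C\sA_{g,\Gamma}$. Extending to a top-dimensional cone $\sigma_{\max}\supseteq\sigma$ (which exists since $\overline{C(\mathfrak{F}_{\min})}^{\mathrm{rc}}$ is covered by such cones) yields the forward implication of (1); the reverse implication is immediate because the edges of $\sigma_{\max}$ themselves span $\sigma_{\max}$, whose associated orbit is nonempty.

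For part (2), when $d=\dim_\C\sA_{g,\Gamma}$ and the intersection is nonempty, the cone $\sigma$ produced above must be top-dimensional with exactly $d$ edges, hence equals $\sigma_{\max}$. Regularity of $\Sigma_{\mathfrak{F}_{\min}}$ with respect to $\Gamma\cap U^{\mathfrak{F}_{\min}}(\Z)$ means $\sigma_{\max}$ is generated by a $\Z$-basis of the lattice, so in an affine chart $X_{\sigma_{\max}}\cong\C^{g(g+1)/2}$ the divisors $D_{\rho_i}$ are the coordinate hyperplanes, intersecting transversely at the single torus-fixed point $\overline{\sO^{\sigma_{\max}}}^{\mathrm{cl}}=\{\mathrm{pt}\}$; by the isomorphism $\pi_{\mathfrak{F}_{\min}}$, this remains a single transverse intersection point on $\overline{\sA}_{g,\Gamma}$, giving intersection number one. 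The main obstacle is the step of ruling out that two globally distinct components $D_i,D_j$ collapse to the same toric edge in $\Sigma_{\mathfrak{F}_{\min}}$; this is handled precisely by geometric $\Gamma$-fineness via Theorem \ref{geometrically-fine-1-1-to non-self-intersections}, and also ensures that no additional contributions arise from other charts $\mathfrak{U}_{[\mathfrak{F}'_{\min}]}$ since, by the argument of Claim 1 in the proof of Theorem \ref{Infity-divisor-on-toroidal-compactification}, if the intersection is nonempty in one chart then the combinatorial data forces the entire intersection to land in that chart up to the $\Gamma$-identifications already absorbed into the statement.
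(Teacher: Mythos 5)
Your proposal is correct and takes essentially the same route the paper intends: the paper's own proof is just the one-line citation of toric intersection theory, and your reduction to the charts $\mathfrak{U}_{[\mathfrak{F}_{\min}]}\cong Z_{\mathfrak{F}_{\min}}$ via F3 and geometric $\Gamma$-fineness, followed by the orbit--cone correspondence and the regularity of $\sigma_{\max}$ to get transversality and the torus-fixed point, is exactly the expansion of that citation. The one step you state loosely in part (2) -- that for $d=\dim_\C\sA_{g,\Gamma}$ the set-theoretic intersection is a \emph{single} point, i.e.\ that inequivalent top-dimensional cone classes cannot give rise to the same $d$-tuple of global boundary divisors -- is likewise left implicit in the paper, so your argument matches its level of detail.
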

\begin{proof}
It is straightforward by the intersection theory on toric geometry(cf.\cite{Ful}).
\end{proof}



\section{Volume forms related to compactifications and constrained conditions of decompositions of cones from the viewpoint of K\"ahler-Einstein metric}

\vspace{0.5cm}

We still denote $\mathfrak{F}_k$ the cusp $\mathfrak{F}(V^{(g-k)})$ of $\mathfrak{H}_g$ for $1\leq k\leq g.$
We take coordinate system $\tau=(\tau_{ij})_{1\leq i,j\leq g}\in\mathfrak{H}_g$ of
the Siegel space
$\mathfrak{H}_g=\{\tau\in M_{g}(\C)\,\,|\,\, \tau=^t\tau, \mathrm{Im}(\tau)>0 \}.$
The Bergman metric on $\mathfrak{H}_g$
is $$d s^2=\sum_{1\leq i\leq j\leq g, 1\leq k\leq l\leq g}g_{ij, \overline{kl}}d \tau_{ij}d \overline{\tau_{kl}}:=\mathrm{Tr}(d\tau \mathrm{Im}(\tau)^{-1}d\overline{\tau} \mathrm{Im}(\tau)^{-1})$$
and its K\"ahler form is $\omega_{\mathrm{can}}=\frac{\sqrt{-1}}{2}\sum\limits_{1\leq i\leq j\leq g, 1\leq k\leq l\leq g}g_{ij, \overline{kl}}d \tau_{ij}\wedge d \overline{\tau_{kl}}.$
Then the volume form is
\begin{eqnarray*}
  \frac{1}{(\frac{g(g+1)}{2})!}\omega_{\mathrm{can}}^{\frac{g(g+1)}{2}} &=&(\frac{\sqrt{-1}}{2})^{\frac{g(g+1)}{2}}\det(g_{ij, \overline{kl}})dV_g  \\
   &=&(\frac{\sqrt{-1}}{2})^{\frac{g(g+1)}{2}}\frac{dV_g}{(\det \mathrm{Im}(\tau))^{g+1}}=:\Phi_g(\tau),
\end{eqnarray*}
where
\begin{eqnarray*}
  dV_g(\tau) &:=& 2^{\frac{g(g-1)}{2}}\bigwedge\limits_{1\leq i\leq j\leq g}d \tau_{ij}\wedge d \overline{\tau_{ij}} \\
   &=&  (-1)^{\frac{(g-1)g(g+1)(g+2)}{8}}2^{\frac{g(g-1)}{2}}(\bigwedge\limits_{1\leq i\leq j\leq g}d \tau_{ij})\wedge(\bigwedge\limits_{1\leq i\leq j\leq g}d \overline{\tau_{ij}})
\end{eqnarray*}
(cf.\cite{Sie43}). The Bergman metric is K\"ahler-Einstein, i.e.,
\begin{equation}\label{KE}
    \sqrt{-1}\partial\overline{\partial}\log \det(g_{ij, \overline{kl}})=-\sqrt{-1}\partial\overline{\partial}\log(\det \mathrm{Im}(\tau))^{g+1}=\frac{g+1}{2}\omega_{\mathrm{can}}.
\end{equation}

Let $\Gamma\subset \Sp(g,\Z)$ be a neat arithmetic subgroup.
Since($\mathfrak{H}_g,$ $ds^2$) is $\Sp(g,\R)$-invariant, it induces a canonical metric on the smooth Siegel variety  $\sA_{g,\Gamma}=\mathfrak{H}_g/\Gamma.$ The canonical metric is a complete K\"ahler-Einstein metric with negative Ricci curvature. The volume form $\Phi_g$ is also $\Sp(g,\R)$-invariant, and so we have an  induced volume form $\Phi_{g,\Gamma}$ on $\sA_{g,\Gamma}.$ It is known that $\Phi_{g,\Gamma}$ is singular  at the boundary divisor $D_{\infty}:=\overline{\sA}_{g,\Gamma}\setminus\sA_{g,\Gamma}$ for any smooth toroidal compactification $\overline{\sA}_{g,\Gamma}.$

\subsection{Volume forms of the Siegel space $\mathfrak{H}_g$ associated to cusps}  Associated to a cusp $\mathfrak{F}_{g-k},$  We  can write  the volume form $\Phi_g$ in the coordinate system explicitly.

Now we identify the Siegel Space $\mathfrak{H}_g$ with $\mathfrak{S}_g$ as in the proposition \ref{Borel-embedding}.
According to the embedding
$$\varphi : u^{\mathfrak{F}_{g-k}}_\C\times v^{\mathfrak{F}_{g-k}}_\R\times \mathfrak{F}_{g-k}\>\cong>> D(\mathfrak{F}_{g-k})\,\,\,(u_1+\sqrt{-1}u_2, v,F)\mapsto \exp(u_1+\sqrt{-1}u_2)\exp(v)(\check{F})$$
and the isomorphism
$ \varphi :(u^{\mathfrak{F}_{g-k}}+\sqrt{-1}C(\mathfrak{F}_{g-k}))\times v^{\mathfrak{F}_{g-k}}_\R\times \mathfrak{F}_{g-k}\>\cong>> \mathfrak{S}_g$
in the proposition \ref{embedding of the third type},we obtain that
\begin{eqnarray*}
 \mathfrak{S}_g  &=& \big\{ \left(
   \begin{array}{cccc}
     I_{g-k} & 0     & 0     & 0 \\
     0       & I_{k} & 0     & Z \\
     0       & 0     &I_{g-k}& 0 \\
     0       & 0     & 0     & I_{k}\\
   \end{array}
 \right)\cdot \left(
   \begin{array}{cccc}
     I_{g-k} & 0     & 0     & A \\
     -^tB     & I_{k} & ^tA  & \frac{^tAB-  ^tBA}{2} \\
     0       & 0     &I_{g-k}& B \\
     0       & 0     & 0     & I_{k}\\
   \end{array} \right)\check{F}\,\,\\
   &  &\,\,\,\, | \,\,Z=X+\sqrt{-1}Y\in(\mathrm{Sym}_k(\R)+\sqrt{-1}\mathrm{Sym}^{+}_k(\R)), A+\sqrt{-1}B\in M_{g-k,k}(\C), \check{F}\in \mathfrak{F}^\vee_k \big\}
\end{eqnarray*}
by the corollary \ref{Siegel domain of the third type}, where $\mathrm{Sym}^{+}_k(\R)=\{Y\in \mathrm{Sym}_k(\R)\,\,|\,\, Y>0\}.$
Suppose
$\tau^{'}\in\mathfrak{H}_{g-k} \mbox{ and }Z=X+\sqrt{-1}Y\in(\mathrm{Sym}_k(\R)+\sqrt{-1}\mathrm{Sym}^{+}_k(\R))$(=$\mathfrak{H}_k$) now.
We note that each $\check{F}\in \mathfrak{F}^\vee_k $ can be written as :
\begin{equation*}
(!)\hspace{0.5cm} \,\,  F_{\tau^{'}}=\mbox{ subspace of $V_\C$ spanned by the column vectors of }
  \left(
  \begin{array}{cc}
    \tau^{'} & 0 \\
    0 &  0_{k} \\
    I_{g-k} & 0 \\
    0 & I_k \\
  \end{array}
\right)  \,\,\mbox{ for } \tau^{'}\in \mathfrak{H}_{g-k}.
\end{equation*}
Thus, we get
\begin{eqnarray*}
\mathfrak{S}_g   &\ni& \left(
   \begin{array}{cccc}
     I_{g-k} & 0     & 0     & 0 \\
     0       & I_{k} & 0     & Z \\
     0       & 0     &I_{g-k}& 0 \\
     0       & 0     & 0     & I_{k}\\
   \end{array}
 \right)\cdot \left(
   \begin{array}{cccc}
     I_{g-k} & 0     & 0     & A \\
     -^tB     & I_{k} & ^tA  & \frac{^tAB-^tBA}{2} \\
     0       & 0     &I_{g-k}& B \\
     0       & 0     & 0     & I_{k}\\
   \end{array} \right)F_{\tau^{'}} \\
   &=& \mbox{ subspace of $V_\C$ spanned by the column vectors of }
  \left(
  \begin{array}{cc}
    \tau^{'} & A \\
    ^t(A-\tau^{'}B) &  Z+\frac{^tAB-^tBA}{2} \\
    I_{g-k} & B \\
    0 & I_k \\
  \end{array}
\right)  \\
   &=& \mbox{ subspace of $V_\C$ spanned by the column vectors of }\\
    && \,\,\, \left(
  \begin{array}{cc}
    \tau^{'} &(A-\tau^{'}B) \\
    ^t(A-\tau^{'}B) &  Z+^tB\tau^{'}B-\frac{(^tAB+^tBA)}{2} \\
    I_{g-k} & 0\\
    0 & I_k \\
  \end{array}
\right)\\
&=:& F^1_{\tau}.
\end{eqnarray*}
Thus this $F^1_{\tau}$ corresponds to a point $\tau:=\left(\begin{array}{cc}
    \tau^{'} &(A-\tau^{'}B) \\
    ^t(A-\tau^{'}B) &  Z+^tB\tau^{'}B-\frac{(^tAB+^tBA)}{2}
    \end{array}
\right)$ in $\mathfrak{H}_g$ as we describe in the proposition \ref{Borel-embedding}. We also have that
\begin{equation*}
\mathrm{Im}(\tau)=\left(\begin{array}{cc}
    \mathrm{Im}(\tau^{'}) & -\im(\tau^{'})B \\
    -^tB\im(\tau^{'}) &  \im(Z)+^tB\im(\tau^{'})B
    \end{array}
\right)
\end{equation*}
and $\det\im(\tau) =\det \im(\tau^{'})\det \im(Z).$
Write $\tau^{'}=(t_{ij}),\,\, A=(a_{ij}),\,\, B=(b_{ij}),\,\,Z=(c_{ij}),\,\,$
$S:=A+\sqrt{-1}B=(s_{ij}),$ and $ U:=A-\tau^{'}B=(u_{ij}).$ The
$((c_{ij}),(s_{ij}),(t_{i,j}))$ becomes a coordinate system of $\mathfrak{H}_g$ associated to the cusp $\mathfrak{F}_{g-k}.$  Then,
\begin{eqnarray*}
  d u_{ij} &=& da_{ij}+ d(\sum_{\alpha} t_{i\alpha}b_{\alpha j}) \\
           &=&  da_{ij}+\sum_{\alpha} t_{i\alpha}db_{\alpha j} + \mbox{ forms containing $dt$}\\
d\overline{u_{ij}} &=&  da_{ij}+\sum_{\alpha} \overline{t_{i\alpha}}db_{\alpha j} + \mbox{ forms containing $d\overline{t}$}\\
d u_{ij}\wedge d\overline{u_{ij}}&=&-2\sqrt{-1}\sum_{\alpha} \im(t_{i\alpha})da_{ij}\wedge db_{\alpha j}+\sum_{\alpha,\beta}t_{i\alpha}\overline{t_{i\beta}}db_{\alpha j}\wedge db_{\beta j}\\
&&\,\,\,\, + \mbox{ forms containing $dt$ or $d\overline{t}$}.
\end{eqnarray*}
Thus, we obtain that
\begin{eqnarray*}
   &&(\frac{\sqrt{-1}}{2})^{k(g-k)}\bigwedge_{i,j}d u_{ij}\wedge d \overline{u_{ij}}  \\
   &=& \det \im(t_{ij})(\bigwedge_{i,j}d a_{ij}\wedge db_{ij}) +\mbox{ forms containing $dt$ or $d\overline{t}$} \\
   &=&(\frac{\sqrt{-1}}{2})^{k(g-k)}\det \im(\tau^{'})(\bigwedge_{i,j}d s_{ij}\wedge d \overline{s_{ij}})+  \mbox{ forms containing $dt$ or $d\overline{t}$ }.
\end{eqnarray*}
Write $R:=Z+^tB\tau^{'}B-\frac{(^tAB+^tBA)}{2}=(r_{ij}),$ we calculate the  volume form in coordinate system $(c_{ij}, s_{ij}, t_{ij})$ of $\mathfrak{H}_g$ :
\begin{eqnarray*}
  dV_g(\tau) &=& 2^{\frac{g(g-1)}{2}}(\bigwedge_{1\leq  i\leq j\leq g-k} dt_{ij} \wedge d\overline{t_{ij}})\wedge(\bigwedge_{ij} du_{ij} \wedge d\overline{u_{ij}}) \wedge(\bigwedge_{1\leq i\leq j\leq k} dr_{ij}\wedge d\overline{r_{ij}}) \\
   &=&2^{\frac{g(g-1)}{2}}\det \im(\tau^{'})
 (\bigwedge_{1\leq  i\leq j\leq g-k} dt_{ij} \wedge d\overline{t_{ij}})\wedge(\bigwedge_{ij} ds_{ij}
  \wedge d\overline{s_{ij}}) \wedge(\bigwedge_{1\leq i\leq j\leq k} dc_{ij}\wedge d\overline{c_{ij}}).
\end{eqnarray*}
 Define
 $$d\mathrm{Vol}(\tau^{'}):=2^{\frac{(g-k)(g-k-1)}{2}}\bigwedge\limits_{1\leq  i\leq j\leq g-k} dt_{ij}\wedge d\overline{t_{ij}},\,\,d\mathrm{Vol}(Z):=2^{\frac{k(k-1)}{2}}\bigwedge\limits_{1\leq i\leq j \leq k} dc_{ij}\wedge d\overline{c_{ij}},$$ and  $d\mathrm{Vol}(S):=2^{k(g-k)}\bigwedge\limits_{1\leq i\leq g-k, 1\leq j \leq k} ds_{ij}\wedge d\overline{s_{ij}}.$
Since $d\mathrm{Vol}(\tau^{'})$ is just the standard Euclidian volume form $dV_{g-k}$ on $\mathfrak{H}_{g-k},$ we have that
\begin{eqnarray*}
  \Phi_g(\tau) &=&(\frac{\sqrt{-1}}{2})^{\frac{g(g+1)}{2}}\frac{dV_g(\tau)}{(\det \mathrm{Im}(\tau))^{g+1}} \\
   &=&(\frac{\sqrt{-1}}{2})^{\frac{g(g+1)}{2}}\frac{\det \im(\tau^{'})dV_{g-k}\wedge d\mathrm{Vol}(S) \wedge d\mathrm{Vol}(Z)}{(\det \im(\tau^{'})\det \im(Z))^{g+1}} \\
   &=&\Phi_{g-k}(\tau^{'})\wedge(\frac{\sqrt{-1}}{2})^{k(g-k)}\frac{d\mathrm{Vol}(S)}{(\det \im(\tau^{'}))^{k-1}}\wedge(\frac{\sqrt{-1}}{2})^{\frac{k(k+1)}{2}}\frac{d\mathrm{Vol}(Z)}{(\det \im(Z))^{g+1}}.\,\,
\end{eqnarray*}
\begin{proposition}\label{Calculation-volume-form-1}
Let $\mathfrak{F}=\mathfrak{F}(V^{(g-k)})$ be a $k$-th cusp of $\mathfrak{H}_g.$
\begin{myenumi}
\item The Siegel space can be written as
\begin{eqnarray*}
 \mathfrak{H}_g  &=&\big\{\tau:=\left(\begin{array}{cc}
    \tau^{'} &(A-\tau^{'}B) \\
    ^t(A-\tau^{'}B) &  Z+^tB\tau^{'}B-\frac{(^tAB+^tBA)}{2}
    \end{array}
\right)\in M_{g}(\C)\,\,\, \\
   && \,\, \mbox{   }|\,\,\, \tau^{'}=(t_{ij})\in \mathfrak{H}_{g-k},\,\, Z=(c_{ij})\in \mathfrak{H}_{k},\,\, S=(s_{ij}):=A+\sqrt{-1}B\in M_{g-k,k}(\C)\big\},
\end{eqnarray*}
and $((c_{ij}),(s_{ij}),(t_{ij}))$ becomes a coordinate system of $\mathfrak{H}_g$ associated to $\mathfrak{F}.$

\item We have the following formula of volume form :
$$\Phi_g(\tau)=\left\{
  \begin{array}{ll}
  \Phi_{g-k}(\tau^{'})\bigwedge(\frac{\sqrt{-1}}{2})^{k(g-k)}\frac{d\mathrm{Vol}(S)}{(\det \im(\tau^{'}))^{k-1}}\bigwedge(\frac{\sqrt{-1}}{2})^{\frac{k(k+1)}{2}}\frac{d\mathrm{Vol}(Z)}{(\det \im(Z))^{g+1}},   &  1\leq k<g\\
 (\frac{\sqrt{-1}}{2})^{\frac{g(g+1)}{2}}\frac{d\mathrm{Vol}(Z)}{(\det \im(Z))^{g+1}}, & k=g.
  \end{array}
\right.  \,\,
$$
\end{myenumi}
\end{proposition}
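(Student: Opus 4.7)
The plan is to assemble the two statements directly from the Harish-Chandra/Siegel domain of the third type picture that was developed in Section 1, together with a short differential-form computation. In fact, since $\mathfrak{F}=\mathfrak{F}(V^{(g-k)})$ is already in standard form, the groups $U^{\mathfrak{F}}(\C),$ $V^{\mathfrak{F}}(\R)$ and the dual cusp $\mathfrak{F}^\vee\cong \mathfrak{H}_{g-k}$ admit explicit matrix realizations: elements of $U^{\mathfrak{F}}(\C)$ act by blocks $Z\in\mathrm{Sym}_k(\C),$ elements of $V^{\mathfrak{F}}(\R)$ by pairs $(A,B)\in M_{g-k,k}(\R)\times M_{g-k,k}(\R),$ and any $\check F\in \mathfrak{F}^\vee$ is the column span of the matrix in $(!).$

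For step one I would apply Proposition \ref{embedding of the third type} and Corollary \ref{Siegel domain of the third type} to write each $F^1\in\mathfrak{S}_g=\mathfrak{H}_g$ uniquely as $\exp(u_1+\sqrt{-1}u_2)\exp(v)(\check F_{\tau'})$ with $Z=u_1+\sqrt{-1}u_2\in \mathfrak{H}_k,$ $S=A+\sqrt{-1}B\in M_{g-k,k}(\C),$ and $\tau'\in\mathfrak{H}_{g-k}.$ Multiplying the explicit matrices from Corollary \ref{algebraic subgroups by boundary-component} one obtains after an elementary column reduction that the resulting subspace is the column span of
\begin{equation*}
\begin{pmatrix}
\tau' & A-\tau' B\\
{}^t(A-\tau' B) & Z+{}^tB\tau'B-\tfrac12({}^tAB+{}^tBA)\\
I_{g-k} & 0\\
0 & I_k
\end{pmatrix},
\end{equation*}
which by Proposition \ref{Borel-embedding} corresponds to the $\tau\in\mathfrak{H}_g$ claimed in (1). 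A direct block-imaginary-part calculation yields $\mathrm{Im}(\tau)$ with the asserted block structure, and a Schur-complement expansion gives $\det\mathrm{Im}(\tau)=\det\mathrm{Im}(\tau')\cdot\det\mathrm{Im}(Z).$

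For the volume-form part I would compute $dV_g(\tau)$ in the three groups of coordinates $(t_{ij},s_{ij},c_{ij}).$ With $U=A-\tau' B=(u_{ij})$ and $R=Z+{}^tB\tau'B-\tfrac12({}^tAB+{}^tBA)=(r_{ij}),$ the $t$-coordinates only contribute through $dt,d\bar t,$ which for the top-degree form means that when we expand $du_{ij}$ and $dr_{ij}$ we may keep only the $(dA,dB)$- respectively $(dZ)$-parts. The essential step is the identity
\begin{equation*}
\bigl(\tfrac{\sqrt{-1}}{2}\bigr)^{k(g-k)}\bigwedge_{i,j} du_{ij}\wedge d\overline{u_{ij}}
 \;\equiv\; \bigl(\tfrac{\sqrt{-1}}{2}\bigr)^{k(g-k)}\det\mathrm{Im}(\tau')\bigwedge_{i,j} ds_{ij}\wedge d\overline{s_{ij}}
\end{equation*}
modulo forms containing $dt$ or $d\bar t,$ which follows from $U=A-\tau'B$ by writing $du_{ij}\wedge d\overline{u_{ij}}$ in the $(a,b)$-variables and recognizing the Jacobian $\det\mathrm{Im}(\tau')$ at each fixed row index. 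Similarly $\bigwedge dr_{ij}\wedge d\overline{r_{ij}}\equiv \bigwedge dc_{ij}\wedge d\overline{c_{ij}}$ modulo $dt,d\bar t,dA,dB.$

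Combining these identities gives $dV_g(\tau)=\det\mathrm{Im}(\tau')\,dV_{g-k}(\tau')\wedge d\mathrm{Vol}(S)\wedge d\mathrm{Vol}(Z),$ and dividing by $(\det\mathrm{Im}(\tau))^{g+1}=(\det\mathrm{Im}(\tau'))^{g+1}(\det\mathrm{Im}(Z))^{g+1}$ produces the displayed factorization of $\Phi_g(\tau).$ The case $k=g$ is the obvious degeneration in which $\tau'$ and $S$ are absent. The main obstacle is purely bookkeeping: keeping careful track of which differentials can be discarded in the wedge product and verifying that the combined Jacobian collapses to the simple factor $\det\mathrm{Im}(\tau');$ everything else is a routine block-matrix manipulation arising from the explicit Levi decomposition of $\sN(\mathfrak{F}).$
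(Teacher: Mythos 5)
Your route is the same one the paper follows: realize $\mathfrak{H}_g$ through the Siegel--domain--of--the--third--kind parametrization attached to $\mathfrak{F}(V^{(g-k)})$, multiply out the block matrices and column--reduce to identify $\tau,$ use the Schur complement to get $\det \im(\tau)=\det \im(\tau^{'})\det\im(Z),$ and then change variables in the top--degree wedge, discarding all terms containing $dt,d\overline{t}.$ Part (1) of your argument and the determinant identity are fine. The genuine gap is in the ``essential step,'' the Jacobian identity. At fixed $\tau^{'},$ the real--linear change of variables $(A,B)\mapsto U=A-\tau^{'}B$ decouples \emph{column by column}, not row by row: for a fixed row index $i$ the entry $u_{ij}=a_{ij}-\sum_{\alpha}\tau^{'}_{i\alpha}b_{\alpha j}$ involves every row of $B,$ so ``recognizing the Jacobian $\det\im(\tau^{'})$ at each fixed row index'' is not a valid decomposition. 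For each fixed column $j$ the map $(a_{\cdot j},b_{\cdot j})\mapsto u_{\cdot j}=a_{\cdot j}-\tau^{'}b_{\cdot j}$ has real Jacobian (in absolute value) $\det\im(\tau^{'}),$ and these factors multiply over the $k$ columns; equivalently the relevant linear map is $B\mapsto \im(\tau^{'})B$ on $M_{g-k,k}(\R),$ whose determinant is $(\det\im(\tau^{'}))^{k}.$ Hence, modulo forms containing $dt$ or $d\overline{t},$ one has $(\frac{\sqrt{-1}}{2})^{k(g-k)}\bigwedge_{i,j}du_{ij}\wedge d\overline{u_{ij}}=\pm(\det\im(\tau^{'}))^{k}\bigwedge_{i,j}da_{ij}\wedge db_{ij},$ so the single--power identity you assert holds only when $k=1$ (it is vacuous when $g-k=0$); your own mechanism, carried out correctly, contradicts it for $k\geq 2.$

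Carrying the correct power through the bookkeeping, the net power of $\det\im(\tau^{'})$ in the denominator of $\Phi_g$ is $(g+1)-k,$ of which $\Phi_{g-k}(\tau^{'})$ absorbs $(g-k)+1,$ leaving exponent $0$ rather than $k-1$ on the $d\mathrm{Vol}(S)$--factor; a direct check at $g=3,$ $k=2$ (say $\tau^{'}=\sqrt{-1}y^{'}$ with $y^{'}\neq 1$) confirms this. So, as written, your computation does not establish the displayed factorization for $2\leq k<g$: either restrict the argument to the cases $k=1$ and $k=g$ (the latter being the minimal--cusp case actually used in the sequel), or redo the wedge computation with the factor $(\det\im(\tau^{'}))^{k}$ and adjust the exponent of $\det\im(\tau^{'})$ accordingly. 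Note that the same single--power Jacobian step occurs in the derivation preceding the proposition, so this is a point where reproducing the displayed formula uncritically does not close the argument; the correct per--column analysis is what a complete proof must contain.
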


\subsection{Local volume forms of low-degree Siegel varieties}
For any two integers $1\leq i,j\leq g,$ let $E_{ij}=(a_{\alpha\beta})$ be a  $(g\times g)$-matrix of $a_{\alpha\beta}=\left\{
                                                                                          \begin{array}{ll}
                                                                                            1, & \hbox{$(\alpha,\beta)=(i,j)$;} \\
                                                                                            0, & \hbox{others.}
                                                                                          \end{array}
                                                                                        \right.
$

We compute volume forms of Siegel varieties $\sA_{g,n}$ of low genus $g$ with respect to
certain special compactifications.
Let $g=2$ or $3$ in this subsection. Let $\Sigma_{\mathrm{cent}}$ be the central cone
decomposition of $C(\mathfrak{F}_0)$ and $\sigma_{0}$ the principal cone in $\Sigma_{\mathrm{cent}}$ defined in the example
\ref{central-cone-decomposition}. We can write down the cone  $\sigma_0$ clearly(cf.\cite{Igu67},\cite{Nam}) :
      $\sigma_0=\{\sum\limits_{1\leq i\leq j\leq g} \lambda_{i,j}\zeta_{i,j} \,\,|\,\, \lambda_{i,j}\in\R_{\geq 0}\}$
      such that every  edge $\R_{\geq 0}\zeta_{i,j}$ is in
$\overline{C(\mathfrak{F}_0)}^{\mathrm{rc}}\setminus C(\mathfrak{F}_0),$ where
$
\left\{
  \begin{array}{ll}
   \zeta_{i,i}:= E_{i,i}, & 1 \leq i\leq g; \\
    \zeta_{i,j}:=-E_{i,j}-E_{j,i}+E_{i,i}+E_{j,j}, & 1\leq i<j \leq g.
  \end{array}
\right.$
Let $\overline{\sA}_{g,n}^{\mathrm{cent}}$ be the projective smooth
toroidal compactification constructed by the central cone
decomposition $\Sigma_{\mathrm{cent}}$ of
$\overline{C(\mathfrak{F}_0)}.$ The induced volume form $\Phi_{g,n}:=\Psi_{g,\Gamma(n)}$ on $\sA_{g,n}$
is singular at the boundary divisor $D_{\infty,n}:=\overline{\sA}_{g,n}^{\mathrm{cent}}\setminus\sA_{g,n}.$

We now calculate the volume form on Siegel space $\mathfrak{H}_g$ associated to the cusp $\mathfrak{F}_0.$
Let
$$
(\star) \hspace{2cm}\left\{
  \begin{array}{ll}
  \zeta_{i,i}^n:=nE_{i,i},&\hbox{ for }  1 \leq i\leq g;\\
  \zeta_{i,j}^n:=n(-E_{i,j}-E_{j,i}+E_{i,i}+E_{j,j}), & \hbox{ for } 1\leq i<j\leq g.
  \end{array}
\right.
$$
The $\{\zeta_{i,j}^n\}_{ 1\leq i\leq j\leq g}$ is a basis of
$\mathrm{Sym}_g(\R),$ and it also can  be regarded as a lattice basis of $\Gamma(n)\cap U^{\mathfrak{F}_0}(\Q).$
We note that $\R_{\geq 0}\zeta_{i,j}^n\, 1\leq i\leq j \leq g$ are all edges of the principal cone $\sigma_0$ in $\Sigma_{\mathrm{cent}}.$
Since the $Z=(c_{ij})_{1\leq i,j\leq g}$ in the proposition \ref{Calculation-volume-form-1} can be written as
$Z=\sum\limits_{1\leq i\leq j\leq g}z_{ij}\zeta_{i,j}^n,$
we get that
\begin{eqnarray*}
  Z&=&\sum_{1\leq i< j\leq g}z_{ij}\zeta_{i,j}^n+ \sum_{j=1}^gz_{jj}\zeta_{j,j}^n \\
   &=&n(\sum_{1\leq i< j\leq g}z_{ij}(-E_{i,j}-E_{j,i})+\sum_{j=1}^{g}(z_{jj}+\sum_{l=1}^{j-1}z_{lj}+\sum_{l=j+1}^gz_{jl})E_{j,j}).
\end{eqnarray*}

On the other hand, $Z= \sum\limits_{1\leq i<j\leq g}c_{ij}(E_{i,j}+E_{j,i})+\sum\limits_{l=1}^gc_{ll}E_{l,l}.$
Thus, we have that
$$
\left\{
  \begin{array}{ll}
    c_{ij}=c_{ji}=-nz_{ij}, & \hbox{ for } 1\leq i<j\leq g; \\
    c_{jj}=n(z_{jj}+\sum\limits_{l=1}^{j-1}z_{lj}+\sum\limits_{l=j+1}^gz_{jl})=:n(z_{jj}+m_j), & \hbox{ for } 1\leq j\leq g.
  \end{array}
\right.
$$
Let $\widetilde{Z}:= Z/n.$ We calculate the volume form $\Phi_g$ in the coordinate system $(z_{ij})$ :
$$dZ:=\bigwedge_{1\leq i\leq j \leq g} dc_{ij}= \pm n^{g(g+1)/2} \bigwedge_{1\leq i\leq j \leq g} dz_{ij},$$
\begin{equation}\label{volume in Siegel coordinates}
 \Phi_g(\tau)=(\frac{\sqrt{-1}}{2})^{\frac{g(g+1)}{2}} \frac{2^{\frac{g(g-1)}{2}}\bigwedge\limits_{1\leq i\leq j \leq g} dz_{ij}\wedge  d\overline{z_{ij}}}{(\det \im(\widetilde{Z}))^{g+1}}.
\end{equation}
where
$$\im(\widetilde{Z})=\im\left(
                       \begin{array}{ccccc}
                         z_{11}+m_1 &\cdots   &  -z_{1j} & \cdots &  -z_{1g}\\
                         \vdots     & \ddots  &  \vdots  & \vdots &  \vdots \\
                         -z_{1j}    & \cdots  &z_{jj}+m_j& \cdots &  -z_{jg}  \\
                          \vdots    &  \vdots &  \vdots  & \ddots &   \vdots \\
                          -z_{1g}   & \cdots  & -z_{jg}  & \cdots &   z_{gg}+m_g\\
                       \end{array}
                     \right)_{g\times g}.
$$
Recall the example \ref{local-coordinate-system},
we have a commutative diagram
$$
 \begin{CDS}
\mathfrak{H}_g \>\subset>>U^{\mathfrak{F}_0}(\C)\cong\C^{g(g+1)/2} \\
\V V  V \novarr \V V w_{ij}:=\exp(2\pi\sqrt{-1}z_{ij}) V  \\
\mathfrak{H}_{g}/\Gamma\cap U^{\mathfrak{F}}(\Q)\>\subset >>
(\C^*)^{g(g+1)/2}
\end{CDS}
$$
and the partial compactification
 \begin{eqnarray*}
   \widetilde{\Delta}_{\mathfrak{F}_0,\sigma_0}&=&\big\{ x=(w_{ij})_{1\leq i\leq j\leq g}\in \C^{g(g+1)/2} \,\,|\,\, \mbox{ there exists a neighborhood } \\
   &&\,\,\Delta_x  \mbox{ of $x$ such that } \Delta_x \cap(\C^{*})^{g(g+1)/2} \subset
 \frac{\mathfrak{H}_{g}}{\Gamma(l)\cap U^{\mathfrak{F}_0}(\Q)}\big\}.
 \end{eqnarray*}
Define $
       \widetilde{\Delta}_{\mathfrak{F}_0, \sigma_0}^*:=  \widetilde{\Delta}_{\mathfrak{F}_0,\sigma_0}-\bigcup\limits_{1\leq i\leq j\leq g} \big\{(w_{ij})\in \widetilde{\Delta}_{\mathfrak{F}_0,\sigma_0}
        \,\,|\,\, w_{ij}=0\big\}.
       $
The  volume on $\widetilde{\Delta}_{\mathfrak{F}_0,\sigma_0}^*$ becomes
\begin{equation}\label{local-volume}
 \Phi_{\sigma_0}(w_{ij})=(\frac{\sqrt{-1}}{2})^{\frac{g(g+1)}{2}}\frac{2^{\frac{g(g-1)}{2}}\bigwedge\limits_{1\leq i\leq j \leq g} dw_{ij}\wedge  d\overline{w_{ij}}}{(\prod\limits_{1\leq i\leq j \leq g}|w_{ij}|^2)(\det\log |W|)^{g+1}}
\end{equation}
where
\begin{eqnarray*}
   \log|W|&:=& \left(
                       \begin{array}{ccccc}
                         \log|w_{11}|+q_1 &\cdots   &-\log|w_{1j}|   & \cdots &-\log|w_{1g}|\\
                         \vdots           & \ddots  &  \vdots        & \vdots &  \vdots \\
                         -\log|w_{1j}|    & \cdots  &\log|w_{jj}|+q_j& \cdots &-\log|w_{jg}|  \\
                          \vdots          &  \vdots &  \vdots        & \ddots & \vdots \\
                         -\log|w_{1g}|    & \cdots  &-\log|w_{jg}|   & \cdots &\log|w_{gg}|+q_g\\
                       \end{array}
                     \right)_{g\times g},\\
  q_{j} &:=&-2\pi\im(m_j)=\sum_{l=1}^{j-1}\log|w_{lj}|+\sum_{l=j+1}^g \log |w_{jl}|   \,\,\,1\leq j\leq g.
\end{eqnarray*}
For genus $g=2,$ Wang has already obtained the volume form of this type in \cite{Wang93}.
However, we should be careful that if $g\leq 3$ then $(w_{ij})$ can not be a local coordinate system of $\overline{\sA}_{g,n}^{\mathrm{cent}}$
with respect to the central cone decomposition $\Sigma_{\mathrm{cent}}$ of $C(\mathfrak{F}_0)$ as we point out in the example \ref{central-cone-decomposition}.

\subsection{Global volume forms on Siegel varieties $\sA_{g,\Gamma}$}
Let $\Sigma_{\mathfrak{F}_0}:=\{\sigma_\alpha^{\mathfrak{F}_0}\}$ be a
 $\overline{\Gamma_{\mathfrak{F}_0}}$(or $\mathrm{GL}(g,\Z)$)-admissible polyhedral decomposition of $C(\mathfrak{F}_0)$ regular with respect to an arithmetic subgroup $\Gamma\subset\Sp(g,\Z).$
Let $\sigma_{\max}$ be a top-dimensional cone in $\Sigma_{\mathfrak{F}_0}.$ The cone $\sigma_{\max}$ is generated by a lattice basis of $U^{\mathfrak{F}_0}\cap\Gamma,$ and then we can endows a marking order on this lattice basis. We have $\sigma_{\max}=\sum\limits_{\mu=1}^{\frac{g(g+1)}{2}} \R_{\geq 0}\overrightarrow{l(\mu)}$
where $\{\overrightarrow{l(\mu)}\}_{\mu=1}^{\frac{g(g+1)}{2}}$ is the marking basis of $U^{\mathfrak{F}_0}\cap\Gamma.$
We write $\overrightarrow{l(\mu)}=\sum\limits_{1\leq i\leq j\leq g} l^{\mu}_{i,j} \delta_{i,j}$ for $\mu=1,\cdots, \frac{g(g+1)}{2},$
where
$\{\delta_{i,j}\}_{_{ 1\leq i\leq j\leq g}}$  is a $\Z$-basis of $\mathrm{Sym}_g(\Z)$ given by
$\left\{
  \begin{array}{ll}
  \delta_{i,i}=E_{i,i},& 1 \leq i\leq g;\\
  \delta_{i,j}=(E_{i,j}+E_{j,i}) & 1\leq i<j\leq g.
  \end{array}
\right.
$
Then, we have a ($\frac{g(g+1)}{2} \times \frac{g(g+1)}{2}$) integral matrix
\begin{equation}\label{matrix-associate to cone}
   L_{\Gamma}(\sigma_{\max},\{\overrightarrow{l(\mu)}\}):=\left(
                                              \begin{array}{ccccc}
                                               l^{1}_{1,1}& l^{1}_{1,2}&\cdots & l^{1}_{g-1,g}& l^{1}_{g,g} \\
                                                \vdots & \cdots &\cdots &\cdots& \vdots \\
                                             l^{\frac{g(g+1)}{2}}_{1,1} & l^{\frac{g(g+1)}{2}}_{1,2}&\ldots & l^{\frac{g(g+1)}{2}}_{g-1,g}& l^{\frac{g(g+1)}{2}}_{g,g}  \\
                                              \end{array}
                                            \right).
\end{equation}
We define lattice  volume  of the top-dimensional cone $\sigma_{\max}$ to be
\begin{equation}\label{volume-number-cone}
\mathrm{vol}_{\Gamma}(\sigma_{\max}):= |\det(L_{\Gamma}(\sigma_{\max}), \{\overrightarrow{l(\mu)}\})|,
\end{equation}
which is a positive integer independent of the marking order of the basis $\{\overrightarrow{l(\mu)}\}_{\mu=1}^{\frac{g(g+1)}{2}}.$

\begin{theorem}\label{global-volume-form-on Siegel varieties}
Let $\Gamma\subset \Sp(g,\Z)$ be a neat arithmetic subgroup and $\Sigma_{\mathfrak{F}_0}$ a
$\overline{\Gamma_{\mathfrak{F}_0}}$(or $\mathrm{GL}(g,\Z)$)-admissible polyhedral decomposition of $C(\mathfrak{F}_0)$ regular with respect to $\Gamma.$
Let $\overline{\sA}_{g,\Gamma}$ be the toroidal compactification  of $\sA_{g,\Gamma}:=\mathfrak{H}_{g}/\Gamma$ constructed by  $\Sigma_{\mathfrak{F}_0}.$

Assume that the boundary divisor $D_{\infty}:=\overline{\sA}_{g,\Gamma}\setminus\sA_{g,\Gamma}$ is  simple normal crossing.
For each irreducible component $D_i$ of $D_\infty=\bigcup\limits_{i}D_i,$ let $s_i$ be the global section of the line bundle $[D_i]$ defining $D_i.$
Let $\sigma_{\max}$ be an arbitrary top-dimensional cone in $\Sigma_{\mathfrak{F}_0}$ and renumber all components $D_i$'s of $D_\infty$ such that $D_1,\cdots, D_{\frac{g(g+1}{2})}$ corresponds to the edges of $\sigma_{\max}$ with marking order.
 \begin{myenumi}
\item The  volume $\Phi_{g,\Gamma}$ on $\sA_{g,\Gamma}$ can be represented by
 \begin{equation}\label{volume-form}
 \Phi_{g,\Gamma}=\frac{2^{\frac{g(g-1)}{2}}\mathrm{vol}_{\Gamma}(\sigma_{\max})^2d\sV_g}{(\prod_{j=1}^{\frac{g(g+1)}{2}} ||s_i||_i^2)F^{g+1}_{\sigma_{\max}}(\log||s_1||_1,\cdots, \log||s_{\frac{g(g+1)}{2}}||_{\frac{g(g+1)}{2}})},
 \end{equation}
where $d\sV_g$ is a continuous volume form on a partial compactification $\mathcal{U}_{\sigma_{\max}}\subset \overline{\sA}_{g,\Gamma}$ of $\sA_{g,\Gamma},$ each $||\cdot||_{i}$ is a suitable Hermitian metric of the line bundle $[D_i]$  on $\overline{\sA}_{g,\Gamma}$($1\leq i \leq g(g+1)/2$) and $F_{\sigma_{\max}}\in \Z[x_1,\cdots, x_{g(g+1)/2}]$
is a homogenous polynomial of degree $g.$ Moreover, the
 coefficients of $F_{\sigma_{\max}}$ only depend on both $\Gamma$ and $\sigma_{\max}$ with marking order of edges.

\item Moreover, the polynomial $F_{\sigma_{\max}}(x_1,\cdots, x_{\frac{g(g+1)}{2}})$  satisfies the following equation
\begin{eqnarray*}
  && \det\big( F_{\sigma_{\max}}(\frac{\partial^2 F_{\sigma_{\max}}}{\partial x_i\partial x_j})_{i,j}-\left(
                               \begin{array}{c}
                                 \frac{\partial F_{\sigma_{\max}}}{\partial x_1} \\
                                 \vdots \\
                                \frac{\partial F_{\sigma_{\max}}}{\partial x_{\frac{g(g+1)}{2}}} \\
                               \end{array}
                             \right)
 \left(
\begin{array}{ccccc}
\frac{\partial F_{\sigma_{\max}}}{\partial x_1}, & \cdots,&\frac{\partial F_{\sigma_{\max}}}{\partial x_{\frac{g(g+1)}{2}}} \\
\end{array}
\right)\big) \\
  &=&  (-1)^{\frac{g(g+1)}{2}}2^{\frac{g(g-1)}{2}}\mathrm{vol}_{\Gamma}(\sigma_{\max})^2F^{(g+1)(g-1)}_{\sigma_{\max}}.
\end{eqnarray*}
\end{myenumi}
\end{theorem}
\begin{remark}
Let $H_{\sigma_{\max}}(x_1,\cdots, x_{\frac{g(g+1)}{2}})=-\log F_{\sigma_{\max}}(-x_1,\cdots, -x_{\frac{g(g+1)}{2}}).$ The equation in the statement (3) becomes
\begin{equation}\label{Monge-Ampere-eq}
   \det(\frac{\partial^2 H_{\sigma_{\max}}}{\partial x_i\partial x_j})_{i,j}=2^{\frac{g(g-1)}{2}}\mathrm{vol}_{\Gamma}(\sigma_{\max})^2 \exp ((g+1)H_{\sigma_{\max}}).
\end{equation}
By the formula \ref{dd-volume-equality} in the next section, it is a real Monge-Amp\'ere of elliptic type on the domain
$\{(x_1,\cdots, x_{\frac{g(g+1)}{2}})\in \R^{\frac{g(g+1)}{2}}\,\,|\,\,x_i \geq C \forall i\}$ for some positive number $C.$
\end{remark}
\begin{proof}[Proof of the theorem \ref{global-volume-form-on Siegel varieties}]
Let $N=\frac{g(g+1)}{2}.$ Let $\mathfrak{F}_{\min}$ be an arbitrary minimal cusp of $\mathfrak{H}_g,$ and let
$\sigma$ be any  top-dimensional cone in the decomposition $\Sigma_{\mathfrak{F}_{\min}}$ induced by
$\Sigma_{\mathfrak{F}_0}$(cf.Lemma \ref{family-decomposion from a minimal boundary}).
Recall the local chart $(\widetilde{\Delta}_{\mathfrak{F}_{\min},\sigma},(w_1^\sigma, \cdots, w_N^\sigma))$ in \ref{neighborhood-toiroidal-embedding} and \ref{local-coordinate-system},
we have
$$
 \begin{CDS}
\frac{\mathfrak{H}_g}{\Gamma\cap U^{\mathfrak{F}_{\min}}(\Q)} \>\subset>> \widetilde{\Delta}_{\mathfrak{F}_{\min},\sigma}\\
\V V  V \novarr \V   \pi_{\mathfrak{F}_{\min}}^{'} V\mbox{\'etale}V  \\
 \mathfrak{H}_{g}/\Gamma\>\subset >> \overline{\sA}_{g,\Gamma}
\end{CDS}
$$
with a toroidal  embedding $\frac{\mathfrak{H}_g}{\Gamma\cap U^{\mathfrak{F}_{\min}}(\Q)}\>\subset >>
\widetilde{\Delta}_{\mathfrak{F}_{\min},\sigma}$(cf.Lemma \ref{toroidal-embedding-partial-compactification}).
There are  facts :
\begin{myenumiii}
\item The morphism  $\pi_{\mathfrak{F}_{\min}}^{'}: \frac{\mathfrak{H}_g}{\Gamma\cap U^{\mathfrak{F}_{\min}}(\Q)}\to \mathfrak{H}_{g}/\Gamma$ is surjective.
\item Define $W_{\mathfrak{F}_{\min},\sigma}:= \pi_{\mathfrak{F}_{\min}}^{'}( \widetilde{\Delta}_{\mathfrak{F}_{\min},\sigma}).$
Since $\overline{\sA}_{g,\Gamma}$ is geometrically fine,  the restriction map $\pi_{\mathfrak{F}_{\min}}^{'}|_{\{ w_i^\sigma=0\}}$ is an isomorphism
onto its image for each $w_i^\sigma.$
   Thus,  $(W_{\mathfrak{F}_{\min},\sigma},(w_1^\sigma,\cdots, w_N^\sigma))$ becomes a coordinate neighborhood of $\overline{\sA}_{g,\Gamma}.$

\item That $W_{\mathfrak{F}_{\min},\sigma}^*= W_{\mathfrak{F}_{\min},\sigma}\setminus D_\infty=\mathfrak{H}_{g}/\Gamma$ where $W_{\mathfrak{F}_{\min},\sigma}^*:= W_{\mathfrak{F}_{\min},\sigma}\setminus\bigcup\limits_{i=1}^N\{ w_i^\sigma=0 \}.$

\item  The compactification $\overline{\sA}_{g,\Gamma}$ is covered by finitely many open sets of the form $W_{\mathfrak{F},\delta},$ where $\mathfrak{F}$ is a minimal cusp of $\mathfrak{H}_g$ and $\delta$ is a top-dimensional cone in the decomposition $\Sigma_{\mathfrak{F}}.$\\
\end{myenumiii}

Now, we begin to prove the statements $1-3$ :

Let $\sigma_{\max}$  be an arbitrary top-dimensional cone in the decomposition $\Sigma_{\mathfrak{F}_0}.$
We take a coordinate chart $(W_{\mathfrak{F}_{0},\sigma_{\max}}^{*},(w_1,\cdots, w_N))$ on $\sA_{g,\Gamma}$ constructed by $\sigma_{\max}$ as above such that $D_i\cap W_{\mathfrak{F}_{0},\sigma_{\max}}=\{w_i=0\}$ for any integer $i\in [1,\frac{g(g+1)}{2}].$
\begin{myenumi}
\item By Theorem 4.1 in \cite{Wang93} or by similar calculations as in \ref{local-volume}, the volume form $\Phi_{g,\Gamma}$ on the chart $(W_{\mathfrak{F}_{0},\sigma_{\max}}^{*},(w_1,\cdots, w_N))$ can be written as
\begin{equation}\label{Wang-volume-form}
    \Phi_{\sigma_{\max}}=\frac{(\frac{\sqrt{-1}}{2})^N2^{\frac{g(g-1)}{2}}\mathrm{vol}_{\Gamma}(\sigma_{\max})^2\bigwedge\limits_{1\leq i \leq N} dw_{i}\wedge d\overline{w_{i}}}{(\prod\limits_{1\leq i \leq N}|w_{i}|^2)(F_{\sigma_{\max}}(\log|w_1|,\cdots,\log|w_N|))^{g+1}}
\end{equation}
where $F_{\sigma_{\max}}\in \Z[x_1,\cdots, x_{N}]$ is a homogenous polynomial of degree $g.$ It is obvious that the coefficients of $F_{\sigma_{\max}}$  only depend on $\Gamma$ and $\sigma_{\max}$ with marking order of edges.

Let $\mathcal{U}_{\sigma_{\max}}:=W_{\mathfrak{F}_{0},\sigma_{\max}}-\bigcup_{i\neq j}D_i\cap D_j.$
The $\mathcal{U}_{\sigma_{\max}}$ is a partial compactification of $\sA_{g,\Gamma}$ satisfying  that $\sA_{g,\Gamma}\subset \mathcal{U}_{\sigma_{\max}}\subset \overline{\sA}_{g,\Gamma}.$ We can choose a Hermitian metrics $||\cdot||_{i}$ of  line bundle $[D_i]$ on $\overline{\sA}_{g,n}$ by setting
$$||s_i||_{i}^2=\rho_i|w_i|^2\,\,  \mbox{ on }\,\,\mathcal{U}_{\sigma_{\max}}$$
for $1\leq i\leq N$ such that $u:=\frac{F_{\sigma_{\max}}(\log||s_1||_1,\cdots, \log||s_{N}||_{N})}{F_{\sigma_{\max}}(\log|w_1|,\cdots, \log|w_N|_{N})}$ is a positive function on $\sA_{g,n}$ by the Lemma \ref{lemma-on-degree-of-polynomial}. Again by the Lemma \ref{lemma-on-degree-of-polynomial},
we can  choose the following continuous volume form $d\sV_g$ on $\mathcal{U}_{\sigma_{\max}}$ given by
$$d\sV_g=(\frac{\sqrt{-1}}{2})^N (u^{g+1} \prod_{i=1}^N\rho_i)\bigwedge\limits_{1\leq i \leq N}dw_{i}\wedge d\overline{w_{i}}$$ which is smooth on $\sA_{g,n}.$
Then, we obtain that the form $\Phi_{g,\Gamma}$ on $\sA_{g,\Gamma}$  can be represented by
 $$\Phi_{g,\Gamma}=\frac{2^{\frac{g(g-1)}{2}}\mathrm{vol}_{\Gamma}(\sigma_{\max})^2d\sV_g}{(\prod\limits_{j=1}^{N} ||s_i||_i^2)F^{g+1}_{\sigma_{\max}}(\log||s_1||_1,\cdots, \log||s_{N}||_{N})}.$$

\item With respect to the coordinate chart $(W_{\mathfrak{F}_{0},\sigma_{\max}}^{*},(w_1,\cdots, w_N))$ on $\sA_{g,\Gamma},$
we define $G(w_1,\cdots, w_N):=F_{\sigma_{\max}}(\log|w_1|,\cdots, \log|w_N|).$ By the K\"ahler-Einstein metric, we get that
\begin{eqnarray*}
  &&(\frac{-g-1}{2})^N2^{-N}N! 2^{\frac{g(g-1)}{2}}\mathrm{vol}_{\Gamma}(\sigma_{\max})^2\bigwedge\limits_{1\leq i \leq N} dw_{i}\wedge d\overline{w_{i}} \\
  &=& G^{g+1}( \prod_{j=1}^{N} |w_i|^2)(\partial\overline{\partial}\log G^{g+1})^N\\
  &=&(g+1)^NG^{g+1}(\prod_{j=1}^{N} |w_i|^2)(\frac{\partial\overline{\partial}G }{G}- \frac{\partial G\wedge \overline{\partial}G}{G^2})^N \\
  &=&(g+1)^NG^{g+1}(\prod_{j=1}^{N} |w_i|^2)(\sum_{i,j}\frac{GG_{w_i\overline{w_j}}-G_{w_i}G_{\overline{w_j}}}{G^2}dw_i\wedge d\overline{w_j} )^N.
\end{eqnarray*}
Let $F_\alpha:=\frac{\partial F_{\sigma_{\max}}(x_1,\cdots,x_N)}{\partial x_\alpha}$ and $F_{\alpha\beta}:=\frac{\partial^2 F_{\sigma_{\max}}(x_1,\cdots,x_N)}{\partial x_\alpha \partial x_\beta}$ for all $1\leq \alpha,\beta\leq N.$
On the Siegel variety $\sA_{g,\Gamma},$ we have :
\begin{eqnarray*}
 G_{w_i}(w_1,\cdots, w_N)              &=& \frac{F_i(\log|w_1|,\cdots, \log|w_N|)}{2w_i}, \\
 G_{\overline{w_i}}(w_1,\cdots, w_N)   &=&  \frac{F_i(\log|w_1|,\cdots, \log|w_N|)}{2\overline{w_i}},\\
 G_{w_i\overline{w_j}}(w_1,\cdots, w_N)&=&  \frac{F_{ij}(\log|w_1|,\cdots, \log|w_N|)}{4w_i\overline{w_j}},
\end{eqnarray*}
$$\big(\det(F_{\sigma_{\max}}F_{ij}-F_iF_j)_{i,j}-(-1)^{N}2^{\frac{g(g-1)}{2}}\mathrm{vol}_{\Gamma}(\sigma_{\max})^2F^{(g+1)(g-1)}_{\sigma_{\max}}\big)(\log|w_1|,\cdots, \log|w_N|)=0.$$
We also have
$$
(\frac{\partial^2 \log F_{\sigma_{\max}}}{\partial x_i\partial x_j})_{i,j}
  =(F_{\sigma_{\max}}F_{ij}-F_iF_j)_{i,j}
= F_{\sigma_{\max}}(F_{ij})_{i,j}-\left(
                               \begin{array}{c}
                                 F_1 \\
                                 \vdots \\
                                F_{N} \\
                               \end{array}
                             \right)\left(
                                      \begin{array}{ccccc}
                                        F_1, & \cdots,& F_{N} \\
                                      \end{array}
                                    \right).
$$
\end{myenumi}
\end{proof}
\begin{example}Let $\Gamma=\Gamma(n)$ for some fixed integer $n\geq 3.$ We consider volume forms on the genus two Siegel variety $\sA_{2,n}$ with same notations in the theorem \ref{global-volume-form-on Siegel varieties}.
Let $\sigma_{\max}=\{\sum\limits_{i=1}^3 \lambda_i\overrightarrow{l_i} | \lambda_i\in \R_{\geq 0},\, \,\, i=1,\cdots, 3\}
$ be a top-dimensional cone in $\Sigma_{\mathfrak{F}_0},$ where  $\overrightarrow{l_1}, \overrightarrow{l_2},\overrightarrow{l_3}$ satisfies that
\begin{itemize}
  \item  $\overrightarrow{l_i}=n\big(a_{i1}\left(
                       \begin{array}{cc}
                         1 & 0 \\
                         0 & 0 \\
                       \end{array}
                     \right)
  + a_{i2}\left(
                       \begin{array}{cc}
                         0 & 1 \\
                         1 & 0 \\
                       \end{array}
                     \right)
                     + a_{i3}\left(
                       \begin{array}{cc}
                         0 & 0 \\
                         0 & 1 \\
                       \end{array}
                     \right)\big)=n\left(
                                     \begin{array}{cc}
                                       a_{i1} & a_{i2} \\
                                       a_{i2} & a_{i3} \\
                                     \end{array}
                                   \right)
                     $ for each $i,$
  \item all $a_{ij}$ are integers and $D:=\sqrt[3]{\mathrm{vol}_{\Gamma}(\sigma_{\max})^2}=\sqrt[3]{(\det(a_{ij})_{3\times3})^{2}}>0.$
\end{itemize}

Each $Z\in\mathfrak{H}_g=\mathrm{Sym}_k(\R)+\sqrt{-1}\mathrm{Sym}^{+}_k(\R)$ can be written as
\begin{eqnarray*}
  Z &=& \sum_{i=1}^3 z_i\overrightarrow{l_i} \\
    &=& n\left(
           \begin{array}{cc}
            z_1a_{11}+ z_2a_{21}+z_3a_{31} & z_1a_{12}+z_2a_{22}+z_3a_{32}\\
            z_1a_{12}+z_2a_{22}+z_3a_{32} & z_1a_{13}+z_2a_{23}+z_3a_{33}\\
           \end{array}
         \right).
\end{eqnarray*}
Then, the symplectic volume becomes
$$ \Phi_2=(\frac{\sqrt{-1}}{2})^{3} \frac{2D^3dz_1\wedge dz_2\wedge dz_3\wedge d\overline{z_1}\wedge d\overline{z_2}\wedge d\overline{z_3}}{(\det \im(\widetilde{Z}))^{3}}$$
where $\widetilde{Z}=\frac{1}{n}Z= \left(
           \begin{array}{cc}
            z_1a_{11}+ z_2a_{21}+z_3a_{31} & z_1a_{12}+z_2a_{22}+z_3a_{32}\\
            z_1a_{12}+z_2a_{22}+z_3a_{32} & z_1a_{13}+z_2a_{23}+z_3a_{33}\\
           \end{array}
         \right).$
For each integer $i$ in $[1,3],$ let  $D_i$ be the smooth divisor on $\overline{\sA}_{2,l}$ constructed by  $\overrightarrow{l_i}$ and let
$s_i$ be the global section of $[D_i]$ defining $D_i.$
Then,  the symplectic volume form $\Phi_{2,l}$ on $\sA_{2,l}$ can be represented by
 $$\Phi_{2}=\frac{2\mathrm{vol}_{\Gamma}(\sigma_{\max})^2d\sV_g}{(||s_1||_1||s_2||_2||s_3||_3)^2F^{3}_{\sigma_{\max}}(\log||s_1||_1,\log||s_2||_2, \log||s_{3}||_{3})}.$$
Here,the polynomial $F_{\sigma_{\max}}$ is
$F_{\sigma_{\max}}(x,y,z)=Ax^2+By^2+Cz^2+Lxy+Mxz+Nyz$
where $A=a_{11}a_{13}-a_{12}^2, B=a_{21}a_{23}-a_{22}^2,C=a_{31}a_{33}-a_{32}^2,$
and $L=a_{11}a_{23}+a_{21}a_{13} -2a_{12}a_{22},$ $ M=a_{11}a_{33}+a_{13}a_{31} -2a_{12}a_{32}, N=a_{21}a_{33}+a_{23}a_{31} -2a_{22}a_{32}.$
Define variables $y_1:=x,y_2:=y, y_3:=z.$ and let $F_i=\frac{\partial F_{\sigma_{\max}}}{\partial y_i},F_{ij}=\frac{\partial^2 F_{\sigma_{\max}}}{\partial y_i\partial y_j}\forall i,j.$ Let
$$G:=\det(F_{\sigma_{\max}}F_{ij}-F_iF_j)_{1\leq i,j\leq 3}-(-1)^{3}2(DF_{\sigma_{\max}})^3.$$
The coefficient of the term $y_1^6$ in $G$ is $A^3P(a_{ij}),$
where $P\in \Q[(x_{ij})_{3\times3}]$ is a homogenous polynomial such that
$P(a_{ij})= 2BM^2+2CL^2+2AN^2-8ABC-2LMN +2D^3.$
 In the polynomial $P,$ the coefficient of the term $x_{12}^2x_{22}^2x_{33}^2$ is  $-2.$ Thus $P$ is a nonzero homogenous polynomial of degree $6.$
\end{example}

\subsection{Constrained combinational conditions of decompositions of cones}

Let $N=g(g+1)/2,$ and let $S_N$ be the group of permutations of the set $\{1,\cdots, N\}.$

For any integer $i\in [1, N],$
let $Y(i)$ be a  $g\times g$ symmetric matrix $\left(
                                                 \begin{array}{ccc}
                                                   y_{11}(i)& \cdots & y_{1g}(i) \\
                                                  \vdots& \cdots & \vdots \\
                                                   y_{1g}(i)  & \cdots & y_{gg}(i) \\
                                                 \end{array}
                                               \right)
$ with $N$-variables. Each $Y(i)$ can be identified with a $1\times N$ matrix as
$$\widetilde{Y}(i):=(y_{11}(i), \cdots,y_{1g}(i), y_{22}(i), \cdots,y_{2g}(i),\cdots, y_{jj}(i),\cdots, y_{jg}(i),\cdots, y_{gg}(i)).$$
Define $Y:=\left(
              \begin{array}{c}
                \widetilde{Y}(1) \\
                \vdots \\
                \widetilde{Y}(i)\\
                \vdots \\
                \widetilde{Y}(N) \\
              \end{array}
            \right).
$
We know that $Y$ is a $N\times N$ matrix with $N^2$-variables. Define $D(Y):=\det(Y),$ it is a homogenous polynomial of degree $N$ in
$\Z[Y]:=\Z[(y_{kl}(i))_{1\leq k\leq l\leq g, 1\leq i\leq N}].$
For any $\varsigma\in S_N,$ we define
$\varsigma(Y):=\left(
              \begin{array}{c}
                \widetilde{Y}(\varsigma(1)) \\
                \vdots \\
                \widetilde{ Y}(\varsigma(i))\\
                \vdots \\
                \widetilde{Y}(\varsigma(N)) \\
              \end{array}
            \right).\\
$

We begin to show that there is a characteristic variety $\mathfrak{Q}_g$ by the unique group-invariant K\"ahler-Einstein metric on $\mathfrak{H}_g.$
Define $F=\det(\sum_{i=1}^Nx_iY(i)).$
We have $$F=\sum_{i_1+\cdots +i_N=g, i_k\geq0 } t_{i_1\cdots i_N}(Y)x_1^{i_1}\cdots x_{N}^{i_N}$$
and each $t_{i_1\cdots i_g}(Y)\in\Z[Y]$ is a homogenous polynomial of degree $g.$
Let
\begin{eqnarray*}
  \mathcal{C}_1&:=& \det\big(F(\frac{\partial^2 F}{\partial x_i\partial x_i})_{i,j}-\left(
                               \begin{array}{c}
                                 \frac{\partial F}{\partial x_1} \\
                                 \vdots \\
                                \frac{\partial F}{\partial x_{N}} \\
                               \end{array}
                             \right)
 \left(
\begin{array}{ccccc}
\frac{\partial F}{\partial x_1}, & \cdots,&\frac{\partial F}{\partial x_{N}} \\
\end{array}
\right)\big), \\
\mathcal{C}_2 &:=&(-1)^{\frac{g(g+1)}{2}}2^{\frac{g(g-1)}{2}}F^{(g+1)(g-1)}D(Y)^2.
\end{eqnarray*}
We then  write $\mathcal{C}_1:=\mathcal{C}_1-\mathcal{C}_2$ as
\begin{equation}\label{defining-equation}
 \mathcal{C}=\sum_{j_1+\cdots +j_N=g(g^2-1), j_k\geq 0 } C_{j_1\cdots j_N}(Y)x_1^{j_1}\cdots x_{N}^{j_N}
\end{equation}
such that each $C_{j_1\cdots j_N}(Y)\in \Q[Y]$ is a homogenous polynomial of degree $g^2(g+1).$
\begin{lemma}\label{symmetric-relation-polynomials}
For any tuple $(j_1,\cdots, j_N)$ of non-negative integers with $\sum\limits_{\alpha=1}^Nj_\alpha=g(g^2-1),$
let $C_{j_1\cdots j_N}(Y)\in \Q[Y]$ be homogenous polynomials defined in \ref{defining-equation}. We have
$$C_{j_1\cdots j_N}(\varsigma(Y))=C_{j_{\varsigma^{-1}(1)}\cdots j_{\varsigma^{-1}(N)}}(Y) \,\,\forall \varsigma\in S_N.$$
\end{lemma}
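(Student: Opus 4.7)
The plan is to exploit the manifest equivariance of $F(x_1,\ldots,x_N;Y)=\det(\sum_i x_iY(i))$ under the simultaneous operation of permuting the matrices $Y(i)$ by $\varsigma$ and the variables $x_i$ inversely, and to propagate this equivariance through the algebraic construction of $\mathcal{C}$ by matching coefficients in the $x_i$.

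First I would establish the basic identity $F(x;\varsigma(Y)) = F(x_{\varsigma^{-1}(1)},\ldots,x_{\varsigma^{-1}(N)};Y)$ by relabeling the summation index. By the chain rule this transfers to partial derivatives: writing $F_i=\partial F/\partial x_i$ and $F_{ij}=\partial^2 F/\partial x_i\partial x_j$, one obtains $F_i(x;\varsigma(Y)) = F_{\varsigma(i)}(x_{\varsigma^{-1}(\cdot)};Y)$ and $F_{ij}(x;\varsigma(Y)) = F_{\varsigma(i)\varsigma(j)}(x_{\varsigma^{-1}(\cdot)};Y)$. Hence the entries $M_{ij} := FF_{ij} - F_iF_j$ of the matrix appearing in $\mathcal{C}_1$ satisfy $M_{ij}(x;\varsigma(Y)) = M_{\varsigma(i),\varsigma(j)}(x_{\varsigma^{-1}(\cdot)};Y)$, which is exactly conjugation of the matrix $(M_{ij})$ by a permutation matrix. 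Since such conjugation preserves determinants, $\mathcal{C}_1(x;\varsigma(Y)) = \mathcal{C}_1(x_{\varsigma^{-1}(1)},\ldots,x_{\varsigma^{-1}(N)};Y)$.

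Next I would treat the $\mathcal{C}_2$ term. Using the equivariance of $F$ already established and noting $D(\varsigma(Y)) = \mathrm{sgn}(\varsigma)\,D(Y)$ so that $D(\varsigma(Y))^2 = D(Y)^2$, the same transformation rule persists: $\mathcal{C}_2(x;\varsigma(Y)) = \mathcal{C}_2(x_{\varsigma^{-1}(1)},\ldots,x_{\varsigma^{-1}(N)};Y)$. Combining, $\mathcal{C}(x;\varsigma(Y)) = \mathcal{C}(x_{\varsigma^{-1}(1)},\ldots,x_{\varsigma^{-1}(N)};Y)$. Expanding both sides as polynomials in $(x_1,\ldots,x_N)$ and reindexing the monomial $\prod_i x_{\varsigma^{-1}(i)}^{j_i}$ as $\prod_l x_l^{j_{\varsigma(l)}}$, comparison of the coefficients of $x_1^{j_1}\cdots x_N^{j_N}$ yields exactly $C_{j_1\cdots j_N}(\varsigma(Y)) = C_{j_{\varsigma^{-1}(1)}\cdots j_{\varsigma^{-1}(N)}}(Y)$.

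The only subtle point, and the step I expect to require a deliberate pen-and-paper check, is correctly tracking whether the permutation appearing in the exponent tuple on the right is $\varsigma$ or $\varsigma^{-1}$. This depends on consistent conventions for how $\varsigma$ acts on the row-labels of $Y$ versus on variable indices; a single unambiguous test case (e.g.\ a transposition $\varsigma=(k\,l)$) at the outset suffices to lock the convention in place. All remaining steps reduce to standard chain-rule and determinant manipulations, so no deeper obstacle is anticipated.
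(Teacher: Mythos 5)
Your proposal is correct and follows essentially the same route as the paper: establish the equivariance $F(x;\varsigma(Y))=F(x_{\varsigma^{-1}(1)},\cdots,x_{\varsigma^{-1}(N)};Y)$ (equivalently, the relation $t_{i_1\cdots i_N}(\varsigma(Y))=t_{i_{\varsigma^{-1}(1)}\cdots i_{\varsigma^{-1}(N)}}(Y)$), note $D(\varsigma(Y))^2=D(Y)^2$, and compare coefficients of the monomials in the $x_i$. The paper leaves the propagation through $\mathcal{C}_1$ implicit (``we prove the statement as well''), while you make the chain-rule identities for $F_i$, $F_{ij}$ and the permutation-conjugation invariance of $\det(FF_{ij}-F_iF_j)$ explicit, which is simply a fuller write-up of the same argument.
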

\begin{proof}Let $\varsigma$ be an arbitrary element in the group $S_N.$ We get
$$\det(\sum\limits_{i=1}^Nx_iY(\varsigma(i)))=\sum\limits_{i_1+\cdots +i_N=g, i_k\geq0 } t_{i_1\cdots i_N}(\varsigma(Y))x_1^{i_1}\cdots x_{N}^{i_N}.$$
On the other hand,
\begin{eqnarray*}
  \det(\sum_{i=1}^Nx_iY(\varsigma(i))) &=& \det(\sum_{i=1}^Nx_{\varsigma^{-1}(i)}Y(i))\\
                                       &=&  \sum_{i_1+\cdots +i_N=g, i_k\geq0 } t_{i_1\cdots i_N}(Y)x_{\varsigma^{-1}(1)}^{i_1}\cdots x_{\varsigma^{-1}(N)}^{i_N}\\
                                       &=&\sum_{i_{\varsigma(1)}+\cdots +i_{\varsigma(N)}=g, i_k\geq0 } t_{i_{1}\cdots i_{N}}(Y)x_{1}^{i_{\varsigma(1)}}\cdots x_{N}^{i_{\varsigma(N)}}\\
                                       &=&\sum_{j_{1}+\cdots +j_{N}=g, j_k\geq0 } t_{j_{\varsigma^{-1}(1)}\cdots j_{\varsigma^{-1}(N)}}(Y)x_{1}^{j_{1}}\cdots x_{N}^{j_{N}}.
\end{eqnarray*}
Therefore, we obtain
$t_{i_1\cdots i_N}(\varsigma(Y))=t_{i_{\varsigma^{-1}(1)}\cdots i_{\varsigma^{-1}(N)}}(Y)\,\,\,\, \forall \, i_1+\cdots +i_N=g $ with  $i_k\geq0.$
Since $D(\varsigma(Y))^2\equiv D(Y)^2,$ we prove the statement as well.\\
\end{proof}

The group $S_N$ has a natural action on the set of $(N\times N)$-matrices $M_{N\times N}(\C)$  as
$$\varsigma(B):=\left(
              \begin{array}{c}
                \widetilde{B}(\varsigma(1)) \\
                \vdots \\
                \widetilde{B}(\varsigma(i))\\
                \vdots \\
                \widetilde{B}(\varsigma(N)) \\
              \end{array}
            \right)\, \mbox{ for }\varsigma\in S_N, B=( \widetilde{B}(1)^T,\cdots  \widetilde{B}(i)^T,\cdots, \widetilde{B}(N)^T)\in M_{N\times N}(\C).
$$
Since the $S_N$ acts freely on
 $\mathrm{GL}(g,\C)=\{A\in M_{N\times N}(\C)\,\,|\,\,  \det A\neq 0 \},$
 the quotient $\mathfrak{P}_g:=  \mathrm{GL}(g,\C)/S_N$ is a smooth affine variety.
\begin{lemma}\label{matrix-associate to cone-lemma}
Let $\Sigma_{\mathfrak{F}_0}:=\{\sigma_\alpha^{\mathfrak{F}_0}\}$ be a
$\overline{\Gamma_{\mathfrak{F}_0}}$(or $\mathrm{GL}(g,\Z)$)-admissible polyhedral decomposition of $C(\mathfrak{F}_0)$ regular with respect to an arithmetic subgroup $\Gamma\subset \Sp(g,\Z).$
There is an injective map of sets
$\nu_{\Gamma}:\{\mbox{top-dimensional cones in }\Sigma_{\mathfrak{F}_0}\}\> \hookrightarrow>> \mathfrak{P}_g(\Z),$
where $\mathfrak{P}_g(\Z)$ is the set of all integral points of the variety $\mathfrak{P}_g.$
\end{lemma}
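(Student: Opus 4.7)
The plan is to let $\nu_\Gamma$ send each top-dimensional cone $\sigma_{\max}\in\Sigma_{\mathfrak{F}_0}$ to the class in $\mathfrak{P}_g=\GL(N,\C)/S_N$ of the integer matrix $L_{\Gamma}(\sigma_{\max},\{\overrightarrow{l(\mu)}\})$ from (\ref{matrix-associate to cone}), where $\{\overrightarrow{l(\mu)}\}_{\mu=1}^{N}$ is the tuple of primitive generators of the edges of $\sigma_{\max}$ inside the lattice $\Gamma\cap U^{\mathfrak{F}_0}(\Z)$, taken in some marking order. Because $\Sigma_{\mathfrak{F}_0}$ is regular with respect to $\Gamma$, each edge $\rho_\mu$ of $\sigma_{\max}$ meets $\Gamma\cap U^{\mathfrak{F}_0}(\Z)$ in a rank-one monoid with a unique primitive generator, and these primitive generators form a $\Z$-basis of $\Gamma\cap U^{\mathfrak{F}_0}(\Z)$. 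Expanding them in the fixed $\Z$-basis $\{\delta_{i,j}\}_{1\leq i\leq j\leq g}$ of $U^{\mathfrak{F}_0}(\Z)=\mathrm{Sym}_g(\Z)$ then produces a well-defined $N\times N$ integer matrix whose rows are intrinsic to $\sigma_{\max}$ once an order on the edges is chosen.

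Next I would verify well-definedness modulo $S_N$. The only remaining ambiguity in the construction is the marking order of the edges: reordering the tuple $\{\overrightarrow{l(\mu)}\}$ by $\varsigma\in S_N$ permutes the rows of $L_{\Gamma}(\sigma_{\max},\{\overrightarrow{l(\mu)}\})$, which is exactly the tautological $S_N$-action on $M_{N\times N}(\C)$ introduced just before the lemma. Hence the equivalence class $[L_{\Gamma}(\sigma_{\max})]$ in $\mathfrak{P}_g$ is independent of all choices. Moreover, by (\ref{volume-number-cone}) one has $|\det L_{\Gamma}(\sigma_{\max})|=\mathrm{vol}_\Gamma(\sigma_{\max})=[U^{\mathfrak{F}_0}(\Z):\Gamma\cap U^{\mathfrak{F}_0}(\Z)]>0$, so $L_{\Gamma}(\sigma_{\max})\in\GL(N,\C)$ with integer entries and the class lies in $\mathfrak{P}_g(\Z)$.

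For injectivity, suppose $\nu_\Gamma(\sigma_{\max}^1)=\nu_\Gamma(\sigma_{\max}^2)$. Then there exists $\varsigma\in S_N$ transforming the row-tuple of $L_{\Gamma}(\sigma_{\max}^1)$ into that of $L_{\Gamma}(\sigma_{\max}^2)$, which means the two cones share the same unordered set of primitive edge generators in $\Gamma\cap U^{\mathfrak{F}_0}(\Z)$. Since each $\sigma_{\max}^i$ is top-dimensional and regular, it is simplicial and is recovered from its edges as $\sum_{\mu=1}^{N}\R_{\geq 0}\overrightarrow{l(\mu)}$; consequently $\sigma_{\max}^1=\sigma_{\max}^2$.

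The only delicate point is ensuring that the primitive-generator normalization is truly canonical and that the $S_N$-action captures all row-reordering ambiguity—both of which rely crucially on the regularity of $\Sigma_{\mathfrak{F}_0}$ with respect to $\Gamma$ (so that edge rays meet the lattice in monogenic semigroups and so that the $N$ edges really are all the edges of $\sigma_{\max}$). Once this is pinned down, the rest of the argument is essentially bookkeeping, and no deep obstacle remains.
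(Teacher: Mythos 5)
Your proposal is correct and follows essentially the same route as the paper: both define $\nu_\Gamma$ by sending $\sigma_{\max}$ to the class of the edge-generator matrix $L_\Gamma(\sigma_{\max},\{\overrightarrow{l(\mu)}\})$ in $\mathfrak{P}_g(\Z)$, using regularity to get a marked $\Z$-basis of $\Gamma\cap U^{\mathfrak{F}_0}(\Z)$ and the $S_N$-action to absorb the reordering ambiguity. You merely spell out details the paper leaves implicit (primitivity of the edge generators, $\det\neq 0$ via the lattice index, and the recovery of the cone from its unordered edge set for injectivity), all of which are sound.
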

\begin{proof}
Let $\sigma_{\max}$ be an arbitrary top-dimensional cone in $\Sigma_{\mathfrak{F}_0}.$
Let $\{\overrightarrow{l(\mu)}\}_{\mu=1}^{N}$ be any marking basis of $U^{\mathfrak{F}_0}(\Z)\cap \Gamma$ such that $\sigma_{\max}=\sum\limits_{\mu=1}^{N} \R_{\geq 0}\overrightarrow{l(\mu)}.$
The $L_{\Gamma}(\sigma_{\max},\{\overrightarrow{l(\mu)}\})$ in \ref{matrix-associate to cone} is an element in
$\mathrm{GL}(g,\Z),$ and its projective image $[L_{\Gamma}(\sigma_{\max},\{\overrightarrow{l(\mu)}\})]$ in $\mathrm{GL}(g,\Z)/S_N$ is independent of the marking order of the basis $\{\overrightarrow{l(\mu)}\}_{\mu=1}^{N}.$
Therefore we can define an  injective map
$$\nu_{\Gamma}:\{\mbox{top-dimensional cones in }\Sigma_{\mathfrak{F}_0}\}\>>> \mathfrak{P}_g(\Z)\,\,\, $$ by sending $\sigma_{\max}$ to
the equivalent class of $L_{\Gamma}(\sigma_{\max},\{\overrightarrow{l(\mu)}\})$ in $\mathfrak{P}_g(\Z).$
\end{proof}
\begin{lemma}
Define $$\mathfrak{A}_g=\{Z\in \mathrm{GL}(g,\C)\,\,|\,\,  C_{j_1\cdots j_N}(Z)=0\,\,\forall j_1+\cdots +j_N=g(g^2-1)\mbox{ with }j_k\geq 0\},$$
where $C_{j_1\cdots j_N}$'s are polynomials defined in \ref{defining-equation}.
The permutation group $S_N$ acts freely on the affine variety $\mathfrak{A}_g.$
\end{lemma}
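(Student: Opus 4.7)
The plan is to reduce the claim to two small checks: that the $S_N$-action on $\mathrm{GL}(g,\C)$ restricts to $\mathfrak{A}_g$, and that the restricted action has no non-trivial stabilizers. Both will follow with essentially no new work beyond what has already been proved.

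First I would verify invariance directly from Lemma~\ref{symmetric-relation-polynomials}. Given $Z\in\mathfrak{A}_g$ and $\varsigma\in S_N$, for every admissible multi-index $(j_1,\ldots,j_N)$ with $\sum_k j_k=g(g^2-1)$ and $j_k\geq 0$ one has
$$C_{j_1\cdots j_N}(\varsigma(Z))=C_{j_{\varsigma^{-1}(1)}\cdots j_{\varsigma^{-1}(N)}}(Z).$$
Since $(j_{\varsigma^{-1}(1)},\ldots,j_{\varsigma^{-1}(N)})$ is again an admissible multi-index (only the labelling has been permuted, not the sum or non-negativity), the right-hand side vanishes by the definition of $\mathfrak{A}_g$. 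Hence $\varsigma(Z)\in\mathfrak{A}_g$, and because a row-permutation clearly preserves $|\det|$ and in particular invertibility, the action indeed restricts to $\mathfrak{A}_g$.

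For freeness, I would invoke the observation, already recorded in the paragraph preceding the lemma, that $S_N$ acts freely on $\mathrm{GL}(g,\C)=\{A\in M_{N\times N}(\C)\,|\,\det A\neq 0\}$. Indeed, if a non-identity $\varsigma$ fixed some $Z\in\mathrm{GL}(g,\C)$, then $\widetilde{Z}(i)=\widetilde{Z}(\varsigma(i))$ for every $i$; picking any $i_0$ with $\varsigma(i_0)\neq i_0$ would produce two coincident rows of $Z$ and force $\det Z=0$, a contradiction. Since $\mathfrak{A}_g\subseteq\mathrm{GL}(g,\C)$ and this subset is $S_N$-stable by the previous step, the restriction inherits freeness.

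The hard part is nominal: all the substance has been absorbed into Lemma~\ref{symmetric-relation-polynomials}. The only point that requires a moment's care is keeping track of the $\varsigma^{-1}$ versus $\varsigma$ in the re-indexing identity, so that the bijection on multi-indices genuinely sends defining equations of $\mathfrak{A}_g$ to defining equations of $\mathfrak{A}_g$; but this has already been sorted out in the proof of that lemma.
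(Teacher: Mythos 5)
Your proposal is correct and follows essentially the same route as the paper: invariance of $\mathfrak{A}_g$ is deduced from Lemma~\ref{symmetric-relation-polynomials} exactly as in the paper's proof, and freeness is inherited from the free $S_N$-action on $\mathrm{GL}(g,\C)$ already noted in the paragraph before the lemma (you merely spell out the two-equal-rows argument that the paper leaves implicit).
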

\begin{proof}
By the lemma \ref{symmetric-relation-polynomials}, we obtain that if $Z\in \mathfrak{A}_g$ then $\varsigma(Z)\in \mathfrak{A}_g$ for all $\varsigma\in S_N.$ Thus the group $S_N$ has a free action on $\mathfrak{A}_g.$
\end{proof}

Define
\begin{equation}\label{characteristic-variety}
  \mathfrak{Q}_g:=\mathfrak{A}_g/S_N.
\end{equation}
It is obvious that $\mathfrak{Q}_g$ is an affine variety defined over $\Q$ dependent only on $\mathfrak{H}_g.$
We call $\mathfrak{Q}_g$ the $g$-\textbf{KE-characteristic variety}.

\begin{theorem}\label{combinatorial-condition-1}
Let $\Gamma\subset \Sp(g,\Z)$ be a neat arithmetic subgroup. Let $\Sigma_{\mathfrak{F}_0}:=\{\sigma_\alpha^{\mathfrak{F}_0}\}$ be a
$\overline{\Gamma_{\mathfrak{F}_0}}$(or $\mathrm{GL}(g,\Z)$)-admissible polyhedral decomposition of $C(\mathfrak{F}_0)$ regular with respect to $\Gamma,$ where $\mathfrak{F}_0$ is the standard minimal cusp of the Siegel space $\mathfrak{H}_g.$
Let $\overline{\sA}_{g,\Gamma}$ be the toroidal compactification  of $\sA_{g,\Gamma}:=\mathfrak{H}_{g}/\Gamma$ constructed by  $\Sigma_{\mathfrak{F}_0}.$

Assume that the boundary divisor $D_{\infty}:=\overline{\sA}_{g,\Gamma}\setminus\sA_{g,\Gamma}$ is
simple normal crossing.
There is an injective map of sets
$$\nu_{\Gamma}:\{\mbox{top-dimensional cones in }\Sigma_{\mathfrak{F}_0}\}\> \hookrightarrow>> \mathfrak{Q}_g(\Z),$$
where  $\mathfrak{Q}_g(\Z)$ is the set of all integral points of the $g$-KE-characteristic variety $\mathfrak{Q}_g.$
\end{theorem}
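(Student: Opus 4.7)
The plan is to observe that Lemma \ref{matrix-associate to cone-lemma} already supplies an injection $\nu_\Gamma$ from the top-dimensional cones of $\Sigma_{\mathfrak{F}_0}$ into $\mathfrak{P}_g(\Z) = \GL(g,\C)(\Z)/S_N$, sending a cone $\sigma_{\max}$ with a marking $\{\overrightarrow{l(\mu)}\}$ of its edges to the class $[L_\Gamma(\sigma_{\max},\{\overrightarrow{l(\mu)}\})]$. Since the subvariety $\mathfrak{Q}_g = \mathfrak{A}_g/S_N$ sits naturally inside $\mathfrak{P}_g$ as the image of the $S_N$-invariant subvariety $\mathfrak{A}_g \subset \GL(g,\C)$, it suffices to prove that every matrix $L_\Gamma(\sigma_{\max},\{\overrightarrow{l(\mu)}\})$ actually lies in $\mathfrak{A}_g$; restricting the codomain then yields the desired injection into $\mathfrak{Q}_g(\Z)$.

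The key identification to set up is that the polynomial $F_{\sigma_{\max}}(x_1,\ldots,x_N)$ appearing in Theorem \ref{global-volume-form-on Siegel varieties} coincides with $\det\bigl(\sum_{\mu=1}^N x_\mu Y(\mu)\bigr)$, where $Y(\mu) \in \mathrm{Sym}_g(\Z)$ is the symmetric matrix representing the edge vector $\overrightarrow{l(\mu)}$ under the isomorphism $U^{\mathfrak{F}_0}(\Z) \cong \mathrm{Sym}_g(\Z)$. This is obtained by tracing the volume form computation of Proposition \ref{Calculation-volume-form-1} and equation \ref{Wang-volume-form}: writing $Z = \sum_\mu z_\mu \overrightarrow{l(\mu)}$ in the coordinates adapted to $\sigma_{\max}$, the denominator $(\det\mathrm{Im}\,\widetilde{Z})^{g+1}$ becomes precisely $\det(\sum_\mu \mathrm{Im}(z_\mu)\, Y(\mu))^{g+1}$, and upon passing to the toric coordinates $w_\mu = \exp(2\pi\sqrt{-1}\,z_\mu)$ from Example \ref{local-coordinate-system} one reads off $F_{\sigma_{\max}}(\log|w_1|,\ldots,\log|w_N|) = \det(\sum_\mu \log|w_\mu|\,Y(\mu))$ up to a sign. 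Under the same substitution, the matrix $Y$ from the definition of $D(Y)$ becomes exactly $L_\Gamma(\sigma_{\max},\{\overrightarrow{l(\mu)}\})$, so $D(Y)^2 = \mathrm{vol}_\Gamma(\sigma_{\max})^2$.

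With this dictionary in hand, statement (3) of Theorem \ref{global-volume-form-on Siegel varieties} is exactly the assertion that, at the specific value $Y = L_\Gamma(\sigma_{\max},\{\overrightarrow{l(\mu)}\})$, the polynomial identity $\mathcal{C}_1 = \mathcal{C}_2$ holds in $\Q[x_1,\ldots,x_N]$. Expanding $\mathcal{C} = \mathcal{C}_1 - \mathcal{C}_2$ as $\sum_{j_1+\cdots+j_N=g(g^2-1)} C_{j_1\cdots j_N}(Y)\,x_1^{j_1}\cdots x_N^{j_N}$, the vanishing of $\mathcal{C}$ as a polynomial in the $x_\mu$ at this specific $Y$ forces each coefficient $C_{j_1\cdots j_N}(L_\Gamma(\sigma_{\max},\{\overrightarrow{l(\mu)}\}))$ to be zero. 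By the very definition \ref{defining-equation} of $\mathfrak{A}_g$, this places $L_\Gamma(\sigma_{\max},\{\overrightarrow{l(\mu)}\})$ inside $\mathfrak{A}_g(\Z)$, so its class in $\mathfrak{P}_g(\Z)$ lies in $\mathfrak{A}_g(\Z)/S_N = \mathfrak{Q}_g(\Z)$.

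The main obstacle will be the careful bookkeeping needed to verify the identification $F_{\sigma_{\max}}(x_1,\ldots,x_N) = \det(\sum_\mu x_\mu Y(\mu))$ for an arbitrary top-dimensional $\sigma_{\max}$ (and not just the principal cone of the central decomposition, where the explicit volume form calculation is carried out in the text). This amounts to checking that the marking-order data and the compatibility between the lattice $U^{\mathfrak{F}_0}(\Z)\cap\Gamma$ and the integer symmetric matrices $\mathrm{Sym}_g(\Z)$ enter both $\mathrm{vol}_\Gamma(\sigma_{\max})$ and the coefficients of $F_{\sigma_{\max}}$ in precisely the same way. Once this is established, the remaining pieces — the Monge-Amp\`ere equation and the injectivity of $\nu_\Gamma$ — are already proven, and the conclusion follows immediately.
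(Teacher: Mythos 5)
Your proposal is correct and follows essentially the same route as the paper, whose own proof is just the one-line observation that the claim is ``straightforward by Theorem \ref{global-volume-form-on Siegel varieties} and Lemma \ref{matrix-associate to cone-lemma}''; you simply make explicit the dictionary $F_{\sigma_{\max}}(x_1,\dots,x_N)=\det\bigl(\sum_\mu x_\mu Y(\mu)\bigr)$ at $Y=L_\Gamma(\sigma_{\max},\{\overrightarrow{l(\mu)}\})$, so that statement (3) of Theorem \ref{global-volume-form-on Siegel varieties} forces all the defining polynomials $C_{j_1\cdots j_N}$ to vanish there, placing the class of $L_\Gamma$ in $\mathfrak{Q}_g(\Z)=\mathfrak{A}_g(\Z)/S_N$, with injectivity supplied by Lemma \ref{matrix-associate to cone-lemma}. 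The ``main obstacle'' you flag (the identification for an arbitrary top-dimensional cone, not just the principal one) is exactly what the paper disposes of in the proof of statement (1) of Theorem \ref{global-volume-form-on Siegel varieties} via the coordinate change $Z=\sum_\mu z_\mu\overrightarrow{l(\mu)}$, whose Jacobian accounts for $\mathrm{vol}_\Gamma(\sigma_{\max})^2$, so no new argument is needed.
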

\begin{proof}
It is straightforward by the theorem \ref{global-volume-form-on Siegel varieties} and the lemma \ref{matrix-associate to cone-lemma}.
\end{proof}
\begin{remark}\label{Remark-mumford-volume}
Actually, the assumption of normal crossing $D_{\infty}:=\overline{\sA}_{g,\Gamma}\setminus\sA_{g,\Gamma}$ in the theorem \ref{combinatorial-condition-1} is
not necessary. The theorem \ref{combinatorial-condition-1} is true for all smooth toroidal compactifications.
Consider the partial compactification given by the diagram
$$
 \begin{CDS}
\mathfrak{H}_g \>\subset>>U^{\mathfrak{F}_0}(\C)\cong\C^{\frac{g(g+1)}{2}} \\
\V V  V \novarr \V V w_{i}:=\exp(2\pi\sqrt{-1}z_{i}) V  \\
\mathfrak{H}_{g}/\Gamma\cap U^{\mathfrak{F}}(\Q)\>\subset >>
(\C^*)^{g(g+1)/2}
\end{CDS}
$$
with respect to an arbitrary regular top-dimensional cone $\sigma\in \Sigma_{\mathfrak{F}_0},$
the $(w_1,\cdots, w_{\frac{g(g+1)}{2}})$ is always a local coordinate system of the partial compactification
even though it can not be  a local coordinate system of
$\overline{\sA}_{g,\Gamma},$ the quotient manifold $\mathfrak{H}_{g}/\Gamma\cap U^{\mathfrak{F}}(\Q)$ also has an induced K\"ahler-Einstein metric with
 volume form \ref{Wang-volume-form}.
Therefore, the function $H_{\sigma}(x_1,\cdots, x_{\frac{g(g+1)}{2}}):=-\log F_{\sigma}(-x_1,\cdots, -x_{\frac{g(g+1)}{2}})$ must satisfy the elliptic real  Monge-Amp\'ere equation \ref{Monge-Ampere-eq}.
\end{remark}



\section{Asymptotic behaviours of logarithmical canonical line bundles}

\vspace{0.5cm}

Let $N=g(g+1)/2.$ For any positive integer $n,$ we define a constant $C_n=(\frac{\sqrt{-1}}{2\pi})^n.$

In this section, we fix a neat subgroup $\Gamma\subset \Sp(g,\Z)$ and a
$\overline{\Gamma_{\mathfrak{F}_0}}$(or $\mathrm{GL}(g,\Z)$)-admissible polyhedral decomposition $\Sigma_{\mathfrak{F}_0}:=\{\sigma_\alpha^{\mathfrak{F}_0}\}$ of $C(\mathfrak{F}_0)$ regular with respect to $\Gamma$ such that the constructed symmetric toroidal compactification $\overline{\sA}_{g,\Gamma}$ of $\sA_{g,\Gamma}:=\mathfrak{H}_{g}/\Gamma$ is geometrically $\Gamma$-fine, i.e., $D_\infty:=\overline{\sA}_{g,\Gamma}\setminus\sA_{g,\Gamma}$ is a simple normal crossing divisor.
Let $K_{\sA_{g,\Gamma}}$ be the canonical divisor on $\sA_{g,\Gamma}$ and $h_B$ the metric  on the canonical line bundle $\sO_{\sA_{g,\Gamma}}(K_{\sA_{g,\Gamma}})$ induced by the Bergman metric $\omega_{\mathrm{can}}$ of $\sA_{g,\Gamma}.$

We define
$D_\infty(\epsilon)$ to be tube neighborhood of $D_\infty$ with radius $\epsilon$ for suitable real number $\epsilon>0.$
For every irreducible component $Y$ of $D_\infty,$ we define
$$Y_{\infty}:=\bigcup\limits_{D_j\neq Y}(D_j\cap Y)\,\,\mbox{ and }\,\,Y^*:=Y\setminus Y_{\infty}.$$
Then $Y_{\infty}$ is a simple normal crossing divisor of $Y.$
In the theorem \ref{global-volume-form-on Siegel varieties},  we show that there is a system $\{(U_{\alpha},(w_1^{\alpha},\cdots, w_N^{\alpha}))\}_{\alpha}$ of finitely many coordinate charts of the compactification $\overline{\sA}_{g,\Gamma}$
such that
\begin{equation}\label{local-chart-toroidal-compactification}
    U_{\alpha}^*:=U_{\alpha}\setminus D_\infty=\sA_{g,\Gamma}\,\, \mbox{  and  } \,\, U_{\alpha}\cap D_\infty=\bigcup_{i=1}^N\{ w_i=0 \}.
\end{equation}
On any  such coordinate chart $(U_{\alpha}^*,(w_1^{\alpha},\cdots, w_N^{\alpha})),$ the volume form $\Phi_{g,\Gamma}$ becomes
$$
 \Phi_\alpha=\frac{(\frac{\sqrt{-1}}{2})^N2^{\frac{g(g-1)}{2}}\mathrm{vol}_{\Gamma}(\sigma_{\max})^2\bigwedge\limits_{1\leq i \leq N} dw_{i}^\alpha\wedge d\overline{w_{i}^\alpha}}{(\prod_{1\leq i \leq N}|w_{i}^{\alpha}|^2)(F^{\alpha}(\log|w_1^{\alpha}|,\cdots,\log|w_N^{\alpha}|))^{g+1}}.
$$
where $F^{\alpha}\in \R[x_1,\cdots, x_{N}]$ is a homogenous polynomial in of degree $g.$
We call this $F^{\alpha}$ the \textbf{local volume function} with respect to the local chart $U_{\alpha}^*.$
Write $$F^{\alpha}_i:=\frac{\partial F^{\alpha}(x_1,\cdots,x_N)}{\partial x_i},F^{\alpha}_{ij}:=\frac{\partial^2 F^{\alpha}(x_1,\cdots,x_N)}{\partial x_i \partial x_j}\,\, 1\leq i,j\leq N.$$
Define $$T^{\alpha}_{i,j}:=F^{\alpha}F^{\alpha}_{ij}-F^{\alpha}_{i}F^{\alpha}_{j}\,\,\, 1\leq i,j\leq N.$$
We have a $N\times N$ matrix $T^\alpha:=(T^{\alpha}_{i,j})$ such that each  $T^{\alpha}_{i,j}$ is a homogenous polynomial of degree $2g-2$ in $\R[x_1,\cdots, x_{N}].$

Now we begin to compute $\partial\overline{\partial}\log \Phi_\alpha$  as a distribution form on $U_\alpha$ :

\begin{eqnarray*}
 && \partial\overline{\partial}\log \Phi_\alpha\\
    &=& -\partial\overline{\partial}\log\prod_{1\leq i \leq N}|w_{i}^{\alpha}|^2-(g+1)\partial\overline{\partial}\log F^{\alpha}(\log|w_1^\alpha|,\cdots,\log|w_N^\alpha|)\\
   &=&- \partial\overline{\partial}\log\prod_{1\leq i \leq N}|w_{i}^{\alpha}|^2\\
   &&-(g+1)\{\frac{\partial\overline{\partial}F^{\alpha}(\log|w_1^\alpha|,\cdots,\log|w_N^\alpha|)}{F^{\alpha}(\log|w_1^\alpha|,\cdots,\log|w_N^\alpha|)} -\frac{(\partial F^{\alpha}\wedge \overline{\partial} F^{\alpha})(\log|w_1^\alpha|,\cdots,\log|w_N^\alpha|)}{F^{\alpha}(\log|w_1^\alpha|,\cdots,\log|w_N^\alpha|)^2}\}\\
   &=& -\sum_{i=1}^N(2+(g+1)\frac{F^{\alpha}_i}{F^{\alpha}}(\log|w_1^\alpha|,\cdots,\log|w_N^\alpha|))
   \partial\overline{\partial}\log|w_i^\alpha|  \\
   &&
   -(g+1)\sum_{1\leq i,j\leq N}\frac{T^{\alpha}_{i,j}}{(F^{\alpha})^2}(\log|w_1^\alpha|,\cdots,\log|w_N^\alpha|)\partial\log |w_i^\alpha|\wedge \overline{\partial}\log |w_j^\alpha|.\\
\end{eqnarray*}
Particularly,
\begin{equation}\label{dd-volume-equality}
    \partial\overline{\partial}\log \Phi_\alpha=\sum_{1\leq i,j\leq N}K_{ij}(w_1^{\alpha},\cdots,w_N^\alpha)dw_i^{\alpha}\wedge d\overline{w_j^\alpha}\,\,\, \mbox{     on } U^*_\alpha.
\end{equation}
is a smooth form on $ U^*_\alpha,$ where $K=(K_{i,j})$ is a new $N\times N$ matrix given by
\begin{equation*}
    K_{i,j}(w_1^{\alpha},\cdots,w_N^\alpha):=
    \frac{-(g+1)}{4}\frac{ T^{\alpha}_{i,j}(\log|w_1^\alpha|,\cdots,\log|w_N^\alpha|)}{w_i^\alpha\overline{w_j^{\alpha}}(F^{\alpha})^2(\log|w_1^\alpha|,\cdots,\log|w_N^\alpha|)}\,\, \forall i,j.
\end{equation*}

In general, with respect to an arbitrary smooth toroidal compactification $\overline{D/\Gamma}$ of any locally symmetric manifold $D/\Gamma$
with normal crossing boundary divisor, Mumford has shown
that any group-invariant Hermitian metric on the homogeneous holomorphic cotangent bundle $\Omega_{D/\Gamma}^1$ is good on $\overline{D/\Gamma}$ such that
the good extension of $\Omega_{D/\Gamma}^1$ to  $\overline{D/\Gamma}$ is just the logarithmical cotangent bundle on  $\overline{D/\Gamma}$
(cf. Main Theorem 3.1 and Proposition 3.4  in Section 1 of \cite{Mum77}).
\begin{lemma}\label{Poincare-growth-metric}For any positive integer $p,$
the smooth $(p,p)$-form $(\partial\overline{\partial}\log \Phi_{g,\Gamma})^p$ on $U_\alpha^*$ has Poincar\'e growth on $D_\infty\cap U_{\alpha}$(cf.\cite{Mum77} for the definition), and $(\sqrt{-1}\partial\overline{\partial}\log \Phi_{g,\Gamma})^p$ is a positive current on $\overline{\sA}_{g,\Gamma}.$
\end{lemma}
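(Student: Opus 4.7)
The strategy is to identify the form in question with a constant multiple of a power of the Bergman Kähler form, and then to invoke Mumford's goodness theory from \cite{Mum77} in order to control the growth near the boundary. The Kähler--Einstein identity \ref{KE} recalled at the beginning of Section~3 gives
\[
\sqrt{-1}\,\partial\overline{\partial}\log\Phi_{g,\Gamma}
=-\sqrt{-1}\,\partial\overline{\partial}\log(\det\operatorname{Im}\tau)^{g+1}
=\frac{g+1}{2}\,\omega_{\mathrm{can}}
\qquad\text{on }\sA_{g,\Gamma},
\]
so that
\[
\bigl(\sqrt{-1}\,\partial\overline{\partial}\log\Phi_{g,\Gamma}\bigr)^{p}
=\Bigl(\frac{g+1}{2}\Bigr)^{p}\omega_{\mathrm{can}}^{p}
\qquad\text{on each }U_{\alpha}^{\ast}.
\]
Since $\omega_{\mathrm{can}}$ is a smooth positive $(1,1)$-form on $\sA_{g,\Gamma}$, the first conclusion is that the form is pointwise strongly positive on the open part, and only its behaviour near $D_{\infty}$ remains to be analyzed.

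First I would recall Mumford's Main Theorem~3.1 and Proposition~3.4 of \cite{Mum77}: any $\Sp(g,\R)$-invariant Hermitian metric on $\Omega^{1}_{\mathfrak{H}_{g}}$ descends to a good metric on the logarithmic cotangent bundle on any smooth toroidal compactification with normal crossing boundary. Applying this to the Bergman metric, the induced metric $h_{B}$ on $K_{\sA_{g,\Gamma}}$ is good on $\overline{\sA}_{g,\Gamma}$, hence its Chern curvature, which is precisely $\tfrac{1}{2\pi}\cdot(-\sqrt{-1}\partial\overline{\partial}\log\Phi_{g,\Gamma})$ up to a sign, defines a $(1,1)$-form with Poincaré growth along $D_{\infty}$ in the sense of Mumford. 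By the pointwise computation \ref{dd-volume-equality} we already know this form extends as a smooth form on $U_{\alpha}^{\ast}$, so the abstract goodness statement is exactly the Poincaré growth estimate in the local coordinates $(w^{\alpha}_{1},\ldots,w^{\alpha}_{N})$.

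Next I would promote the $p=1$ case to general $p$ by using Mumford's elementary observation (also in \S 1 of \cite{Mum77}) that Poincaré growth is preserved under wedge products. Since $\sqrt{-1}\,\partial\overline{\partial}\log\Phi_{g,\Gamma}$ has Poincaré growth on each chart $U_{\alpha}$, so does its $p$-fold wedge product, establishing the first half of the lemma. For the second half, positivity of $(\sqrt{-1}\,\partial\overline{\partial}\log\Phi_{g,\Gamma})^{p}$ as a current on all of $\overline{\sA}_{g,\Gamma}$ follows from the identification with $\bigl(\tfrac{g+1}{2}\bigr)^{p}\omega_{\mathrm{can}}^{p}$ on the interior together with the integrability furnished by the Poincaré growth bound: pairing against any positive $(N-p,N-p)$ test form on $\overline{\sA}_{g,\Gamma}$ gives a convergent integral (the Poincaré growth estimate forces the singularities along $D_{\infty}$ to be locally integrable), and the integrand is pointwise nonnegative on $\sA_{g,\Gamma}$.

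The main obstacle is really the first passage, namely the assertion that the explicit rational expression for $\partial\overline{\partial}\log\Phi_{g,\Gamma}$ in the coordinates $(w^{\alpha}_{1},\ldots,w^{\alpha}_{N})$ exhibits Poincaré growth; I would not reprove this by hand from the formula for the matrix $K_{ij}$, since that amounts to Mumford's theorem, but rather derive it as a direct corollary of \cite{Mum77}. Everything else (closure of Poincaré growth under wedge products, positivity of the current, independence of the chart $U_{\alpha}$) is then a formal consequence, and the proof closes by noting that the constants in front of $\omega_{\mathrm{can}}^{p}$ are positive.
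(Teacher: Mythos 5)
Your proposal is correct and follows essentially the same route as the paper's own proof: identify $\sqrt{-1}\,\partial\overline{\partial}\log\Phi_{g,\Gamma}$ with $\frac{g+1}{2}\omega_{\mathrm{can}}$ (equivalently $2\pi c_1(\sO_{\sA_{g,\Gamma}}(K_{\sA_{g,\Gamma}}),h_B)$) via the K\"ahler--Einstein identity \ref{KE}, and invoke Mumford's Main Theorem 3.1 and Proposition 3.4 to get goodness of $h_B$ and hence Poincar\'e growth, with positivity and the extension to powers $p$ following formally. The extra remarks you add (closure of Poincar\'e growth under wedge products, integrability giving the current) are exactly what the paper leaves implicit.
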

\begin{proof}
The Main Theorem 3.1 and Proposition 3.4 in \cite{Mum77} guarantee that $[K_{\overline{\sA}_{g,\Gamma}}+D_\infty]$
is the unique extension of $\sO_{\sA_{g,\Gamma}}(K_{\sA_{g,\Gamma}})$ to $\overline{\sA}_{g,\Gamma}$ such that $h_B$ is singular metric good on $\overline{\sA}_{g,\Gamma}.$ Thus $c_1(\sO_{\sA_{g,\Gamma}}(K_{\sA_{g,\Gamma}}),h_B)$ has Poincar\'e growth on $D_\infty.$
By the K\"ahler-Einstein equality \ref{KE}, we get that
$$(\frac{g+1}{2}\omega_{\mathrm{can}})^p =(\sqrt{-1}\partial\overline{\partial}\log \Phi_{g,\Gamma})^p
    =(2\pi c_1(\sO_{\sA_{g,\Gamma}}(K_{\sA_{g,\Gamma}}),h_B))^p.$$
\end{proof}
We can make an improvement on the above lemma : By the generalized Schwarz lemma(cf.\cite{Yau78-2},\cite{CCL79} and \cite{Roy80}), the lemma is true for not only smooth toroidal compactifications but also a general compactification with normal crossings boundary divisor.

\begin{proposition}\label{key-lemma-on-recurrence}
Let $\Gamma\subset \Sp(g,\Z)$ be a neat arithmetic subgroup and $\Sigma_{\mathfrak{F}_0}$ a
$\overline{\Gamma_{\mathfrak{F}_0}}$(or $\mathrm{GL}(g,\Z)$)-admissible polyhedral decomposition of $C(\mathfrak{F}_0)$ regular with respect to $\Gamma.$
Let $\overline{\sA}_{g,\Gamma}$ be a toroidal compactification  of $\sA_{g,\Gamma}:=\mathfrak{H}_{g}/\Gamma$ constructed by  $\Sigma_{\mathfrak{F}_0}.$

Assume that the boundary divisor $D_{\infty}:=\overline{\sA}_{g,\Gamma}\setminus\sA_{g,\Gamma}$ is  simple normal crossing. Let $\Phi_{g,\Gamma}$ be the standard volume form on $\sA_{g,\Gamma}.$
Let $p$ be a positive integer in $[1, \frac{g(g+1)}{2}]$ and let $\Psi:=(\partial\overline{\partial}\log\Phi_{g,\Gamma})^p.$
Write $D_\infty=\bigcup\limits_{j=1}^l D_j,$ we have :
\begin{myenumi}
\item Regard $\Psi$ as a singular form $\widetilde{\Psi}$ on the compactification $\overline{\sA}_{g,n},$
we define the restriction of $\Psi$ to $D_i$ as follows in sense of limit :
\begin{equation}\label{restrict-rule}
 \mathrm{Res}_{D_i}((\partial\overline{\partial}\log\Phi_{g,\Gamma})^p):=\widetilde{\Psi}|_{D_i^*}
\end{equation}
for each irreducible component $D_i$ of $D_\infty.$ For each $D_i,$ the form $\mathrm{Res}_{D_i}((\partial\overline{\partial}\log\Phi_{g,\Gamma})^p)$ becomes a smooth form on each $D_i.$

\item For each irreducible component $D_i$ of $D_\infty,$ the form $\mathrm{Res}_{D_i}((\partial\overline{\partial}\log\Phi_{g,\Gamma})^p)$ has Poincar\'e growth on the simple normal crossing divisor $D_{i,\infty}$ of $D_i.$
 That $\mathrm{Res}_{D_i}((\partial\overline{\partial}\log\Phi_{g,\Gamma})^p)$ becomes a current on $D_i$ in sense that the following integral
\begin{equation}\label{Poincare-growth-integral}
  \int_{D_i}\mathrm{Res}_{D_i}((\partial\overline{\partial}\log\Phi_{g,\Gamma})^p)\wedge \alpha:=\lim_{\varepsilon\to 0} \int_{D_i\setminus T_i(\varepsilon)} \mathrm{Res}_{D_i}((\partial\overline{\partial}\log\Phi_{g,\Gamma})^p)\wedge \alpha\,\,\,\, \,
\end{equation}
is finite for each smooth $(g(g+1)-2p-2)$-form  $\alpha$ on $D_i,$ where $T_i(\varepsilon)$ is a tube neighborhood of $D_{i,\infty}$ with radius $\varepsilon.$
\item For each irreducible component $D_i$ of $D_\infty,$ the form $\mathrm{Res}_{D_i}((\partial\overline{\partial}\log\Phi_{g,\Gamma})^p)$ is closed on $D_i^*$ and $(\frac{\sqrt{-1}}{2\pi})^p\mathrm{Res}_{D_i}((\partial\overline{\partial}\log\Phi_{g,\Gamma})^p)$ is a positive closed current on $D_i.$
\end{myenumi}
\end{proposition}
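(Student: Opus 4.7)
The approach will combine three ingredients: the explicit local formula $\partial\overline{\partial}\log\Phi_\alpha=\sum K_{ij}\,dw_i\wedge d\overline{w_j}$ derived just above, Mumford's goodness theorem \cite{Mum77} that $h_B$ is good on $\overline{\sA}_{g,\Gamma}$, and the remark after Theorem \ref{Infity-divisor-on-toroidal-compactification}, which identifies each irreducible component $D_i$ of $D_\infty$ with a smooth toroidal compactification of a locally symmetric variety sharing a direct factor with the lower-genus Siegel variety $\sA_{g-1,\Gamma'}$, with $D_{i,\infty}$ playing the role of infinity boundary.

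For Part (1), I fix $p_0\in D_i^*$ and choose a toroidal chart $(U_\alpha,(w_1,\ldots,w_N))$ with $D_i\cap U_\alpha=\{w_1=0\}$ and the remaining coordinates bounded away from $0$ on a shrunk neighborhood $V$ of $p_0$. The cleanest route to the continuous extension is via the factorization of $\Phi_g$ across the depth-$1$ cusp attached to $D_i$ given by Proposition \ref{Calculation-volume-form-1} (case $k=1$): $\Phi_g$ splits as $\Phi_{g-1}(\tau')$ in the boundary-tangential directions, wedged with a flat fibre factor in $V^{\mathfrak{F}}(\R)$ and a Poincar\'e-type factor in the toroidal direction $w_1=\exp(2\pi\sqrt{-1}Z)$. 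Taking $\partial\overline{\partial}\log$ and wedging, $\Psi$ decomposes into a tangential piece---smooth in the $(\tau',s)$-variables with a continuous limit as $w_1\to 0$---plus terms proportional to $dw_1$ or $d\overline{w_1}$ that vanish upon restriction to $D_i^*$. This produces $\widetilde\Psi$ and the smoothness of $\mathrm{Res}_{D_i}(\Psi)$ on $D_i^*$; the restriction further matches (up to fibre weights) the analogous object $(\partial\overline{\partial}\log\Phi_{g-1,\Gamma'})^{p-\ell}$ on the lower-genus factor. An alternative direct route uses the explicit $K_{ij}$, but requires a careful three-case asymptotic analysis of $T^\alpha_{ij}/(F^\alpha)^2$ as $\log|w_1|\to-\infty$ distinguishing $(1,1)$, $(1,\geq 2)$ and $(\geq 2,\geq 2)$.

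For Parts (2) and (3), Mumford's main theorem gives Poincar\'e growth of $c_1([K_{\overline{\sA}_{g,\Gamma}}+D_\infty],h_B)$ and of all its wedge powers along $D_\infty$; since $\Psi$ is a nonzero constant multiple of $(2\pi c_1(h_B))^p$, it has Poincar\'e growth along $D_\infty$. Combined with the self-similarity above, Mumford's theorem applies anew on $D_i$ itself to give Poincar\'e growth of $\mathrm{Res}_{D_i}(\Psi)$ along $D_{i,\infty}$, and the integral \eqref{Poincare-growth-integral} converges by Mumford's local integrability. Closedness of $\mathrm{Res}_{D_i}(\Psi)$ on $D_i^*$ is then immediate from the tangential factorization of Part (1), since the tangential piece is an exact form $d\bigl(\partial\log\Phi_{g-1}\wedge(\partial\overline{\partial}\log\Phi_{g-1})^{p-\ell-1}\bigr)$ up to fibre weights. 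Positivity follows from the K\"ahler-Einstein identity $\sqrt{-1}\partial\overline{\partial}\log\Phi_{g,\Gamma}=\tfrac{g+1}{2}\omega_{\mathrm{can}}$, which makes $(\sqrt{-1}/(2\pi))^p\Psi$ strongly positive on $\sA_{g,\Gamma}$; strong positivity is preserved under continuous limits, hence on $D_i^*$, and Poincar\'e growth promotes it to a positive closed current on all of $D_i$.

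The main obstacle will be making the tangential/normal decomposition in Part (1) globally consistent across different toroidal charts covering $D_i^*$: the factorization of Proposition \ref{Calculation-volume-form-1} is attached to a specific cusp $\mathfrak{F}$, so one must verify that the ``tangential piece'' patches to a well-defined form on all of $D_i$ matching the Bergman-type structure of $\sA_{g-1,\Gamma'}$. This compatibility is what drives the recursion exploited in the intersection-theoretic results to follow.
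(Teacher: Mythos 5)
Your primary route does not establish the proposition as stated, for two concrete reasons. First, the identification of $D_i$ with a toroidal compactification of a locally symmetric variety carrying a lower-genus Siegel factor is available only for components coming from boundary edges (rank-one generators); the remark following Theorem \ref{Infity-divisor-on-toroidal-compactification} is explicitly restricted to that case. Components coming from interior edges (rank-$g$ generators, which the statement must also cover, cf.\ the count in part (1) of Theorem \ref{Infity-divisor-on-toroidal-compactification}) admit no depth-one cusp attached to them, so the factorization of Proposition \ref{Calculation-volume-form-1} with $k=1$ that your Part (1) rests on simply does not exist there; in fact, as the proof of Lemma \ref{lemma-on-recurrence-integral} notes, the restricted form vanishes on such components, which your argument would not even detect. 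Second, the step you yourself call the ``main obstacle'' is precisely what carries the burden of Parts (2)--(3) in your scheme: to ``apply Mumford's theorem anew on $D_i$'' you must first prove that $\mathrm{Res}_{D_i}(\Psi)$ agrees with (a power of) the Chern form of a good invariant metric on the locally symmetric structure of $D_i^*$ --- your asserted matching ``up to fibre weights'' with $(\partial\overline{\partial}\log\Phi_{g-1,\Gamma'})^{p-\ell}$. This is never proved, and the fibre (Kuga-type) directions of $D_i$ make it non-obvious; without it neither the Poincar\'e growth along $D_{i,\infty}$ nor the convergence of \eqref{Poincare-growth-integral} is established. A further technical point for Part (1): near a point of $D_i^*$ the relevant holomorphic coordinates are the toroidal ones $(w_1,\cdots,w_N)$ attached to a minimal cusp and a top-dimensional cone, not the cusp coordinates $(Z,S,\tau')$ of Proposition \ref{Calculation-volume-form-1} (which, as written with $S=A+\sqrt{-1}B$, are not visibly holomorphic), so the claim that the ``normal'' terms are exactly those proportional to $dw_1,d\overline{w_1}$ requires a genuine change-of-variables argument, not just the factorization of the volume density.

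By contrast, the paper's proof is the ``alternative direct route'' you set aside, and it is uniform in the type of component and short. Part (1) is deduced from Lemma \ref{degree-determine-polynomal} together with statement (4) of Lemma \ref{lemma-on-degree-of-polynomial}: the degree bounds $\deg_k T^{\alpha}_{i,j}\leq 2\deg_k F^{\alpha}$ (with a strict drop when $i$ or $j$ equals $k$) and the non-vanishing of the leading coefficient of $F^{\alpha}$ (the $\infty$-positive matrix argument) show that every coefficient $T^{\alpha}_{i,j}/(F^{\alpha})^2(\log|w_1|,\cdots,\log|w_N|)$ extends continuously across $\{w_k=0\}$ with smooth restriction --- the ``three-case analysis'' you worried about is entirely packaged in those two lemmas. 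Part (2) needs no goodness statement on $D_i$ itself: the Poincar\'e bound for the coefficients furnished by Lemma \ref{Poincare-growth-metric} on $U_\alpha^*$ has a right-hand side independent of $w_1$, so it passes to the limit $w_1\to 0$ and gives Poincar\'e growth of $\mathrm{Res}_{D_i}(\Psi)$ along $D_{i,\infty}$ directly. Part (3) is proved by restricting the identity $d\Psi=0$ to the slices $\{w_1=\varepsilon\}$ and letting $\varepsilon\to 0$, the coefficients being rational functions of the logarithms whose limits are the coefficients of $d\,\mathrm{Res}_{D_i}(\Psi)$. To salvage your approach you would have to prove the self-similarity statement and handle interior-edge components separately; as it stands the proposal has a genuine gap.
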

\begin{proof}
\begin{myenumi}
\item The statement (1) is a consequence of (4) of the lemma \ref{lemma-on-degree-of-polynomial} and  the lemma \ref{degree-determine-polynomal} in the next subsection.
\item By symmetry, we prove the statement (2) for case $D_1$ only.  It is a local problem and it is sufficient to prove this statement for $p=1.$

Taking a local chart $(U_\alpha,(w^\alpha_1,w^\alpha_2,\cdots, w^\alpha_N))$ of $\overline{\sA}_{g,\Gamma}$ as in \ref{local-chart-toroidal-compactification}, we have the smooth form
$$
 \partial\overline{\partial}\log \Phi_\alpha
   =-(g+1)\sum_{1\leq i,j\leq N} \frac{T^{\alpha}_{i,j}}{(F^{\alpha})^2}(\log|w_1^\alpha|,\cdots,\log|w_N^\alpha|)\frac{dw_i^{\alpha}\wedge d\overline{w_j^\alpha}}{4w_i^\alpha\overline{w_j^\alpha}}\,\,\, \mbox{     on } U^*_\alpha.\\
$$
Thus, on $D_1^*\cap U_\alpha,$ the $\mathrm{Res}_{D_1}(\partial\overline{\partial}\log\Phi_{g,\Gamma})$ can be written
as
\begin{eqnarray*}
  \mathrm{Res}_{D_1}(\partial\overline{\partial}\log\Phi_{g,\Gamma})
   &=&  -(g+1)\sum_{2\leq i,j\leq N} \frac{T^{\alpha}_{i,j}}{(F^{\alpha})^2}(\log|w_1^\alpha|,\cdots,\log|w_N^\alpha|)|_{D_1^*\cap U_\alpha}\frac{dw_i^{\alpha}\wedge d\overline{w_j^\alpha}}{4w_i^\alpha\overline{w_j^\alpha}}\\
&=:& \sum_{2\leq i,j\leq N} a_{i,j}dw_i^{\alpha}\wedge d\overline{w_j^\alpha}.
\end{eqnarray*}

Let $V_1\subset D_1$ be a small neighborhood in containing the origin point and $U_1$ a small neighborhood in $U_\alpha$ such that $U_1\cap D_1=V_1.$
Let  $i,j$ be two arbitrary integers with $2\leq i,j\leq N.$
Since $\partial\overline{\partial}\log\Phi_{g,\Gamma}$ has Pincar\'e growth on $U_\alpha \cap D_\infty$
we have
$$|\frac{T^{\alpha}_{i,j}}{(F^{\alpha})^2}(\log|w_1^\alpha|,\cdots,\log|w_N^\alpha|)
\frac{1}{4w_i^\alpha\overline{w_j^\alpha}}|\leq \frac{C}{|w_i^\alpha w_j^\alpha||\log |w_i^\alpha|\log |w_j^\alpha||}\,\, \mbox{ on } \,\,U_1\cap U^*_\alpha $$
for a suitable constant. Let $w_1^\alpha \to 0,$  we get
$$|a_{ij}|\leq \frac{C}{|w_i^\alpha w_j^\alpha||\log |w_i^\alpha|\log |w_j^\alpha||}\,\, \mbox{ on } \,\,V_1\cap D_1^*\cap U^*_\alpha. $$
Therefore,
$\mathrm{Res}_{D_1}(\partial\overline{\partial}\log\Phi_{g,\Gamma})$ has Poincar\'e growth on $D_{1,\infty}\cap U_\alpha,$
and the integral \ref{Poincare-growth-integral} is finite.

\item It is sufficient to  prove the statement(3) in case of $D_1$ for $p=1.$ It is also a local problem.
Take a local chart $U_\alpha$ of $\overline{\sA}_{g,\Gamma}.$
Let $V$ be an open neighborhood in $D_1^*\cap U_\alpha.$ For a sufficiently small $\varepsilon,$
we define a sub-complex manifold  in $D^*$
$$V_\varepsilon:=\{(\varepsilon, w^\alpha_2,\cdots, w^\alpha_N)\,\,|\,\,(0, w^\alpha_2,\cdots, w^\alpha_N)
\in V \}.$$

On $V_\varepsilon,$ we have
\begin{eqnarray*}
&& d(\widetilde{\Psi}|_{V_\varepsilon})(\varepsilon, w^\alpha_2,\cdots, w^\alpha_N) \\
&=& d(\Psi|_{V_\varepsilon})(\varepsilon, w^\alpha_2,\cdots, w^\alpha_N) \\
   &=& \sum_{2\leq k,i,j\leq N}Q_{k,i,j}(\log|\varepsilon|, \log |w^\alpha_2|,\cdots,\log |w^\alpha_N|)\frac{(\overline{w^\alpha_k} d w^\alpha_k+w^\alpha_k d\overline{ w^\alpha_k})\wedge
dw^\alpha_i\wedge d\overline{w^\alpha_j}}{|w^\alpha_k|^2w^\alpha_i\overline{w^\alpha_j}},
\end{eqnarray*}
where $Q_{k,i,j}(x_1,x_2,\cdots, x_N)$'s are rational functions.

On $V,$ we also have
\begin{eqnarray*}
&&d\mathrm{Res}_{D_1}(\partial\overline{\partial}\log\Phi_{g,\Gamma})(w^\alpha_2,\cdots, w^\alpha_N)\\
&=& \sum_{2\leq k,i,j\leq N}P_{k,i,j}(\log |w^\alpha_2|,\cdots,\log |w^\alpha_N|)\frac{(\overline{w^\alpha_k} d w^\alpha_k+w^\alpha_k d\overline{ w^\alpha_k})\wedge
dw^\alpha_i\wedge d\overline{w^\alpha_j}}{|w^\alpha_k|^2w^\alpha_i\overline{w^\alpha_j}},
\end{eqnarray*}
where $P_{k,i,j}(x_2,\cdots, x_N)$'s are rational functions.

Let $(k,i,j)$ be an arbitrary triple  with $2\leq k,i,j\leq N.$
Let $(z^\alpha_2,\cdots,z^\alpha_N)$ be an arbitrary point on $V.$
By directly calculating, we get
$$P_{k,i,j}(\log |z^\alpha_2|,\cdots,\log |z^\alpha_N|)= \lim_{\varepsilon\to 0} Q_{k,i,j}(\log|\varepsilon|,\log |z^\alpha_2|,\cdots,\log |z^\alpha_N|).$$
On the other hand,
$$d(\Psi|_{V_\varepsilon})=(d\Psi)|_{V_\varepsilon}\equiv 0|_{V_\varepsilon}\equiv0.$$
Thus  $ Q_{k,i,j}(\log|\varepsilon|,\log |z^\alpha_2|,\cdots,\log |z^\alpha_N|)=0$
for any sufficiently small $\varepsilon.$
Therefore, $$P_{k,i,j}(\log |w^\alpha_2|,\cdots,\log |w^\alpha_N|)\equiv 0  \,\,\, \mbox{ on } V$$
and so $d\mathrm{Res}_{D_1}(\partial\overline{\partial}\log\Phi_{g,\Gamma})=0.$
\end{myenumi}
\end{proof}
\subsection{Some lemmas on local volume functions}
For any polynomial  $T$ in $\R[x_1,\cdots, x_{N}],$  let $\deg_iT$ be the degree of $T$ with respect to $x_i.$
For example, we write $T=a_l x_1^l+a_{l-1}x_1^{l-1}+\cdots + a_0 \,\,\mbox{ with } a_l\neq 0$
where each $a_i$ is a  polynomial in $\R[x_2,\cdots, x_{N}],$ then  $\deg_1(T)=l.$

\begin{lemma}\label{degree-determine-polynomal}
Let $F$ be an arbitrary local volume function. We have :
\begin{myenumi}
\item That $$\deg_{k}T^{\alpha}_{i,j}\left\{
  \begin{array}{ll}
   =2\deg_k F^{\alpha}-2, & \hbox{$i=j=k$} \\
   \leq 2\deg_k F^{\alpha}-1, & \hbox{$i=k,\, j\neq k$ or $i\neq k,\, j= k$} \\
    \leq 2\deg_k F^{\alpha}, & \hbox{$i\neq k,\,j\neq k$}
  \end{array}
\right.
$$
and
$$\deg_k\det(T^{\alpha}_{i,j})_{1\leq i,j\leq N}\leq 2N\deg_k F^{\alpha}-2.$$

\item That $\deg_i F^{\alpha} \geq 1$ for  all $i=1,\cdots N.$
\end{myenumi}
\end{lemma}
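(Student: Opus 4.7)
The plan is to dispatch part (2) first, since the asserted equality $\deg_k T^\alpha_{k,k} = 2\deg_k F^\alpha - 2$ in part (1) implicitly requires $\deg_k F^\alpha \ge 1$ to make sense and to ensure the leading coefficient survives.

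For part (2), I would recall from the proof of Theorem \ref{global-volume-form-on Siegel varieties} that the local volume function $F^\alpha$ attached to a top-dimensional cone $\sigma_{\max} = \sum_{i=1}^N \R_{\geq 0}\,\overrightarrow{l(i)}$ is, up to a nonzero constant,
$$F^\alpha(x_1,\ldots,x_N) = c\cdot\det\Bigl(\sum_{i=1}^N x_i\,\overrightarrow{l(i)}\Bigr),$$
where each $\overrightarrow{l(i)}$ is a nonzero primitive symmetric integer matrix in $\mathrm{Sym}_g(\Z)$ and together they form an $\R$-basis of $\mathrm{Sym}_g(\R)$. A standard calculation, for instance by conjugating $\overrightarrow{l(i_0)}$ to $\mathrm{diag}(I_r,0)$ via congruence and applying a Schur-complement expansion, shows that for any fixed $g\times g$ matrix $M$, the polynomial $\det(M + t\,\overrightarrow{l(i_0)})$ has degree in $t$ exactly $\mathrm{rank}(\overrightarrow{l(i_0)})$. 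Since $\overrightarrow{l(i_0)}\ne 0$, this rank is at least $1$, so $\deg_{i_0} F^\alpha \ge 1$.

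For the three-case assertion in part (1), I would write $F^\alpha = \sum_{r=0}^d a_r x_k^r$ with $d := \deg_k F^\alpha \ge 1$ and $a_d \ne 0$, where the $a_r$ are polynomials in the remaining variables. Direct leading-term computations in $x_k$ give
$F^\alpha F^\alpha_{kk} = d(d-1)a_d^2\, x_k^{2d-2} + (\text{lower order})$ and $(F^\alpha_k)^2 = d^2 a_d^2\, x_k^{2d-2} + (\text{lower order})$,
so $T^\alpha_{k,k}$ has $x_k$-degree exactly $2d-2$ with leading coefficient $-d a_d^2 \ne 0$. For $i=k$, $j\ne k$, both $F^\alpha F^\alpha_{kj}$ and $F^\alpha_k F^\alpha_j$ have $x_k$-degree at most $2d-1$, and the coefficients of $x_k^{2d-1}$ in both equal $d\,a_d(\partial_j a_d)$; these cancel in $T^\alpha_{k,j}$, yielding $\deg_k T^\alpha_{k,j} \le 2d-2 \le 2d-1$ (the symmetric case is identical). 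For $i,j\ne k$ the trivial bound $\deg_k \le 2d$ is immediate.

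For the determinant estimate, I would apply the Leibniz expansion $\det(T^\alpha_{i,j}) = \sum_{\sigma\in S_N}\mathrm{sgn}(\sigma)\prod_{i=1}^N T^\alpha_{i,\sigma(i)}$ with a dichotomy on whether $\sigma$ fixes $k$. If $\sigma(k)=k$, then exactly one factor, $T^\alpha_{k,k}$, contributes $\deg_k=2d-2$ while the remaining $N-1$ factors avoid $k$ entirely and contribute at most $2d$ each, so $\deg_k$ of the product is at most $(2d-2)+(N-1)(2d)=2Nd-2$. If $\sigma(k)\ne k$, then the two distinct factors $T^\alpha_{k,\sigma(k)}$ and $T^\alpha_{\sigma^{-1}(k),k}$ are of the mixed type and each have $\deg_k \le 2d-1$, while the remaining $N-2$ factors give $\deg_k \le 2d$ each, summing again to $2(2d-1)+(N-2)(2d)=2Nd-2$. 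The main obstacle is verifying the two leading-coefficient cancellations in the $i=j=k$ and $i=k,\,j\ne k$ cases, which upgrade the naive bounds; once these are in hand, the determinant estimate follows cleanly by the above combinatorial count over permutations.
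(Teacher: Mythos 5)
Your part (1) is correct and in fact supplies exactly the computation the paper waves off as ``obvious'': the cancellation of the $x_k^{2d-1}$ coefficients $d\,a_d(\partial_j a_d)$ in the mixed case, the exact leading term $-d\,a_d^2x_k^{2d-2}$ of $T^\alpha_{k,k}$ (which needs part (2), so your ordering is right), and the Leibniz dichotomy $\sigma(k)=k$ versus $\sigma(k)\neq k$ giving $2Nd-2$ in both cases are all sound. For part (2), however, you take a genuinely different route from the paper, and your route as written has a gap. The paper proves $\deg_iF^\alpha\geq1$ by contradiction out of the Monge--Amp\`ere identity of Theorem \ref{global-volume-form-on Siegel varieties}(3): if $\deg_kF^\alpha=0$ then $F^\alpha_k\equiv F^\alpha_{kj}\equiv0$, so the whole $k$-th row and column of $(T^\alpha_{i,j})$ vanish and $\det(T^\alpha_{i,j})\equiv0$, contradicting $\det(T^\alpha_{i,j})=(-1)^{N}2^{\frac{g(g-1)}{2}}\mathrm{vol}_\Gamma(\sigma_{\max})^2(F^\alpha)^{(g+1)(g-1)}\neq0$ on $U_\alpha^*$; no structural information about $F^\alpha$ beyond that identity is used.

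Your route instead uses $F^\alpha=c\det\big(\sum_i x_i\overrightarrow{l(i)}\big)$, which is legitimate, but the pivotal claim ``for any fixed $g\times g$ matrix $M$, $\det(M+t\,\overrightarrow{l(i_0)})$ has $t$-degree exactly $\mathrm{rank}(\overrightarrow{l(i_0)})$'' is false as a general statement: after conjugating $\overrightarrow{l(i_0)}$ to $\mathrm{diag}(\Lambda_r,0)$, the coefficient of $t^{r}$ is $\det(\Lambda_r)$ times the complementary $(g-r)\times(g-r)$ principal minor of $M$, which can vanish (take $M=0$ with $r<g$, giving the zero polynomial). So in your application, with $M=\sum_{i\neq i_0}x_i\overrightarrow{l(i)}$, you still owe an argument that this complementary minor is a nonzero polynomial, and that argument must come from positivity of the cone: evaluating at an interior point where $\sum_i a_i\overrightarrow{l(i)}$ is positive definite, the complementary block is a principal submatrix of a positive-definite matrix, hence nonsingular --- this is precisely what the paper proves later in Lemma \ref{lemma-on-degree-of-polynomial}(2). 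Alternatively, for the weaker statement you actually need, note that at such an interior point $A\succ0$ one has $\partial\det/\partial x_{i_0}=\det(A)\,\mathrm{tr}(A^{-1}\overrightarrow{l(i_0)})>0$ since $\overrightarrow{l(i_0)}$ is semi-positive and nonzero, so $F^\alpha$ genuinely depends on $x_{i_0}$ and $\deg_{i_0}F^\alpha\geq1$. With either repair your argument closes; as stated, the ``standard calculation'' step does not.
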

\begin{proof}
The (1) and (2) are obvious. We just prove the (3).
Otherwise, $\deg_k F^{\alpha} =0$ for some $k.$ Then $T_{k,j}^\alpha$ and $T_{j,k}^\alpha$ are zero polynomials for all integers $j\in[1,N],$ so that $\det(T^{\alpha}_{i,j})_{1\leq i,j\leq N}$ is a zero polynomials. But on $U^*_\alpha,$
$$\det(T^{\alpha}_{i,j})(\log|w_1^\alpha|,\cdots,\log|w_N^\alpha|)=
(-1)^{\frac{g(g+1)}{2}}(F^{\alpha})^{(g+1)(g-1)}(\log|w_1^\alpha|,\cdots,\log|w_N^\alpha|)\neq 0$$
by the theorem \ref{global-volume-form-on Siegel varieties}. It is a contradiction.
\end{proof}

For convenience, we now allow any function to take $\pm\infty$ value.
Let $$\nu_0=-\log 0^+:=\lim\limits_{r\to 0+}\log\frac{1}{r}.$$
We have the following reasonable definitions and rules :
\begin{eqnarray*}
  0\times\nu_0:=0, &&(\nu_0)^0=1, \\
  \alpha\nu_0:=\lim\limits_{r\to 0+}\log(\frac{1}{r})^{\alpha} \,\, (\alpha\in\R),  && \alpha\nu_0+\beta\nu_0:=(\alpha+\beta)\nu_0 \,\, (\alpha,\beta\in\R),\\
 (\nu_0)^{\alpha}:=\lim\limits_{r\to 0+}(\log\frac{1}{r})^\alpha \,\,(\alpha\in \R_{>0}),&&(\nu_0)^{-\alpha}:=\frac{1}{(\nu_0)^{\alpha}} \,\, (\alpha\in \R_{>0}), \\
  \nu_0^{\alpha}\times \nu_0^{\beta}:=\nu_0^{\alpha+\beta}\,\, (\alpha,\beta\in\R).&& 
\end{eqnarray*}
In this paper, the addition of $\alpha\nu_0^a$ and $\beta \nu_0^b$ is formally written as $\alpha\nu_0^a+\beta \nu_0^b$ for any two nonzero different real  numbers $a, b$(we particularly prohibit to use the rule that $\alpha\nu_0^a+\beta \nu_0^b=\alpha\nu_0^a$ if $a>b$);
we set the rule of  multiplication by $(\sum\limits_{i=1}^n\alpha_i\nu_0^{a_i})(\sum\limits_{j=1}^m\beta_j\nu_0^{b_j}):=\sum\limits_{1\leq i\leq n,1\leq j\leq m}\alpha_i\beta_j\nu_0^{a_i+b_j}.$
We also allow that any coefficient of a matrix take $+\infty$ value. A $n\times n$ real matrix $M$ is said to be a \textbf{$\infty$-positive} if
$M$ can be written as $M=M_1+(\nu_0-c)M_2 $
for some positive $n\times n$ matrix $M_1,$ some semi-positive $n\times n$ matrix $M_2$ and some non-negative real number $c.$ Certainly,
$M_1,M_2$ and $c$ are not unique for any $\infty$-positive matrix  $M.$
\begin{lemma}
The determine of a $n\times n$ $\infty$-positive matrix $M=M_1+(\nu_0-c)M_2$ is never zero. Moreover,
$\det(M)=\sum\limits_{i=0}^{\rank(M_2)}c_i\nu_0^i $ with some finite number  $c_{\rank(M_2)}>0.$
\end{lemma}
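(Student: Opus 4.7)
The plan is to diagonalize $M_2$ by an orthogonal change of basis. Since $M_2$ is a real symmetric semi-positive matrix of rank $r:=\rank(M_2)$, there exists an orthogonal $P$ with $P^{T}M_2P=D$, where $D=\mathrm{diag}(\lambda_1,\ldots,\lambda_r,0,\ldots,0)$ and $\lambda_i>0$ for $1\le i\le r$. Setting $\widetilde M_1:=P^{T}M_1P$, which remains symmetric positive definite, and noting that $\det(P^{T}MP)=\det M$, the problem is reduced to computing
\[
\det M \;=\; \det\bigl(\widetilde M_1+(\nu_0-c)D\bigr).
\]

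Next I would expand this determinant by column-multilinearity. Because $(\nu_0-c)D$ contributes $(\nu_0-c)\lambda_i$ only to diagonal entries with $i\le r$, the expansion is a polynomial in $(\nu_0-c)$ of degree at most $r$; selecting from each column either the $\widetilde M_1$ part or the $(\nu_0-c)D$ part yields
\[
\det M \;=\; \sum_{I\subset\{1,\ldots,r\}} (\nu_0-c)^{|I|}\Big(\prod_{i\in I}\lambda_i\Big)\,\det\bigl(\widetilde M_1[\{1,\ldots,n\}\setminus I]\bigr),
\]
where $\widetilde M_1[S]$ denotes the principal submatrix with rows and columns in $S$. Reorganizing powers of $(\nu_0-c)$ using the formal rules $\nu_0^a\nu_0^b=\nu_0^{a+b}$ rewrites this as $\sum_{i=0}^{r}c_i\nu_0^i$ with coefficients $c_i\in\R$.

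The critical step is the non-vanishing of the leading coefficient. Taking $I=\{1,\ldots,r\}$ in the expansion gives
\[
c_r \;=\; \lambda_1\cdots\lambda_r\cdot \det\bigl(\widetilde M_1[\{r+1,\ldots,n\}]\bigr).
\]
Since $\widetilde M_1$ is symmetric positive definite, every principal submatrix is positive definite, so the complementary $(n-r)\times(n-r)$ minor is strictly positive; together with $\lambda_1\cdots\lambda_r>0$ this yields $c_r>0$.

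The only real obstacle is this non-vanishing, which is handled entirely by positive definiteness of $M_1$. Given the formal convention in the paper that $\alpha\nu_0^a+\beta\nu_0^b$ is never collapsed when $a\neq b$, the expression $\det M=\sum_{i=0}^{r}c_i\nu_0^i$ with $c_r>0$ is a nonzero formal polynomial in $\nu_0$ of degree exactly $r$, establishing both assertions simultaneously.
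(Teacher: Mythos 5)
Your argument is correct, and in fact the paper states this lemma without any proof at all, so your write-up fills a gap rather than duplicating an existing argument. Your route --- orthogonally diagonalize $M_2$, expand $\det\bigl(\widetilde M_1+(\nu_0-c)D\bigr)$ by column-multilinearity over subsets $I\subset\{1,\dots,r\}$, and read off the top coefficient $c_r=\lambda_1\cdots\lambda_r\det\bigl(\widetilde M_1[\{r+1,\dots,n\}]\bigr)>0$ from positive definiteness of $\widetilde M_1$ --- is exactly the technique the authors themselves deploy one lemma later (in the proof of the statement on $\deg_i S=\rank E_i$), where an orthogonal diagonalization of $E_1$ identifies the leading coefficient with a complementary principal minor; so your proof is both correct and stylistically consonant with the paper. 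Two small points worth making explicit: in the extreme case $r=n$ the complementary principal submatrix is empty and its determinant should be taken to be $1$, so the formula still gives $c_n=\lambda_1\cdots\lambda_n>0$; and for the "never zero" assertion you should separate the cases $r=0$ (where $\det M=\det M_1>0$ is an ordinary positive real) and $r\geq 1$ (where, under the paper's formal rules for $\nu_0$, the expression $\sum_{i=0}^{r}c_i\nu_0^{i}$ with $c_r>0$ is nonzero because such sums are never collapsed), which you do gesture at in your closing paragraph.
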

\begin{lemma}\label{lemma-on-degree-of-polynomial}
Let $n,g$ be  positive integers such that $n\geq g(g+1)/2.$
Let $B$ be an  open set in $(\C^n,(z_1,\cdots, z_n))$ containing  $(0,\cdots,0)$ and let
$B^*:=B\setminus\bigcup\limits_{i=1}^n B_i$ where $B_{i}:=\{z_i=0\}.$

Let $M(x_1,\cdots, x_n)$ be a $g\times g$ \textbf{logarithmical positive matrix function} on $B^*$ with $n$ real variables $x_1,\cdots, x_n,$ i.e.,
$M(x_1,\cdots, x_n)=\sum\limits_{i=1}^n x_jE_j$ for  $n$ nonzero semi-positive symmetric real $g\times g$ matrices $E_1,\cdots,E_{n}$ and
$M(-\log|w_1|,\cdots, -\log|w_n|)$ is a positive matrix at every point $(w_1,\cdots, w_n)$ in $B^*.$

Let $S(x_1,\cdots, x_n):=\det(M(x_1,\cdots, x_n)).$ For any integer $i\in [1,n],$
we have :
\begin{myenumi}
\item $M(-\log|w_1|,\cdots, -\log|w_n|)$ is a $\infty$-positive matrix at any point  $(w_1,\cdots, w_n)\in B.$

\item $\deg_iS= \rank E_i,$
and
$$S(x_1,\cdots, x_n)=S_i(x_1,\cdots \widehat{x_{i}}\cdots, x_n)x_i^{\deg_i S}+\mbox{ terms with lower degree of $x_i$ }$$
where $S_i(x_1,\cdots \widehat{x_{i}}\cdots, x_n)$ is a homogenous polynomial of degree $(g-\deg_i S)$ with $n-1$ variables
$x_1,\cdots, x_{i-1},x_{i+1},\cdots, x_n.$

\item There exists a  positive number $\alpha$ and a $(g-\rank E_i)\times(g-\rank E_i)$ logarithmical positive matrix function
$M_i(x_1,\cdots \widehat{x_{i}}\cdots, x_n)$
on $B^*_{i}:=B_i
\setminus \bigcup\limits_{m\neq i}(B_i\cap B_m)$ such that
$$S_i(x_1,\cdots \widehat{x_{i}}\cdots, x_n)=\alpha\det(M^{(i)}(x_1,\cdots \widehat{x_{i}}\cdots, x_n)).$$
Moreover, $M^{(i)}(-\log|w_1|,\cdots \widehat{-\log|w_i|}\cdots, -\log|w_n|)$ is a $\infty$-positive matrix at any point  $(w_1,\cdots, w_n)\in B.$
In particular, $S_i(-\log|w_1|,\cdots \widehat{-\log|w_i|}\cdots, -\log|w_n|)$ is never zero at any point  $(w_1,\cdots, w_n)\in B.$

\item Let $i\in[1,n]$ be an integer and let $B_i^*:= B_i\setminus\bigcup_{j\neq i}B_j.$
Let $Q\in \R[x_1,\cdots, x_n]$ be a homogenous polynomial with $\deg_{i}Q\leq \deg_iS$ such that
$$Q=Q_i(x_1,\cdots \widehat{x_{i}}\cdots, x_n)x_i^{\deg_i Q}+\mbox{ terms with lower degree of $x_i$ }.$$
where $Q_i(x_1,\cdots \widehat{x_{i}}\cdots, x_n)$ is a homogenous polynomial with $n-1$ variables
$x_1,\cdots, x_{i-1},$ $x_{i+1},\cdots, x_n.$
Define a function  $A(z_1,\cdots, z_n):=(Q/S)(-\log|z_1|,\cdots, -\log|z_n|)$ on $B^*.$
\begin{myenumii}
\item That $J(w_1,\cdots, \underbrace{0}_{i},\cdots,w_n):=\lim\limits_{t_i\to 0}(Q/S)(-\log|w_1|,\cdots, -\log|t_i|, \cdots,-\log|w_n|)$
exists as a finite real number for any point $(w_1,\cdots, \underbrace{0}_{i},\cdots,w_n)\in B_i^*;$
\item the function $A$ can be extended to a continuous function $\widetilde{A}$ on $B^*\cup B_i^*,$ where
\begin{eqnarray*}
   && \widetilde{A}(w_1,\cdots, w_n ) \\
   &:=&\left\{
    \begin{array}{ll}
      A(w_1,\cdots, w_n) & \hbox{$(w_1,\cdots, w_n )\in B^*,$} \\
 J(w_1,\cdots,w_{i-1}, 0,w_{i+1},\cdots,w_n) & \hbox{$(w_1,\cdots,w_{i-1}, 0,w_{i+1},\cdots,w_n)\in B_i^*;$}
    \end{array}
  \right.
\end{eqnarray*}

\item $\mathrm{Res}_i(A):=\widetilde{A}|_{B_i^*}$ is a smooth function on $B_i^*.$
\end{myenumii}

\end{myenumi}
\end{lemma}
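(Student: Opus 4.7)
The plan is to handle the four parts in order, using a simultaneous change of basis as the technical core.

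For part (1), the key observation is that the hypothesis \emph{$M(-\log|w_1|,\dots,-\log|w_n|) > 0$ on $B^*$} already forces the finite sum $\sum_j E_j$ to be positive definite: specialize to $w_j = e^{-1}$ for all $j$ so that $\sum_j E_j = M(1,\ldots,1) > 0$. Given a point $(w_1,\ldots,w_n) \in B$, let $I := \{j : w_j = 0\}$ and decompose
$$M(-\log|w_1|,\ldots,-\log|w_n|) = \underbrace{\sum_{j \notin I}(-\log|w_j|)E_j + c \sum_{j \in I} E_j}_{=: M_1} + (\nu_0 - c)\underbrace{\sum_{j \in I} E_j}_{=: M_2}.$$
After shrinking $B$ so that $|w_j| < 1$ everywhere, each $-\log|w_j|$ is positive; taking $c$ large enough and letting $c' := \min(c, \min_{j \notin I}(-\log|w_j|)) > 0$ gives $M_1 - c'\sum_j E_j \geq 0$, hence $M_1 \geq c'\sum_j E_j > 0$. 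This exhibits the desired $\infty$-positive decomposition.

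For parts (2) and (3), I would diagonalize $E_i$ by a real invertible $P$ so that $P^T E_i P = \mathrm{diag}(I_{r_i}, 0_{g - r_i})$ with $r_i := \rank E_i$. Writing $P^T M P$ in block form adapted to the $r_i \mid g - r_i$ split,
$$P^T M P = \begin{pmatrix} x_i I_{r_i} + A_{11} & A_{12} \\ A_{21} & M^{(i)} \end{pmatrix},$$
where $M^{(i)}(x_1,\ldots,\widehat{x_i},\ldots,x_n) := \sum_{j \neq i} x_j E^{(i)}_j$ and each $E^{(i)}_j$ is the lower-right $(g-r_i)\times(g-r_i)$ principal submatrix of $P^T E_j P$, hence symmetric semi-positive. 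Laplace expansion on the first $r_i$ rows shows that $\det(P^T M P)$ has degree at most $r_i$ in $x_i$ and that the coefficient of $x_i^{r_i}$ equals $\det M^{(i)}$; combining with $S = (\det P)^{-2} \det(P^T M P)$ gives $S_i = \alpha \det M^{(i)}$ with $\alpha := (\det P)^{-2} > 0$. Homogeneity of $S_i$ of degree $g - r_i$ is inherited from that of $S$. To confirm that the bound is sharp (so $\deg_i S = r_i$ and $S_i \not\equiv 0$), I would check that $M^{(i)}$ is itself a logarithmical positive matrix function on $B_i^*$: a principal submatrix of a positive matrix is positive, and $M^{(i)}$ does not involve $x_i$, so positivity of $M$ on $B^*$ transfers to positivity of $M^{(i)}$ on the projected $B_i^*$. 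Applying part (1) recursively to $M^{(i)}$ in one fewer variable then yields the $\infty$-positivity at all points of $B$, and the determinant of an $\infty$-positive matrix is nonzero by the preceding auxiliary lemma---which gives both the ``in particular'' non-vanishing statement and the sharpness of the degree bound.

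For part (4), substitute $x_k = -\log|w_k|$ for $k \neq i$ and $x_i = -\log|t_i|$, then let $t_i \to 0^+$. Expanding numerator and denominator in powers of $x_i$,
$$\frac{Q}{S} = (-\log|t_i|)^{\deg_i Q - \deg_i S}\left(\frac{Q_i}{S_i} + O\bigl((-\log|t_i|)^{-1}\bigr)\right).$$
Since $\deg_i Q \leq \deg_i S$ and $S_i(-\log|w_1|,\ldots,\widehat{\cdot},\ldots,-\log|w_n|)$ never vanishes on $B_i^*$ by (3), the limit exists as a finite real number---either $0$ when $\deg_i Q < \deg_i S$, or $Q_i/S_i$ when equality holds. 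This defines $\widetilde{A}$ on $B_i^*$ and establishes (4a); continuity at boundary points (4b) follows from uniform convergence of the above expansion on compact subsets of $B_i^*$, and smoothness of $\mathrm{Res}_i(A)$ on $B_i^*$ (4c) is immediate from the explicit rational-in-$\log|\cdot|$ formula together with non-vanishing of the denominator. The main obstacle will be the sharpness step in (2)-(3): extracting the non-vanishing of $\det M^{(i)}$ from the naive block decomposition is not automatic, but becomes tractable once $M^{(i)}$ is recognized as an instance of the same logarithmical-positive structure in one fewer variable so that part (1) can be reapplied; everything else amounts to careful bookkeeping.
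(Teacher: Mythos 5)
Your proposal follows essentially the same route as the paper's proof: part (1) via an $\infty$-positive decomposition isolating the vanishing coordinates, parts (2)--(3) by diagonalizing $E_i$ (you use a congruence $P^{T}E_iP=\mathrm{diag}(I_{r_i},0)$, the paper an orthogonal conjugation, which is cosmetic), identifying the coefficient of $x_i^{\rank E_i}$ with $\det M^{(i)}$ up to a positive constant, and then showing $M^{(i)}$ is again a logarithmical positive matrix function on $B_i^*$ so that its determinant does not vanish, and part (4) by the expansion of $Q/S$ in powers of $-\log|t_i|$ together with the non-vanishing of $S_i$ on $B_i^*$. The only places you deviate are in (1), and there your argument is slightly weaker than needed for the statement as written: the evaluation point $w_j=e^{-1}$ used to get $\sum_jE_j>0$ need not lie in $B$ (use a diagonal point of small modulus instead), and after shrinking $B$ so that $|w_j|<1$ you prove $\infty$-positivity only on the shrunk set, not at every point of the given $B$. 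The paper avoids both issues by perturbing only the vanishing coordinates to a small $\epsilon_0<1$: the resulting point lies in $B^*$, so $M$ there is positive and serves directly as $M_1$, with $M_2=\sum_{j\in\Lambda}E_j$ and $c=\log(1/\epsilon_0)\ge 0$; no positivity of $\sum_jE_j$ and no shrinking are required. Likewise, the $\infty$-positivity of $M^{(i)}$ at \emph{every} point of $B$ is obtained in the paper simply because $M^{(i)}$ is a principal block of the congruated $M$, which is $\infty$-positive by (1); re-applying (1) recursively on $B_i$, as you suggest, only covers points whose $i$-th coordinate can be set to zero inside $B$. These are small, easily repaired slips that do not affect the sharpness argument in (2)--(3) or any of part (4), since there you only use positivity of $M^{(i)}$ on $B_i^*$ itself.
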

\begin{proof}
\begin{myenumi}
\item It is sufficient to  show that $M(-\log|w_1|,\cdots, -\log|w_n|)$ is a $\infty$-positive matrix at any point $(w_1,\cdots, w_n)\in \bigcup\limits_{i=1}^{n} \{w_i=0\}.$

Let $\Lambda$ be a subset of $\{1,\cdots, n\}.$

Let $(w_1,\cdots, w_n)$ be a point in $\bigcup\limits_{i=1}^{n} \{w_i=0\}$ such that
$\left\{
        \begin{array}{ll}
          w_i=0, & i\in \Lambda \\
          w_i\neq 0, & i\notin \Lambda
        \end{array}
      \right..
$
We define a system of points $\{(w_1(\epsilon),\cdots, w_n(\epsilon))\}_{\epsilon\in \R_+}$ given by
$ \left\{
                    \begin{array}{ll}
                      w_i(\epsilon):=\epsilon, & i\in \Lambda \\
                      w_i(\epsilon):=w_i, & i\notin \Lambda
                    \end{array}
                  \right..
$
It is easy to check that there is a positive $\epsilon_0<1$ such that
$(w_1(\epsilon),\cdots, w_n(\epsilon))\in B^*$ for $\forall \epsilon\in(0, \epsilon_0].$

We always have
\begin{eqnarray*}
 M(-\log|w_1(r)|,\cdots, -\log|w_n(r)|)  &=& M(-\log|w_1(\epsilon_0)|,\cdots, -\log|w_n(\epsilon_0)|) \\
   && +(-\log r +\log \varepsilon_0)(\sum_{i\in \Lambda}E_i)
\end{eqnarray*}
for any sufficient small real positive number $r.$
Since $M(-\log|w_1(\epsilon_0)|,\cdots, -\log|w_n(\epsilon_0)|)$ is a positive matrix as $(w_1(\epsilon_0),\cdots, w_n(\epsilon_0))\in B^*$ and $\sum_{i\in \Lambda}E_i$ is a semi-positive matrix, we obtain
$M(-\log|w_1|,\cdots, -\log|w_n|)$ is a $\infty$-positive matrix and
\begin{eqnarray*}
   && M(-\log|w_1|,\cdots, -\log|w_n|) \\
   &=&M(-\log|w_1(\epsilon_0)|,\cdots, -\log|w_n(\epsilon_0)|)+(\nu_0-\log\frac{1}{\epsilon_0})(\sum_{i\in \Lambda}E_i).
\end{eqnarray*}

\item  We prove the second and the third statements in this lemma together.
 By the symmetry, it is sufficient to prove all statements in case of $i=1.$

Since $E_1$ is semi-positive symmetric and nonzero, $E_1$ is diagonalized by an orthogonal matrix $O$ such that
$$O^T E_1O=\left(
             \begin{array}{cccc}
               \lambda_1 & 0      & \cdots & 0 \\
               0         & \ddots & \vdots & \vdots\\
               \vdots    &  0     & \ddots & 0 \\
               0         & 0      &\cdots  & \lambda_g \\
             \end{array}
           \right)
$$with
$$\lambda_1\geq \lambda_2\geq \cdots \geq \lambda_{k_1}>0=\lambda_{k_1+1}=\lambda_{k_1+2}=\cdots$$
 where $k_1=\rank E_1.$

Then, we have
$O^TMO = x_1\mathrm{diag}[\lambda_1,\cdots,\lambda_{k_1},0,\cdots,0]+\sum_{i=2}^n x_iO^T E_iO$
and
\begin{eqnarray*}
  S &=& \det\big(x_1\mathrm{diag}[\lambda_1,\cdots,\lambda_{k_1},0,\cdots,0]+\sum_{i=2}^n x_iO^T E_iO \big) \\
   &=& P_{k_1}(x_{2},\cdots, x_n)(\prod_{i=1}^{k_1}\lambda_i)x_1^{k_1} + \sum_{i=1}^{k_1}P_{k_1-i}(x_{2},\cdots, x_n)a_{k_1-i}(\lambda_1,\cdots, \lambda_{k_1})x_1^{k_1-i},
\end{eqnarray*}
where each $P_i(x_{2},\cdots, x_n)$ is a homogenous polynomial of degree $g-i$ and each $a_i(y_{1},\cdots, y_{k_1})\in \R[y_{1},\cdots, y_{k_1}]$ is a homogenous polynomial of degree $i.$

For any $2\leq j\leq n,$  we define $E_j^{(1)}$ to be the $(g-k_1,g-k_1)$ matrix by deleting rows $1,\cdots,k_1$ and columns $1,\cdots,k_1$ of the matrix $O^T E_jO.$ So all $E_i^{(1)}$ are semi-positive. Let $M^{(1)}(x_2,\cdots, x_n):=\sum\limits_{j=2}^n x_jE_j^{(1)}.$
Then, $M^{(1)}(x_2,\cdots, x_n)$ is the $(g-k_1,g-k_1)$ matrix by deleting rows $1,\cdots,k_1$ and columns $1,\cdots,k_1$ of the matrix $O^T M(x_1,\cdots,x_n)O,$
and $P_{k_1}(x_{2},\cdots, x_n)=\det(M^{(1)}(x_2,\cdots, x_n)).$

At any point $(w_1,\cdots, w_n)\in B,$
$M^{(1)}(-\log|w_2|,\cdots, -\log|w_n|)$ is a $\infty$-positive $(g-k_1,g-k_1)$ matrix since $O^T M(-\log|w_1|,\cdots, -\log|w_n|)O$ is a $\infty$-positive $(g,g)$ matrix by (1) of this lemma.
Let $(0, w_2^{'},\cdots, w_n^{'})\in B_1^*$ be an arbitrary point.
The matrix $O^T M(-\log|\epsilon|,-\log|w_1^{'}|\cdots, -\log|w_n^{'}|)O$ is positive as $(\epsilon,w_2,\cdots, w_n)\in B^*$
for any nonzero sufficiently small real number  $\epsilon,$ and so $M^{(1)}(-\log|w_2^{'}|,\cdots, -\log|w_n^{'}|)$ is a positive $(g-\rank E_i)\times(g-\rank E_i)$ matrix. Thus  $M^{(1)}(x_2,\cdots, x_n)$ is a $(g-\rank E_i)\times(g-\rank E_i)$ logarithmical positive matrix function on $B_1^*.$
Therefore, $P_{k_1}(-\log|w_2|,\cdots, -\log|w_n|)$ is non zero at any point $(w_1,\cdots, w_n)\in B$ by the statement (1) of this lemma. In particular, we have that $P_{k_1}(x_2,\cdots,x_n)$ is a nonzero polynomial and $\deg_1 S=\rank E_1.$

\item See the proof of the statement(2).

\item Let $(w_1,\cdots, \underbrace{0}_{i},\cdots,w_n)\in B_i^*$ be an arbitrary point.
Then $(w_1,\cdots, t_i,\cdots,w_n)$ is in $B^*$ for any $t_i\in \C^*$ with sufficiently small $|t_i|.$
We have that
\begin{eqnarray*}
  A(w_1,\cdots, t_i \cdots,w_n) &=& \frac{(-\log|t_i|)^{\deg_i S}}{S(-\log|w_1|,\cdots, -\log|t_i| \cdots,-\log|w_n|) } \\
   &&\times \frac{Q(-\log|w_1|,\cdots, -\log|t_i| \cdots,-\log|w_n|)}{(-\log|t_i|)^{\deg_i S}}
\end{eqnarray*}
for any $t_i\in \C^*$ near zero point. Since both
$$\lim_{t_i\to 0}\frac{(-\log|t_i|)^{\deg_i S}}{S(-\log|w_1|,\cdots, -\log|t_i| \cdots,-\log|w_n|)}<\infty$$
and $$\lim_{t_i\to 0}\frac{Q(-\log|w_1|,\cdots, -\log|t_i| \cdots,-\log|w_n|)}{(-\log|t_i|)^{\deg_i S}}<\infty$$ exist,
$J(z_1,\cdots, \underbrace{0}_{i},\cdots,z_n)$ is well-defined on $B_i^*.$
It is easy to check that for any $p\in B_i^*,$ there is a neighborhood $V_p$ of $p$ in $B^*\cup B_i^*$
such that $\widetilde{A}(z_1,\cdots,z_n)$ is continues on  $V_p.$
Moreover, we have
\begin{eqnarray*}
   && \mathrm{Res}_i(A)(w_1,\cdots, \underbrace{0}_{i},\cdots,w_n) \\
   &=&\left\{
                                                                \begin{array}{ll}
                                                                  0, &  \deg_i Q < \deg_i S;\\
                        \frac{Q_i}{S_i}(-\log|w_1|,\cdots \widehat{-\log|w_i|}\cdots, -\log|w_n|), & \deg_i Q = \deg_i S.
                                                                \end{array}
                                                              \right.
\end{eqnarray*}
Thus, $\mathrm{Res}_i(A)$ is a smooth function on $B_i^*.$
\end{myenumi}
\end{proof}

\subsection{Behaviors of logarithmical canonical line bundles}
Let $\widetilde{t}_{i,j}=(-1)^{i+j}\det(\widetilde{T}_{i,j})$ be the $(i,j)$-th cofactor of $(T^{\alpha}_{i,j})$
where $\widetilde{T}_{i,j}$ is a $(N-1)\times(N-1)$ the matrix that from deleting row $i$ and column $j$ of the $N\times N$ matrix $(T^{\alpha}_{i,j}).$ \\

Let $\widetilde{K}_{i,j}$ be the $(N-1)\times(N-1)$ the matrix that from deleting row $i$ and column $j$ of the $N\times N$ matrix $(K_{i,j}).$ Let $\widetilde{k}_{i,j}=(-1)^{i+j}\det(\widetilde{K}_{i,j})$ be the $(i,j)$-th cofactor of the matrix $(K_{i,j}).$

For all pair $(i,j)$ with $1\leq i,j\leq N,$ we define  $(N-1,N-1)$ simple forms
$$\nu_{i,j}=\left\{
              \begin{array}{ll}
                dw_i^{\alpha}\wedge d\overline{w_j^{\alpha}}\bigwedge\limits_{k\neq i,k\neq j}^{1\leq k\leq N}dw_k^{\alpha}\wedge d\overline{w_k^{\alpha}}, & \hbox{$i\neq j$;} \\
                \bigwedge\limits_{k\neq i}^{1\leq k\leq N}dw_k^{\alpha}\wedge d\overline{w_k^{\alpha}}, & \hbox{$i=j$.}
              \end{array}
            \right.
$$
By \ref{dd-volume-equality},
we obtain
$(\partial\overline{\partial}\log \Phi_\alpha)^{N-1}=(N-1)!(\sum_{i=1}^N \widetilde{k}_{i,i}\nu_{ii}
-\sum_{i\neq j}^{1\leq i,j\leq N}\widetilde{k}_{j,i}\nu_{i,j}).$

\begin{lemma}\label{chern-class-current}
Let $\widetilde{h}$ be  an arbitrary  smooth Hermitian metric on the logarithmical cotangent bundle $[K_{\overline{\sA}_{g,\Gamma}}+D_\infty].$
Then, there holds that
$$\int_{\overline{\sA}_{g,\Gamma}}c_1([K_{\overline{\sA}_{g,\Gamma}}+D_\infty], \widetilde{h })^{N-k}\wedge \eta
=\int_{\sA_{g,\Gamma}}c_1(\sO_{\sA_{g,\Gamma}}(K_{\sA_{g,\Gamma}}), h_B)^{N-k}\wedge \eta$$
for any $d$-closed smooth $2k$-form($1\leq k\leq N$) $\eta$ on $\overline{\sA}_{g,\Gamma}.$
\end{lemma}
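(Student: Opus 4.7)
The plan is to combine Mumford's good singular metric theory from \cite{Mum77} with the K\"ahler--Einstein equality \ref{KE} to identify the two pairings as evaluations of the same cohomology class on $[\eta]$. First I would recall the key input: by Main Theorem 3.1 and Proposition 3.4 of \cite{Mum77}, the Bergman metric $h_B$ on $\sO_{\sA_{g,\Gamma}}(K_{\sA_{g,\Gamma}})$ is singular good on $\overline{\sA}_{g,\Gamma}$, and the unique good extension of the canonical line bundle is precisely the logarithmic canonical bundle $[K_{\overline{\sA}_{g,\Gamma}}+D_\infty]$. In particular, via \ref{KE} the smooth form $c_1(\sO_{\sA_{g,\Gamma}}(K_{\sA_{g,\Gamma}}),h_B)=\tfrac{1}{2\pi}\cdot\tfrac{g+1}{2}\omega_{\mathrm{can}}$ has Poincar\'e growth along $D_\infty$, so by Lemma~\ref{Poincare-growth-metric} its $(N-k)$-th exterior power extends to a positive closed $(N-k,N-k)$-current $T_B^{N-k}$ on $\overline{\sA}_{g,\Gamma}$.

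Next I would invoke Mumford's goodness theorem to conclude that, as a cohomology class in $H^{2(N-k)}(\overline{\sA}_{g,\Gamma},\R)$, the current $T_B^{N-k}$ represents $c_1([K_{\overline{\sA}_{g,\Gamma}}+D_\infty])^{N-k}$. On the other hand, for any smooth Hermitian metric $\widetilde{h}$ on $[K_{\overline{\sA}_{g,\Gamma}}+D_\infty]$, the smooth form $c_1([K_{\overline{\sA}_{g,\Gamma}}+D_\infty],\widetilde{h})^{N-k}$ also represents the same class. Thus both are cohomologous as currents, differing by an exact term of the form $d\zeta$ for a current $\zeta$ of the appropriate degree (the $dd^c$-Lemma on the smooth K\"ahler compactification $\overline{\sA}_{g,\Gamma}$, combined with the goodness hypothesis for $T_B^{N-k}$, produces such a primitive). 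Since $\eta$ is $d$-closed, Stokes' theorem gives
\begin{equation*}
\int_{\overline{\sA}_{g,\Gamma}} c_1([K_{\overline{\sA}_{g,\Gamma}}+D_\infty],\widetilde{h})^{N-k}\wedge\eta
\;=\;\int_{\overline{\sA}_{g,\Gamma}} T_B^{N-k}\wedge\eta.
\end{equation*}

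Finally, I would unwind the definition of $T_B^{N-k}$ as a current: by the Poincar\'e growth property and the definition \ref{Poincare-growth-integral}-style regularization, its pairing with the smooth form $\eta$ is
\begin{equation*}
\int_{\overline{\sA}_{g,\Gamma}} T_B^{N-k}\wedge\eta
\;=\;\lim_{\varepsilon\to 0}\int_{\overline{\sA}_{g,\Gamma}\setminus D_\infty(\varepsilon)}
c_1(\sO_{\sA_{g,\Gamma}}(K_{\sA_{g,\Gamma}}),h_B)^{N-k}\wedge\eta
\;=\;\int_{\sA_{g,\Gamma}} c_1(\sO_{\sA_{g,\Gamma}}(K_{\sA_{g,\Gamma}}),h_B)^{N-k}\wedge\eta,
\end{equation*}
where the last equality holds because the integrand on $\sA_{g,\Gamma}$ is absolutely integrable against $\eta$ (a consequence of the Poincar\'e growth bound and the compactness of $\overline{\sA}_{g,\Gamma}$), so dominated convergence applies.

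The main obstacle is the middle step: rigorously justifying that $T_B^{N-k}$ represents $c_1([K_{\overline{\sA}_{g,\Gamma}}+D_\infty])^{N-k}$ in de Rham cohomology and that the difference with any smooth representative is $d$-exact as a current. This is precisely what Mumford's goodness machinery (\S1 of \cite{Mum77}) is designed to supply, via the fact that the Chern forms of a good metric compute the Chern classes of the good extension not only as cohomology classes of currents but also in a way compatible with cup product; the cup-product compatibility is the delicate part since positive closed currents cannot in general be multiplied, but the Poincar\'e growth controls established in Lemma~\ref{Poincare-growth-metric} and Proposition~\ref{key-lemma-on-recurrence} provide the required local estimates to make wedge powers well-defined and cohomologically meaningful.
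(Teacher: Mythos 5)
Your argument is correct and follows essentially the same route as the paper: its proof likewise reduces the statement to Lemma \ref{Poincare-growth-metric} together with Mumford's goodness machinery (Theorem 1.4 of \cite{Mum77}, supplemented by Koll\'ar's argument 5.18 in \cite{Kol87}), which is precisely the cup-product-compatibility step you flag as the main obstacle. The paper simply cites these results outright, so your write-up is a more detailed rendering of the same proof rather than a different approach.
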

\begin{proof}
 According to the lemma \ref{Poincare-growth-metric}, the statement can be obtained directly by Mumford's argument in Theorem 1.4 of \cite{Mum77} and Koll\'ar argument of 5.18 in \cite{Kol87}.
\end{proof}

\begin{theorem}\label{non-ample-logarithmical-cotangent-bundle}
Let $\Gamma\subset \Sp(g,\Z)$ be a neat arithmetic subgroup and $\Sigma_{\mathfrak{F}_0}$ a
$\overline{\Gamma_{\mathfrak{F}_0}}$(or $\mathrm{GL}(g,\Z)$)-admissible polyhedral decomposition of $C(\mathfrak{F}_0)$ regular with respect to $\Gamma.$
Let $\overline{\sA}_{g,\Gamma}$ be the toroidal compactification  of $\sA_{g,\Gamma}:=\mathfrak{H}_{g}/\Gamma$ constructed by  $\Sigma_{\mathfrak{F}_0}.$

Assume that the boundary divisor $D_{\infty}:=\overline{\sA}_{g,\Gamma}\setminus\sA_{g,\Gamma}$ is  simple normal crossing. Let $D_i$ be an arbitrary irreducible component of $D_\infty.$
The intersection number
$$ D_i\cdot(K_{\overline{\sA}_{g,\Gamma}}+D_\infty)^{\dim_\C \sA_{g,\Gamma} -1}=0$$
if one of the following conditions is satisfied :
(i)\, $g=2,$ \,\,(ii)\, $D_i$ is constructed from an edge $\rho_i$ in $\Sigma_{\mathfrak{F}_{\min}}$ for some minimal cusp $\mathfrak{F}_{\min}$ such that
$\mathrm{Int}(\rho_i)\subset C(\mathfrak{F}_{\min}).$
\end{theorem}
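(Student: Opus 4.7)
The plan is to identify $D_i\cdot(K_{\overline{\sA}_{g,\Gamma}}+D_\infty)^{N-1}$ (where $N=g(g+1)/2$) with an integral of a residue current on $D_i$, and then to show this residue form vanishes identically on $D_i^*$ by a degree analysis of the local volume function. First, using Mumford's goodness theorem (through Lemma \ref{chern-class-current}) together with the positivity and Poincar\'e growth of the residue currents established in Proposition \ref{key-lemma-on-recurrence}, I would justify the identity
$$D_i\cdot(K_{\overline{\sA}_{g,\Gamma}}+D_\infty)^{N-1}=\Bigl(\frac{\sqrt{-1}}{2\pi}\Bigr)^{N-1}\int_{D_i}\mathrm{Res}_{D_i}\bigl((\partial\bar\partial\log\Phi_{g,\Gamma})^{N-1}\bigr).$$
Next I would work in a local chart $(U_\alpha,(w_1,\dots,w_N))$ arising from a top-dimensional cone $\sigma_{\max}\in\Sigma_{\mathfrak{F}_{\min}}$ whose first edge is $\rho_i$, so that $U_\alpha\cap D_i=\{w_1=0\}$. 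In the cofactor expansion of $(\partial\bar\partial\log\Phi_\alpha)^{N-1}$ displayed just before the theorem statement, only the term $(N-1)!\,\widetilde{k}_{1,1}\,\nu_{1,1}$ survives as a form on $D_i^*$, and away from $\bigcup_{j\ge 2}\{w_j=0\}$ the quantity $\widetilde{k}_{1,1}$ equals $\det(T^\alpha_{i,j})_{i,j\ge 2}(-\log|w|)/(F^\alpha)^{2(N-1)}(-\log|w|)$ up to a bounded nonvanishing factor. Thus the pointwise residue is controlled by the asymptotic behaviour of the rational function $\det(T^\alpha_{i,j})_{i,j\ge 2}/(F^\alpha)^{2(N-1)}$ as $x_1=-\log|w_1|\to+\infty$ with $x_2,\dots,x_N$ held fixed.

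For case (ii), the edge $\rho_i$ lies in the interior of $C(\mathfrak{F}_{\min})$, so its primitive generator is a positive definite symmetric matrix and Lemma \ref{lemma-on-degree-of-polynomial}(2) gives $\deg_1 F^\alpha=g$. Writing $F^\alpha=\alpha x_1^g+F_{g-1}x_1^{g-1}+\cdots$ with $\alpha\neq 0$ and each $F_{g-k}$ homogeneous of degree $k$ in $x_2,\dots,x_N$, the facts that $F_{g-1}$ is linear in the remaining variables and that the pure $x_1^g$ term is independent of $x_i$ and $x_j$ force $\deg_1 F^\alpha_{i,j}\le g-2$ for $i,j\ge 2$, and hence $\deg_1 T^\alpha_{i,j}\le 2g-2$. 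Consequently $\deg_1\det(T^\alpha_{i,j})_{i,j\ge 2}\le(2g-2)(N-1)$, while $(F^\alpha)^{2(N-1)}$ has $x_1$-degree $2g(N-1)$ with leading coefficient $\alpha^{2(N-1)}\neq 0$; the quotient therefore decays like $x_1^{-2(N-1)}$, and its pointwise limit vanishes. For case (i), the interior-edge subcase is covered by (ii); for a boundary edge in $g=2$ one has $\deg_1 F^\alpha=1$, so $F^\alpha=F_1x_1+F_0$ with $F_1,F_0$ homogeneous of degrees $1$ and $2$ in $(x_2,x_3)$. A direct computation of the $2\times 2$ determinant $\det(T^\alpha_{i,j})_{i,j=2,3}$ then shows that its leading $x_1^4$-coefficient equals $\det(-\nabla F_1\cdot(\nabla F_1)^{T})$, which vanishes because this outer product has rank one; hence the quotient decays at least as $x_1^{-1}$ and the residue is zero.

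In both cases the residue form is identically zero on $D_i^*$, so the intersection number is $0$. The principal obstacle is the first step: rigorously equating the intersection number with the integral of the residue current requires combining Mumford's goodness theorem with the Poincar\'e growth and positivity of the residue currents from Proposition \ref{key-lemma-on-recurrence} in order to exclude hidden contributions from the lower-dimensional boundary stratum $D_{i,\infty}$. Once this reduction is in place, the remaining argument is the elementary polynomial degree calculation sketched above.
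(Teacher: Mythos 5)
Your degree analysis in the second half is correct and is in substance the paper's own final step: working in a chart from a top-dimensional cone with $\rho_i$ as first edge, the residue on $D_i^*$ is governed by $\det(T^\alpha_{l,m})_{l,m\ge 2}/(F^\alpha)^{2(N-1)}$, and the coefficient of $x_1^{2(N-1)\deg_1F^\alpha}$ in the numerator is (Lemma \ref{residue-lemma}) $\det\bigl(PP_{lm}-P_lP_m\bigr)$ with $P$ the leading coefficient of $F^\alpha$ in $x_1$; this vanishes because $P$ is a nonzero constant when $\mathrm{Int}(\rho_i)\subset C(\mathfrak{F}_{\min})$ (so $\deg_1F^\alpha=g$ by Lemma \ref{lemma-on-degree-of-polynomial}), and is linear when $g=2$ and $\rho_i$ is a boundary edge, so the matrix is a rank-one outer product and its $2\times 2$ determinant is zero --- exactly your computation. (In the $g=2$ boundary-edge case you also need the leading coefficient $F_1$ to be nonvanishing on the relevant logarithmic domain; that is Lemma \ref{lemma-on-degree-of-polynomial}(3) and is available.)

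The genuine gap is your first step. The identity $D_i\cdot(K_{\overline{\sA}_{g,\Gamma}}+D_\infty)^{N-1}=\int_{D_i}\mathrm{Res}_{D_i}\bigl((\tfrac{\sqrt{-1}}{2\pi}\partial\overline{\partial}\log\Phi_{g,\Gamma})^{N-1}\bigr)$ is precisely Theorem \ref{recurrence-intersection-formula} with $d=1$, and it does not follow from Lemma \ref{chern-class-current} together with Proposition \ref{key-lemma-on-recurrence} as you assert. Lemma \ref{chern-class-current} only allows you to replace the smooth representative of $c_1(K_{\overline{\sA}_{g,\Gamma}}+D_\infty)^{N-1}$ by the singular Bergman one when integrated against the closed smooth form $c_1([D_i],\|\cdot\|_i)$; Proposition \ref{key-lemma-on-recurrence} records properties of the residue current (smoothness on $D_i^*$, Poincar\'e growth on $D_{i,\infty}$, closedness, positivity) but says nothing about that current computing an intersection number. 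To pass from $\lim_{\epsilon\to 0}\int_{\sA_{g,\Gamma}\setminus D_\infty(\epsilon)}(\partial\overline{\partial}\log\Phi_{g,\Gamma})^{N-1}\wedge c_1([D_i],\|\cdot\|_i)$ to the residue integral one must write $c_1([D_i],\|\cdot\|_i)$ through $\partial\overline{\partial}\log\|s_i\|_i$, apply Stokes to obtain a boundary integral over $\partial D_\infty(\epsilon)$, and then show that the smooth-metric part dies (the Mumford-type estimate \ref{vanishing-result-1}), that the contributions from the tubes around the other components and all off-diagonal cofactor terms die --- this uses the generalized Cauchy integral formula together with the bound $\deg_j\widetilde t_{i,j}\le 2(N-1)\deg_jF^\alpha-1$ from Lemma \ref{degree-determine-polynomal} (giving \ref{vanishing-result-2}) --- and that the diagonal term converges to the residue integral. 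This boundary-tube analysis is the bulk of the paper's proof (and, for general $d$, of Lemmas \ref{lemma-intersection-calculation-1}--\ref{lemma-intersection-calculation-4}); invoking positivity of the residue current and ``excluding hidden contributions from $D_{i,\infty}$'' does not substitute for it, so as written your argument presupposes the hardest part of the theorem.
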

\begin{proof}
Let $||\cdot||_i$ be an arbitrary Hermitian metric on the line bundle $[D_i]$ and  $s_i$  the global section
of $[D_i]$ defining $D_i.$
By  the lemma \ref{chern-class-current}, we have :
\begin{equation}\label{intersection-chern-form}
    (K_{\overline{\sA}_{g,\Gamma}}+D_\infty)^{\dim_\C \sA_{g,\Gamma} -1}\cdot D_i
   =\lim_{\epsilon\to 0}\int_{\overline{\sA}_{g,\Gamma}\setminus D_\infty(\epsilon)}(\frac{\sqrt{-1}}{2\pi}\partial\overline{\partial}\log\Phi_{g,\Gamma})^{N-1}\wedge c_1([D_i],||\cdot||_i)
\end{equation}
where $D_\infty(\epsilon)$ is a tube of radius $\epsilon$ around $D_\infty.$ Thus
\begin{eqnarray*}
    &&(K_{\overline{\sA}_{g,\Gamma}}+D_\infty)^{\dim_\C \sA_{g,\Gamma} -1}\cdot D_i \\
   &=&\lim_{\epsilon\to 0}-(\frac{\sqrt{-1}}{2\pi})^N\int_{\overline{\sA}_{g,\Gamma}\setminus D_\infty(\epsilon)}(\partial\overline{\partial}\log\Phi_{g,\Gamma})^{N-1}\wedge \partial\overline{\partial}\log ||s_i||_i\\
   &=&\lim_{\epsilon\to 0}\frac{1}{2}(\frac{\sqrt{-1}}{2\pi})^N \int_{\overline{\sA}_{g,\Gamma}\setminus D_\infty(\epsilon)}(\partial\overline{\partial}\log\Phi_{g,\Gamma})^{N-1}\wedge d(\partial-\overline{\partial})\log ||s_i||_i\\
   &=&-\lim_{\epsilon\to 0}\frac{\sqrt{-1}}{4\pi}C_{N-1} \int_{\partial D_\infty(\epsilon)}(\partial\overline{\partial}\log\Phi_{g,\Gamma})^{N-1}\wedge(\partial-\overline{\partial})\log ||s_i||_i.
\end{eqnarray*}

Let $(U_\alpha,(w_1^{\alpha},\cdots, w_N^{\alpha}))$ be a local chart  as in \ref{local-chart-toroidal-compactification} such that  $D_i\cap U_{\alpha}=\{w_i^\alpha=0\}.$
We write $||s_i||_i=h_\alpha(w)|w_i^\alpha|^2$ on $U_\alpha.$
Then,
\begin{eqnarray*}
   && \int_{\partial D_\infty(\epsilon)\cap U_\alpha}C_{N-1}(\partial\overline{\partial}\log\Phi_{\alpha})^{N-1}\wedge(\partial-\overline{\partial})\log(h_\alpha(w)|w_i^\alpha|^2) \\
   &=& \int_{\partial D_\infty(\epsilon)\cap U_\alpha}C_{N-1}(\partial\overline{\partial}\log\Phi_{\alpha})^{N-1}\wedge
   \big\{2\sqrt{-1}\mathrm{Im}(\partial\log |w_i^\alpha|)+(\partial-\overline{\partial})\log h_\alpha(w)\big\}\\
   &=& 2\sqrt{-1}\mathrm{Im}\big(C_{N-1}\int_{\partial D_\infty(\epsilon)\cap U_\alpha}\partial\log |w_i^\alpha|\wedge(\partial\overline{\partial}\log\Phi_{\alpha})^{N-1} \big)\\
   &&+ C_{N-1}\int_{\partial D_\infty(\epsilon)\cap U_\alpha}(\partial\overline{\partial}\log\Phi_{\alpha})^{N-1}\wedge(\partial-\overline{\partial})\log h_\alpha(w).
\end{eqnarray*}

Using similar calculation as Proposition 1.2 in \cite{Mum77}, we get
\begin{equation}\label{vanishing-result-1}
  \lim_{\epsilon\to 0} \int_{\partial D_\infty(\epsilon)\cap U_\alpha}(\partial\overline{\partial}\log\Phi_{\alpha})^{N-1}\wedge(\partial-\overline{\partial})\log h_\alpha(w)=0.
\end{equation}

On $U_\alpha^*=U_\alpha\setminus D_\infty,$ we have
\begin{eqnarray*}
   && \frac{1}{(N-1)!} \partial\log |w_i^\alpha|\wedge(\partial\overline{\partial}\log \Phi_\alpha)^{N-1} \\
   &=&\widetilde{k}_{i,i}\frac{dw_i^\alpha}{w_i^\alpha}\wedge \bigwedge_{1\leq m\leq N,m\neq i}dw_m^{\alpha}\wedge d\overline{w_m^{\alpha}}\\
   &&+\sum_{1\leq m\leq N,m\neq i}\widetilde{k}_{i,m}\frac{dw_m^{\alpha}}{w_i^\alpha}\wedge \bigwedge_{1\leq l\leq N,l\neq m}dw_l^{\alpha}\wedge d\overline{w_l^{\alpha}}
\end{eqnarray*}

Let $T_j(\epsilon)$ be the tube neighborhood of $\{w_j=0\}$ in $U_\alpha $ for $j=1,\cdots,N.$
Due to $dw_k^{\alpha}\wedge d\overline{w_k^{\alpha}}=0 \,\, \mbox{ on } \partial T_k(\epsilon):=\{|w_k^\alpha|=\epsilon\},$ we get
\begin{eqnarray*}
   &&  \int_{\partial D_\infty(\epsilon)\cap U_\alpha}\partial\log |w_i^\alpha|\wedge(\partial\overline{\partial}\log \Phi_\alpha)^{N-1}\\
   &=& \sum_{j=1}^N\int_{\partial T_j(\epsilon)\cap\partial D_\infty(\epsilon) }\partial\log |w_i^\alpha|\wedge(\partial\overline{\partial}\log \Phi_\alpha)^{N-1}\\
   &=&(N-1)!\int_{\partial T_i(\epsilon)\cap\partial D_\infty(\epsilon)}\frac{\widetilde{k}_{i,i}}{w_i^\alpha}dw_i^\alpha\wedge \bigwedge_{1\leq m\leq N,m\neq i}dw_m^{\alpha}\wedge d\overline{w_m^{\alpha}}\\
   &&+(N-1)!\sum_{1\leq j\leq N, j\neq i}\int_{\partial T_j(\epsilon)\cap\partial D_\infty(\epsilon) }\frac{\widetilde{k}_{i,j}}{w_i^\alpha}dw_j^{\alpha}\wedge \bigwedge_{1\leq l\leq N, l\neq j}dw_l^{\alpha}\wedge d\overline{w_l^{\alpha}}.
\end{eqnarray*}
Since $\sqrt{-1}\partial\overline{\partial}\log\Phi_{\alpha}$ has Poincar\'e growth on boundary, the above integral is bounded uniformly on $\epsilon.$

For any integer $j\in[1,N]$ with $j\neq i,$ we have
\begin{eqnarray*}
 \frac{\widetilde{k}_{i,j}}{w_i^\alpha}  &=&(-1)^{N-1}(\frac{g+1}{4})^{N-1} \\
   && \times \frac{\widetilde{t}_{i,j}(\log|w_1^\alpha|,\cdots,\log|w_N^\alpha|) }{w_j^\alpha(\prod_{1\leq l\leq N,l\neq j}|w_l^\alpha|^2)(F^{\alpha})^{2(N-1)}(\log|w_1^\alpha|,\cdots,\log|w_N^\alpha|)} \,\, \mbox{ on }\partial T_j(\epsilon)\cap\partial D_\infty(\epsilon)
\end{eqnarray*}
where  $\widetilde{t}_{i,j}=(-1)^{i+j}\det(\widetilde{T}_{i,j})$ is the $(i,j)$-th cofactor of the matrix $(T_{l,m})_{1\leq l,m\leq N}.$

The lemma \ref{degree-determine-polynomal} says
$$\deg_j \widetilde{t}_{i,j}\leq 2(N-1)\deg_jF-1 \,\,\forall j\neq i,$$
we then get
\begin{equation}\label{vanishing-result-2}
\lim_{\epsilon\to 0} \int_{\partial T_j(\epsilon)\cap\partial D_\infty(\epsilon) }\frac{\widetilde{k}_{i,j}}{w_i^\alpha}dw_j^{\alpha}\wedge \bigwedge_{1\leq l\leq N, l\neq j}dw_l^{\alpha}\wedge d\overline{w_l^{\alpha}}=0\,\,\,\,  \forall j\neq i.\\
\end{equation}
by using the generalized Cauchy integral formula and the Poincar\'e growth of $(\partial\overline{\partial}\log\Phi_{\alpha})^{N-1}.$

Also, we have
\begin{eqnarray*}
 \frac{\widetilde{k}_{i,i}}{w_i^\alpha}  &=&(-1)^{N-1}(\frac{g+1}{4})^{N-1} \\
   &&\times \frac{\widetilde{t}_{i,i}(\log|w_1^\alpha|,\cdots,\log|w_N^\alpha|) }{w_i^\alpha(\prod_{1\leq l\leq N,l\neq i}|w_l^\alpha|^2)(F^{\alpha})^{2(N-1)}(\log|w_1^\alpha|,\cdots,\log|w_N^\alpha|)} \,\, \mbox{ on }\partial T_i(\epsilon)\cap\partial D_\infty(\epsilon)
\end{eqnarray*}
where  $\widetilde{t}_{i,i}=\det(\widetilde{T}_{i,i})$ is the $(i,i)$-th cofactor of the matrix $(T_{l,m})_{1\leq l,m\leq N}.$

Let $d_i=\deg_iF^{\alpha}[x_1,\cdots, x_N].$ We can write
$$F^{\alpha}=P(x_1,\cdots, \widehat{x_i},\cdots, x_N)x_i^{d_i}+ \cdots,$$
where $P$ is a homogenous polynomial in $\R[x_1,\cdots, \widehat{x_i},\cdots, x_N]$ of degree $g-d_i.$
For any integers $l,m $ in $[1,N]\setminus \{i\},$ we define
$A_{l,m}:=PP_{lm}- P_lP_m,$
where $P_{lm}:=\frac{\partial^2 P}{\partial x_l\partial x_m}, \,\, P_l:=\frac{\partial P}{\partial x_l}.$
We  get  a $(N-1)\times(N-1)$ matrix $(A_{l,m})_{l,m\in [1,N]\setminus \{i\}}.$
Then, the coefficient of the term $x_i^{2(N-1)d_i}$ in $\widetilde{t}_{i,i}$ is just
$\det(A_{l,m}).$ Again using  the generalized Cauchy integral formula and the Poincar\'e growth of $(\partial\overline{\partial}\log\Phi_{\alpha})^{N-1},$
we have
\begin{eqnarray*}
   && \lim_{\epsilon\to 0}\frac{1}{2\pi\sqrt{-1} } \int_{\partial T_i(\epsilon)\cap\partial D_\infty(\epsilon) }\frac{\widetilde{k}_{i,i}}{w_i^\alpha}
   dw_i^\alpha\wedge \bigwedge_{1\leq m\leq N,m\neq i}dw_m^{\alpha}\wedge d\overline{w_m^{\alpha}} \\
   &=&(-1)^{N-1}(\frac{g+1}{4})^{N-1}\\
   &&\times\int_{\{w_i^{\alpha}=0\}}\frac{\det(A_{l,m})(\log |w_1^{\alpha}|,\cdots, \widehat{\log |w_i^{\alpha}|},\cdots, \log |w_N^{\alpha}|)\bigwedge\limits_{1\leq m\leq N,m\neq i}dw_m^{\alpha}\wedge d\overline{w_m^{\alpha}} }{(\prod_{1\leq l\leq N,l\neq i}|w_l^\alpha|^2)P^{2(N-1)}(\log|w_1^\alpha|,\cdots, \widehat{\log |w_i^{\alpha}|},\cdots,,\log|w_N^\alpha|)}.
\end{eqnarray*}

Therefore, we obtain
\begin{eqnarray*}
   &&\lim_{\epsilon\to 0}\frac{1}{2}(\frac{\sqrt{-1}}{2\pi})^N \int_{\partial D_\infty(\epsilon)\cap U_\alpha}(\partial\overline{\partial}\log\Phi_{g,\Gamma})^{N-1}\wedge(\partial-\overline{\partial})\log ||s_i||_i\\
   &=& \frac{\sqrt{-1}}{4\pi}\times \lim_{\epsilon\to 0} \int_{\partial D_\infty(\epsilon)\cap U_\alpha}(\frac{\sqrt{-1}}{2\pi}\partial\overline{\partial}\log\Phi_{\alpha})^{N-1}\wedge(\partial-\overline{\partial})\log(h_\alpha(w)|w_i^\alpha|^2)\\
   &=& \frac{-1}{2\pi}\mathrm{Im}\big(\lim_{\epsilon\to 0}\int_{\partial D_\infty(\epsilon)\cap U_\alpha}\partial\log |w_i^\alpha|\wedge(\frac{\sqrt{-1}}{2\pi}\partial\overline{\partial}\log\Phi_{\alpha})^{N-1} \big) \\
   &=& \frac{-(N-1)!}{2\pi}\mathrm{Im}\big(\lim_{\epsilon\to 0}(\frac{\sqrt{-1}}{2\pi})^{N-1}\int_{\partial D_\infty(\epsilon)\cap T_i(\epsilon)}
   \frac{\widetilde{k}_{i,i}}{w_i^\alpha}
   dw_i^\alpha\wedge \bigwedge_{1\leq m\leq N,m\neq i}dw_m^{\alpha}\wedge d\overline{w_m^{\alpha}} \big) \\
   &=&(-1)^N(N-1)!(\frac{g+1}{4})^{N-1} \\
   &&\times  \int_{\{w_i^{\alpha}=0\}}\frac{\det(PP_{lm}- P_lP_m)}{P^{2(N-1)}}(\log|w_1^\alpha|,\cdots,\widehat{_i},\cdots,\log|w_N^\alpha|)\frac{\bigwedge\limits_{m\neq i}^{1\leq m\leq N}(\frac{\sqrt{-1}}{2\pi}dw_m^{\alpha}\wedge d\overline{w_m^{\alpha}})}{\prod_{1\leq l\leq N,l\neq i}|w_l^\alpha|^2}.
\end{eqnarray*}
In all conditions(i),(ii),
the polynomial $P$ never takes  zero value and  $\det(PP_{lm}- P_lP_m)$ is always zero by
the lemma \ref{lemma-on-degree-of-polynomial}.
\end{proof}

\subsection{Intersection theory for infinity divisor boundaries and non ampleness of logarithmical canonical bundles}
Let $d$ be an integer  with  $1\leq d\leq N-1.$
Let $D_1,\cdots, D_{d}$ be $d$ irreducible components of the boundary divisor $D_\infty.$  For each integer $i\in[1, d],$ let $||\cdot||_i$ be an arbitrary Hermitian metric on the line bundle $[D_i]$ and let $s_i$ be a global section of $[D_i]$ defining $D_i.$

Now we study the intersection number $(K_{\overline{\sA}_{g,\Gamma}}+D_\infty)^{N-d}\cdot D_1\cdots D_{d}.$
By  the lemma \ref{chern-class-current}, we have :
\begin{eqnarray*}
    &&(K_{\overline{\sA}_{g,\Gamma}}+D_\infty)^{N-d}\cdot D_1\cdots D_{d} \\
   &=&\lim_{\epsilon\to 0}\int_{\overline{\sA}_{g,\Gamma}-D_\infty(\epsilon)}(\frac{\sqrt{-1}}{2\pi}\partial\overline{\partial}\log\Phi_{g,\Gamma})^{N-d}\wedge \bigwedge_{i=1}^{d}c_1([D_i],||\cdot||_i)\\
   &=&\lim_{\epsilon\to 0}-C_{N-d+1}\int_{\overline{\sA}_{g,\Gamma}-D_\infty(\epsilon)}(\partial\overline{\partial}\log\Phi_{g,\Gamma})^{N-d}\wedge(\bigwedge_{i=2}^{d}c_1([D_i],||\cdot||_i))\wedge\partial\overline{\partial}\log ||s_1||_1\\
   &=&\lim_{\epsilon\to 0}\frac{C_{N-d+1}}{2}\int_{\overline{\sA}_{g,\Gamma}-D_\infty(\epsilon)}(\partial\overline{\partial}\log\Phi_{g,\Gamma})^{N-d}\wedge(\bigwedge_{i=2}^{d}c_1([D_i],||\cdot||_i))\wedge d(\partial-\overline{\partial})\log ||s_1||_1\\
   &=&-\lim_{\epsilon\to 0}\frac{\sqrt{-1}}{4\pi} C_{N-d}  \int_{\partial D_\infty(\epsilon)}(\partial\overline{\partial}\log\Phi_{g,\Gamma})^{N-d}\wedge(\bigwedge_{i=2}^{d}c_1([D_i],||\cdot||_i) )\wedge(\partial-\overline{\partial})\log ||s_1||_1.
\end{eqnarray*}
Let $(U_{\alpha},(w_1^{\alpha},\cdots, w_N^{\alpha}))$  be a local coordinate chart  as in \ref{local-chart-toroidal-compactification} such that
$D_1\cap U_{\alpha}=\{w_1^\alpha=0\}\,\mbox{ and }\,U_\alpha^*= U_{\alpha}\setminus D_\infty.$
We write $||s_1||_1=h_\alpha(w)|w_1^\alpha|^2$ on $U_\alpha.$
Since $\omega_{\mathrm{can}}$ has Poincar\'e growth on $D_\infty,$ we get
\begin{eqnarray*}
  &&  \lim_{\epsilon \to 0}\frac{\sqrt{-1}}{4\pi} C_{N-d}\int_{\partial D_\infty(\epsilon)\cap U_\alpha}(\partial\overline{\partial}\log\Phi_{g,\Gamma})^{N-d}\wedge(\bigwedge_{i=2}^{d}c_1([D_i],||\cdot||_i) )\wedge(\partial-\overline{\partial})\log ||s_1||_1.\\
  &=&\frac{-1}{2\pi}\mathrm{Im}\big(C_{N-d}\lim_{\epsilon \to 0}\int_{\partial D_\infty(\epsilon)\cap U_\alpha}\partial\log |w_1^\alpha|\wedge(\partial\overline{\partial}\log\Phi_{\alpha})^{N-d}\wedge(\bigwedge_{i=2}^{d}c_1([D_i],||\cdot||_i))\big).\\
\end{eqnarray*}

We now use $[i_1, \cdots, i_l]$ to mean a $l$-tuple $(i_1,\cdots,i_l)$ with $1\leq i_1<i_2<\cdots <i_l\leq N,$
and we say  $[i_1, \cdots, i_l]=[j_1, \cdots, j_l]$ if and only if $i_k=j_k\,\,\forall k=1, \cdots, l.$
For any $[i_1, i_2,\cdots, i_l],$  let $j_1, \cdots, j_{N-l}$ be integers in $\{1,\cdots, N\}\setminus\{i_1,\cdots, i_l \}$ satisfying $1\leq j_1<j_2<\cdots <j_{N-l}\leq N,$ define
$[i_1,\cdots, i_l]^{\circ}:=[j_1, \cdots, j_{N-l}].$
So $[i_1,\cdots, i_l]^{\circ}=[j_1,\cdots, j_l]^{\circ}$ if and only if $[i_1,\cdots, i_l]=[j_1,\cdots, j_l].$
For any $l$-tuple $[i_i, \cdots, i_l],$ we define a simple $(l,l)$-form on $U_\alpha$
$$\nu^{[i_i, \cdots, i_l]}_{[j_i, \cdots, j_l]}:=(dw^\alpha_{i_1}\wedge\cdots\wedge dw^\alpha_{i_l})\bigwedge(d\overline{w^\alpha_{j_1}}\wedge\cdots\wedge d\overline{w^\alpha_{j_l}}).$$
For a $N\times N$ matrix $A=(A_{ij}),$ we use $A^{[i_i, \cdots, i_d]}_{[j_i, \cdots, j_d]}$ to  mean a $(N-d, N-d)$ matrix by
deleting rows $i_1,\cdots, i_d$ and column $j_1,\cdots, j_d$ of the matrix $A=(A_{i,j}),$ and we define
$$\widetilde{A}^{[i_i, \cdots, i_d]}_{[j_i, \cdots, j_d]}=(-1)^{\sum_{k=1}^d(i_k+j_k)}\det(A^{[i_i, \cdots, i_d]}_{[j_i, \cdots, j_d]}).$$
Consider the $N\times N$ matrix $K=(K_{i,j})$ related to the equality \ref{dd-volume-equality},
we have
\begin{eqnarray*}
  && \frac{1}{(N-d)!}(\partial\overline{\partial}\log\Phi_{\alpha})^{N-d} \\
  &=&\sum_{[i_1, \cdots, i_d]}\widetilde{K}^{[i_1, \cdots, i_d]}_{[i_1, \cdots, i_d]}\bigwedge_{1\leq l\leq N-d}^{[j_1, \cdots, j_{N-d}]=[i_1,\cdots, i_d]^{\circ} }(dw^\alpha_{j_l}\wedge d\overline{w^\alpha_{j_l}})+
\sum_{[i_1, \cdots, i_d],[j_1, \cdots, j_d]}^{[i_1, \cdots, i_d]\neq [j_1, \cdots, j_d]} \pm \widetilde{K}^{[i_1, \cdots, i_d]}_{[j_1, \cdots, j_d]}\nu^{[i_1, \cdots, i_d]^{\circ}}_{[j_1, \cdots, j_d]^{\circ}}
\end{eqnarray*}
and we get the following equality on $\partial D_\infty(\epsilon)\cap U_\alpha$ :
\begin{eqnarray*}
  \frac{\partial\log |w_1^\alpha|\wedge(\partial\overline{\partial}\log\Phi_{\alpha})^{N-d}}{(N-d)!} &=& \underbrace{\sum_{[1,i_2, \cdots, i_d]}\frac{\widetilde{K}^{[1, i_2,\cdots, i_d]}_{[1, i_2\cdots, i_d]}}{2w_1^\alpha}dw^\alpha_1\wedge\bigwedge_{1\leq l \leq N-d}^{[j_1, \cdots, j_{N-d}]=[1,i_2,\cdots, i_d]^{\circ}}(dw^\alpha_{j_l}\wedge d\overline{w^\alpha_{j_l}})}_{I} \\
   &&+ \underbrace{\sum_{[1, i_2,\cdots, i_d],[1,j_2, \cdots, j_d]}^{[1, i_2,\cdots, i_d]\neq [1,j_2, \cdots, j_d]} \pm \frac{\widetilde{K}^{[1, i_2,\cdots, i_d]}_{[1,j_2, \cdots, j_d]}}{2w_1^\alpha}dw^\alpha_1\wedge\nu^{[1,i_2,\cdots, i_d]^{\circ}}_{[1,j_2, \cdots, j_d]^{\circ}}}_{II}\\
&&+ \underbrace{\sum_{[1, i_2,\cdots, i_d],[j_1, \cdots, j_d]}^{j_1\neq 1} \pm \frac{\widetilde{K}^{[1, i_2,\cdots, i_d]}_{[j_1, \cdots, j_d]}}{2w_1^\alpha}dw^\alpha_1\wedge\nu^{[1,i_2,\cdots, i_d]^{\circ}}_{[j_1, \cdots, j_d]^{\circ}}}_{III}\\
\end{eqnarray*}
Here $I,$ $II$ and $III$  are smooth forms on $\partial D_\infty(\epsilon)\cap U_\alpha.$
There is
$$\partial D_\infty(\epsilon)\cap U_\alpha=\bigcup_{j=1}^N\partial D_\infty(\epsilon)\cap \partial T_j(\epsilon),$$
where each  $T_j(\epsilon)$ is the tube neighborhood of $\{w_j=0\}$ in $U_\alpha $ with radius $\varepsilon.$
Certainly, it is not necessary that $D_j\cap U_\alpha=\{w_k=0\}$ for some $k$ if $j\geq 2.$
\begin{lemma}\label{lemma-intersection-calculation-1}
Let $\eta$ be an arbitrary smooth $(d-1,d-1)$-form on $\overline{\sA}_{g,\Gamma}.$ We have
$$\lim_{\epsilon\to 0}\int_{\partial D_\infty(\epsilon)\cap \partial T_k(\epsilon)} I\wedge \eta=0$$
for any integer $k\in [2,N].$
\end{lemma}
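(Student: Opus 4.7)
The plan is to decompose $I$ into its elementary summands indexed by tuples $[1, i_2, \ldots, i_d]$ and analyze each term separately on $\partial T_k(\epsilon)\cap \partial D_\infty(\epsilon)\cap U_\alpha$ in polar coordinates. Setting $[j_1,\ldots,j_{N-d}] := [1, i_2, \ldots, i_d]^{\circ}$, I will isolate the summand
$$\omega := \frac{1}{2w_1^\alpha}\, \widetilde{K}^{[1, i_2, \ldots, i_d]}_{[1, i_2, \ldots, i_d]}\, dw_1^\alpha \wedge \bigwedge_{l=1}^{N-d}\bigl(dw_{j_l}^\alpha \wedge d\overline{w_{j_l}^\alpha}\bigr)$$
and parameterize $\partial T_k(\epsilon)$ by $w_k^\alpha = \epsilon\, e^{\sqrt{-1}\theta_k}$, so that $dw_k^\alpha$ and $d\overline{w_k^\alpha}$ both restrict to $O(\epsilon)$-multiples of $d\theta_k$, and in particular $dw_k^\alpha \wedge d\overline{w_k^\alpha}|_{\partial T_k(\epsilon)} = 0$.

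Next I will split into two cases according to the position of $k$ relative to the tuple $[1, i_2, \ldots, i_d]$; since $k \geq 2$ these are exhaustive. In Case A, where $k \in \{j_1,\ldots,j_{N-d}\}$, the summand $\omega$ already contains the factor $dw_k^\alpha \wedge d\overline{w_k^\alpha}$, which vanishes on $\partial T_k(\epsilon)$, so this summand contributes nothing. In Case B, where $k \in \{i_2,\ldots,i_d\}$, the summand $\omega$ contains neither $dw_k^\alpha$ nor $d\overline{w_k^\alpha}$; for $\omega \wedge \eta$ to pull back non-trivially to $\partial T_k(\epsilon)$ the smooth form $\eta$ must supply the $d\theta_k$ direction through exactly one of the two factors $dw_k^\alpha$, $d\overline{w_k^\alpha}$ (both factors would again give zero), and that factor restricts to $\pm\sqrt{-1}\epsilon\, e^{\pm\sqrt{-1}\theta_k}\, d\theta_k$, carrying an explicit prefactor of $\epsilon$.

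To conclude, I will bound the remainder of the Case B integrand uniformly in $\epsilon$. By Lemma \ref{Poincare-growth-metric} the form $(\sqrt{-1}\partial\overline{\partial}\log\Phi_\alpha)^{N-d}$ has Poincar\'e growth on $D_\infty \cap U_\alpha$, giving a Poincar\'e-type bound
$$\bigl|\widetilde{K}^{[1, i_2, \ldots, i_d]}_{[1, i_2, \ldots, i_d]}\bigr| \leq C \prod_{l \in \{j_1, \ldots, j_{N-d}\}}\frac{1}{|w_l^\alpha|^2 (\log|w_l^\alpha|)^2}.$$
Combined with the explicit $1/|w_1^\alpha|$ factor and the restriction of the remaining differentials, the density on $\partial T_k(\epsilon)\cap \partial D_\infty(\epsilon)\cap U_\alpha$ is integrable uniformly in $\epsilon$ (the radial integrals $\int_\epsilon^R r\, dr/(r^2(\log r)^2)$ stay bounded), precisely as in Proposition~1.2 of \cite{Mum77} and the calculation leading to \ref{vanishing-result-2} above. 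Together with the $O(\epsilon)$ prefactor from Case B, each summand contributes at most $O(\epsilon)$, and since there are only finitely many tuples $[1, i_2, \ldots, i_d]$, the total integral of $I\wedge \eta$ tends to zero as $\epsilon \to 0$.

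The hard part will be the uniform boundedness in Case B, because the extra $1/w_1^\alpha$ factor is not directly absorbed by a Poincar\'e norm (only $dw_1^\alpha$, not $d\overline{w_1^\alpha}$, appears in $\omega$). A cleaner way around this that I intend to use is to recognize $\omega$ as a piece of $\partial\log|w_1^\alpha|\wedge(\partial\overline{\partial}\log\Phi_\alpha)^{N-d}$ and inherit Mumford's boundary-integral estimates as in Theorem~1.4 of \cite{Mum77}: those estimates show the full boundary integral of $\partial\log|w_1^\alpha|\wedge(\partial\overline{\partial}\log\Phi_\alpha)^{N-d}\wedge \eta$ on $\partial T_k(\epsilon)\cap\partial D_\infty(\epsilon)$ stays bounded as $\epsilon \to 0$, and the additional factor of $\epsilon$ forced by Case B then converts this bounded quantity into a vanishing limit.
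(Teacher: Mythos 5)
Your reduction is essentially the paper's: on $\partial T_k(\epsilon)$ one has $dw_k^{\alpha}\wedge d\overline{w_k^{\alpha}}=0$, so only the summands of $I$ with $k\in\{i_2,\cdots,i_d\}$ survive, a nonvanishing restriction forces $\eta$ to contribute a factor involving $w_k^{\alpha}$ that restricts to an $O(\epsilon)$ multiple of $d\theta_k$, and the coefficient $\widetilde{K}^{[1,i_2,\cdots,i_d]}_{[1,i_2,\cdots,i_d]}$ is controlled by the Poincar\'e growth of $(\partial\overline{\partial}\log\Phi_\alpha)^{N-d}$ (Lemma \ref{Poincare-growth-metric}). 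The genuine problem is the way you propose to close the argument. Citing Mumford-type estimates to say that the full boundary integral of $\partial\log|w_1^{\alpha}|\wedge(\partial\overline{\partial}\log\Phi_\alpha)^{N-d}\wedge\eta$ over $\partial D_\infty(\epsilon)\cap\partial T_k(\epsilon)$ stays bounded, and then ``converting'' this into a vanishing limit by the ``additional factor of $\epsilon$ forced by Case B,'' is circular: that $\epsilon$ comes from restricting $dw_k^{\alpha}$ to $\{|w_k^{\alpha}|=\epsilon\}$ and is therefore already contained in the boundary integral whose boundedness you quoted, so there is no extra factor left to apply. Uniform boundedness alone does not give vanishing --- the analogous integral over $\partial T_1(\epsilon)$ is also uniformly bounded, yet its limit is nonzero and is precisely the surviving term computed in Lemma \ref{lemma-intersection-calculation-4} and Theorem \ref{non-ample-logarithmical-cotangent-bundle}. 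So the step you yourself single out as the hard part is not justified by your proposed patch.

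The difficulty is dissolved by a short type count, which is what the paper does. Write $\eta=\sum_\beta c_\beta\eta_\beta$ with $\eta_\beta$ simple. Since each surviving product must be of type $(N,N-1)$, $\eta_\beta$ has to supply all of $dw_{i_2}^{\alpha},\cdots,dw_{i_d}^{\alpha}$, in particular $dw_k^{\alpha}$; if it also contained $d\overline{w_k^{\alpha}}$ the restriction to $\partial T_k(\epsilon)$ would vanish, so the one antiholomorphic index omitted from $\{1,i_2,\cdots,i_d\}$ must be $k$, and hence $d\overline{w_1^{\alpha}}$ necessarily appears in $\eta_\beta$. Consequently the singular factor occurs only in the combination $\frac{dw_1^{\alpha}\wedge d\overline{w_1^{\alpha}}}{2w_1^{\alpha}}$, whose density is bounded in polar coordinates (it is comparable to $dr_1\wedge d\theta_1$; even the cruder bound $1/|w_1^{\alpha}|$ would be area-integrable). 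Combined with the Poincar\'e bound on $\widetilde{K}$, which makes the integrals in the $w_{j_m}^{\alpha}$-directions convergent uniformly in $\epsilon$, and with the explicit factor $\epsilon$ from $dw_k^{\alpha}|_{\partial T_k(\epsilon)}$, each surviving summand is $O(\epsilon)$, giving the stated limit. With this replacement your argument coincides with the paper's proof; as written, however, the final paragraph is a step that would fail.
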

\begin{proof}
Since
$dw_k^{\alpha}\wedge d\overline{w_k^{\alpha}}=0 \,\, \mbox{ on } \partial T_k(\epsilon)(=\{|w_k^\alpha|=\epsilon\}),$
we get
$$I= \sum\limits_{[1,i_2, \cdots, i_d],k\in \{i_2, \cdots, i_d \}}\theta([1,i_2,\cdots i_d])
\,\,\,\mbox{   on   }\partial T_k(\epsilon),$$
where
$$\theta([1,i_2,\cdots i_d]):= \widetilde{K}^{[1,i_2, \cdots, i_d]}_{[1, i_2,\cdots, i_d]}\frac{dw^\alpha_1}{2w_1^\alpha}\wedge\bigwedge_{l=1}^{N-d}
(dw^\alpha_{j_l}\wedge d\overline{w^\alpha_{j_l}})\,\,\,\mbox{   on   }\partial T_k(\epsilon)$$
with
$[j_1, \cdots, j_{N-d}]=[1,\cdots, i_l]^{\circ}.$
On the coordinate chart $(U_\alpha,(w^\alpha_1,\cdots, w^\alpha_N)),$ we write $\eta=\sum_{\beta}c_{\beta}\eta_{\beta}$ where each $c_\beta$ is a  smooth function on $U_\alpha$ and
each $\eta_\beta$ is a simple $(d-1,d-1)$ form  given by the wedge product of some $dw^\alpha_l$'s and some $d\overline{w^\alpha_m}$'s with coefficient $1.$
It is sufficient to prove the equality
$$\lim\limits_{\epsilon\to 0}\int_{\partial D_\infty(\epsilon)\cap \partial T_k(\epsilon)} \theta([1,i_2,\cdots i_d])\wedge c_\beta\eta_\beta=0$$ for
any  $d$-tuple $[1,i_2,\cdots i_d]$ satisfying $k\in  \{i_2, \cdots, i_d \},$ any simple $(d-1,d-1)$ form $\eta_{\beta}$ with coefficient $1$ and any smooth function $c_\beta$ on  $U_\alpha.$

Let $\eta_{\beta}$ be an arbitrary simple $(d-1,d-1)$ form with coefficient $1$ and $c_\beta$ an arbitrary  smooth function  on  $U_\alpha.$ We may require that $\eta_\beta$ does not contain  the factor $dw^\alpha_1$ and contains neither a factor like $dw^\alpha_{j_l}$ nor a factor like $d\overline{w^\alpha_{j_m}}$(or else $\theta([1,i_2,\cdots i_d])\wedge \eta_\beta=0$). Since $\theta\wedge \eta_{\beta}$ is a simple $(N, N-1)$-form with coefficient $\frac{\widetilde{K}^{[1, i_2,\cdots, i_d]}_{[1, i_2,\cdots, i_d]}}{2w_1^\alpha},$ $\eta_\beta$ must contain $dw^\alpha_k.$
We also require that $\eta_\beta$ does not contain the factor $d\overline{w^\alpha_k}$(or else $\eta_\beta=0$ on $\partial T_k(\epsilon)$).
Then, $\eta_\beta$ contains the factor $d\overline{w^\alpha_1}$ and
\begin{eqnarray*}
   && \theta([1,i_2,\cdots i_d])\wedge \eta_{\beta} \\
   &=&  \pm\widetilde{K}^{[1, i_2,\cdots, i_d]}_{[1,i_2, \cdots, i_d]}dw^\alpha_k \wedge \frac{dw^\alpha_1\wedge d\overline{w^\alpha_1}}{2w_1^\alpha}\wedge \bigwedge_{j=2}^{k-1}(dw^\alpha_j\wedge d\overline{w^\alpha_j}) \wedge \bigwedge_{j=k+1}^{N}(dw^\alpha_j\wedge d\overline{w^\alpha_j}) \,\,\mbox{ on }\,\,\partial T_k(\epsilon).
\end{eqnarray*}

Since $(\partial\overline{\partial}\log\Phi_{\alpha})^{N-d}$ is a  $(N-d,N-d)$ form of Poincar\'e growth,
there is a small neighborhood $V_0$ of the origin point $0\in U_\alpha$ such that
 $$  |\widetilde{K}^{[1, i_2,\cdots, i_d]}_{[1,i_2, \cdots, i_d]}|\leq \frac{ C }{\prod_{m=1}^{N-d} |w^\alpha_{j_m}|^2(\log |w^\alpha_{j_m}|)^2}   \mbox{ on }  V_0\cap U_\alpha^*$$
where $C$ is a constant depending on $V_0.$
It is well-known that there is
$$\lim_{\varepsilon\to 0^+}\varepsilon(\int_{\varepsilon}^A\frac{dr}{r(\log r)^2} )^n=0$$
for any real numbers  $A,n>0.$
Therefore, we have
$$\lim_{\epsilon\to 0}\int_{\partial D_\infty(\epsilon)\cap \partial T_k(\epsilon)} \theta([1,i_2,\cdots i_d])\wedge c_\beta\eta_\beta=0.$$
\end{proof}

\begin{lemma}\label{lemma-intersection-calculation-2}
Let $\eta$ be an arbitrary smooth $(d-1,d-1)$-form on $\overline{\sA}_{g,\Gamma}.$ We have
$$\lim_{\epsilon\to 0}\int_{\partial D_\infty(\epsilon)\cap \partial T_k(\epsilon)} II\wedge \eta=0$$
for any integer $k\in [2,N].$
\end{lemma}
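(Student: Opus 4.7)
The plan is to mirror the strategy used in Lemma \ref{lemma-intersection-calculation-1}, with the extra bookkeeping needed because now the upper and lower cofactor indices of $\widetilde{K}$ differ. First I would decompose $\eta = \sum_{\beta} c_\beta \eta_\beta$ on the chart $(U_\alpha,(w_1^\alpha,\cdots,w_N^\alpha))$, where each $c_\beta$ is a smooth function and each $\eta_\beta$ is a simple $(d-1,d-1)$-form whose coefficient is $1$. By linearity it then suffices to prove, for each pair of tuples $[1,i_2,\cdots,i_d]\neq[1,j_2,\cdots,j_d]$ and each simple $\eta_\beta$, that
\begin{equation*}
\lim_{\epsilon\to 0}\int_{\partial D_\infty(\epsilon)\cap\partial T_k(\epsilon)}\frac{\widetilde{K}^{[1,i_2,\cdots,i_d]}_{[1,j_2,\cdots,j_d]}}{2w_1^\alpha}\,dw_1^\alpha\wedge \nu^{[1,i_2,\cdots,i_d]^\circ}_{[1,j_2,\cdots,j_d]^\circ}\wedge c_\beta\eta_\beta = 0.
\end{equation*}

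Next I would eliminate terms that vanish for purely combinatorial reasons on $\partial T_k(\epsilon)$. Recall $dw_k^\alpha\wedge d\overline{w_k^\alpha}\equiv 0$ on the real hypersurface $\{|w_k^\alpha|=\epsilon\}$. Thus: (a) if $k\in[1,i_2,\cdots,i_d]^\circ\cap[1,j_2,\cdots,j_d]^\circ$, the factor $\nu^{[1,i_2,\cdots,i_d]^\circ}_{[1,j_2,\cdots,j_d]^\circ}$ already contains both $dw_k^\alpha$ and $d\overline{w_k^\alpha}$, so the integrand vanishes; (b) if $\eta_\beta$ contributes a $dw_k^\alpha$ (resp.\ $d\overline{w_k^\alpha}$) when $\nu$ already contains one, the integrand vanishes again; (c) if $\eta_\beta$ contains a $dw_m^\alpha$ or $d\overline{w_m^\alpha}$ with $m$ lying in the corresponding index set of $\nu$, the integrand is zero. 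After these reductions there are only finitely many surviving configurations, and in each of them $k\in\{i_2,\cdots,i_d\}\triangle\{j_2,\cdots,j_d\}$ with $\eta_\beta$ supplying precisely the transverse factors needed to make the integrand a top-degree form on $\partial T_k(\epsilon)$.

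For each surviving term I would invoke the Poincar\'e-growth of $\sqrt{-1}\partial\overline{\partial}\log\Phi_\alpha$ guaranteed by Lemma \ref{Poincare-growth-metric} (combined with a Cauchy--Schwarz type estimate for off-diagonal cofactors of a Hermitian matrix with Poincar\'e-bounded diagonal): on a small neighborhood $V_0$ of the origin there is a constant $C>0$ such that, on $V_0\cap U_\alpha^*$,
\begin{equation*}
\bigl|\widetilde{K}^{[1,i_2,\cdots,i_d]}_{[1,j_2,\cdots,j_d]}\bigr|\;\leq\;\frac{C}{\prod_{m\in\Lambda}|w_m^\alpha|^2(\log|w_m^\alpha|)^2},
\end{equation*}
where $\Lambda$ is the set of indices contributing holomorphic (or antiholomorphic) differentials to the integrand. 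Passing to polar coordinates $w_m^\alpha=r_m e^{\sqrt{-1}\theta_m}$ and integrating out the angular variables, the remaining radial integral factors as a product of integrals of the form $\int_{\epsilon}^{A} r^{-1}(\log r)^{-2}\,dr$, bounded by $|\log\epsilon|^{-1}$, together with a single factor of $\epsilon$ coming from the restriction $|w_k^\alpha|=\epsilon$. Applying $\lim_{\varepsilon\to 0^+}\varepsilon\bigl(\int_\varepsilon^{A}\tfrac{dr}{r(\log r)^2}\bigr)^n=0$ as in the proof of Lemma \ref{lemma-intersection-calculation-1} yields the claimed vanishing.

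The main obstacle is the off-diagonal Poincar\'e estimate for $\widetilde{K}^{[1,i_2,\cdots,i_d]}_{[1,j_2,\cdots,j_d]}$: unlike the diagonal cofactors handled in the previous lemma, an off-diagonal cofactor does not come directly from a principal minor of a Poincar\'e-growth Hermitian matrix, so one must argue via the Cauchy--Schwarz inequality $|K_{ij}|\leq\sqrt{K_{ii}K_{jj}}$ applied to the positive $(1,1)$-form $\sqrt{-1}\partial\overline{\partial}\log\Phi_\alpha$, together with Hadamard's bound on the remaining minor, to obtain a bound of the correct shape. Once that estimate is in place the rest is the same polar-coordinate calculation as in Lemma \ref{lemma-intersection-calculation-1}.
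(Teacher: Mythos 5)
Your overall skeleton (decompose $\eta$ into simple forms with coefficient one, discard terms that die on $\partial T_k(\epsilon)$ because $dw_k^\alpha\wedge d\overline{w_k^\alpha}=0$ there, then estimate the survivors) is the same as the paper's, but the case analysis and the key estimate for the hard case are wrong as stated. First, the surviving configurations are not those with $k\in\{i_2,\cdots,i_d\}\triangle\{j_2,\cdots,j_d\}$. The case $k\in\{i_2,\cdots,i_d\}\cap\{j_2,\cdots,j_d\}$ survives the combinatorial reductions (there $\eta_\beta$ must supply $dw_k^\alpha$ and one forbids $d\overline{w_k^\alpha}$), and it is precisely the case where your mechanism works: $k$ is not a complement index, so the Poincar\'e bound on $\widetilde{K}^{[1,i_2,\cdots,i_d]}_{[1,j_2,\cdots,j_d]}$ carries no singular factor in $w_k^\alpha$, and the factor $\epsilon$ from restricting $dw_k^\alpha$ to $\{|w_k^\alpha|=\epsilon\}$ kills the limit. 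By contrast, the case $k\in\{i_2,\cdots,i_d\}\setminus\{j_2,\cdots,j_d\}$ dies combinatorially (the product contains $dw_k^\alpha\wedge d\overline{w_k^\alpha}$), while the genuinely hard survivor is $k\in\{j_2,\cdots,j_d\}\setminus\{i_2,\cdots,i_d\}$: there $\nu^{[1,i_2,\cdots,i_d]^{\circ}}_{[1,j_2,\cdots,j_d]^{\circ}}$ contains $dw_k^\alpha$ and, after writing each entry $K_{i,j}$ as $T^\alpha_{i,j}(\log|w^\alpha|)/(4w_i^\alpha\overline{w_j^\alpha}(F^\alpha)^2)$, the integrand carries an explicit $dw_k^\alpha/w_k^\alpha$, whose restriction to the circle contributes no factor of $\epsilon$ at all.

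In that case your estimate does not close. The bound you write, with $|w_m^\alpha|^2(\log|w_m^\alpha|)^2$ for every $m\in\Lambda$, is the wrong shape for the unmatched index $k$ (Poincar\'e growth, or your Cauchy--Schwarz route, gives only first powers $|w_m^\alpha|\,|\log|w_m^\alpha||$ per occurrence); plugging your stated bound into the "single factor of $\epsilon$" accounting produces $\epsilon^{-1}(\log\epsilon)^{-2}$, which diverges, so the limit is not controlled. The paper's proof instead isolates $\det\bigl(T^{[1,i_2,\cdots,i_d]}_{[1,j_2,\cdots,j_d]}\bigr)/(F^\alpha)^{2(N-d)}$ and uses the degree inequality $\deg_k\det\bigl(T^{[1,i_2,\cdots,i_d]}_{[1,j_2,\cdots,j_d]}\bigr)\leq 2(N-d)\deg_kF^\alpha-1$ from Lemma \ref{degree-determine-polynomal}, combined with the generalized Cauchy integral formula, to see that the coefficient tends to $0$ as $\log|w_k^\alpha|\to-\infty$; this degree deficiency (equivalently, a decay of order $1/|\log\epsilon|$, not of order $\epsilon$) is what makes the limit vanish, and it is exactly the mechanism missing from your proposal. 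A corrected Poincar\'e estimate with first powers per unmatched index would also yield the $1/|\log\epsilon|$ decay and salvage the argument, but as written both the identification of the surviving cases and the quantitative step for the hard one are incorrect.
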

\begin{proof}
We can write
$II=\sum\limits_{[1, i_2,\cdots, i_d],[1,j_2, \cdots, j_d]}^{[1, i_2,\cdots, i_d]\neq [1,j_2, \cdots, j_d]} \pm \theta^{[1, i_2,\cdots, i_d]}_{[1,j_2, \cdots, j_d]},$
where
$$\theta^{[1, i_2,\cdots, i_d]}_{[1,j_2, \cdots, j_d]}:= \widetilde{K}^{[1, i_2,\cdots, i_d]}_{[1,j_2, \cdots, j_d]}\frac{dw^\alpha_1}{2w_1^\alpha}\wedge\nu^{[1,i_2,\cdots, i_d]^{\circ}}_{[1,j_2, \cdots, j_d]^{\circ}}.$$
Since
$dw_k^{\alpha}\wedge d\overline{w_k^{\alpha}}=0 \,\, \mbox{ on } \partial T_k(\epsilon),$
it is sufficient to show
$$\lim_{\epsilon\to 0}\int_{\partial D_\infty(\epsilon)\cap \partial T_k(\epsilon)} \theta^{[1, i_2,\cdots, i_d]}_{[1,j_2, \cdots, j_d]}\wedge c_\beta\eta_\beta=0$$
for any  simple $(d-1,d-1)$ form $\eta_{\beta}$ with coefficient $1,$ any smooth function $c_\beta$ on  $U_\alpha$ and any two $d$-tuples
$[1, i_2,\cdots, i_d], [1,j_2, \cdots, j_d]$ satisfying
$$(!)\,\,\, \,\,\,\, [1, i_2,\cdots, i_d]\neq [1,j_2, \cdots, j_d]\,\,\,\mbox{ and } \,\, k\in \{i_2,\cdots, i_d\}\bigcup \{j_2, \cdots, j_d\}.$$

Let $\eta_{\beta}$ be an arbitrary simple $(d-1,d-1)$ form with coefficient $1$ and $c_\beta$ an arbitrary  smooth function  on  $U_\alpha.$
Let $[1, i_2,\cdots, i_d],[1,j_2, \cdots, j_d]$ be two arbitrary $d$-tuples satisfying the condition (!),
and let $[i_{d+1},\cdots, i_{N}]=[1, i_2,\cdots, i_d]^{\circ}$ and $[j_{d+1},\cdots, j_{N}]=[1,j_2,\cdots, j_{d}]^{\circ}.$ So $i_{d+1}\geq 2, j_{d+1}\geq 2$ and
$\theta^{[1, i_2,\cdots, i_d]}_{[1,j_2, \cdots, j_d]}= \widetilde{K}^{[1, i_2,\cdots, i_d]}_{[1,j_2, \cdots, j_d]}\frac{dw^\alpha_1}{2w_1^\alpha}\wedge\nu^{[i_{d+1},\cdots, i_{N}]}_{[j_{d+1},\cdots, j_{N}]}.$

We can require that the simple form  $\eta_\beta$  contains no factor in the set
$$\{dw^\alpha_1\}\cup\{dw^\alpha_{i_{d+1}},\cdots,dw^\alpha_{i_{N}}\}\cup \{ d\overline{w^\alpha_{1}},d\overline{w^\alpha_{j_{d+2}}},\cdots,d\overline{w^\alpha_{j_N}}\}$$
(or else  $\theta^{[1, i_2,\cdots, i_d]}_{[j_1, \cdots, j_d]}\wedge \eta_\beta =0$). There are three cases :
\begin{itemize}
  \item $k\in \{i_2,\cdots, i_d\}\cap \{j_2, \cdots, j_d\}$ : Then  $k\notin \{i_{d+1},\cdots, i_{N}\}\cup \{j_{d+1}, \cdots, j_N\}.$
Suppose that $\theta^{[1, i_2,\cdots, i_d]}_{[j_1, \cdots, j_d]}\wedge \eta_\beta$ is a nonzero $(N,N-1)$ simple form.
Then $\eta_\beta$ contains the factor $dw^\alpha_k$ by that $k\notin \{i_{d+1},\cdots, i_{N}\}.$
 We may require that $\eta_\beta$ does not contain the factor $d\overline{w^\alpha_k}$(otherwise $\eta=0$ on $\partial T_k(\epsilon)$), and then $\eta_\beta$ contains the factor $d\overline{w^\alpha_1}.$
We have that
 $$\theta^{[1, i_2,\cdots, i_d]}_{[1,j_2, \cdots, j_d]}\wedge \eta_\beta=\pm\widetilde{K}^{[1, i_2,\cdots, i_d]}_{[1,j_2, \cdots, j_d]}  dw^\alpha_k\wedge\frac{dw^\alpha_1\wedge d\overline{w^\alpha_1}}{2w_1^\alpha}\wedge(\bigwedge_{l=2}^{k-1}dw^\alpha_l\wedge d\overline{w^\alpha_{l}}) \wedge(\bigwedge_{k+1}^{N}dw^\alpha_l\wedge d\overline{w^\alpha_{l}}).$$
by that $k\notin \{j_{d+1},\cdots, j_{N}\}.$ Since that $k\notin \{i_{d+1},\cdots, i_{N}\}\cup\{j_{d+1},\cdots, j_{N}\},$
we obtain $$\lim_{\epsilon\to 0}\int_{\partial D_\infty(\epsilon)\cap \partial T_k(\epsilon)}\theta^{[1, i_2,\cdots, i_d]}_{[j_1, \cdots, j_d]}\wedge c_\beta\eta_\beta=0$$
by  the Poincar\'e growth  of the form $(\partial\overline{\partial}\log\Phi_{\alpha})^{N-d}.$

  \item $k\in \{i_2,\cdots, i_d\}$ but $k\notin\{j_2, \cdots, j_d\}$ : Then, $k\in\{ j_{d+1}, \cdots, j_{N}\},$ and so
$$\theta^{[1, i_2,\cdots, i_d]}_{[j_1, \cdots, j_d]}\wedge \eta_\beta=0\,\, \mbox{ on } \,\,   \partial T_k(\epsilon).$$

  \item $k\in \{j_2, \cdots, j_d\}\setminus\{i_2,\cdots, i_d\}$ : Then $k\in \{i_{d+1},\cdots, i_N\}$ but $k\notin\{j_{d+1},\cdots, j_N\}.$
  We require that $\eta_\beta$ does not have the factor $d\overline{w_{k}^\alpha}$(or else $\theta^{[1, i_2,\cdots, i_d]}_{[j_1, \cdots, j_d]}\wedge \eta_\beta=0$ on $\partial T_k(\epsilon)$ by $k\in \{i_{d+1},\cdots, i_N\}$), and so $\eta_\beta$ has the factor $d\overline{w_{1}^\alpha}$ as $j_{d+1}\geq 2.$ Thus, we get
  \begin{eqnarray*}
       && \theta^{[1, i_2,\cdots, i_d]}_{[1,j_2, \cdots, j_d]}\wedge \eta_\beta \\
       &=& \pm\frac{(-g-1)^{N-d}\det(T^{[1, i_2,\cdots, i_d]}_{[1,j_2, \cdots, j_d]})}{4^{N-d}(\prod\limits_{d+1\leq l\leq N}^{i_l\neq k} w_{i_l}^\alpha)(\prod\limits_{l=d+1}^N \overline{w_{j_l}} )(F^{\alpha})^{2(N-d)}(\log|w_1^\alpha|,\cdots,\log|w_N^\alpha|)} \\
       && \times \frac{dw_k^\alpha}{w_k^\alpha} \wedge \frac{dw^\alpha_1\wedge d\overline{w^\alpha_1}}{2w_1^\alpha}\wedge(\bigwedge_{i=2}^{k-1}  dw_i^\alpha d\wedge\overline{w_i^\alpha})\wedge(\bigwedge_{i=k+1}^{N}  dw_i^\alpha\wedge d\overline{w_i^\alpha}) \mbox{ on }    \partial T_k(\epsilon).
\end{eqnarray*}
Since $k\in\{ j_{d+1}, \cdots, j_{N}\}$ but $k\notin \{ i_{d+1}, \cdots, i_{N}\},$ we have
  $$\deg_k \det(T^{[1, i_2,\cdots, i_d]}_{[1,j_2, \cdots, j_d]})\leq 2(N-d)\deg_k F^\alpha-1.$$
by the lemma \ref{degree-determine-polynomal}.
Therefore, we  obtain
$$\lim_{\epsilon\to 0}\int_{\partial D_\infty(\epsilon)\cap \partial T_k(\epsilon)}\theta^{[1, i_2,\cdots, i_d]}_{[1,j_2, \cdots, j_d]}\wedge c_\beta\eta_\beta=\int_{\{w^\alpha_k=0\}}0=0.$$
by using the generalized Cauchy integral formula and using the Poincar\'e growth of $(\partial\overline{\partial}\log\Phi_{\alpha})^{N-d}.$
\end{itemize}
\end{proof}

\begin{lemma}\label{lemma-intersection-calculation-3}Let $\eta$ be an arbitrary smooth $(d-1,d-1)$-form  on $\overline{\sA}_{g,\Gamma}.$
We have that
$$\lim_{\epsilon\to 0}\int_{\partial D_\infty(\epsilon)\cap \partial T_k(\epsilon)} III\wedge \eta=0$$
for any  integer $k\in [1,N].$
\end{lemma}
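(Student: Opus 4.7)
My approach is to follow the templates of Lemmas \ref{lemma-intersection-calculation-1} and \ref{lemma-intersection-calculation-2}. I decompose
$$III=\sum_{[1,i_2,\ldots,i_d],[j_1,\ldots,j_d]}^{j_1\neq 1}\pm\theta^{[1,i_2,\ldots,i_d]}_{[j_1,\ldots,j_d]},\qquad \theta^{[1,i_2,\ldots,i_d]}_{[j_1,\ldots,j_d]}:=\widetilde{K}^{[1,i_2,\ldots,i_d]}_{[j_1,\ldots,j_d]}\frac{dw_1^\alpha}{2w_1^\alpha}\wedge\nu^{[1,i_2,\ldots,i_d]^\circ}_{[j_1,\ldots,j_d]^\circ},$$
expand $\eta|_{U_\alpha}=\sum c_\beta\eta_\beta$ as a finite sum of simple $(d-1,d-1)$-forms $\eta_\beta$ with coefficient $1$ and smooth coefficients $c_\beta$, and reduce the claim to showing that $\lim_{\epsilon\to 0}\int_{\partial D_\infty(\epsilon)\cap\partial T_k(\epsilon)}\theta\wedge c_\beta\eta_\beta=0$ for every such choice.

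The decisive structural observation is that the condition $j_1\neq 1$ forces $1\in[j_1,\ldots,j_d]^\circ$, so $d\overline{w_1^\alpha}$ appears in $\nu^{[1,i_2,\ldots,i_d]^\circ}_{[j_1,\ldots,j_d]^\circ}$ and every $\theta^{[1,i_2,\ldots,i_d]}_{[j_1,\ldots,j_d]}$ carries the $2$-form $dw_1^\alpha\wedge d\overline{w_1^\alpha}$ as an explicit wedge factor. For $k=1$ this $2$-form restricts to zero on $\partial T_1(\epsilon)$: either $\eta_\beta$ duplicates $dw_1^\alpha$ or $d\overline{w_1^\alpha}$ and $\theta\wedge\eta_\beta\equiv 0$, or else $\theta\wedge\eta_\beta$ still carries the vanishing factor and restricts to zero on $\partial T_1(\epsilon)$; in either case the integral is identically zero.

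For $k\geq 2$ I will split according to whether $k\in\{i_2,\ldots,i_d\}$ and/or $k\in\{j_1,\ldots,j_d\}$. If $k$ lies in neither set, $\nu^{[1,i_2,\ldots,i_d]^\circ}_{[j_1,\ldots,j_d]^\circ}$ already contains both $dw_k^\alpha$ and $d\overline{w_k^\alpha}$, so $\theta$ has the factor $dw_k^\alpha\wedge d\overline{w_k^\alpha}$ and vanishes on $\partial T_k(\epsilon)$. If $k$ lies in exactly one of the two sets, $\theta$ carries exactly one of $dw_k^\alpha$, $d\overline{w_k^\alpha}$; after disposing of the choices of $\eta_\beta$ producing a repeated wedge factor or the vanishing wedge $dw_k^\alpha\wedge d\overline{w_k^\alpha}$, the surviving integrand acquires a simple pole in $w_k^\alpha$ and is amenable to the generalized Cauchy integral formula. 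The degree estimate $\deg_k\det(T^{[1,i_2,\ldots,i_d]}_{[j_1,\ldots,j_d]})\leq 2(N-d)\deg_k F^\alpha-1$ from Lemma \ref{degree-determine-polynomal} then forces the residue at $w_k^\alpha=0$ to be identically zero, exactly as in the corresponding sub-case of Lemma \ref{lemma-intersection-calculation-2}.

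The main obstacle is the remaining sub-case $k\in\{i_2,\ldots,i_d\}\cap\{j_1,\ldots,j_d\}$, in which $\theta$ contains neither $dw_k^\alpha$ nor $d\overline{w_k^\alpha}$ and the coefficient of $\theta$ carries no pole in $w_k^\alpha$, so that neither the vanishing-wedge argument nor the Cauchy residue argument is directly available. Here I plan to parametrize $\partial T_k(\epsilon)$ by $w_k^\alpha=\epsilon e^{i\phi_k}$, whence $dw_k^\alpha=i\epsilon e^{i\phi_k}d\phi_k$, and exploit the fact that the coefficient $\widetilde{K}^{[1,i_2,\ldots,i_d]}_{[j_1,\ldots,j_d]}/(2w_1^\alpha)$ depends on $w_k^\alpha$ only through $|w_k^\alpha|$ and is therefore independent of $\phi_k$; the $\phi_k$-integration against $c_\beta$ then picks out a single Fourier coefficient of $c_\beta$ which, by smoothness of $c_\beta$, vanishes to first order in $\epsilon$. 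Combined with the extra $\epsilon$-factor from the parametrization of $dw_k^\alpha$ and the uniform boundedness of the integral in the remaining variables guaranteed by the Poincar\'e growth of $(\partial\overline{\partial}\log\Phi_{g,\Gamma})^{N-d}$ from Lemma \ref{Poincare-growth-metric}, the total integral is $O(\epsilon^2)$ and vanishes in the limit.
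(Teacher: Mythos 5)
Your proof is correct and follows essentially the same route as the paper's: the same decomposition of $III$ into the forms $\theta^{[1,i_2,\ldots,i_d]}_{[j_1,\ldots,j_d]}$ wedged against simple pieces $c_\beta\eta_\beta$ of $\eta$, the same observation that $j_1\neq 1$ forces $d\overline{w_1^\alpha}$ into $\nu^{[1,i_2,\ldots,i_d]^{\circ}}_{[j_1,\ldots,j_d]^{\circ}}$ so that every term carries $dw_1^\alpha\wedge d\overline{w_1^\alpha}$ and the $k=1$ integral vanishes identically on $\partial T_1(\epsilon)$, and the same case split for $k\geq 2$ according to membership of $k$ in $\{i_2,\ldots,i_d\}$ and $\{j_1,\ldots,j_d\}$, with the generalized Cauchy formula plus the degree bound $\deg_k\det(T^{[1,i_2,\ldots,i_d]}_{[j_1,\ldots,j_d]})\leq 2(N-d)\deg_kF^{\alpha}-1$ of Lemma \ref{degree-determine-polynomal} and the Poincar\'e-growth estimate used in exactly the sub-cases where the paper uses them. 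The only deviations are harmless: in the sub-case $k\in\{i_2,\ldots,i_d\}\setminus\{j_1,\ldots,j_d\}$ every term in fact vanishes identically on $\partial T_k(\epsilon)$ (any survivor would carry a pole in $\overline{w_k^\alpha}$, not $w_k^\alpha$, so no Cauchy residue is ever computed there, as the paper notes), and in the doubly-shared sub-case your extra Fourier-coefficient factor of $\epsilon$ is an unnecessary refinement, while your claim of uniform boundedness of the remaining integral is slightly too strong (the explicit $1/w_1^\alpha$ pole combined with the Poincar\'e factor in $w_1^\alpha$ allows at worst a doubly-logarithmic divergence), but the $\epsilon$ from $dw_k^\alpha$ still forces the limit to zero.
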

\begin{proof}
Since
$dw_k^{\alpha}\wedge d\overline{w_k^{\alpha}}=0 \,\, \mbox{ on } \partial T_k(\epsilon),$
$III= \sum\limits_{[1, i_2,\cdots, i_d], [j_1, \cdots, j_d], j_1\neq 1}^{k\in \{1, i_2,\cdots, i_d\}\cup\{j_1, \cdots, j_d\}} \pm \theta^{[1, i_2,\cdots, i_d]}_{[j_1, \cdots, j_d]}   \,\,\,\mbox{   on   }\partial T_k(\epsilon)$
where $$\theta^{[1, i_2,\cdots, i_d]}_{[j_1, \cdots, j_d]}  := \widetilde{K}^{[1, i_2,\cdots, i_d]}_{[j_1, \cdots, j_d]}\frac{dw^\alpha_1}{2w_1^\alpha}\wedge\nu^{[1,\cdots, i_d]^{\circ}}_{[j_1, \cdots, j_d]^{\circ}}.$$
As $j_1\neq 1,$ we must have $[j_{1},\cdots, j_{d}]^{\circ}=[1,\cdots ]$ and so
$$III=0 \mbox{ on } \partial T_1(\epsilon).$$

Now we suppose that $k>1.$ It is sufficient to show
$$\lim_{\epsilon\to 0}\int_{\partial D_\infty(\epsilon)\cap \partial T_k(\epsilon)} \theta^{[1, i_2,\cdots, i_d]}_{[j_1, \cdots, j_d]}\wedge c_\beta\eta_\beta=0$$
for any  simple $(d-1,d-1)$ form $\eta_{\beta}$ with coefficient $1,$ any smooth function $c_\beta$ on  $U_\alpha$ and any two $d$-tuples
$[1, i_2,\cdots, i_d], [j_1, \cdots, j_d]$ satisfying
$$(!!)\,\,\,\,\,\, \, j_1\neq 1  \,\,\mbox{   and  }\,\,k\in \{1, i_2,\cdots, i_d\}\cup\{j_1, \cdots, j_d\}.$$
Let $\eta_{\beta}$ be an arbitrary simple $(d-1,d-1)$ form with coefficient $1$ and $c_\beta$ an arbitrary  smooth function  on  $U_\alpha.$
Let $[1, i_2,\cdots, i_d],[j_1, \cdots, j_d]$ be two arbitrary $d$-tuples satisfying the condition (!!), and
let $[i_{d+1},\cdots, i_{N}]=[1, i_2,\cdots, i_d]^{\circ}$ and $[1,j_{d+2},\cdots, j_{N}]=[j_{1},\cdots, j_{d}]^{\circ}.$
Then, we get
$$\theta^{[1, i_2,\cdots, i_d]}_{[j_1, \cdots, j_d]}  = \widetilde{K}^{[1, i_2,\cdots, i_d]}_{[j_1, \cdots, j_d]}
\frac{dw^\alpha_1\wedge d\overline{w^\alpha_1}}{2w_1^\alpha}\wedge \bigwedge_{l=d+1}^N dw^\alpha_{i_l}\wedge \bigwedge_{l=d+2}^Nd\overline{w^\alpha_{j_l}}.$$
We can require that the simple form $\eta_\beta$ contains no factor  in the set
$$\{dw^\alpha_1\}\cup\{dw^\alpha_{i_{d+1}},\cdots, dw^\alpha_{i_{N}}\}\cup \{ d\overline{w^\alpha_{1}},d\overline{w^\alpha_{j_{d+2}}},\cdots,d\overline{w^\alpha_{j_N}}\}$$
(or else $\theta^{[1, i_2,\cdots, i_d]}_{[j_1, \cdots, j_d]}\wedge \eta_\beta =0$).
There are three cases :
\begin{itemize}
  \item $k\in \{1, i_2,\cdots, i_d\}\cap\{j_1, \cdots, j_d\}$ : So $k\notin \{i_{d+1},\cdots, i_{N}\}\cup\{1,j_{d+2},\cdots, j_{N}\}.$

Suppose that $\theta^{[1, i_2,\cdots, i_d]}_{[j_1, \cdots, j_d]}\wedge \eta_\beta$ is a nonzero $(N,N-1)$ simple form with coefficient $\frac{\widetilde{K}^{[1, i_2,\cdots, i_d]}_{[j_1, \cdots, j_d]}}{2w_1^\alpha}.$
Then $\eta_\beta$ contains the factor $dw^\alpha_k$ by $k\notin \{i_{d+1},\cdots, i_{N}\}.$
 We may require $\eta_\beta$ does not contain the factor $d\overline{w^\alpha_k}$(otherwise, $\eta=0$ on $\partial T_k(\epsilon)$).
We have
 $$\theta^{[1, i_2,\cdots, i_d]}_{[j_1, \cdots, j_d]}\wedge \eta_\beta=\pm\widetilde{K}^{[1, i_2,\cdots, i_d]}_{[j_1, \cdots, j_d]}  dw^\alpha_k\wedge\frac{dw^\alpha_1\wedge d\overline{w^\alpha_1}}{2w_1^\alpha}\wedge(\bigwedge_{l=2}^{k-1}dw^\alpha_l\wedge d\overline{w^\alpha_{l}}) \wedge(\bigwedge_{k+1}^{N}dw^\alpha_l\wedge d\overline{w^\alpha_{l}})$$
by that $k\notin \{1,j_{d+2},\cdots, j_{N}\}.$
Then, $$\lim_{\epsilon\to 0}\int_{\partial D_\infty(\epsilon)\cap \partial T_k(\epsilon)}\theta^{[1, i_2,\cdots, i_d]}_{[j_1, \cdots, j_d]}\wedge c_\beta\eta_\beta=0$$
by that $k\notin \{i_{d+1},\cdots, i_{N}\}\cup\{1,j_{d+2},\cdots, j_{N}\}$ and the Poincar\'e growth  of the form $(\partial\overline{\partial}\log\Phi_{\alpha})^{N-d}.$

  \item $k\in \{1, i_2,\cdots, i_d\}$ bur $k\notin\{j_1, \cdots, j_d\}$ : So $k\in \{1,j_{d+2},\cdots, j_N\},$  and $i_{d+1}>1.$
Then,
  $$\theta^{[1, i_2,\cdots, i_d]}_{[j_1, \cdots, j_d]}\wedge \eta_\beta=0\,\, \mbox{ on } \,\,   \partial T_k(\epsilon).$$

  \item $k\in \{j_1, \cdots, j_d\}$ but $k\notin \{1, i_2,\cdots, i_d\}$ :  So $k\in \{i_{d+1},\cdots, i_N\}$ but $k\notin \{1,j_{d+2},\cdots, j_N\}$ and $i_{d+1}>1.$
  We require that $\eta_\beta$ has no factor $d\overline{w_{k}^\alpha}$(otherwise, $\theta^{[1, i_2,\cdots, i_d]}_{[j_1, \cdots, j_d]}\wedge \eta_\beta=0$ on $\partial T_k(\epsilon)$).
Then, we have
  \begin{eqnarray*}
       && \theta^{[1, i_2,\cdots, i_d]}_{[j_1, \cdots, j_d]}\wedge \eta_\beta \\
       &=& \pm\frac{(-g-1)^{N-d}\det(T^{[1, i_2,\cdots, i_d]}_{[j_1, \cdots, j_d]})}{4^{N-d}(\prod\limits_{d+1\leq l\leq N}^{i_l\neq k} w_{i_l}^\alpha)(\prod\limits_{l=d+1}^N \overline{w_{j_l}} )(F^{\alpha})^{2(N-d)}(\log|w_1^\alpha|,\cdots,\log|w_N^\alpha|)} \\
       && \times \frac{dw_k^\alpha}{w_k^\alpha} \wedge \frac{dw^\alpha_1\wedge d\overline{w^\alpha_1}}{2w_1^\alpha}\wedge(\bigwedge_{i=2}^{k-1}  dw_i^\alpha\wedge d\overline{w_i^\alpha})\wedge(\bigwedge_{i=k+1}^{N}  dw_i^\alpha\wedge d\overline{w_i^\alpha})  \mbox{ on }    \partial T_k(\epsilon).
\end{eqnarray*}
The lemma \ref{degree-determine-polynomal} says that
  $$\deg_k \det(T^{[1, i_2,\cdots, i_d]}_{[j_1, \cdots, j_d]})\leq 2(N-d)\deg_k F^\alpha-1.$$
Therefore, we  obtain
$$\lim_{\epsilon\to 0}\int_{\partial D_\infty(\epsilon)\cap \partial T_k(\epsilon)}\theta^{[1, i_2,\cdots, i_d]}_{[j_1, \cdots, j_d]}\wedge c_\beta\eta_\beta=0.$$
by the generalized Cauchy integral formula and the Poincar\'e growth of $(\partial\overline{\partial}\log\Phi_{\alpha})^{N-d}.$
\end{itemize}
\end{proof}

By the lemma \ref{lemma-on-degree-of-polynomial}, we can write
$$F^{\alpha}(x_1,\cdots, x_N)=S^{\alpha,1}(x_2,\cdots,  x_N)x_1^{\deg_1 F^\alpha}+ \mbox{ terms of lower degree of $x_1$ },$$
where $S_1$ is a homogenous polynomial in $\R[x_1,\cdots, \widehat{x_i},\cdots, x_N]$ of degree $(g-\deg_1 F^\alpha).$
For any integers $l,m $ in $[1,N],$ we define
a  $N\times N$ matrix $A^{\alpha}(1):=(A^{\alpha}(1)_{l,m})$ by setting
\begin{equation}\label{matrix-polynomial}
    A^{\alpha}(1)_{l,m}:=S^{\alpha,1}\frac{\partial^2 S^{\alpha,1}}{\partial x_l\partial x_m}- \frac{\partial S^{\alpha,1}}{\partial x_l}\frac{\partial S^{\alpha,1}}{\partial x_m}.
\end{equation}

Here is a direct consequence of the lemma \ref{degree-determine-polynomal}:
\begin{lemma}\label{residue-lemma}
The coefficient of term $x_1^{2(N-d)\deg_1 F^{\alpha}}$ in $\det(T^{[1, i_2,\cdots, i_d]}_{[1,i_2,\cdots, i_d]})$ is
$\det(P^{[1, i_2,\cdots, i_d]}_{[1,i_2,\cdots, i_d]}) $
where $P:=A^{\alpha}(1).$
\end{lemma}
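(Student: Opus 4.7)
The plan is to reduce the claim to extracting the top-degree coefficient in $x_1$ entry-by-entry from each $T^{\alpha}_{l,m}$ appearing in the minor, then collecting these coefficients into the determinant.

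First I would invoke Lemma \ref{lemma-on-degree-of-polynomial} (more precisely, the degree bounds in Lemma \ref{degree-determine-polynomal}) to note that for any indices $l,m\neq 1$, the polynomial $T^{\alpha}_{l,m}=F^{\alpha}F^{\alpha}_{lm}-F^{\alpha}_lF^{\alpha}_m$ has $\deg_1 T^{\alpha}_{l,m}\le 2\deg_1 F^{\alpha}=2d_1$. Since every entry of the minor $T^{[1,i_2,\ldots,i_d]}_{[1,i_2,\ldots,i_d]}$ is indexed by $l,m\in\{1,\ldots,N\}\setminus\{1,i_2,\ldots,i_d\}$, each such entry has degree $\le 2d_1$ in $x_1$. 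Expanding the $(N-d)\times(N-d)$ determinant as a signed sum over permutations, every monomial in the expansion is a product of $N-d$ entries, so the total degree in $x_1$ is at most $2(N-d)d_1$.

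Next I would compute the coefficient of $x_1^{2d_1}$ in a generic entry $T^{\alpha}_{l,m}$ with $l,m\neq 1$. Writing $F^{\alpha}=S^{\alpha,1}x_1^{d_1}+(\text{lower in }x_1)$ and differentiating with respect to $x_l,x_m$ (neither of which is $x_1$), the leading $x_1^{d_1}$-coefficients of $F^{\alpha}_{lm}$, $F^{\alpha}_l$, $F^{\alpha}_m$ are respectively $\partial^2 S^{\alpha,1}/\partial x_l\partial x_m$, $\partial S^{\alpha,1}/\partial x_l$, $\partial S^{\alpha,1}/\partial x_m$. Multiplying out,
\begin{equation*}
T^{\alpha}_{l,m}=\Big(S^{\alpha,1}\tfrac{\partial^2 S^{\alpha,1}}{\partial x_l\partial x_m}-\tfrac{\partial S^{\alpha,1}}{\partial x_l}\tfrac{\partial S^{\alpha,1}}{\partial x_m}\Big)x_1^{2d_1}+(\text{lower in }x_1)=A^{\alpha}(1)_{l,m}\,x_1^{2d_1}+(\text{lower in }x_1),
\end{equation*}
so the leading $x_1^{2d_1}$-coefficient of $T^{\alpha}_{l,m}$ is exactly $P_{l,m}$ with $P=A^{\alpha}(1)$.

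Finally I would read off the top-degree coefficient of the minor: a monomial of maximal $x_1$-degree $2(N-d)d_1$ in the expansion of $\det T^{[1,i_2,\ldots,i_d]}_{[1,i_2,\ldots,i_d]}$ can only come from multiplying the $x_1^{2d_1}$-coefficients of $(N-d)$ entries together, so the $x_1^{2(N-d)d_1}$-coefficient of the minor equals the determinant of the matrix obtained by replacing each entry $T^{\alpha}_{l,m}$ by its leading coefficient $P_{l,m}$. This is precisely $\det P^{[1,i_2,\ldots,i_d]}_{[1,i_2,\ldots,i_d]}$, as claimed.

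The argument is essentially a routine leading-term extraction; the only potential pitfall is checking that deletion of row and column $1$ genuinely removes all indices where the degree bound in Lemma \ref{degree-determine-polynomal} would be strictly smaller than $2d_1$ (namely $i=1$ or $j=1$), so that every surviving entry really does contribute a full $x_1^{2d_1}$-coefficient to the top term. Since the deleted index set $\{1,i_2,\ldots,i_d\}$ always contains $1$, this is immediate and no index-bookkeeping issue remains.
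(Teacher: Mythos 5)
Your proof is correct and is exactly the routine leading-coefficient extraction that the paper has in mind: the paper states this lemma as a ``direct consequence'' of Lemma \ref{degree-determine-polynomal} without writing out the argument, and your expansion of $F^{\alpha}=S^{\alpha,1}x_1^{\deg_1 F^{\alpha}}+\cdots$, the computation of the top $x_1$-coefficient of each entry $T^{\alpha}_{l,m}$ with $l,m\neq 1$, and the observation that the determinant's top coefficient is the determinant of the entrywise top coefficients supply precisely the omitted verification. Your closing check that deleting row and column $1$ removes the only indices with a strictly smaller degree bound is the right point to flag, and it is handled correctly.
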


We write the  real $(N-d,N-d)$-form
\begin{equation}\label{terms-volume-form-1}
  C_{N-d}(\partial\overline{\partial}\log\Phi_{\alpha})^{N-d}=(N-d)!\sum_{[i_1,\cdots, i_d],[j_1,\cdots, j_d] } \varsigma^{[i_1,\cdots, i_d]}_{[j_1,\cdots, j_d]}
\end{equation}
with $ \varsigma^{[i_1,\cdots, i_d]}_{[j_1,\cdots, j_d]}= \overline{\varsigma^{[j_1,\cdots, j_d]}_{[i_1,\cdots, i_d]}}.$
Each term in $C_{N-d}(\partial\overline{\partial}\log\Phi_{\alpha})^{N-d}$ can be written
\begin{equation}\label{terms-volume-form-2}
  \varsigma^{[i_1,\cdots, i_d]}_{[j_1,\cdots, j_d]}=c^{[ i_1,\cdots, i_d]}_{[j_1, \cdots, j_d]}\det(K^{[ i_1,\cdots, i_d]}_{[j_1, \cdots, j_d]})
\bigwedge_{l=d+1}^N(dw^\alpha_{i_l}\wedge d\overline{w^\alpha_{j_l}}),
\end{equation}
such that $c^{[ i_1,\cdots, i_d]}_{[j_1, \cdots, j_d]}$ is the sign $\pm$ depending on $([i_1,\cdots, i_d],[j_1,\cdots, j_d]),$ and
$$c^{[ i_1,\cdots, i_d]}_{[j_1, \cdots, j_d]}=c_{[ i_1,\cdots, i_d]}^{[j_1, \cdots, j_d]},\,\,\,  \,\, c^{[ i_1,\cdots, i_d]}_{[i_1, \cdots, i_d]}=1.$$

\begin{lemma}\label{lemma-intersection-calculation-4}
Let $P:=A^{\alpha}(1)$ be a $N\times N$ matrix of polynomials given in \ref{matrix-polynomial}.
Given a $d$-tuple $[\alpha_1,\cdots,\alpha_d]$ where $1\leq \alpha_1,\cdots,\alpha_d\leq N,$
we always use $N-d$ tuple $[\alpha_{d+1},\cdots,\alpha_N]$ to represent $[\alpha_1,\cdots,\alpha_d]^{\circ},$ where
$\{\alpha_{d+1},\cdots,\alpha_N\}=\{1,\cdots,N\}\setminus \{\alpha_1,\cdots,\alpha_d\}.$

For any two $d$-tuples $[1, i_2,\cdots, i_d],[1, j_2,\cdots, j_d],$ we define
a rational function 
\begin{equation}\label{rational-function-volume-residue}
    \xi^{[1, i_2,\cdots, i_d]}_{[1, j_2,\cdots, j_d]}(x_2,\cdots, x_N)=\frac{\det(P^{[1, i_2,\cdots, i_d]}_{[1,j_2,\cdots, j_d]})(x_2,\cdots, x_N)}{(S^{\alpha, 1})^{2(N-d)}(x_2,\cdots, x_N)},
\end{equation}
and we define a real $(N-d,N-d)$ form
$$\delta_{[1, j_2,\cdots, j_d]}^{[1, i_2,\cdots, i_d]}:=\frac{C_{N-d}}{2}
((\prod\limits_{l=d+1}^{N}w_{i_l}^\alpha \overline{w_{j_l}^\alpha})\bigwedge\limits_{l=d+1}^{N}(dw^\alpha_{i_l}\wedge d\overline{w^\alpha_{j_l}})
+(\prod\limits_{l=d+1}^{N}w_{j_l}^\alpha \overline{w_{i_l}^\alpha})\bigwedge\limits_{l=d+1}^{N}(dw^\alpha_{j_l}\wedge d\overline{w^\alpha_{i_l}})).
$$

Let $\eta$ be an arbitrary real smooth  $(d-1,d-1)$ form on $\overline{\sA}_{g,\Gamma}.$ We have :
\begin{eqnarray*}
    && \lim_{\epsilon \to 0}\frac{\sqrt{-1}}{4\pi} C_{N-d}\int_{\partial D_\infty(\epsilon)\cap U_\alpha}(\partial\overline{\partial}\log\Phi_{g,\Gamma})^{N-d}\wedge\eta \wedge(\partial-\overline{\partial})\log ||s_1||_1.\\
 &=&(-1)^{N-d+1}(\frac{g+1}{4})^{N-d}(N-d)!\\
&&\times\big\{ \sum_{[1,i_2, \cdots, i_d]}\int_{\{w_1=0\}}\xi^{[1, i_2,\cdots, i_d]}_{[1, i_2,\cdots, i_d]}(\log|w_2^\alpha|,\cdots,\log|w_N^\alpha|)\frac{\delta_{[1, i_2,\cdots, i_d]}^{[1, i_2,\cdots, i_d]}\wedge\eta}{\prod\limits_{l=d+1}^{N}|w_{i_l}^\alpha|^4}\\
&&+\sum_{[1,i_2, \cdots, i_d], [1,j_2,\cdots,j_d]}^{[1,i_2, \cdots, i_d]\neq[1,j_2,\cdots,j_d]}c^{[ i_1,\cdots, i_d]}_{[j_1, \cdots, j_d]}\int_{\{w_1=0\}}\xi^{[1, i_2,\cdots, i_d]}_{[1, j_2,\cdots, j_d]}(\log|w_2^\alpha|,\cdots,\log|w_N^\alpha|)\frac{\delta_{[1, j_2,\cdots, j_d]}^{[1, i_2,\cdots, i_d]}\wedge\eta}{\prod\limits_{l=d+1}^{N}|w_{i_l}^\alpha w_{j_l}^\alpha|^2 }\big\}
\end{eqnarray*}
where each $c^{[ i_1,\cdots, i_d]}_{[j_1, \cdots, j_d]}$ is a sign defined in \ref{terms-volume-form-1}-\ref{terms-volume-form-2}.
\end{lemma}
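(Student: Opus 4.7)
The plan is to follow the strategy of the three preceding vanishing lemmas in reverse: having shown that every boundary contribution vanishes except one, we now extract that surviving residue explicitly. First I would decompose $(\partial-\overline{\partial})\log\|s_1\|_1 = 2\sqrt{-1}\,\mathrm{Im}(\partial\log|w_1^\alpha|) + (\partial-\overline{\partial})\log h_\alpha$ on $U_\alpha$, where $\|s_1\|_1 = h_\alpha(w)|w_1^\alpha|^2$ with $h_\alpha$ smooth and positive. The contribution from $(\partial-\overline{\partial})\log h_\alpha$ vanishes in the limit by exactly the same smooth-factor argument used to establish \eqref{vanishing-result-1}: the bounded smooth form pairs against the Poincar\'e-growth form $(\partial\overline{\partial}\log\Phi_\alpha)^{N-d}\wedge \eta$ to produce an integrand whose tube-integral shrinks to zero. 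Then I would split $\partial D_\infty(\epsilon)\cap U_\alpha = \bigcup_{k=1}^N (\partial T_k(\epsilon)\cap \partial D_\infty(\epsilon))$ and decompose $\partial\log|w_1^\alpha|\wedge(\partial\overline{\partial}\log\Phi_\alpha)^{N-d}/(N-d)! = I + II + III$ as in the proof of Theorem \ref{non-ample-logarithmical-cotangent-bundle}. Lemmas \ref{lemma-intersection-calculation-1}--\ref{lemma-intersection-calculation-3} eliminate every piece except $(I+II)$ integrated over $\partial T_1(\epsilon)\cap \partial D_\infty(\epsilon)$ (note that Lemma \ref{lemma-intersection-calculation-3} already kills $III$ on $\partial T_1(\epsilon)$ because $j_1\ne 1$ forces $1$ to appear as a conjugate index).

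Next, on $\partial T_1(\epsilon)$ each surviving summand carries a factor $dw_1^\alpha/(2w_1^\alpha)$, so I would apply the generalized Cauchy integral formula in $w_1^\alpha$. Writing
\[
\widetilde{K}^{[1,i_2,\cdots,i_d]}_{[1,j_2,\cdots,j_d]} = \frac{(-g-1)^{N-d}}{4^{N-d}}\cdot \frac{\det(T^{[1,i_2,\cdots,i_d]}_{[1,j_2,\cdots,j_d]})(\log|w_1^\alpha|,\cdots,\log|w_N^\alpha|)}{\bigl(\prod_{l=d+1}^N w_{i_l}^\alpha \overline{w_{j_l}^\alpha}\bigr)(F^\alpha)^{2(N-d)}(\log|w_1^\alpha|,\cdots,\log|w_N^\alpha|)},
\]
I would observe that both submatrix index sets exclude $1$, so Lemma \ref{degree-determine-polynomal} gives $\deg_1(\text{each entry of }T^{[\cdots]}_{[\cdots]})\le 2\deg_1 F^\alpha$, hence $\deg_1\det(T^{[\cdots]}_{[\cdots]})\le 2(N-d)\deg_1 F^\alpha$, matching $\deg_1(F^\alpha)^{2(N-d)}$. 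Writing $F^\alpha = S^{\alpha,1}x_1^{\deg_1 F^\alpha} + (\text{lower in }x_1)$, the leading coefficient of $T^\alpha_{l,m}$ in $x_1$ for $l,m\ne 1$ is exactly $A^\alpha(1)_{l,m}=P_{l,m}$; this is the immediate extension of Lemma \ref{residue-lemma} to off-diagonal submatrices and follows by the same determinant expansion. Consequently the ratio admits a finite limit as $x_1\to -\infty$, and that limit is precisely $\xi^{[1,i_2,\cdots,i_d]}_{[1,j_2,\cdots,j_d]}(x_2,\cdots,x_N)$.

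Finally, I would perform the Cauchy extraction $\lim_{\epsilon\to 0}\tfrac{1}{2\pi\sqrt{-1}}\int_{\partial T_1(\epsilon)}(dw_1^\alpha/w_1^\alpha)\phi = \phi|_{w_1^\alpha=0}$ term by term, replacing each coefficient ratio by its $\xi$-residue, and collect the remaining $(N-1,N-1)$-integrals on $\{w_1^\alpha=0\}$. The prefactor $-\frac{1}{2\pi}\mathrm{Im}(\cdot)$, composed with the $2\sqrt{-1}$ from $(\partial-\overline{\partial})\log\|s_1\|_1$, turns the complex boundary integral into a real one and pairs each off-diagonal index pair $([1,i_2,\cdots,i_d],[1,j_2,\cdots,j_d])$ with its conjugate $([1,j_2,\cdots,j_d],[1,i_2,\cdots,i_d])$, symmetrizing the wedge factors into the form $\delta^{[1,i_2,\cdots,i_d]}_{[1,j_2,\cdots,j_d]}$; the diagonal terms are self-conjugate and yield the first sum. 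Combining $(-g-1)^{N-d}/4^{N-d}$, the $(N-d)!$ from expanding $(\partial\overline{\partial}\log\Phi_\alpha)^{N-d}$, and the powers of $\sqrt{-1}$ in $C_{N-d}$ produces the stated constant $(-1)^{N-d+1}\bigl(\tfrac{g+1}{4}\bigr)^{N-d}(N-d)!$.

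The main obstacle will be bookkeeping rather than analysis: tracking the permutation signs $c^{[i_1,\cdots,i_d]}_{[j_1,\cdots,j_d]}$ that appear when reordering wedge factors in the determinantal expansion $(\partial\overline{\partial}\log\Phi_\alpha)^{N-d} = (N-d)!\sum \varsigma^{[\cdots]}_{[\cdots]}$, and verifying that the imaginary part of each complex residue recombines the $(i,j)$- and $(j,i)$-pairs into the Hermitian symmetrization $\delta$ (rather than its anti-Hermitian counterpart, which would cancel). Checking that this sign tracking produces exactly $(-1)^{N-d+1}$ and the correct symmetrized form is the delicate bookkeeping step.
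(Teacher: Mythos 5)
Your proposal is correct and follows essentially the same route as the paper's proof: reduce to the $I+II$ contribution on $\partial T_1(\epsilon)$ via the three preceding lemmas and the vanishing of the smooth-metric term, express the cofactors $\widetilde{K}^{[1,\cdots]}_{[1,\cdots]}$ through $\det(T^{[\cdots]}_{[\cdots]})/(F^\alpha)^{2(N-d)}$, extract the $w_1^\alpha$-residue by the generalized Cauchy formula using the leading-coefficient identification $\det(P^{[\cdots]}_{[\cdots]})$ (the off-diagonal extension of Lemma \ref{residue-lemma}, which the paper also uses implicitly), and then symmetrize the conjugate index pairs under $-\frac{1}{2\pi}\mathrm{Im}(\cdot)$ to obtain the $\delta$-forms and the stated constant. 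The only step the paper treats no more explicitly than you do is precisely the sign/symmetrization bookkeeping you flag, so no genuinely different argument is involved.
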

\begin{proof}
For any two $d$-tuples $[1,i_2,\cdots, i_d], [1,j_2,\cdots, j_d],$ we have
\begin{eqnarray*}
   \frac{dw^\alpha_1}{2w_1^\alpha}\wedge\varsigma^{[1,i_2,\cdots, i_d]}_{[1,i_2,\cdots, i_d]}&=& C_{N-d}\det(K^{[1, i_2,\cdots, i_d]}_{[1,i_2, \cdots, i_d]})\frac{dw^\alpha_1}{2w_1^\alpha}\wedge
\bigwedge_{l=d+1}^N(dw^\alpha_{i_l}\wedge d\overline{w^\alpha_{i_l}}) \\
   \frac{dw^\alpha_1}{2w_1^\alpha}\wedge\varsigma^{[1,i_2,\cdots, i_d]}_{[1,j_2,\cdots, j_d]} &=& c^{[1, i_2,\cdots, i_d]}_{[1,j_2, \cdots, j_d]}C_{N-d}\det(K^{[1, i_2,\cdots, i_d]}_{[1,j_2, \cdots, j_d]})\frac{dw^\alpha_1}{2w_1^\alpha}\wedge
\bigwedge_{l=d+1}^N(dw^\alpha_{i_l}\wedge d\overline{w^\alpha_{j_l}}).
\end{eqnarray*}

 Since $C_{N-d}(\partial\overline{\partial}\log\Phi_{\alpha})^{N-d}$ has Poincar\'e growth on $D_\infty,$ we get
$$|\det(K^{[1, i_2,\cdots, i_d]}_{[1,j_2, \cdots, j_d]})|<\frac{C}{\prod_{l=d+1}^{N}(|w_{i_l}^{\alpha}||\log |w_{i_l}^{\alpha}|)(|w_{j_l}^{\alpha}||\log |w_{j_l}^{\alpha}|)}\,\,
\mbox{ on } V_0\cap U^*_\alpha,$$
where $V_0$ is a sufficient small neighborhood of the origin point $(0,\cdots, 0)$ in $U_\alpha$ and $C$ is a constant depending on $V_0.$
Thus the $|\lim\limits_{\epsilon \to 0}\int_{\partial D_\infty(\epsilon)\cap U_\alpha}\theta([1, i_2,\cdots, i_d])\wedge \eta|$ is finite.
Moreover, we get that
$$\det(K^{[1, i_2,\cdots, i_d]}_{[1,j_2, \cdots, j_d]})= \frac{(-g-1)^{N-d}}{4^{N-d}}\frac{\det(T^{[1, i_2,\cdots, i_d]}_{[1,j_2,\cdots, j_d]})(\log|w_1^\alpha|,\cdots,\log|w_N^\alpha|)}{(\prod\limits_{l=d+1}^{N}w_{i_l}^\alpha \overline{w_{j_l}^\alpha})(F^{\alpha})^{2(N-d)}(\log|w_1^\alpha|,\cdots,\log|w_N^\alpha|)}\,\,\mbox{on }  \partial T_1(\epsilon)$$

Using the generalized Cauchy integral formula, we obtain that
\begin{eqnarray*}
   && \lim_{\epsilon \to 0}\int_{\partial D_\infty(\epsilon)\cap \partial T_1(\epsilon)}\frac{dw^\alpha_1}{2w_1^\alpha}\wedge\varsigma^{[1,i_2,\cdots, i_d]}_{[1,i_2,\cdots, i_d]}\wedge\eta\\
   &=& 2\pi \sqrt{-1} \frac{(-g-1)^{N-d}}{4^{N-d}}\int_{\{w_1=0\}}\xi^{[1, i_2,\cdots, i_d]}_{[1, i_2,\cdots, i_d]}(\log|w_2^\alpha|,\cdots,\log|w_N^\alpha|)\frac{C_{N-d}\bigwedge\limits_{l=d+1}^{N}(dw^\alpha_{i_l}\wedge d\overline{w^\alpha_{i_l}})\wedge\eta}{\prod\limits_{l=d+1}^{N}|w_{i_l}^\alpha|^2}\\
&=& 2\pi \sqrt{-1} \frac{(-g-1)^{N-d}}{4^{N-d}}\int_{\{w_1=0\}}\xi^{[1, i_2,\cdots, i_d]}_{[1, i_2,\cdots, i_d]}(\log|w_2^\alpha|,\cdots,\log|w_N^\alpha|)\frac{\delta_{[1, i_2,\cdots, i_d]}^{[1, i_2,\cdots, i_d]}\wedge\eta}{\prod\limits_{l=d+1}^{N}|w_{i_l}^\alpha|^4}
\end{eqnarray*}
and that if $[1,i_2,\cdots, i_d]\neq [1,j_2,\cdots, j_d]$ then
\begin{eqnarray*}
   && \frac{1}{2}\lim_{\epsilon \to 0}\int_{\partial D_\infty(\epsilon)\cap \partial T_1(\epsilon)}\frac{dw^\alpha_1}{2w_1^\alpha}\wedge(\varsigma^{[1,i_2,\cdots, i_d]}_{[1,j_2,\cdots, j_d]}+ \varsigma^{[1,j_2,\cdots, j_d]}_{[1,i_2,\cdots, i_d]})\wedge\eta\\
   &=& c^{[1, i_2,\cdots, i_d]}_{[1,j_2, \cdots, j_d]}2\pi \sqrt{-1} \frac{(-g-1)^{N-d}}{4^{N-d}}
   \int_{\{w_1=0\}}\xi^{[1, i_2,\cdots, i_d]}_{[1, j_2,\cdots, j_d]}(\log|w_2^\alpha|,\cdots,\log|w_N^\alpha|)\frac{\delta_{[1, j_2,\cdots, j_d]}^{[1, i_2,\cdots, i_d]}\wedge\eta}{\prod\limits_{l=d+1}^{N}|w_{i_l}^\alpha w_{j_l}^\alpha|^2 }
\end{eqnarray*}
We then have that
\begin{eqnarray*}
 &&\frac{C_{N-d}}{(n-d)!} \lim_{\epsilon \to 0}\int_{\partial D_\infty(\epsilon)\cap U_\alpha}\partial\log |w_1^\alpha|\wedge(\partial\overline{\partial}\log\Phi_{\alpha})^{N-d}\wedge\eta\\
  &=& C_{N-d}\lim_{\epsilon \to 0}\int_{\partial D_\infty(\epsilon)\cap \partial T_1(\epsilon)} I\wedge\eta +C_{N-d}\lim_{\epsilon \to 0}\int_{\partial D_\infty(\epsilon)\cap \partial T_1(\epsilon)} II\wedge\eta\\
  &=&\sum_{[1,i_2, \cdots, i_d]} \lim_{\epsilon \to 0}\int_{\partial D_\infty(\epsilon)\cap \partial T_1(\epsilon)}\frac{dw^\alpha_1}{2w_1^\alpha}\wedge\varsigma^{[1, i_2,\cdots, i_d]}_{[1, i_2,\cdots, i_d]}\wedge\eta \\
&&+ \sum_{[1,i_2, \cdots, i_d], [1,j_2,\cdots,j_d]}^{[1,i_2, \cdots, i_d]\neq[1,j_2,\cdots,j_d]}\frac{1}{2}\lim_{\epsilon \to 0}\int_{\partial D_\infty(\epsilon)\cap \partial T_1(\epsilon)}\frac{dw^\alpha_1}{2w_1^\alpha}\wedge(\varsigma^{[1, i_2,\cdots, i_d]}_{[1, j_2,\cdots, j_d]}+\varsigma^{[1, j_2,\cdots, j_d]}_{[1, i_2,\cdots, i_d]} )\wedge\eta.
\end{eqnarray*}
Finally, we obtain that
\begin{eqnarray*}
  &&  \lim_{\epsilon \to 0}\frac{\sqrt{-1}}{4\pi} C_{N-d}\int_{\partial D_\infty(\epsilon)\cap U_\alpha}(\partial\overline{\partial}\log\Phi_{g,\Gamma})^{N-d}\wedge\eta\wedge(\partial-\overline{\partial})\log ||s_1||_1.\\
 &=&\frac{-1}{2\pi}\mathrm{Im}\big(C_{N-d}\lim_{\epsilon \to 0}\int_{\partial D_\infty(\epsilon)\cap U_\alpha}\partial\log |w_1^\alpha|\wedge(\partial\overline{\partial}\log\Phi_{\alpha})^{N-d}\wedge \eta\big).\\
&=&(-1)^{N-d+1}(\frac{g+1}{4})^{N-d}(N-d)!\\
&&\times\big\{ \sum_{[1,i_2, \cdots, i_d]}\int_{\{w_1=0\}}\xi^{[1, i_2,\cdots, i_d]}_{[1, i_2,\cdots, i_d]}(\log|w_2^\alpha|,\cdots,\log|w_N^\alpha|)\frac{\delta_{[1, i_2,\cdots, i_d]}^{[1, i_2,\cdots, i_d]}\wedge\eta}{\prod\limits_{l=d+1}^{N}|w_{i_l}^\alpha|^4}\\
&&+\sum_{[1,i_2, \cdots, i_d], [1,j_2,\cdots,j_d]}^{[1,i_2, \cdots, i_d]\neq[1,j_2,\cdots,j_d]}c^{[ i_1,\cdots, i_d]}_{[j_1, \cdots, j_d]}\int_{\{w_1=0\}}\xi^{[1, i_2,\cdots, i_d]}_{[1, j_2,\cdots, j_d]}(\log|w_2^\alpha|,\cdots,\log|w_N^\alpha|)\frac{\delta_{[1, j_2,\cdots, j_d]}^{[1, i_2,\cdots, i_d]}\wedge\eta}{\prod\limits_{l=d+1}^{N}|w_{i_l}^\alpha w_{j_l}^\alpha|^2 }\big\}.
\end{eqnarray*}
\end{proof}

We have the following generalization of the theorem \ref{non-ample-logarithmical-cotangent-bundle}.
\begin{theorem}\label{recurrence-intersection-formula}
Let $\Gamma\subset \Sp(g,\Z)$ be a neat arithmetic subgroup and $\Sigma_{\mathfrak{F}_0}$ a
$\overline{\Gamma_{\mathfrak{F}_0}}$(or $\mathrm{GL}(g,\Z)$)-admissible polyhedral decomposition of $C(\mathfrak{F}_0)$ regular with respect to $\Gamma.$
Let $\overline{\sA}_{g,\Gamma}$ be the toroidal compactification  of $\sA_{g,\Gamma}:=\mathfrak{H}_{g}/\Gamma$ constructed by  $\Sigma_{\mathfrak{F}_0}.$

Assume that the boundary divisor $D_{\infty}:=\overline{\sA}_{g,\Gamma}\setminus\sA_{g,\Gamma}$ is  simple normal crossing. Let $d$ be an integer  with  $1\leq d\leq \dim_\C\sA_{g,\Gamma}-1.$
Let $D_1,\cdots, D_{d}$ be $d$ irreducible components of the boundary divisor $D_\infty.$
For each $D_i,$ let $||\cdot||_i$ be an arbitrary Hermitian metric on the line bundle $[D_i].$

There is
\begin{eqnarray*}
   &&(K_{\overline{\sA}_{g,\Gamma}}+D_\infty)^{\dim_\C\sA_{g,\Gamma}-d}\cdot D_1\cdots D_{d} \\
   &=&\int_{D_i}\mathrm{Res}_{D_i}((\frac{\sqrt{-1}}{2\pi}\partial\overline{\partial}\log\Phi_{g,\Gamma})^{^{\dim_\C\sA_{g,\Gamma}-d}}) \wedge(\bigwedge_{1\leq j\leq d,j\neq i }c_1([D_{j}],||\cdot||_j))
\end{eqnarray*}
for any integer $i\in [1, \dim_\C\sA_{g,\Gamma}],$ where the operator $\mathrm{Res}_{D_i}$ is defined in \ref{restrict-rule}.
Moreover, the intersection number $$(K_{\overline{\sA}_{g,\Gamma}}+D_\infty)^{\dim_\C\sA_{g,\Gamma}-d}\cdot D_1\cdots D_{d}=0$$
if one of the following conditions is satisfied :
(i)\, $g=2;$ \,\,(ii)\, there is a $D_i\in \{D_1,\cdots, D_{d}\}$ constructed from an edge $\rho_i$ in $\Sigma_{\mathfrak{F}_{\min}}$ for some minimal cusp $\mathfrak{F}_{\min}$ such that $\mathrm{Int}(\rho)\subset C(\mathfrak{F}_{\min}).$
\end{theorem}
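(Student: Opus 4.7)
The plan is to carry out the intersection computation by reducing it to a tube-boundary integral near $D_i$ (we may take $i=1$ by symmetry) and then extracting the Poincar\'e residue. First, I would invoke Lemma~\ref{chern-class-current} (Mumford's goodness of $h_B$) to replace $c_1(K_{\overline{\sA}_{g,\Gamma}}+D_\infty)$ by the smooth Chern form $\tfrac{\sqrt{-1}}{2\pi}\partial\overline{\partial}\log\Phi_{g,\Gamma}$ on $\sA_{g,\Gamma}$, writing
\begin{equation*}
(K_{\overline{\sA}_{g,\Gamma}}+D_\infty)^{N-d}\!\cdot D_1\cdots D_{d}
=\lim_{\epsilon\to 0}\int_{\overline{\sA}_{g,\Gamma}\setminus D_\infty(\epsilon)}\!\!\!\big(\tfrac{\sqrt{-1}}{2\pi}\partial\overline{\partial}\log\Phi_{g,\Gamma}\big)^{N-d}\!\wedge\!\bigwedge_{j=1}^{d}c_1([D_j],\|\cdot\|_j).
\end{equation*}
Then, using $c_1([D_1],\|\cdot\|_1)=-\tfrac{1}{2\pi}\partial\overline{\partial}\log\|s_1\|_1^2$ on $\sA_{g,\Gamma}$ and Stokes' theorem, I would convert the integral above into a boundary integral on $\partial D_\infty(\epsilon)$ involving $(\partial-\overline{\partial})\log\|s_1\|_1$. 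Because the Chern factors $c_1([D_j],\|\cdot\|_j)$ for $j\ge 2$ are globally smooth and $\sqrt{-1}\partial\overline{\partial}\log\Phi_{g,\Gamma}$ has Poincar\'e growth, the terms involving $\partial\overline{\partial}\log h_\alpha$ on a local chart $(U_\alpha,(w_1^\alpha,\dots,w_N^\alpha))$ drop out in the limit, as in the argument of Proposition~1.2 of \cite{Mum77}.

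Next, I would localize in charts of the form \eqref{local-chart-toroidal-compactification} with $D_1\cap U_\alpha=\{w_1^\alpha=0\}$ and decompose $\partial D_\infty(\epsilon)\cap U_\alpha=\bigcup_k (\partial D_\infty(\epsilon)\cap\partial T_k(\epsilon))$. By Lemmas~\ref{lemma-intersection-calculation-1}, \ref{lemma-intersection-calculation-2}, and \ref{lemma-intersection-calculation-3}, the contributions on tubes $\partial T_k(\epsilon)$ with $k\neq 1$ vanish in the limit $\epsilon\to 0$, so only the tube $\partial T_1(\epsilon)$ about $D_1$ contributes. Applying Lemma~\ref{lemma-intersection-calculation-4} with $\eta=\bigwedge_{j=2}^d c_1([D_j],\|\cdot\|_j)$ computes this residual boundary integral explicitly in terms of the rational functions $\xi^{[1,i_2,\dots,i_d]}_{[1,j_2,\dots,j_d]}$ built from the local volume polynomial $F^\alpha$. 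Comparing the resulting expression with the smooth form obtained by restricting $\bigl(\tfrac{\sqrt{-1}}{2\pi}\partial\overline{\partial}\log\Phi_{g,\Gamma}\bigr)^{N-d}$ through the operator $\mathrm{Res}_{D_1}$ defined in \eqref{restrict-rule}, and using the degree identity of Lemma~\ref{residue-lemma}, I would recognize the two as equal on $D_1^*\cap U_\alpha$. Summing over a finite cover by such charts and using that the integral over $D_1\setminus D_{1,\infty}$ is finite (Proposition~\ref{key-lemma-on-recurrence}(2)) would yield the main formula.

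For the vanishing statements, I would argue from the explicit polynomial expression of the residue in Lemma~\ref{lemma-intersection-calculation-4}. In case (ii), when $D_i=D_{\rho_i}$ for an edge $\rho_i$ with $\mathrm{Int}(\rho_i)\subset C(\mathfrak{F}_{\min})$, the coordinate $w_i^\alpha$ corresponds to a generator lying in the interior of the cone $\sigma_{\max}$, so the local volume function $F^\alpha$ has full-rank dependence on $x_i$ and $\deg_i F^\alpha$ is maximal. A direct analog of the argument in the proof of Theorem~\ref{non-ample-logarithmical-cotangent-bundle}, combined with part~(4) of Lemma~\ref{lemma-on-degree-of-polynomial}, then forces every cofactor $\det(A^\alpha(i)^{[\cdots]}_{[\cdots]})$ appearing in the residue to vanish identically, giving $\mathrm{Res}_{D_i}\bigl((\partial\overline{\partial}\log\Phi_{g,\Gamma})^{N-d}\bigr)=0$. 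For $g=2$, I would combine the already proved case $d=1$ of Theorem~\ref{non-ample-logarithmical-cotangent-bundle} with a dimension count: since $N=3$, the intersection numbers reduce to ones already governed by that theorem or by the self-similar boundary structure described in the Remark following Theorem~\ref{Infity-divisor-on-toroidal-compactification}.

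The main obstacle will be the careful bookkeeping in paragraph two: matching the precise collection of rational-function terms produced by the boundary integral to the smooth residue form $\mathrm{Res}_{D_1}$ constructed in Proposition~\ref{key-lemma-on-recurrence}, and verifying that the signs $c^{[i_1,\dots,i_d]}_{[j_1,\dots,j_d]}$, the factors $\delta^{[\cdot]}_{[\cdot]}$ of Lemma~\ref{lemma-intersection-calculation-4}, and the Cauchy-integral residues combine to reproduce exactly $\bigl(\tfrac{\sqrt{-1}}{2\pi}\partial\overline{\partial}\log\Phi_{g,\Gamma}\bigr)^{N-d}\big|_{D_1^*}$. The degree-counting for the vanishing in case (ii) is delicate because one must ensure that the maximal-degree coefficients in $x_i$ across all cofactors of $T^{[\cdots]}_{[\cdots]}$ factor uniformly through the degenerate matrix $A^\alpha(i)$, which in turn reduces to the rank analysis in Lemma~\ref{lemma-on-degree-of-polynomial}.
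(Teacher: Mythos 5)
Your proposal matches the paper's proof: the paper obtains the formula exactly by inserting $\eta=\bigwedge_{j=2}^{d}c_1([D_j],||\cdot||_j)$ into the tube-boundary computation set up via Lemma \ref{chern-class-current} and Stokes and controlled by Lemmas \ref{lemma-intersection-calculation-1}--\ref{lemma-intersection-calculation-4}, identifying the surviving $\partial T_1(\epsilon)$ contribution with $\mathrm{Res}_{D_1}$ through Proposition \ref{key-lemma-on-recurrence} and Lemma \ref{residue-lemma}, and your degree/rank argument for case (ii) (interior edge of rank $g$ forces a constant leading coefficient, hence $A^{\alpha}(i)\equiv 0$) is likewise the paper's mechanism. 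The only soft spot is your $g=2$ vanishing for $d\geq 2$, where reduction to the $d=1$ case of Theorem \ref{non-ample-logarithmical-cotangent-bundle} does not suffice when all the chosen divisors come from boundary edges and one really needs the iterated-residue recursion (Lemma \ref{lemma-on-recurrence-integral}, Theorem \ref{multiple-intersection-formula}); the paper's own proof of this theorem is, however, equally silent on that point.
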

\begin{myrem}
Let $\rho=\R_{\geq 0}E$ be an edge in $\Sigma_{\mathfrak{F}_0}$ where $E$ is a semi-positive $g\times g$ matrix with rational coefficients.
By the lemma \ref{edge-to-boundary-component}, if $\rank(E)\neq g$(i.e, $\mathrm{Int}(\rho)\subset C(\mathfrak{F}_{0})$) then $\rank(E)=1.$
\end{myrem}
\begin{proof}
Using the proposition \ref{key-lemma-on-recurrence}, we can get the integral formula for $i=1$ immediately by putting $\eta:=\bigwedge\limits_{j=2}^{d}c_1([D_{j}],||\cdot||_j)$ into the lemmas \ref{lemma-intersection-calculation-1}, \ref{lemma-intersection-calculation-2}, \ref{lemma-intersection-calculation-3}, \ref{lemma-intersection-calculation-4}.
\end{proof}

The theorem \ref{Infity-divisor-on-toroidal-compactification} and the corollary \ref{intersection-theory-boundary-divisor} obviously imply the following fact.

\noindent{\bf Claim *} :{\it  Let $D_1,\cdots, D_d$ be $d$($\leq \dim_\C \sA_{g,\Gamma}$) different irreducible components of $D_\infty.$ We have that the set $\mathfrak{D}:=\bigcap\limits_{i=1}^{d} D_{i}$ is not empty
if and only if there exists a minimal cusp $\mathfrak{F}_{\min}$ and a top-dimensional cone $\sigma_{\max}$ in the polyhedral decomposition $\Sigma_{\mathfrak{F}_{\min}}$ of the convex cone $C(\mathfrak{F}_{\min})$ such that each $D_i$ corresponds one-one to an edge of $\sigma_{\max}.$
Moreover, if $\mathfrak{D}\neq \emptyset$ then every local chart $U_\alpha$ in \ref{local-chart-toroidal-compactification} satisfying
$U_\alpha \cap \mathfrak{D}\neq \emptyset$ is constructed by
a top-dimensional cone $\sigma_{\max}$ in $\Sigma_{\mathfrak{F}_{\min}}$ for some minimal cusp $\mathfrak{F}_{\min}.$ 
}\\

For any two different irreducible components $D_\alpha,D_\beta$ of $D_\infty=\bigcup\limits_{i}D_i,$
we define $$(D_\alpha\cap D_\beta)_\infty:=\bigcup\limits_{i\neq \alpha, \beta} D_\alpha\cap D_\beta\cap D_i \,\,\mbox{ and }\,\,(D_\alpha\cap D_\beta)^*:=(D_\alpha\cap D_\beta)\setminus(D_\alpha\cap D_\beta)_\infty.$$
Let $p$ be an arbitrary positive integer. By the lemma \ref{lemma-on-degree-of-polynomial}, we can define a form $\mathrm{Res}_{D_i\cap D_j}(\mathrm{Res}_{D_i}((\partial\overline{\partial}\log\Phi_{g,\Gamma})^p))$ on $(D_i\cap D_j)^*$ for any two different irreducible components $D_i,D_j$ of $D_\infty$  with $D_i\cap D_j\neq \emptyset.$  Using similar arguments in the proposition \ref{key-lemma-on-recurrence}, we get that  $\mathrm{Res}_{D_i\cap D_j}(\mathrm{Res}_{D_i}((\partial\overline{\partial}\log\Phi_{g,\Gamma})^p))$ is a closed smooth form on $(D_i\cap D_j)^*$ having Poincar\'e growth on $(D_i\cap D_j)_\infty.$
\begin{lemma}\label{lemma-on-recurrence-integral}
Let $p$  be a positive integer. Let $D_i,D_j$ be two different irreducible components of $D_\infty$ such that $D_i\cap D_j\neq \emptyset.$
Both $\mathrm{Res}_{D_i\cap D_j}(\mathrm{Res}_{D_i}((\frac{\sqrt{-1}}{2\pi}\partial\overline{\partial}\log\Phi_{g,\Gamma})^p))$ and $\mathrm{Res}_{D_i\cap D_j}(\mathrm{Res}_{D_j}((\frac{\sqrt{-1}}{2\pi}\partial\overline{\partial}\log\Phi_{g,\Gamma})^p))$ are positive closed currents on $D_i\cap D_j$ such that
$$\mathrm{Res}_{D_i\cap D_j}(\mathrm{Res}_{D_i}((\frac{\sqrt{-1}}{2\pi}\partial\overline{\partial}\log\Phi_{g,\Gamma})^p))=\mathrm{Res}_{D_i\cap D_j}(\mathrm{Res}_{D_j}((\frac{\sqrt{-1}}{2\pi}\partial\overline{\partial}\log\Phi_{g,\Gamma})^p)).$$
\end{lemma}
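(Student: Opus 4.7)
\smallskip

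\noindent\textbf{Proof proposal.} The plan is to reduce the statement to a symmetry of iterated leading-coefficient extractions for rational functions of the variables $\log|w_k^\alpha|$, applied in the local coordinate description of the form $(\partial\overline{\partial}\log\Phi_{g,\Gamma})^p$. Choose a local chart $(U_\alpha,(w_1^\alpha,\dots,w_N^\alpha))$ of the type \eqref{local-chart-toroidal-compactification} with $D_i=\{w_i^\alpha=0\}$, $D_j=\{w_j^\alpha=0\}$, so that $D_i\cap D_j=\{w_i^\alpha=w_j^\alpha=0\}$ meets $(D_i\cap D_j)^*$ exactly in the chart complement of all other boundary branches. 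Using the explicit expression \eqref{dd-volume-equality} together with the structure of the matrix $T^\alpha$ (and its higher exterior powers), the smooth form $(\partial\overline{\partial}\log\Phi_\alpha)^p$ on $U_\alpha^*$ is a wedge product whose coefficient functions have the form $Q/S^{m}$ evaluated at $(\log|w_1^\alpha|,\dots,\log|w_N^\alpha|)$, where $Q,S\in\R[x_1,\dots,x_N]$ are homogeneous polynomials coming from $F^\alpha$, its derivatives and $\det(T^\alpha)$, and where $\deg_k Q\le\deg_k S^{m}$ by Lemma~\ref{lemma-on-degree-of-polynomial}.

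Under this description, the operator $\mathrm{Res}_{D_i}$ acts coefficient-wise via the ``residue'' of part (4) of Lemma~\ref{lemma-on-degree-of-polynomial}: it keeps those wedge monomials that are free of $dw_i^\alpha,d\overline{w_i^\alpha}$ and $1/|w_i^\alpha|^{2}$-type singularities and replaces each remaining coefficient $Q/S^{m}$ by the ratio of its leading-in-$x_i$ coefficients $Q^{(i)}/(S^{(i)})^{m}$. Applying $\mathrm{Res}_{D_i\cap D_j}$ afterwards performs the same extraction in the variable $x_j$, yielding $(Q^{(i)})^{(j)}/((S^{(i)})^{(j)})^{m}$ evaluated at $(\log|w_k^\alpha|)_{k\neq i,j}$. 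The coincidence of the two iterated residues then reduces to the purely algebraic identity
\begin{equation*}
(Q^{(i)})^{(j)}=(Q^{(j)})^{(i)},\qquad (S^{(i)})^{(j)}=(S^{(j)})^{(i)},
\end{equation*}
which holds because both sides are the coefficient of the monomial $x_i^{\deg_i S}x_j^{\deg_j S}$ (respectively $x_i^{\deg_i Q}x_j^{\deg_j Q}$ when the degrees are saturated) of the polynomial in question; equivalently, the limits $x_i\to+\infty$ and $x_j\to+\infty$ in homogeneous expressions commute. That the degrees in question are saturated in both orders follows from Lemma~\ref{degree-determine-polynomal} applied to the ``sub-volume-function'' $F^{\alpha,i}$ (the leading coefficient in $x_i$), which is itself the local volume function for the lower-dimensional toroidal structure on $D_i$ coming from the Remark after Theorem~\ref{geometrically-fine-1-1-to non-self-intersections}; this places one back in the inductive setup of Proposition~\ref{key-lemma-on-recurrence}.

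The positivity and closedness on $(D_i\cap D_j)^*$ are then immediate: positivity passes through both restriction steps because each residue is the pointwise limit of the positive smooth $(p,p)$-form $(\tfrac{\sqrt{-1}}{2\pi}\partial\overline{\partial}\log\Phi_{g,\Gamma})^p$ along shrinking tubes, and closedness is obtained by the same slice-calculation as in Proposition~\ref{key-lemma-on-recurrence}(3) applied twice, first to the closed form $(\tfrac{\sqrt{-1}}{2\pi}\partial\overline{\partial}\log\Phi_{g,\Gamma})^p$ restricted to a transverse slice of $D_i$ of radius $\varepsilon$, and then to the resulting smooth closed form on $D_i^*$ restricted to a transverse slice of $D_{i,\infty}$. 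To promote the smooth closed form on $(D_i\cap D_j)^*$ to a positive closed current on $D_i\cap D_j$ it suffices, by the same test-form definition \eqref{Poincare-growth-integral} used in Proposition~\ref{key-lemma-on-recurrence}(2), to verify Poincaré growth along $(D_i\cap D_j)_\infty$; this follows from iterating the coefficient-wise bound $|Q^{(i)}(x)/(S^{(i)})^m(x)|\le C\prod_{k\ne i,j}(|x_k|^{-1}\cdots)$ derived from the Poincaré growth of $(\partial\overline{\partial}\log\Phi_{g,\Gamma})^p$. The main obstacle is the algebraic commutation step of the second paragraph: one must be careful that $\deg_i$ of the second leading coefficient equals $\deg_i$ of the original polynomial (so that nothing is ``lost'' in the first extraction), and this requires the non-degeneracy input from Lemma~\ref{lemma-on-degree-of-polynomial}(3) applied to both orderings of $(i,j)$.
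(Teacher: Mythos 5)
Your argument for positivity, closedness and the Poincar\'e-growth bound needed to view the iterated residue as a current on $D_i\cap D_j$ is fine and runs along the same lines as Proposition \ref{key-lemma-on-recurrence}. The gap is in the equality of the two orders of restriction, which you reduce to the commutation identity $(Q^{(i)})^{(j)}=(Q^{(j)})^{(i)}$, $(S^{(i)})^{(j)}=(S^{(j)})^{(i)}$ for iterated leading-coefficient extractions. That identity is false for general polynomials (for $S=2x_1^2x_2+3x_1x_2^2$ one gets $2\neq 3$); it holds only when the degrees are saturated, i.e.\ when the monomial $x_i^{\deg_i S}x_j^{\deg_j S}$ really occurs in $S$. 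Your justification of saturation (``Lemma \ref{degree-determine-polynomal} applied to the sub-volume-function'') does not work: writing the local volume function as $F^{\alpha}=\det(\sum_k x_kE_k)$, Lemma \ref{lemma-on-degree-of-polynomial}(2)--(3) gives $\deg_j F^{\alpha,(i)}=\rank E_j^{(i)}$, the rank of $E_j$ compressed to $(\mathrm{Im}\,E_i)^{\perp}$, and this equals $\deg_jF^{\alpha}=\rank E_j$ precisely when $\mathrm{Im}\,E_i\cap\mathrm{Im}\,E_j=\{0\}$. By Lemma \ref{edge-to-boundary-component} (see the remark after Theorem \ref{recurrence-intersection-formula}) each edge generator has rank $1$ or $g$, so saturation fails exactly when one of the two edges is an interior edge ($\rank E_i=g$, in which case $F^{\alpha,(i)}$ is a constant) or when the two rank-one generators have the same image; Lemma \ref{degree-determine-polynomal}(2), which concerns $\deg_kF^{\alpha}\geq 1$ for the full volume function, says nothing about the leading coefficient $F^{\alpha,(i)}$ and cannot rule these cases out.

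What actually saves the lemma in those configurations is not commutation of leading coefficients but vanishing, and this is how the paper argues: it splits into cases according to $\rank E_i,\rank E_j\in\{1,g\}$. If one rank is $g$, or if both are rank one with coincident image, then the relevant numerator degrees fall strictly below the saturation level and \emph{both} iterated residues are identically zero, so the equality holds trivially; only in the remaining case does one have $F^{\alpha}=S(x_{\mathrm{rest}})\,x_ix_j+\cdots$, where the two extraction orders visibly give the same answer (your computation applies verbatim there). Your proposal, as written, asserts saturation in all cases and therefore proves nothing when an interior edge is involved --- a situation that genuinely occurs for admissible decompositions. To repair the proof you must add this dichotomy, i.e.\ show that whenever the bidegree of $F^{\alpha}$ in $(x_i,x_j)$ is not saturated, both iterated residues vanish, and only then invoke the symmetric leading term in the saturated case.
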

\begin{proof}
Let $i=1,j=2$ for convenience.
There exists a minimal cusp $\mathfrak{F}_{\min}$ and a top-dimensional cone $\sigma_{\max}$ in the polyhedral decomposition $\Sigma_{\mathfrak{F}_{\min}}:=\{\sigma_\alpha\}_\alpha$ of the convex cone $C(\mathfrak{F}_{\min})$ such that  $D_1, D_2$ correspond to two different edges $\rho_1:=\R_{\geq 0}E_1,\rho_2:=\R_{\geq 0}E_2  $ of $\sigma_{\max}.$ Let $(U_\alpha,(w_1^\alpha,\cdots, w_N^\alpha))$
be a local chart as \ref{local-chart-toroidal-compactification} such that
$\{ w^\alpha_i=0\}=U_\alpha\cap D_i$ for $i=1,2.$
In this chat,we write the form $\Phi_{g,\Gamma}$ on  $U^*_\alpha:=U_\alpha \setminus D_\infty$ as
$$
 \Phi_\alpha=\frac{(\frac{\sqrt{-1}}{2})^N2^{\frac{g(g-1)}{2}}\bigwedge\limits_{1\leq i \leq N} dw_{i}^\alpha\wedge d\overline{w_{i}^\alpha}}{(\prod_{1\leq i \leq N}|w_{i}^{\alpha}|^2)(F^{\alpha}(\log|w_1^{\alpha}|,\cdots,\log|w_N^{\alpha}|))^{g+1}}.
$$
For convenience, let $\mathfrak{F}_{\min}$ be the standard minimal cusp $\mathfrak{F}_0.$ Then, $E_1$ and $E_2$ are semi-positive $g\times g$ symmetric real matrices. By the lemma \ref{edge-to-boundary-component}, we have
$$\rank(E_i)= 1 \mbox{ or } g  \,\,\,\mbox{ for } i=1,2.$$
Then, we need to check the following three cases.
\begin{itemize}
  \item $\rank E_1=g$ or $\rank E_2=g$ : Let $\rank E_1=g.$ Then we have
  $$\mathrm{Res}_{D_1}((\partial\overline{\partial}\log\Phi_{g,\Gamma})^p)=0\,\,\mbox{ and }\,\mathrm{Res}_{D_1\cap D_2}(\mathrm{Res}_{D_2}((\partial\overline{\partial}\log\Phi_{g,\Gamma})^p))=0
    \, \,\mbox{ on } U_\alpha^*$$
  since $F^{\alpha}(x_1,x_2,\cdots, x_N)=\mbox{Constant}\cdot x_1^g+ \mbox{ terms of lower dgeree of } x_1.$
  \item $\rank E_1=\rank E_2=1$ and there is an orthogonal $g\times g$ matrix $O$ such that
  $$O^TE_1O=\mathrm{diag}[\lambda_{1,1},0,\cdots, 0](\lambda_{1,1}>0)\mbox{ and }O^TE_2O=\mathrm{diag}[\lambda_{2,1},0,\cdots, 0](\lambda_{2,1}>0) : $$
  Then, the homogenous polynomial $F^\alpha$ has the form
  $$F^{\alpha}(x_1,x_2,\cdots, x_N)=S(x_3,\cdots, x_N)\cdot(\lambda_{1,1}x_1+ \lambda_{2,1}x_2)+ \mbox{ terms without } x_1 \mbox{ and } x_2, $$
  and so $ \mathrm{Res}_{D_1\cap D_2}(\mathrm{Res}_{D_1}((\partial\overline{\partial}\log\Phi_{g,\Gamma})^p))=\mathrm{Res}_{D_1\cap D_2}(\mathrm{Res}_{D_2}((\partial\overline{\partial}\log\Phi_{g,\Gamma})^p))=0
    \mbox{ on } U_\alpha^*.$
  \item Otherwise, the homogenous polynomial $F^\alpha$ has the form
  $$F^{\alpha}(x_1,x_2,\cdots, x_N)=S(x_3,\cdots, x_N)\cdot x_1x_2+ \mbox{ terms without } x_1 \mbox{ and } x_2, $$
  and so $ \mathrm{Res}_{D_1\cap D_2}(\mathrm{Res}_{D_1}((\partial\overline{\partial}\log\Phi_{g,\Gamma})^p))=\mathrm{Res}_{D_1\cap D_2}(\mathrm{Res}_{D_2}((\partial\overline{\partial}\log\Phi_{g,\Gamma})^p))
   \,\, \mbox{ on }\,\, U_\alpha^*.$
\end{itemize}
\end{proof}

We observe that these $\mathrm{Res}_{D_i\cap D_j}(\mathrm{Res}_{D_i}(\partial\overline{\partial}\log\Phi_{g,\Gamma}))$'s($i\neq j$) and $\mathrm{Res}_{D_i}(\partial\overline{\partial}\log\Phi_{g,\Gamma})$'s  all have similar type as that of $\partial\overline{\partial}\log\Phi_{g,\Gamma}.$
\begin{theorem}\label{multiple-intersection-formula}
Let $\Gamma\subset \Sp(g,\Z)$ be a neat arithmetic subgroup and $\Sigma_{\mathfrak{F}_0}$ a
$\overline{\Gamma_{\mathfrak{F}_0}}$(or $\mathrm{GL}(g,\Z)$)-admissible polyhedral decomposition of $C(\mathfrak{F}_0)$ regular with respect to $\Gamma.$
Let $\overline{\sA}_{g,\Gamma}$ be the toroidal compactification  of $\sA_{g,\Gamma}:=\mathfrak{H}_{g}/\Gamma$ constructed by  $\Sigma_{\mathfrak{F}_0}.$

Assume that the boundary divisor $D_{\infty}:=\overline{\sA}_{g,\Gamma}\setminus\sA_{g,\Gamma}$ is  simple normal crossing. Let $d$ be an integer  with  $1\leq d\leq \dim_\C\sA_{g,\Gamma}-1$ and
let $D_1,\cdots, D_{d}$ be any $d$ different irreducible components of the boundary divisor $D_\infty$ such that $\bigcap\limits_{k=1}^d D_k\neq \emptyset.$
We have :
\begin{myenumi}
\item Let $i_1,\cdots, i_d$ be $d$ positive integers.
If $d\geq g-1$ and $\dim_\C\sA_{g,\Gamma}-\sum\limits_{k=1}^d i_k\geq 2 $(or if $d\geq g$ and $\dim_\C\sA_{g,\Gamma}-\sum\limits_{k=1}^d i_k=1$) then the intersection number $$D_1^{i_1}\cdots D_{d}^{i_d}\cdot(K_{\overline{\sA}_{g,\Gamma}}+D_\infty)^{\dim_\C\sA_{g,\Gamma}-\sum_{k=1}^d i_k}=0.$$


\item The divisor $K_{\overline{\sA}_{g,\Gamma}}+D_\infty$ is not ample on $\overline{\sA}_{g,\Gamma}.$

\item Define $D^{(1)}:=D_1, D^{(2)}:=D^{(1)}\cap D_2,\cdots, D^{(d)}:=D^{(d-1)}\cap D_d.$
If $d< g-1$ then there is
\begin{eqnarray*}
   && (K_{\overline{\sA}_{g,\Gamma}}+D_\infty)^{\dim_\C\sA_{g,\Gamma}-d}\cdot D_1\cdots D_{d}\\
   &=& \int_{\bigcap\limits_{k=1}^d D_k}\mathrm{Res}_{D^{(d)}}(\mathrm{Res}_{D^{(d-1)}}\cdots
(\mathrm{Res}_{D^{(1)}}((\frac{\sqrt{-1}}{2\pi}\partial\overline{\partial}\log\Phi_{g,\Gamma})^{\dim_\C\sA_{g,\Gamma}-d}))
\cdots)
\end{eqnarray*}
where  the  $\mathrm{Res}_{D^{(k)}}(\cdots)\,\,k=1,\cdots,d $ are current on $D^{(k)}$ defined recursively  by the lemma \ref{lemma-on-degree-of-polynomial}.
\end{myenumi}
\end{theorem}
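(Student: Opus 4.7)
The plan is to prove (3) first by induction on $d$, then deduce (1) from (3) combined with a vanishing of the iterated residue, and finally derive (2) from (1) by a suitable choice of boundary configuration.

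For (3), I would induct on $d$ with base case $d=1$ being exactly Theorem \ref{recurrence-intersection-formula}. Assuming the formula for some $d\ge 1$, I would apply the same integration-by-parts machinery used in Lemmas \ref{lemma-intersection-calculation-1}--\ref{lemma-intersection-calculation-4} to the form
\[
\omega_d:=\mathrm{Res}_{D^{(d)}}\!\circ\cdots\circ\mathrm{Res}_{D^{(1)}}\!\!\left(\left(\tfrac{\sqrt{-1}}{2\pi}\partial\overline{\partial}\log\Phi_{g,\Gamma}\right)^{N-d-1}\right)
\]
viewed on $D^{(d)}$ in place of $\overline{\sA}_{g,\Gamma}$, peeling off the factor $c_1([D_{d+1}]|_{D^{(d)}})$ exactly as $c_1([D_1],\|\cdot\|_1)$ was peeled off in the proof of Theorem \ref{recurrence-intersection-formula}. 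The two analytic inputs required are that $\omega_d$ is a closed positive current of Poincaré growth on $D^{(d)}$ (which follows from Proposition \ref{key-lemma-on-recurrence} iterated with Lemma \ref{lemma-on-recurrence-integral}) and that, in an adapted local chart of $\overline{\sA}_{g,\Gamma}$ intersected with $D^{(d)}$, the form $\omega_d$ retains the polynomial-of-log-moduli structure needed for the Cauchy-type calculations; the latter is ensured by iterating Lemma \ref{lemma-on-degree-of-polynomial}(3), which tracks the residue polynomial at each step.

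For (1), I would first combine (3) with the projection formula for the smooth embeddings $\bigcap_{l\le k}D_l\hookrightarrow\bigcap_{l<k}D_l$ to obtain
\[
D_1^{i_1}\cdots D_d^{i_d}\cdot (K_{\overline{\sA}_{g,\Gamma}}+D_\infty)^{N-\sum i_k}=\int_{\bigcap_k D_k}\bigwedge_{k=1}^{d}c_1([D_k]|_{\bigcap_k D_k})^{i_k-1}\wedge\mathrm{Res}^{(d)}\!\!\left(\left(\tfrac{\sqrt{-1}}{2\pi}\partial\overline{\partial}\log\Phi_{g,\Gamma}\right)^{N-\sum i_k}\right),
\]
with $\mathrm{Res}^{(d)}=\mathrm{Res}_{D^{(d)}}\circ\cdots\circ\mathrm{Res}_{D^{(1)}}$; the statement then reduces to showing $\mathrm{Res}^{(d)}((\partial\overline{\partial}\log\Phi_{g,\Gamma})^p)\equiv 0$ on $\bigcap_k D_k$ whenever $d\ge g-1$ and $p\ge 1$. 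In an adapted chart $(U_\alpha,(w_1^\alpha,\ldots,w_N^\alpha))$ of $\overline{\sA}_{g,\Gamma}$ the local volume function $F^\alpha=\det(\sum_i x_i E_i)$ reduces, after $d$ iterations of Lemma \ref{lemma-on-degree-of-polynomial}(3) along the edges $\rho_1,\ldots,\rho_d$, to a residue polynomial $F^\alpha_{(d)}(x_{d+1},\ldots,x_N)$ of degree $g-\mathrm{rk}(E_1+\cdots+E_d)$. A combinatorial analysis of faces of regular top-dimensional cones in $\Sigma_{\mathfrak{F}_{\min}}$ shows that this rank is at least $g-1$ whenever $d\ge g-1$, so $F^\alpha_{(d)}$ has degree $\le 1$; then the Hessian $(F^\alpha_{(d)})_{lm}$ vanishes, the cofactor matrix $F^\alpha_{(d)}(F^\alpha_{(d)})_{lm}-(F^\alpha_{(d)})_l(F^\alpha_{(d)})_m$ becomes the negative of a rank-one outer product, and all its minors of size $\ge 2$ vanish identically. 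This is precisely the vanishing pattern invoked in cases (i)--(ii) of the proof of Theorem \ref{non-ample-logarithmical-cotangent-bundle}, and it forces $\mathrm{Res}^{(d)}((\partial\overline{\partial}\log\Phi_{g,\Gamma})^p)\equiv 0$ for every $p\ge 1$.

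For (2), take any top-dimensional cone $\sigma_{\max}\in\Sigma_{\mathfrak{F}_0}$ with edges $\rho_1,\ldots,\rho_N$ and let $D_k=D_{\rho_k}$ for $k=1,\ldots,N-1$; by Corollary \ref{intersection-theory-boundary-divisor}, $\bigcap_{k=1}^{N-1}D_k$ is a non-empty algebraic curve in $\overline{\sA}_{g,\Gamma}$. Since $N-1\ge g-1$, part (1) with all $i_k=1$ gives $(K_{\overline{\sA}_{g,\Gamma}}+D_\infty)\cdot D_1\cdots D_{N-1}=0$; expanding this curve into irreducible components, each component has non-negative intersection with the nef divisor $K_{\overline{\sA}_{g,\Gamma}}+D_\infty$ and the total is zero, so every irreducible component has zero intersection, contradicting ampleness. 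The main obstacle will be the rank inequality $\mathrm{rk}(E_1+\cdots+E_d)\ge g-1$ for $d\ge g-1$ semi-positive generators of distinct edges of a regular top-dimensional cone: this is a combinatorial statement about how lattice bases of $U^{\mathfrak{F}_{\min}}(\Z)\cap\Gamma$ can sit inside the rational closure $\overline{C(\mathfrak{F}_{\min})}^{\mathrm{rc}}$, and it is the geometric heart of the vanishing in (1); I expect it to follow from the convexity of the positive cone together with the classification of rational boundary components via Remark \ref{remark on rational boundary component}, but the detailed verification, along with the sign- and boundary-term bookkeeping in the iterated integration by parts of (3), is where the bulk of the remaining work will lie.
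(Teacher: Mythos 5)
Your overall architecture coincides with the paper's: part (3) is proved there by exactly the recursion you describe (peeling off one $c_1([D_l],\|\cdot\|_l)$ at a time with the estimates of Lemmas \ref{lemma-intersection-calculation-1}--\ref{lemma-intersection-calculation-4}, iterating Proposition \ref{key-lemma-on-recurrence} and Lemma \ref{lemma-on-recurrence-integral}), and (2) is deduced from (1). The genuine gap is in the vanishing mechanism you propose for (1). The inequality $\rank(E_1+\cdots+E_d)\ge g-1$ for $d\ge g-1$ distinct edges of a common regular top-dimensional cone is not true in general and does not follow from convexity of $C(\mathfrak{F}_{\min})$ together with Remark \ref{remark on rational boundary component}: distinct boundary edges only give pairwise non-proportional, linearly independent rank-one matrices in $\mathrm{Sym}_g(\R)$, and linear independence of the $E_k$ is compatible with the spanning vectors lying in a very small subspace. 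For instance $\zeta_{1,1}=E_{1,1}$, $\zeta_{2,2}=E_{2,2}$ and $\zeta_{1,2}=E_{1,1}+E_{2,2}-E_{1,2}-E_{2,1}$ (the rank-one matrix attached to $e_1-e_2$) are three independent edges of the principal cone whose vectors span only a plane; for $g\ge 4$ a configuration of this shape with $d=g-1$ violates your inequality, and nothing in the structure you cite excludes it from an admissible regular $\Gamma$-separable decomposition. The paper's argument needs no statement about $\rank(\sum_k E_k)$: it only uses that every local volume function has total degree $g$ with $\deg_iF^{\alpha}\ge 1$ (Lemma \ref{degree-determine-polynomal}) and the degree bookkeeping of Lemma \ref{lemma-on-degree-of-polynomial}, so that at each residue step either the degree of the residue polynomial strictly drops, or the new variable is absent from it and the corresponding iterated residue/boundary term dies for degree reasons (this is the degenerate case treated in the proof of Lemma \ref{lemma-on-recurrence-integral}); after at least $g-1$ steps nothing of degree $\ge 2$ can survive.

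There is a second gap in the same step. Even once the residue polynomial $S$ has degree $\le 1$, "all minors of size $\ge 2$ of $SS_{lm}-S_lS_m$ vanish" only kills $\mathrm{Res}^{(d)}\big((\partial\overline{\partial}\log\Phi_{g,\Gamma})^p\big)$ for $p\ge 2$; for $p=1$ the coefficient matrix is $-S_lS_m/S^2$, which is rank one but not zero, so the residue $(1,1)$-form need not vanish. Statement (1) does include the case $\sum_k i_k=N-1$, i.e. $p=1$, and your derivation of (2) invokes precisely this case ($d=N-1$, all $i_k=1$). You must either continue the recursion so as to absorb the remaining factors $c_1([D_k])^{i_k-1}$ (which is what the paper's "recursion, since all local volume functions have only degree $g$" amounts to, forcing degree $0$ or a vanishing boundary term before the process ends), or, for (2), simply take $d=g-1$ and all $i_k=1$, so that $p=N-g+1\ge 2$ and your rank-one argument applies; with that choice (2) follows from (1) exactly as in the paper.
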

\begin{proof}
The statement (2) is a direct consequence of the statement (1). For each $i,$ let $||\cdot||_i$ be an arbitrary Hermitian metric on the line bundle $[D_i].$
We have that
\begin{eqnarray*}
   && \int_{D_1}\mathrm{Res}_{D_1}((\partial\overline{\partial}\log\Phi_{g,\Gamma})^{^{\dim_\C\sA_{g,\Gamma}-d}}) \wedge(\bigwedge_{l=2}^{ \dim_\C\sA_{g,\Gamma}}c_1([D_{l}],||\cdot||_l))\\
   &=& \int_{D_1\cap D_2}\mathrm{Res}_{D_1\cap D_2}(\mathrm{Res}_{D_1}((\partial\overline{\partial}\log\Phi_{g,\Gamma})^{^{\dim_\C\sA_{g,\Gamma}-d}})) \wedge(\bigwedge_{l=3}^{ \dim_\C\sA_{g,\Gamma}}c_1([D_{l}],||\cdot||_l))
\end{eqnarray*}
by the estimates in the lemmas \ref{lemma-intersection-calculation-1}-\ref{lemma-intersection-calculation-4}. Therefore, we can finish proving the statements (1) and (3) by recursion since all local volume functions have only degree $g.$
\end{proof}

\begin{example}Suppose that $d$ different irreducible components $D_1,\cdots. D_d$ of $D_\infty:=\overline{\sA}_{g,\Gamma}\setminus\sA_{g,\Gamma}$ has $\bigcap\limits_{l=1}^d D_l\neq \emptyset.$
Let $(U_\alpha,(w_1^\alpha,\cdots, w_N^\alpha))$($N=\frac{g(g+1)}{2}$)
be a local chart as \ref{local-chart-toroidal-compactification} such that
$\{ w^\alpha_i=0\}=U_\alpha\cap D_i$ for all $i=1,\cdots, d.$
The following is an algorithm processor to produce $\chi_\alpha:=\mathrm{Res}_{D^{(d)}}(\cdots (\mathrm{Res}_{D^{(1)}}((\partial\overline{\partial}\log\Phi_{g,\Gamma})^{N-d})))|_{U_\alpha\cap\bigcap\limits_{l=1}^d D_l}.$
\begin{description}
  \item[00]Begin

  \item[10] In put the local volume form on $U_\alpha^*:=U_\alpha \setminus D_\infty$ :
  $$
 \Phi_\alpha=\frac{(\frac{\sqrt{-1}}{2})^N2^{\frac{g(g-1)}{2}}\mathrm{vol}_{\Gamma}(\sigma_{\max})^2\bigwedge\limits_{1\leq i \leq N} dw_{i}^\alpha\wedge d\overline{w_{i}^\alpha}}{(\prod_{1\leq i \leq N}|w_{i}^{\alpha}|^2)(F^{\alpha}(\log|w_1^{\alpha}|,\cdots,\log|w_N^{\alpha}|))^{g+1}},
$$
where $F^{\alpha}\in \R[x_1,\cdots, x_{N}]$ of degree $g$ is the local volume function.
\item[20] Let $k=0$ and let $S_0(x_1,\cdots, x_{N}) :=F^{\alpha}(x_1,\cdots, x_{N}).$

  \item[30] Let  $f(x_1,\cdots, x_{N}):=S_k(x_{k+1},\cdots, x_{N})$ and $q=k+1.$

  \item[40] Let $k=q$  and let $n:=\deg_k f(x_1,\cdots, x_{N}).$  Write
      $$f= S_k(x_{k+1},\cdots,x_N)x_k^{n}+\mbox{terms of lower degree of $x_k$}.$$

  \item[50] Let $P=(P_{lm})_{1\leq l,m\leq N}$ be a $N\times N$ matrix given by
$P_{lm}=S_k\frac{\partial^2 S_k}{\partial x_l\partial x_m}-\frac{\partial S_k}{\partial x_l}\frac{\partial S_k}{\partial x_m}. $

  \item[60]Let $g_k(x_{k+1},\cdots x_N):=\det(P^{[1,\cdots,d]}_{[1,\cdots,d]})$
  (Thus, $\deg S_k\leq g-k$ and $\deg g_k \leq 2(N-d)(\deg S_k-1)\leq2(N-d)(g-k-1);$ if  $\deg S_k=0$ then $g_k\equiv0;$ if $\deg S_k=1$ and $N-d\geq 2$ then $g_k\equiv0$).

  \item[70] If $k<d$ then  goto {\bf 30}.

  \item[80]Output
\begin{eqnarray*}
  \chi_\alpha &=& (-1)^{N-d}(\frac{g+1}{4})^{N-d}(N-d)!\\
   &&\times \frac{g_d(\log|w_{d+1}^\alpha|, \cdots, \log|w_{N}^\alpha|)}{S_d(\log|w_{d+1}^\alpha|, \cdots, \log|w_{N}^\alpha|)^{2(N-d)}}\frac{\bigwedge\limits_{i=d+1}^N dw^\alpha_{i}\wedge d\overline{w^\alpha_{i}}}{(\prod\limits_{i=d+1}^{N}|w^\alpha_i|)^2}.
\end{eqnarray*}

   \item[90] End.
\end{description}
\end{example}

\vspace{0.5cm}

{\small

}

\end{document}